\newtheorem*{rep@theorem}{\rep@title}
\newcommand{\newreptheorem}[2]{%
\newenvironment{rep#1}[1]{%
 \def\rep@title{#2 \ref{##1}}%
 \begin{rep@theorem}}%
 {\end{rep@theorem}}}
\newtheorem{theorem}{Theorem}[section]
\newtheorem{lemma}[theorem]{Lemma}
\newtheorem{corollary}[theorem]{Corollary}
\newtheorem{proposition}[theorem]{Proposition}
\newtheorem{conjecture}[theorem]{Conjecture}
\theoremstyle{definition}
\newtheorem{definition}[theorem]{Definition}
\newtheorem{remark}[theorem]{Remark}
\newtheorem{example}[theorem]{Example}
\newtheorem{problem}[theorem]{Problem}
\def\rank{\textup{rank}}
\def\L{\mathcal{L}}
\def\H{\mathbb{H}}
\newcommand{\Q}{\mathbb{Q}}
\newcommand{\kk}{\Bbbk}
\newcommand{\Z}{\mathbb{Z}}
\newcommand{\F}{\mathbb{F}}
\newcommand{\A}{\mathfrak{A}}
\newcommand{\mfs}{\mathfrak{s}}
\newcommand{\mft}{\mathfrak{t}}
\newcommand{\mfw}{\mathfrak{w}}
\newcommand{\mfz}{\mathfrak{z}}
\newcommand{\spinc}{\text{Spin}^c}
\newcommand{\CFhat}{\widehat{CF}}
\newcommand{\CFminus}{CF^{-}}
\newcommand{\CFinfty}{CF^{\infty}}
\newcommand{\HFhat}{\widehat{HF}}
\newcommand{\HFminus}{HF^{-}}
\newcommand{\HFinfty}{HF^{\infty}}
\newcommand{\lk}{\ell k}
\newcommand{\PHS}{\mathrm{PHS}}
\newcommand{\dk}{d_{\kk}}
\newcommand{\B}{\mathrm{B}}
\newcommand{\Wh}{\mathrm{Wh}}
\title{Triple linking numbers and Heegaard Floer homology}
\author{Eugene Gorsky}
\address{Department of Mathematics, University of California, Davis, One Shields Avenue, Davis, CA 95616, USA}
\email{egorskiy@math.ucdavis.edu}
\author{Tye Lidman}
\address{Department of Mathematics, North Carolina State University, Raleigh, NC 27607, USA}
\email{tlid@math.ncsu.edu}
\author{Beibei Liu}
\address{Max Placnk Institute for Mathematics, Vivatsgasse 7, 53111 Bonn, Germany}
\email{bbliumath@gmail.com}
\author{Allison H. Moore}
\address{Department of Mathematics \& Applied Mathematics, Virginia Commonwealth University, 1015 Floyd Avenue, Box 842014, Richmond, VA 23284-2014, USA}
\email{moorea14@vcu.edu}
\begin{document}

\maketitle
\begin{abstract}
We establish some new relationships between Milnor invariants and Heegaard Floer homology.  This includes a formula for the Milnor triple linking number from the link Floer complex, detection results for the Whitehead link and Borromean rings, and a structural property of the $d$-invariants of surgeries on certain algebraically split links.      
\end{abstract}

\section{Introduction}

Milnor defined in his thesis a family of higher-order linking invariants capable of detecting triple linking, as in the Borromean rings \cite{Milnor-linkgroup}. The $\mu$-invariants are calculated using coefficients in the Magnus expansion of certain quotients of the fundamental group of the link complement. 
The Milnor invariants contain both link homotopy and concordance invariants of links \cite{Milnor, Casson, Stallings:homology}, and are central tools in the study of three-manifolds and four-manifolds.
Geometric interpretations of Milnor's invariants are numerous. 
Stallings conjectured that the $\mu$-invariants could be described in terms of Massey products for cohomology \cite{Stallings}, following which Turaev and Porter gave explicit interpretations \cite{Turaev, Porter}. 
Milnor's invariants can be computed using the intersection theory of certain ``derived" surfaces in the link exterior \cite{Cochran:memoirs} or expressed in terms of the Alexander and Conway polynomials \cite{Murasugi:Milnor, Traldi, Smythe}. The interpretation of the $\mu$-invariants most relevant to our purposes is the identification of the first non-vanishing coefficient of the Conway polynomial of an algebraically split link with the Sato-Levine invariant $\beta$ and square of the Milnor triple linking number $\mu_{123}$ for two- and three-component links, respectively, as determined by Cochran \cite{Cochran}. 

As many other invariants from knot theory can be seen in Floer homology, it is natural to ask about the Milnor invariants as well.  For example, it is asked in \cite[Problem 17.2.7]{Grid}:
\begin{problem}\label{problem:milnor-grid}
Do the Milnor invariants place algebraic restrictions on the
structure of link Floer homology?
\end{problem}
In previous work of the first, third, and fourth authors \cite{GLM}, it is shown that Heegaard Floer homology is able to see the Sato-Levine invariant $\beta$ of an algebraically split two-component link. In this paper, we address Problem~\ref{problem:milnor-grid} to study several appearances of the Milnor triple linking number $\mu_{123}$ \cite{Milnor} in the Heegaard Floer theory of links and three-manifolds.  We also apply this to give new Dehn surgery and link Floer complex detection results for the Whitehead link and the Borromean rings.  

The first result we have is that the link Floer complex of Ozsv\'ath-Szab\'o \cite{OS:linkpoly} contains the Milnor triple linking number.  
\begin{theorem}\label{thm:milnor}
Let $\L$ be a three-component link with pairwise linking number zero.  Then, there is an explicit formula for $|\mu_{123}(\L)|$ in terms of $CFL^-(\L)$.  
\end{theorem}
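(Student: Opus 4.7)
The plan is to combine Cochran's classical identification of $\mu_{123}(\L)^2$ with the first non-vanishing coefficient of the Conway polynomial for algebraically split three-component links \cite{Cochran} together with the fact that the link Floer complex determines the multi-variable Alexander polynomial. In other words, the chain to be assembled is
\[
CFL^-(\L) \ \rightsquigarrow \ \Delta_\L(t_1, t_2, t_3) \ \rightsquigarrow \ \nabla_\L(z) \ \rightsquigarrow \ |\mu_{123}(\L)|.
\]

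The first arrow is the standard observation that the graded Euler characteristic of $CFL^-(\L)$ with respect to the Alexander multi-grading recovers $\Delta_\L(t_1, t_2, t_3)$ up to a unit in $\Z[t_1^{\pm 1}, t_2^{\pm 1}, t_3^{\pm 1}]$, which is part of the defining package of link Floer homology from \os\ \cite{OS:linkpoly}. The second arrow is obtained by specializing $t_1 = t_2 = t_3 = t$ to produce the single-variable Alexander polynomial $\Delta_\L(t)$ and then substituting $z = t^{1/2} - t^{-1/2}$, with the appropriate power of $(t^{1/2} - t^{-1/2})$ absorbed to match the normalization of the Conway polynomial for a three-component link. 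The third arrow is Cochran's theorem: under the pairwise linking number zero hypothesis, the lowest non-vanishing coefficient of $\nabla_\L(z)$ sits in degree $z^2$ and equals $\mu_{123}(\L)^2$, so taking the square root of its absolute value yields $|\mu_{123}(\L)|$.

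The main (and really only) obstacle is bookkeeping: making sure the unit ambiguity in $\Delta_\L$ and the fractional powers of $t$ are tracked correctly through the two substitutions so that one arrives at a genuinely well-defined coefficient in $\nabla_\L(z)$. The sign ambiguity of $\mu_{123}$ is unavoidable at this level of input, since $\Delta_\L$ is only well-defined up to multiplication by a monomial unit; this is precisely why the theorem is stated with $|\mu_{123}|$ rather than $\mu_{123}$ itself. No new Floer-theoretic content beyond the Euler characteristic is consumed, so the result is an explicit packaging of Cochran's classical formula with the Alexander-polynomial content of $CFL^-(\L)$.
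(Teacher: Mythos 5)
Your proposal is correct and follows the same strategic outline as the paper: extract the multi-variable Alexander polynomial from the Euler characteristic of link Floer homology, pass to the Conway polynomial, and invoke Cochran's identification $a_2(\L) = \pm\mu_{123}^2(\L)$. The paper, however, routes the middle step differently. Rather than reading $a_2$ directly off $\chi(HFL^-)$, it first builds the state-sum invariants $h'$ and $\chi'$ (alternating sums over sublinks of the $H$-function and of $\chi(\A^-_{tor})$, respectively), proves $\tilde\Delta'_{\L}(\bm{t}) = (-1)^{n}\sum_{\bm{s}}(\chi'(\bm{s})-h'(\bm{s}))\bm{t}^{\bm{s}}$ (Theorem \ref{lem: Delta'}), and then obtains the explicit formula $\mu^{2}_{123}(\L)=\pm \sum_{\bm{s}}(\chi'(\bm{s})-h'(\bm{s}))$ of Corollary \ref{cor:mucfl}. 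This refinement matters: $\chi'$ vanishes for L--space links, $h'=h$ for Brunnian links, and the nonnegativity of $h$ is exactly what pins down the sign in Theorem \ref{Brunnianh} and drives the unlink and Borromean detection results. Your argument proves the bare statement of Theorem \ref{thm:milnor}, but would not by itself supply those refinements. One small bookkeeping slip: with the paper's normalization $\nabla_\L(z) = z^{n-1}(a_0 + a_2 z^2 + \cdots)$ and $a_0=0$ for an algebraically split link, the lowest non-vanishing term of $\nabla_\L(z)$ for a three-component link sits in degree $z^{4}$, not $z^{2}$; it is the coefficient \emph{indexed} $a_2$ that equals $\pm\mu_{123}^2$, and it lives in degree $z^{n+1}=z^4$.
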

For the explicit statement, see Corollary \ref{cor:mucfl} below.   
The main strategy is to utilize the aforementioned relationship between the triple linking number and the Conway polynomial, and to express this in terms of the Euler characteristic of the link Floer complex and an associated link invariant. 

If we add some additional hypotheses on the links then this formula simplifies greatly.  The formula from Theorem~\ref{thm:milnor} becomes especially transparent if the link is a Brunnian L--space link.  First, recall that the $h$-function is an integer valued link invariant that is defined using absolute gradings in link Floer homology (see Section \ref{sec:hfunction} for a definition and properties). 
Second, recall that a link of three or more components is called Brunnian if all its proper sublinks are unlinks. 
In this article, we will call a two-component link Brunnian if it is linking number zero and the components are unknots, and will generally include unlinks in the class of Brunnian links. 
More generally, a link is called algebraically split if all pairwise linking numbers are zero. Finally, a link is called an L--space link if all sufficiently large surgeries of $S^3$ are L--spaces, that is, they have the simplest possible Heegaard Floer homology.  (In the first part of this section, we will work exclusively with $\mathbb{Z}_2$-coefficients.)

\begin{theorem}
Assume that $\L$ is a Brunnian L--space link with three components. Then
\label{triple squared}
$\mu_{123}(\L)^2 =\sum_\mathbf{s} h(\mathbf{s})$.
\end{theorem}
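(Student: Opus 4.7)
The plan is to combine Cochran's classical identification of $\mu_{123}(\L)^2$ with a Conway polynomial coefficient and the link Floer description of the Alexander polynomial for L--space links. Recall from Cochran \cite{Cochran} that for any three-component algebraically split link $\L$ one has $\nabla_{\L}(z) = \mu_{123}(\L)^2\, z^4 + O(z^6)$, so the target reduces to computing the coefficient of $z^4$ in $\nabla_{\L}(z)$ as $\sum_\mathbf{s} h(\mathbf{s})$.

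The bridge to Heegaard Floer is built from two standard identities. First, the specialization $\Delta_{\L}(t,t,t)$ of the symmetrized multi-variable Alexander polynomial recovers $\nabla_{\L}(z)$ via $z = t^{1/2} - t^{-1/2}$, up to the usual normalization. Second, the Ozsv\'ath--Szab\'o Euler characteristic formula writes $\Delta_{\L}(\mathbf{t})$ (up to the factor $\prod_{i}(t_i^{1/2} - t_i^{-1/2})$) as the multi-graded Euler characteristic of $\widehat{HFL}(\L)$. For an L--space link, this Euler characteristic is controlled by the $h$-function through an inclusion--exclusion identity of the shape
\begin{equation*}
\chi(\widehat{HFL}(\L, \mathbf{s})) \;=\; \sum_{B \subseteq \{1,2,3\}} (-1)^{|B|}\, h(\mathbf{s} - \vec{e}_B),
\end{equation*}
which in generating function form produces a closed expression for $\Delta_{\L}(\mathbf{t})$ in terms of $\sum_\mathbf{s} h(\mathbf{s})\,\mathbf{t}^\mathbf{s}$.

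The computation then amounts to setting $t_i = t$, expanding in $z$, and extracting the $z^4$-coefficient. The Brunnian hypothesis is essential at this step: since every proper sublink is an unlink, the lower-order Milnor invariants vanish and the sub-Alexander polynomials of the proper sublinks trivialize, so the contributions to $\nabla_{\L}(z)$ of order less than $z^4$ collapse. A direct expansion then identifies the remaining $z^4$-contribution with $\sum_\mathbf{s} h(\mathbf{s})$, yielding the formula.

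The main obstacle is to give the possibly infinite sum $\sum_\mathbf{s} h(\mathbf{s})$ a precise, finite meaning. For a general L--space link, $h$ grows linearly in the coordinate directions where some components tend to $-\infty$, matching the $h$-function of an unlink. The Brunnian assumption ensures that $h - h_{\text{unlink}}$ is compactly supported (mirroring the two-component argument in \cite{GLM}), which makes the sum well-defined, but this regularization must be threaded consistently through the generating-function manipulations. Tracking sign and normalization conventions through Cochran's theorem, the Torres-type specialization, and the $h$-function inclusion--exclusion will be the other delicate piece.
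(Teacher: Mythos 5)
Your proposal follows essentially the same route as the paper's: pass through Cochran's identification $a_2(\L)=\pm\mu_{123}^2(\L)$, express the Alexander polynomial via the $h$-function using the Euler-characteristic formula and inclusion--exclusion (after dividing out $\prod_i(t_i^{1/2}-t_i^{-1/2})$), specialize $t_i=t$ and read off the $z^4$-coefficient, with the Brunnian hypothesis ensuring $h=h'$ is finitely supported. The one point you flag as delicate but leave open --- the overall sign --- is resolved in the paper by observing that both $\mu_{123}^2$ and each $h(\mathbf{s})$ are nonnegative, which forces the sign to be $+1$; you should also note that it is the unnormalized $H$, not $h$, that grows linearly towards $-\infty$, since $h$ is by definition already the difference $H-\sum_i H_O(s_i)$.
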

In particular, $\sum_\mathbf{s} h(\mathbf{s})$ is a link homotopy invariant of three-component Brunnian L--space links. See Corollary \ref{cor:mucfl} for a more general statement of homotopy invariance in the link Floer complex. 
For such links, the $h$-function is always non-negative (see Lemma \ref{h-function list} \eqref{h nonnegative}). As the unlink is the only L-space link with vanishing $h$-function (Lemma \ref{unlink1}), we have

\begin{corollary}
\label{unlink detection}
Assume that $\L$ is a Brunnian L-space link with three components. If $\mu_{123}=0$ then $\L$ is the three-component unlink. 
\end{corollary}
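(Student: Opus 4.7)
The plan is to chain together the three results cited immediately before the corollary. Assume $\L$ is a Brunnian L--space link with three components and that $\mu_{123}(\L) = 0$. By the theorem labeled ``triple squared'' (Theorem preceding the corollary), we have the identity
\[
\mu_{123}(\L)^2 = \sum_{\mathbf{s}} h(\mathbf{s}),
\]
so the vanishing of $\mu_{123}$ forces $\sum_{\mathbf{s}} h(\mathbf{s}) = 0$.

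Next I would invoke Lemma \ref{h-function list}\eqref{h nonnegative}, which says that for any L--space link the $h$-function is non-negative at every lattice point $\mathbf{s}$. Combined with the previous display, a sum of non-negative integers equaling zero means each summand is zero, so $h(\mathbf{s}) = 0$ for every $\mathbf{s} \in \Z^3$.

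Finally, Lemma \ref{unlink1} characterizes the unlink among L--space links as the unique one with identically vanishing $h$-function. Applying this to $\L$, we conclude that $\L$ is the three-component unlink, as desired.

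There is no real obstacle: the content of the corollary lies in the three inputs (the quadratic formula for $\mu_{123}^2$, the non-negativity of $h$, and the unlink detection via $h$), all of which are stated earlier. Once those are in hand, the corollary is an immediate consequence and the proof amounts to the short logical chain above.
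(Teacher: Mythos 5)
Your proof is correct and follows exactly the paper's own route: the formula $\mu_{123}(\L)^2=\sum_{\mathbf{s}}h(\mathbf{s})$ from Theorem \ref{triple squared}, non-negativity of $h$ from Lemma \ref{h-function list}\eqref{h nonnegative}, and then unlink detection via Lemma \ref{unlink1} (whose hypothesis $h(\bm{0})=0$ is of course implied by the identical vanishing you establish; the paper cites the underlying result of \cite{Beibei} directly, which is the same content).
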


Next, we ask to what extent Floer homology is able to detect the simplest links admitting rationally framed Dehn surgery to the three-sphere or  the Poincar\'e homology sphere. We first observe that if there is a rational surgery $S^{3}_{1/m_{1}, \cdots, 1/m_{\ell}}(\L)$ on an $\ell$-component Brunnian link which is the three-sphere, then $\L$ is the unlink (see Proposition \ref{unlinksphere2}). 
We extend this by considering rational surgeries which are the Poincar\'e homology sphere and prove the following results: 

\begin{theorem}
\label{Brunnian detection from Poincare surgery combined}
Let $\L$ be an $\ell$-component Brunnian link, and suppose that $S^3_{1/{m_1}, \cdots, 1/{m_\ell}}(\L)$ is the Poincar\'e homology sphere.
    \begin{enumerate}
    \item \label{Brun 2} If $\ell=2$, then $\L$ is the Whitehead link or its mirror and $mn=1$. 
    \item \label{Brun 3} If $\ell=3$, then $\L$ is the Borromean rings, and all $|m_i|=1$ with the same sign. 
    \item \label{Brun 4} If $\ell\geq 4$, no such $\L$ exists.
    \end{enumerate}
\end{theorem}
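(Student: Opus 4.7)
The plan is to split by the number of components $\ell$, combining classical Casson-invariant constraints for large $\ell$ with the Heegaard Floer machinery developed in this paper for small $\ell$. Since $\L$ is Brunnian, every proper sublink is an unlink, so all pairwise linking numbers vanish and the surgery is automatically an integer homology sphere, consistent with being $\Sigma(2,3,5)$.

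For $\ell \geq 4$, the plan is to invoke Hoste's Casson surgery formula, which expresses $\lambda(S^3_{1/m_1,\ldots,1/m_\ell}(\L))$ for an algebraically split link as a sum of contributions from one-, two-, and three-component sublinks, weighted by the corresponding $m_i$-monomials and involving, respectively, $\Delta''_{K_i}(1)/2$, the Sato--Levine invariant, and the squared Milnor triple linking number. For a Brunnian link with $\ell \geq 4$, every $\leq 3$-component sublink is an unlink, so every such contribution vanishes and $\lambda(\mathrm{surgery})=0$. Since $\lambda(\Sigma(2,3,5)) = \pm 1 \neq 0$, no such $\L$ can exist.

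For $\ell = 2$ and $\ell = 3$, the plan is to use the Heegaard Floer surgery formula. First I would establish that $\L$ is an L-space link: since all proper sublinks are unlinks (L-space links with vanishing $h$-function) and the surgery manifold $\Sigma(2,3,5)$ is itself an L-space, a recursive analysis of the Manolescu--Ozsv\'ath link surgery formula should force $\L$ itself to be an L-space link. With $\L$ an L-space link, $d(S^3_{1/m_1,\ldots,1/m_\ell}(\L))$ becomes an explicit expression in the $h$-function and the surgery coefficients; equating it to $d(\pm \Sigma(2,3,5)) = \mp 2$ yields tight numerical constraints on both. For $\ell = 2$, combining with the Sato--Levine identity $|\beta(\L)| = \sum_{\mathbf{s}} h(\mathbf{s})$ from \cite{GLM} forces $|\beta(\L)| = 1$ and $|m_1 m_2| = 1$, and a Heegaard Floer detection result for the Whitehead link identifies $\L$ as the Whitehead link or its mirror. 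For $\ell = 3$, Theorem \ref{triple squared} gives $\mu_{123}(\L)^2 = \sum_{\mathbf{s}} h(\mathbf{s})$, forcing $|\mu_{123}(\L)| = 1$ and $|m_i| = 1$ with the same sign, and a Heegaard Floer detection result for the Borromean rings then identifies $\L$.

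The main obstacle will be establishing the L-space link property of $\L$, since $\pm 1$-surgeries are not \emph{large} in the usual sense; the Brunnian hypothesis, which trivializes every proper sublink and collapses the sub-cube pieces of the Manolescu--Ozsv\'ath mapping cone, is precisely what allows the recursion to close. A secondary obstacle is extracting, from the $h$-function values and L-space-link structure alone, enough information to pin down the link Floer complex of $\L$ as that of the Whitehead link or the Borromean rings; these Floer detection statements are substantive results on their own, alluded to in the abstract.
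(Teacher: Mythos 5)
Your argument for part \eqref{Brun 4} is correct and is genuinely different from (and simpler than) the paper's: the paper instead shows, via its fibering argument, that the image of $L_4$ would be a genus-one fibered knot in $L(l,1)\#L(p,1)\#L(q,1)$ and derives a contradiction from Heegaard genus, whereas your use of Hoste's surgery formula \eqref{statesum} together with Cochran's vanishing $a_2(\L)=0$ for algebraically split links with $\ge 4$ components (Example \ref{ex: 4comp}) gives $\lambda(S^3_{1/m_1,\ldots,1/m_\ell}(\L))=0\neq\lambda(\PHS)$ directly, with no constraint needed on the $m_i$. That part of your proposal stands on its own.

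For $\ell=2,3$, however, there is a genuine gap, and it sits exactly where the real content of the theorem lies. Your plan ends by invoking ``a Heegaard Floer detection result for the Whitehead link / Borromean rings'' to identify $\L$ from the data ($\L$ an L-space link with unknotted components, $|\beta|=1$ or $|\mu_{123}|=1$). No such result is available independently of the theorem you are proving: in this paper those detection statements are Theorem \ref{Lspace link detection combined}, and their proofs (Corollary \ref{beta is 1} via Lemma \ref{lem:whitehead detection}, and Proposition \ref{borromean}) run through Propositions \ref{Whitehead by topology} and \ref{Borromean by topology} --- i.e.\ precisely parts \eqref{Brun 2} and \eqref{Brun 3} of the present theorem in the $(+1,\ldots,+1)$ case. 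So your outline is circular relative to the only known proofs of those detection results. The actual work in the paper is topological, and none of it appears in your plan: Ghiggini's trefoil detection pins down the image knot and forces integral coefficients $\pm1$; Ni's theorem on fiberedness and Dehn filling shows the relevant surgered knots are genus-one fibered; Baker's classification of genus-one fibered knots in lens spaces (plus Hodgson--Rubinstein and the equivariant sphere theorem in the three-component case) identifies them; a geometric limit/JSJ argument (Lemma \ref{lem:geometric-limit}) recovers the link exterior; and Lemmas \ref{lem:whitehead-exterior} and \ref{lem:borromean-exterior} resolve the ambiguity among links sharing that exterior. Floer-theoretic data alone ($h$-function, $\beta$, $\mu_{123}$, $d$-invariants) is not known to pin down the link, so this is not a detail to be filled in later but the heart of the proof.

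Two further points would fail as stated even granting a detection result. First, the L-space-link step: Liu's criterion (Theorem \ref{l-space link cond}\eqref{criterion}, Example \ref{ex:Lspacelinks}) requires a positive-definite framing matrix, so your ``recursive analysis of the Manolescu--Ozsv\'ath complex'' only covers $m_i$ all of one sign (the all-negative case by mirroring, since $-\PHS$ is also an L-space); the mixed-sign case, e.g.\ $S^3_{1,-1}(\L)=\PHS$, is not excluded by any surgery-formula argument you describe, and the paper rules it out topologically (end of Proposition \ref{ifandonlyifwhite}). Relatedly, for a $(\pm1,\ldots,\pm1)$-surgery the $d$-invariant is not an ``explicit expression in the $h$-function'' without further work, since such surgeries are not large and the truncation arguments used in Lemma \ref{lem:whitehead detection} require knowing the $h$-function in advance. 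Second, in part \eqref{Brun 3} the conclusion that the $m_i$ all have the \emph{same} sign does not follow from the numerical constraints you list (Hoste's formula only gives $|m_1m_2m_3|\cdot\mu_{123}^2=1$); the paper obtains the sign from the definiteness of the plumbing bounded by the surgered manifold after blowing down, and you would need some such argument as well.
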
 
In a related vein, we prove that amongst Brunnian L-space links, the Whitehead link and Borromean rings are essentially detected by the Sato-Levine invariant and the triple linking number. 

\begin{theorem}
\label{Lspace link detection combined}
Let $\L$ be an $\ell$-component Brunnian L--space link. 
    \begin{enumerate}
    \item If $\ell=2$ and $\beta=1$, then $\L$ is the Whitehead link. 
    \item If $\ell=3$ and $\mu_{123}(\L)=\pm 1$, then $\L$ is the Borromean rings.
    \item If $\ell\geq 4$, then $\L$ is the four-component unlink.
    \end{enumerate}
\end{theorem}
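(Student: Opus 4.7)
The plan for all three parts is unified: use formulas expressing a Milnor-type invariant as $\sum_{\mathbf{s}} h(\mathbf{s})$ together with non-negativity and conjugation symmetry of $h$ to pin down the $h$-function of $\L$, then use the L-space link surgery formula to realize a specific $(+1)$-surgery of $\L$ as the Poincar\'e homology sphere $\Sigma(2,3,5)$, and conclude via Theorem~\ref{Brunnian detection from Poincare surgery combined}. The key external ingredients are (a) the formula $\beta(\L) = \sum_{\mathbf{s}} h(\mathbf{s})$ of \cite{GLM} for two-component Brunnian L-space links and the analogous Theorem~\ref{triple squared} here, (b) surgery formulas relating the multi-variable Alexander polynomial of $\L$ to that of a knot obtained by $(+1)$-surgery on some of its unknotted, pairwise-unlinked components, and (c) Ghiggini's theorem detecting the right-handed trefoil amongst L-space knots by its Alexander polynomial.

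For Parts (1) and (2), the hypotheses $\beta(\L) = 1$ and $|\mu_{123}(\L)| = 1$ respectively give $\sum_{\mathbf{s}} h(\mathbf{s}) = 1$. Combined with $h \geq 0$, integrality, and the symmetry $h(-\mathbf{s}) = h(\mathbf{s})$ (from vanishing linking), this forces $h(\mathbf{0}) = 1$ and $h \equiv 0$ elsewhere, thereby determining the multi-variable Alexander polynomial of $\L$. Since every proper sublink is a split unlink of unknots, $(+1)$-surgery on all but one component yields $S^3$, in which the image $K_\ell'$ of the remaining component is an L-space knot. Surgery-type formulas applied to the Alexander polynomial yield $\Delta_{K_\ell'}(t) = t - 1 + t^{-1}$, and Ghiggini's theorem identifies $K_\ell'$ as the right-handed trefoil. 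Performing a further $(+1)$-surgery on this trefoil produces $\Sigma(2,3,5)$, so $S^3_{+1, \ldots, +1}(\L) = \Sigma(2,3,5)$, and parts~\eqref{Brun 2} and \eqref{Brun 3} of Theorem~\ref{Brunnian detection from Poincare surgery combined} identify $\L$ as the Whitehead link and the Borromean rings respectively.

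For Part (3), I argue by contradiction. Suppose $\L$ is a Brunnian $\ell$-component L-space link with $\ell \geq 4$ and $h_\L \not\equiv 0$. Since sublinks of L-space links are L-space links, every three-component sublink of $\L$ is a Brunnian three-component L-space link, hence the three-unlink by the Brunnian hypothesis, and its $h$-function vanishes identically. The sublink-stabilization of $h_\L$ at infinity then forces $h_\L(\mathbf{s}) \to 0$ in every coordinate direction, so $h_\L$ has finite support. Apply the same surgery template: $(+1)$-surgeries on $\ell - 1$ appropriately chosen components reduce to $S^3$, and the remaining L-space knot has Alexander polynomial determined by $h_\L$; by a careful choice of framings exploiting the nontriviality of $h_\L$, one can arrange this polynomial to equal $t - 1 + t^{-1}$, so Ghiggini's theorem identifies the knot as the right-handed trefoil and some $(1/m_1, \ldots, 1/m_\ell)$-surgery on $\L$ equals $\Sigma(2,3,5)$, contradicting part~\eqref{Brun 4} of Theorem~\ref{Brunnian detection from Poincare surgery combined}. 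Hence $h_\L \equiv 0$, and by Lemma~\ref{unlink1} $\L$ is the $\ell$-component unlink. The main obstacle is Part (3), since for $\ell \geq 4$ there is no single Milnor invariant controlling $\sum h$; the argument must proceed via the surgery non-existence statement and trefoil detection rather than by a direct pinning of $h_\L$.
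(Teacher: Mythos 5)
Your reduction of parts (1) and (2) to the statement ``$S^3_{1,\ldots,1}(\L)=\Sigma(2,3,5)$, then apply Theorem~\ref{Brunnian detection from Poincare surgery combined}'' matches the paper's architecture, and pinning down $h$ from $\beta$ or $\mu_{123}$ is exactly what the paper does. The gap is in how you get from the $h$-function to the Poincar\'e sphere. You claim that a ``surgery-type formula'' computes the Alexander polynomial of the knot $K'_\ell$ obtained by blowing down the other components from the multivariable Alexander polynomial of $\L$, yielding $t-1+t^{-1}$. No such formula can exist: the Whitehead link and its mirror are both Brunnian with the same multivariable Alexander polynomial, yet $+1$-blow-down of one component gives the right-handed trefoil in one case and the figure-eight knot in the other, whose Alexander polynomials differ. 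The data you allow yourself (Alexander polynomial of $\L$, unknotted unlinked sublinks, framings) therefore cannot determine $\Delta_{K'_\ell}$. You also assert without proof that $K'_\ell$ is an L--space knot; this does not follow from the definition of an L--space link, which only controls surgeries with \emph{all} coefficients large. The paper supplies both missing ingredients at once via the Manolescu--Ozsv\'ath surgery complex: since $h$ is supported only at the origin with value $1$, the complex for $S^3_{1,\ldots,1}(\L)$ truncates to a single $\A^-(\L,\bm{0})$, so this surgery is an L--space with $d=-2H(\bm{0})=-2$ (Lemma~\ref{lem:whitehead detection}, Proposition~\ref{borromean}); then the blown-down knot is an L--space knot whose $+1$-surgery is an L--space with $d=-2$, forcing genus exactly one, i.e.\ the right-handed trefoil, so the surgery is $\Sigma(2,3,5)$. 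Ghiggini's theorem is not needed at this stage and cannot substitute for the Floer-theoretic input.

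Part (3) has a more serious gap. Your contradiction scheme hinges on the claim that, when $h_\L\not\equiv 0$, ``a careful choice of framings'' makes the blown-down knot have Alexander polynomial $t-1+t^{-1}$ and hence produces a $(1/m_1,\ldots,1/m_\ell)$-surgery equal to $\Sigma(2,3,5)$. There is no justification for this, and beyond the Alexander-polynomial issue above there is no reason such framings exist; the scheme is not a proof. The paper's argument is both different and much shorter: by Cochran's theorem (Example~\ref{ex: 4comp}), an algebraically split link with at least four components has $a_2(\L)=0$, so Corollary~\ref{cor: Delta' Brunnian} gives $\sum_{\bm{s}}h(\bm{s})=0$; since $h\ge 0$, $h\equiv 0$, and \cite[Theorem 1.3]{Beibei} (cf.\ Lemma~\ref{unlink1}) shows $\L$ is the unlink (Proposition~\ref{thm: 4 comp brunnian}). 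Your proposal misses this vanishing input entirely, and without it the $\ell\ge 4$ case does not close.
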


Finally, if we drop the assumption that the components of $\L=L_1\cup L_2$ are unlinks, requiring only that it is an L-space link, we widen the detection result as follows.

\begin{theorem}
\label{whitehead or trefoil}
Let $\L=L_{1}\cup L_{2}$ be an algebraically split L-space link such that $S^{3}_{1, 1}(\L)$ is the Poincar\'e homology sphere. Then $\L$ is either the Whitehead link or the split union of $T(2, 3)$ and the unknot.
\end{theorem}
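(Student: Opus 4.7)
The plan is to case-split based on whether $\L$ is a split link. In both cases the argument uses the integer surgery formula for L--space links to translate the constraint $d(S^{3}_{1,1}(\L)) = d(\Sigma(2,3,5)) = -2$ into constraints on the $h$-function of $\L$, and then invokes the restrictive structure of L--space knots together with Theorem \ref{Lspace link detection combined} to identify $\L$.

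Suppose first that $\L = L_1 \sqcup L_2$ is split. Then $S^{3}_{1,1}(\L) = S^{3}_{1}(L_1) \# S^{3}_{1}(L_2)$, and since $\Sigma(2,3,5)$ is irreducible, one summand is $S^3$ and the other is $\Sigma(2,3,5)$. A split link is an L--space link if and only if each component is an L--space knot, so both $L_i$ are L--space knots, and $d(S^{3}_{1}(L_i)) = -2\, V_0(L_i)$. The component with $V_0 = 0$ has trivial Alexander polynomial (by the staircase description of L--space knot Floer complexes), hence is the unknot. The component with $V_0 = 1$ has $\Delta(t) = t - 1 + t^{-1}$ forced by the same staircase constraints, making it a genus one fibered L--space knot, that is, $T(2,3)$.

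Now assume $\L$ is non-split. The plan is to show $\L$ is Brunnian with Sato--Levine invariant $\beta(\L) = 1$, whereupon Theorem \ref{Lspace link detection combined}(1) identifies $\L$ as the Whitehead link. The L--space link integer surgery formula expresses $d(S^{3}_{1,1}(\L))$ as $-2$ times a combinatorial functional of the $h$-function $h_\L \colon \Z^2 \to \Z_{\geq 0}$. Setting this equal to $-2$ forces $h_\L(0,0) = 1$ and pins down the remaining values of $h_\L$ near the origin; the large-coordinate limits of $h_\L$ recover the $h$-functions of the sublinks $L_i$, and the forced vanishing gives $V_0(L_i) = 0$, so each $L_i$ is an unknot. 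Substituting the resulting $h$-function values into the formula of \cite{GLM} expressing $\beta$ in terms of the $h$-function then yields $\beta(\L) = 1$, and Theorem \ref{Lspace link detection combined}(1) concludes.

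The main obstacle lies in the non-split case: extracting enough pointwise information about $h_\L$ from a single scalar $d$-invariant to simultaneously force unknottedness of the components and $\beta(\L) = 1$ requires careful use of the nonnegativity, the jumping property, and the axis limits of $h$-functions for L--space links. A subsidiary orientation issue is that $\Sigma(2,3,5)$ with $d = +2$ cannot arise as an L--space link surgery, so only the $d = -2$ orientation is relevant, consistent with the split case identifying the right-handed $T(2,3)$ (its mirror is not an L--space knot).
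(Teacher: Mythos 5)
Your split case is essentially fine in outcome but contains a misstep, and your non-split case has a genuine gap that skips exactly the hard part of the theorem. In the split case, the inference ``$V_0=1$ forces $\Delta(t)=t-1+t^{-1}$'' is false: $T(2,5)$ is an L-space knot with $V_0=1$ and genus $2$, so the staircase constraints alone do not pin down the Alexander polynomial. What saves you is the unused part of the hypothesis: the summand is $\PHS$ itself, so either cite \cite{Ghiggini}, or note that $S^3_1(L_i)$ is an L-space and hence $1\ge 2g(L_i)-1$ forces $g(L_i)\le 1$, identifying $T(2,3)$. (Your appeal to Theorem \ref{Lspace link detection combined}(1) is not circular, since Corollary \ref{beta is 1} is proved via Proposition \ref{Whitehead by topology} independently of this theorem.)

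The non-split case, however, is asserted rather than proved. From $d(S^3_{1,1}(\L))=-2$, the inequality \eqref{eq:111<nnn} together with nontriviality (as in Remark \ref{rem:grading}) does give $h_{\L}(0,0)=1$; but nothing you cite ``pins down the remaining values of $h_{\L}$ near the origin'' or forces $V_0(L_i)=0$. Monotonicity (Lemma \ref{h-function list}\eqref{lem: h increases}) only yields $h\le 1$ everywhere, which is perfectly consistent with a genus-one component, i.e.\ with $L_1$ or $L_2$ being a trefoil, and with $h$-functions such as the one in Figure \ref{fig: h hypothetical}; a single scalar $d$-invariant cannot exclude these. Ruling them out is precisely where the paper's proof does its work: one first needs \cite[Proposition 5.6]{GLM} to find a component whose $+1$-surgery is an L-space, Ghiggini's theorem after blowing down an unknotted component, and, in the case of a trefoil component, the Casson surgery formula \eqref{statesum} combined with positivity of $h'$ and \cite[Proposition 4.7]{Liu19} (using that the $(1,d)$-surgeries are L-spaces) to show the other component is unknotted, plus the Dehn surgery characterization of the unknot \cite{KMOS, Gainullin}; even when the $h$-function is forced to agree with that of $O\sqcup T(2,3)$, splitness is deduced from the Thurston-polytope detection result of \cite{Beibei3}, not from the surgery formula. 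Finally, even granting unknotted components and $h(0,0)=1$, your route through $\beta(\L)=\pm1$ needs $\sum_{\bm{s}}h(\bm{s})=1$, i.e.\ vanishing of $h$ away from the origin, which again does not follow from $d=-2$; the paper instead applies Proposition \ref{Whitehead by topology} directly once the components are known to be unknots and $S^3_{1,1}(\L)=\PHS$. As written, your proposal replaces the full hypothesis ``$S^3_{1,1}(\L)$ is $\PHS$'' by its $d$-invariant, and that weaker statement does not suffice.
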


We make the following conjecture for three-component algebraically split L--space links. 

\begin{conjecture}
\label{Borromean}
Let $\L=L_{1}\cup L_{2}\cup L_{3}$ be an algebraically split L-space link such that $S^{3}_{1, 1, 1}(\L)$ is the Poincar\'e homology sphere. Then $\L$ must be one of the following:
\begin{enumerate}
\item the Borromean rings, 
\item the split union of the Whitehead link and the unknot, 
\item the split union of the right hand trefoil and the two-component unlink. 
\end{enumerate}
\end{conjecture}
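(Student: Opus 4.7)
The plan is to split into cases based on whether $\L$ is a split link. Assume first that $\L$ is split. Then $S^{3}_{1,1,1}(\L)$ decomposes as a connected sum of surgeries on the split factors of $\L$, and since the Poincar\'e homology sphere $P$ is irreducible, all summands but one must be $S^{3}$ with the remaining summand being $P$. If $\L$ is fully split as $K_{1} \sqcup K_{2} \sqcup K_{3}$, then two of the $K_{i}$ satisfy $S^{3}_{1}(K_{i}) = S^{3}$ and are therefore unknots by the knot complement theorem, while the third has $S^{3}_{1} = P$, forcing it (by the characterization of the trefoil via surgery to the Poincar\'e sphere) to be the right-handed trefoil; this gives case (3). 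If instead $\L$ splits as $K \sqcup \L'$ with $\L'$ non-split, then either $S^{3}_{1}(K) = S^{3}$, so $K$ is unknotted, and Theorem \ref{whitehead or trefoil} identifies $\L'$ as the Whitehead link, yielding case (2) (the other possibility $T(2,3) \sqcup O$ from that theorem is split and thereby excluded). Or $S^{3}_{1,1}(\L') = S^{3}$, which by a $d$-invariant argument using the L--space link surgery formula forces $\L'$ to be the two-component unlink, again contradicting non-splitness.

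Now suppose $\L$ is non-split. We aim to show $\L$ is the Borromean rings. The surgery formula for algebraically split L--space links expresses $d(S^{3}_{1,1,1}(\L))$ as an alternating sum of $h$-values at appropriate lattice points, and sublink surgery formulas read off the $h$-functions of all proper sublinks. Setting $d(S^{3}_{1,1,1}(\L)) = \pm 2 = d(\pm P)$ yields sharp integral constraints. Combined with the non-negativity of the $h$-function for L--space links (Lemma \ref{h-function list}) and the characterization of trivial $h$-functions for unlinks (Lemma \ref{unlink1}), the goal is to deduce that every proper sublink of $\L$ is an unlink, i.e.\ that $\L$ is Brunnian. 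Once this is established, Theorem \ref{triple squared} gives $\mu_{123}(\L)^{2} = \sum_{\mathbf{s}} h(\mathbf{s})$, and the $d$-invariant constraint pins this sum down to $1$. Applying Theorem \ref{Lspace link detection combined}(2) then identifies $\L$ as the Borromean rings.

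The main obstacle is the reduction from the algebraically split hypothesis to the Brunnian hypothesis in the non-split case. The alternating sums appearing in the surgery formula could in principle permit non-trivial contributions from knotted but algebraically unlinked components to cancel against full-link contributions while still producing $d = \pm 2$. Excluding this requires a uniform lower bound on $d(S^{3}_{1,1,1}(\L))$ in terms of the $h$-functions of proper sublinks, and this is exactly the gap between algebraically split and Brunnian; it is especially subtle when components are cabled or composite with non-trivial Alexander polynomial. An auxiliary subtlety is tracking orientations: the sign of $d(P)$ distinguishes the two chiralities of the Borromean rings, and this needs to be reconciled with the sign ambiguity of $\mu_{123}$ in Theorem \ref{Lspace link detection combined}(2).
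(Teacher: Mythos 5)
The statement you are proving is Conjecture \ref{Borromean}: the paper offers no proof of it, so there is nothing to compare your argument against except the paper's surrounding results --- and your proposal does not close the conjecture either. Your split-case analysis is fine and uses exactly the tools the paper provides: irreducibility of the Poincar\'e sphere splits the connected sum, Gordon--Luecke handles $S^3_1(K)=S^3$, Ghiggini's theorem handles $S^3_1(K)=\PHS$, Corollary \ref{unlinksphere} handles $S^3_{1,1}(\L')=S^3$, and Theorem \ref{whitehead or trefoil} handles $S^3_{1,1}(\L')=\PHS$. (One small simplification: once you know $\L$ is Brunnian, Proposition \ref{Borromean by topology} identifies it as the Borromean rings directly, with no need to route through $\sum_{\bm{s}}h(\bm{s})=1$ and Theorem \ref{Lspace link detection combined}(2).)

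The genuine gap is the one you yourself flag: in the non-split case you never actually show that an algebraically split L--space link with $S^3_{1,1,1}(\L)=\PHS$ must be Brunnian, and this is precisely the content that makes the statement a conjecture rather than a theorem. The mechanism you propose --- reading $d(S^3_{1,1,1}(\L))=-2$ as a sharp constraint on an alternating sum of $h$-values of $\L$ and its sublinks --- does not exist in the form you need it. The paper's computations of $d(S^3_{1,1,1}(\L))$ from the $h$-function (Lemma \ref{lem:whitehead detection}, Proposition \ref{borromean}) rely on truncating the Manolescu--Ozsv\'ath surgery complex to the single piece $\A^-_{0\cdots 0}$, and that truncation is only justified after one already knows the $h$-function vanishes away from the origin; for a general algebraically split L--space link with knotted components (e.g.\ components with large genus, or nontrivial two-component sublinks), the $(1,1,1)$-surgery complex involves many lattice points and there is no closed formula equating $d$ with a signed sum of $h$-values. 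Inequalities in the spirit of Proposition \ref{prop: L space sublink} and Corollary \ref{link and sublink} give upper bounds $d(S^3_{1,1,1}(\L))\le -2$ once some sublink is nontrivial, but they point the wrong way: they cannot rule out $d=-2$ for links with knotted components or nontrivial sublinks (indeed the conjectured answers (2) and (3) have nontrivial sublinks and still give $d=-2$), so no ``uniform lower bound'' of the kind you invoke can force Brunnian-ness. Until that reduction is supplied, the argument establishes only the split case, and the conjecture remains open.
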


\begin{remark}
By a similar argument to the one in \cite[Proposition 5.6]{GLM}, one of the manifolds $S^{3}_{1}(L_{i})$ is an L-space where $i=1, 2, 3$. Hence at least one of the components of $L$ is the unknot or the right-handed trefoil $T(2, 3)$. 
\end{remark}

Theorems~\ref{Lspace link detection combined} is sufficient to establish the following.   

\begin{corollary}\label{thm:cfl-detection}
The link Floer chain complex detects the Whitehead link and the Borromean rings.   
\end{corollary}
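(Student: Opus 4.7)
The plan is to show that $CFL^-(\L)$ encodes every invariant appearing in the hypotheses of Theorem \ref{Lspace link detection combined}, and then to invoke that theorem directly. From the multi-graded complex $CFL^-(\L)$ together with its $U_i$-actions, I would extract: (i) the number of components of $\L$, from the number of Alexander gradings; (ii) the pairwise linking numbers, from the multi-variable Alexander polynomial, read off as the graded Euler characteristic; (iii) the knot Floer homology $\widehat{HFK}(L_i)$ of each component, via standard manipulations of the $U_j$ for $j \neq i$, which by Ozsv\'ath-Szab\'o unknot detection determines whether each component is an unknot; and (iv) the L-space link property, via the Manolescu-Ozsv\'ath link surgery formula applied to sufficiently large framings.

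Combining (i)--(iv) with the main result of \cite{GLM} (which recovers the Sato-Levine invariant $\beta$ from $CFL^-$ in the two-component case) and Theorem \ref{thm:milnor} (which recovers $|\mu_{123}|$ in the three-component case) shows that $CFL^-(\L)$ determines the full input to Theorem \ref{Lspace link detection combined}.

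Now suppose $CFL^-(\L) \cong CFL^-(W)$, where $W$ is the Whitehead link. Then $\L$ is a two-component Brunnian L-space link with $|\beta(\L)|=1$, and Theorem \ref{Lspace link detection combined}(1), applied to $\L$ or its mirror as needed, yields $\L = W$; the amphichirality of $W$ removes any residual sign ambiguity. Similarly, $CFL^-(\L) \cong CFL^-(B)$ for the Borromean rings $B$ makes $\L$ a three-component Brunnian L-space link with $|\mu_{123}(\L)|=1$, and Theorem \ref{Lspace link detection combined}(2), combined with the amphichirality of $B$, produces $\L = B$.

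The main obstacle is to pin down precisely the notion of ``isomorphism of link Floer complexes'' under which the corollary is being claimed, and then to verify that each of the invariants above is transported across such an isomorphism. In particular, one must check that the L-space link property is recoverable from $CFL^-(\L)$ as an abstract algebraic object, rather than only as a property of $\L$ itself. This amounts to careful bookkeeping with the Manolescu-Ozsv\'ath link surgery formula and the standard recovery of sublink data from the full link Floer complex, and requires no new geometric input.
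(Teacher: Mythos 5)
Your proposal is correct and follows essentially the same route as the paper's proof: reduce to Theorem \ref{Lspace link detection combined} after observing that $CFL^-(\L)$ determines the L--space link condition (via the Floer homology of large surgeries), the linking numbers and Brunnian-ness (via the Alexander polynomial and sublink data), and $\beta$ resp.\ $|\mu_{123}|$ via \cite{GLM} and Theorem~\ref{thm:milnor}. The only cosmetic difference is your appeal to amphichirality, which is not needed since Corollary~\ref{beta is 1} and Theorem~\ref{Lspace link detection combined}(2) already cover $\beta=\pm1$ and $\mu_{123}=\pm1$.
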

\begin{proof}
The link Floer chain complex determines the Heegaard Floer homology of large surgeries on a link and hence determines whether a link is an L-space link.  The link Floer chain complex also determines the multivariable Alexander polynomial \cite{OS:linkpoly}, which in turn determines the linking number \cite{Torres}, as well as whether the link is Brunnian. By Theorem~\ref{Lspace link detection combined}, it thus suffices to know that the link Floer complex determines the Sato-Levine invariant (for links with two components) or Milnor triple linking number (for links with three components).  This is shown in \cite{GLM} for links with two components and Theorem~\ref{thm:milnor} for links with three components.
\end{proof}

The final appearance of the triple linking number involves its relationship with another Heegaard Floer-theoretic invariant, the $d$-invariant of a homology three-sphere.  As is common, we write $d$ for the $d$-invariant when computing Heegaard Floer homology with $\mathbb{Z}_2$-coefficients.  We will use $d_{\kk}$ for the $d$-invariant when working with coefficients in a field $\kk$.  We show that the non-vanishing of the triple linking number for a link $\L$ gives an interesting restriction on the $d$-invariants of integer homology spheres obtained by Dehn surgeries along $\L$ in the three-sphere. We prove the following:

\begin{theorem}
\label{nonzero linking d invt}
Let $\L=L_1\cup L_2\cup L_3$ be an algebraically split link such that all two-component sublinks are $\Q$-L--space links. If the triple linking number $\mu_{123}$ is nonzero, then $d_{\Q}(S^3_{1,1,1} (\L))\leq -2$. If the triple linking number $\mu_{123}$ is odd, then the analogous inequality holds with $\mathbb{Z}_2$-coefficients.
\end{theorem}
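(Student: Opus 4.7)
My plan is to bound $d_\Q(S^3_{1,1,1}(\L))$ using the Manolescu--Ozsv\'ath link surgery formula applied to the three-dimensional hypercube of link Floer complexes for $\L$. Because each two-component sublink is a $\Q$-L-space link, the codimension-one faces of this hypercube have the simple structure familiar from the two-component case of \cite{GLM}: they are governed by the $h$-function of the relevant sublink, and along each such face the local surgery-formula computation reduces to a single value of that sublink's $h$-function. I would follow the strategy of \cite{GLM}, where the analogous two-component computation yields an identity of the form $d_\Q(S^3_{1,1}(L_i\cup L_j))= d_\Q(S^3_1(L_i))+d_\Q(S^3_1(L_j)) - 2h_{L_i\cup L_j}(\mathbf{0})$. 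Adapting that argument to three components should yield an inequality
\[
d_\Q(S^3_{1,1,1}(\L)) \;\leq\; \sum_{i=1}^{3} d_\Q(S^3_1(L_i)) \;-\; 2\,h_\L(\mathbf{s}_0)
\]
at a suitable lattice point $\mathbf{s}_0 \in \mathbb{H}(\L)$. Since $d_\Q(S^3_1(L_i)) \leq 0$ for any knot in $S^3$, this gives $d_\Q(S^3_{1,1,1}(\L)) \leq -2\,h_\L(\mathbf{s}_0)$.

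The second step is to show that $h_\L(\mathbf{s})$ is nonzero somewhere whenever $\mu_{123}(\L)\neq 0$. For this I would invoke the link-Floer-theoretic formula for $|\mu_{123}|$ from Theorem~\ref{thm:milnor} (Corollary~\ref{cor:mucfl}), which expresses $\mu_{123}(\L)^2$ as an Euler-characteristic-like alternating sum involving values of $h_\L$, paralleling the identity $\mu_{123}(\L)^2 = \sum_{\mathbf{s}} h(\mathbf{s})$ from Theorem~\ref{triple squared} in the Brunnian L-space case. Combined with the nonnegativity of $h$ for L-space sublinks (Lemma~\ref{h-function list}), nonvanishing of $\mu_{123}$ forces $h_\L(\mathbf{s}) \geq 1$ for some $\mathbf{s}$; taking this $\mathbf{s}$ as $\mathbf{s}_0$ in the previous step yields $d_\Q(S^3_{1,1,1}(\L)) \leq -2$. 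For the $\Z_2$ statement, the argument runs identically with $\Z_2$-coefficients; since $\mu_{123}^2 \equiv \mu_{123} \pmod 2$, the oddness of $\mu_{123}$ forces the mod-$2$ reduction of some $h_\L(\mathbf{s})$ to be odd, hence nonzero.

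The main obstacle I anticipate is the surgery inequality in the first step. While the two-component face complexes simplify under the L-space-sublink hypothesis, $\L$ itself is not assumed to be a $\Q$-L-space link, so the top vertex of the hypercube can carry higher-dimensional cancellations. Showing that the tower generator of $HF^+(S^3_{1,1,1}(\L))$ genuinely sits in grading at most $-2\,h_\L(\mathbf{s}_0)$, rather than being pushed higher by those cancellations, requires a careful identification of the cycle realizing the bottom of the infinite tower and a verification that it survives when passing from the sublink-level surgery complexes to the full three-component surgery complex. The algebraic input from $\mu_{123}$ only enters at the very end; the heart of the argument is the link surgery formula computation together with the assembly of the hypercube from its two-component faces.
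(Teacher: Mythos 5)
Your plan diverges from the paper's proof in a way that runs into a fundamental obstruction, and there is also a gap in the combinatorial step even over $\mathbb{Z}_2$.

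The central problem is the coefficient field. The Manolescu--Ozsv\'ath link surgery formula, the generalized Floer complexes $\A^-(\L,\bm{s})$, and hence the $h$-function are at present only available over $\F=\mathbb{Z}_2$; the paper flags this explicitly in Section~\ref{subsec:coefs} and again at the start of Section~\ref{sec:d}. Your proposed inequality $d_\Q(S^3_{1,1,1}(\L)) \leq \sum_i d_\Q(S^3_1(L_i)) - 2h_\L(\mathbf{s}_0)$ mixes a $\Q$-valued $d$-invariant with the $\mathbb{Z}_2$-valued $h$-function and has no meaning; there is no route through the hypercube over $\Q$. This is precisely why the paper takes a completely different path: it shows (Theorem~\ref{thm: rank 6}, via the cup-product computation in monopole Floer homology and the HF$=$HM isomorphisms) that $\mu_{123}\neq 0$ forces $HF^\infty_\Q(S^3_{0,0,0}(\L),\mfs_0)$ to be free of rank $6$, hence \emph{nonstandard}; then the surgery exact triangle relating $S^3_{0,0,\infty}$, $S^3_{0,0,0}$, and $S^3_{0,0,n}$ (Lemma~\ref{dtop dbot}) gives $d_{top,\Q}(S^3_{0,0,1}(\L)) \leq d_{bot,\Q}(\#^2 S^1\times S^2) = -1$; finally two applications of Proposition~\ref{prop:d-inv} descend from $(0,0,1)$- to $(1,1,1)$-surgery. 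The reduction to the Brunnian case (using Proposition~\ref{prop: L space sublink} and Corollary~\ref{link and sublink} when some two-component sublink is nontrivial) is handled separately at the start. None of this appears in your sketch.

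Even restricting to $\Z_2$-coefficients, where the link surgery formula is available, your step ``nonvanishing of $\mu_{123}$ forces $h_\L(\bm{s})\geq 1$ for some $\bm{s}$'' is not justified by Corollary~\ref{cor:mucfl}. That corollary gives $\mu_{123}^2(\L)=\pm\sum_{\bm{s}}(\chi'(\bm{s})-h'(\bm{s}))$, not $\mu_{123}^2 = \sum h$; the torsion contribution $\chi'$ is nonzero exactly when $\L$ itself is not an L--space link (which is not assumed), and the sum is signed, so it can be nonzero even if every $h_\L(\bm{s})$ vanishes. The identity $\mu_{123}^2 = \sum_{\bm{s}} h(\bm{s})$ holds only for Brunnian L--space links (Theorem~\ref{Brunnianh}). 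The paper's alternate $\Z_2$-argument (Theorem~\ref{thm:h} and Corollary~\ref{cor:d-mu123-odd}) is closer to what you are aiming for, but it does not proceed through any Euler-characteristic identity: it uses the spectral sequence of the link surgery complex for $S^3_{0,0,0}(\L)$ and the second author's identification of the relevant $d_3$ differential with multiplication by $\mu_{123}\bmod 2$; the degree drop of $d_3$ is what forces $H_\L(0,0,0)>0$. If you want to pursue the $\Z_2$ case along your lines, you should replace the combinatorial step with this $d_3$ argument; for the $\Q$ case you need the monopole input on $HF^\infty$ and the exact-triangle argument, since the hypercube machinery is simply not available there.
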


In Theorem \ref{nonzero linking d invt}, we use $d$-invariants for Heegaard Floer homology with coefficients in $\Q$ and in $\mathbb{Z}_2$. Although it is customary for Heegaard Floer homology literature to work over $\mathbb{Z}_2$, we use a comparison with monopole Floer homology to obtain a separate result over $\Q$. Surprisingly enough, we do not know if $d_{\Q}$ and $d$ coincide. However, note that a $\mathbb{Z}_2$-L-space is necessarily a $\mathbb{Q}$-L-space.  See Section \ref{subsec:coefs} for a discussion about coefficients.

\begin{remark}
The same proof applies if we work with coefficients in an arbitrary field $\kk$ and the characteristic of $\kk$ is coprime to $\mu_{123}(\L)$.
\end{remark}

The L--space link assumption in Theorem~\ref{nonzero linking d invt} will be shown to be necessary in Example \ref{ex: bing}.  As an immediate application of Theorem \ref{nonzero linking d invt}, notice that when $\L=L_1\cup L_2\cup L_3$ is an algebraically split link such that all two-component sublinks are L--space links and has non-vanishing  triple linking number $\mu_{123}$, then $S^3_{1,1,1}(\L)$ has infinite order in homology cobordism group, and, for example, does not bound rational homology ball.

\begin{remark}
It is easy to see that Theorem \ref{nonzero linking d invt} holds for algebraically split links $\L$ with $n$ components which contains a three-component sublink $L_{ijk}$ satisfying the assumption of the theorem. 
\end{remark}

In Section \ref{sec:d} we establish some $d$-invariant inequalities for surgeries on links over an arbitrary field $\kk$ which may be of independent interest to the reader. Since link Floer homology is only defined over $\mathbb{Z}_2$ at the moment, for arbitrary coefficients, we cannot make use of it or various formulas relating link Floer homology with the Heegaard Floer homology of surgery.  Nonetheless, we obtain the following results (which are known to experts over $\mathbb{Z}_2$):

\begin{proposition}
\label{prop: L space sublink}
Assume that $\L$ is a nontrivial $\kk$-L--space link of any number of components and pairwise linking zero. Then $\dk(S^3_{1, \cdots, 1}(\L))\le -2$. The same inequality is true for any $(1/m_1,\ldots,1/m_\ell)$--surgery along $\L$ where $m_1,\ldots,m_\ell>0$.
\end{proposition}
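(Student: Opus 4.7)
The plan is to induct on the number of components $\ell$, combining the standard knot surgery formula in the base case with an integer surgery formula for L-space links that is valid over arbitrary field coefficients $\kk$ in the inductive step. The adaptation to $\kk$ is the essential new content beyond the folklore statement over $\F_2$, and the guiding principle is to appeal only to constructions built from knot Floer complexes (which are defined over $\kk$) rather than from link Floer homology (which is not).

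For the base case $\ell = 1$, the statement reduces to the classical fact that for a nontrivial $\kk$-L-space knot $K$ and any $m>0$, the Ni--Wu rational surgery formula gives
\[
d_\kk(S^3_{1/m}(K)) = -2 V_0(K),
\]
and $V_0(K) \geq 1$ because a nontrivial positive L-space knot has $h_K(0) \geq 1$, as can be read off from the Ozsv\'ath--Szab\'o description of $\HFminus(K)$ in terms of the Alexander polynomial. This direction of the Ni--Wu formula is proved via the knot Floer integer-surgery mapping cone and goes through without modification over $\kk$.

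For the inductive step, I would establish the identity
\[
d_\kk(S^3_{1/m_1,\ldots,1/m_\ell}(\L)) = -2 h_\L(\vec{0})
\]
for any $\kk$-L-space link $\L$ with pairwise linking zero and all $m_i>0$. The tool is the Manolescu--Ozsv\'ath link surgery formula, which assembles the chain complex computing $\HFminus$ of a surgery on $\L$ from the knot Floer complexes of its sublinks via a purely algebraic mapping cone; for a $\kk$-L-space link this cone collapses to a tower whose top generator sits in grading $-2h_\L(\vec{0})$, so that the $d$-invariant is computed by $h_\L(\vec{0})$. Nontriviality of $\L$ combined with the structural properties of the $h$-function from Lemma \ref{h-function list} and the characterization of the unlink in Lemma \ref{unlink1} then forces $h_\L(\vec{0}) \geq 1$, yielding the claimed bound.

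The main obstacle is verifying that the Manolescu--Ozsv\'ath machinery is genuinely coefficient-free in detail: every chain-level identification, sign, and grading shift used in the mapping cone must be checked to make sense over $\kk$, which is presumably why the statement is folklore only over $\F_2$. A cleaner alternative worth trying first is to build a negative-definite cobordism from $S^3_{N,\ldots,N}(\L)$ down to $S^3_{1/m_1,\ldots,1/m_\ell}(\L)$ for sufficiently large $N$ (using $(-1)$-framed meridians together with blowdowns): since the large surgery is a $\kk$-L-space with directly computable $d$-invariant, the standard $d$-invariant inequality under negative-definite cobordisms would transfer the bound to the target surgery, sidestepping the full link surgery formula altogether.
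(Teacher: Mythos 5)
Your base case ($\ell=1$) is correct and coincides with the paper. However, both of your proposed routes for the inductive step have real gaps, and the obstacle you flag is more serious than you suggest.

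Your primary approach asks to establish $d_\kk(S^3_{1/m_1,\ldots,1/m_\ell}(\L)) = -2h_\L(\vec{0})$ via the Manolescu--Ozsv\'ath link surgery formula. This is not a matter of ``checking signs and gradings'': as the paper emphasizes in Section~\ref{subsec:coefs} and again at the start of Section~\ref{sec:d}, link Floer homology for links with more than one component is at present only defined over $\F = \Z_2$. The complexes $\A^-(\L,\bm{s})$, the $H$-function, $h_\L$, and the entire link surgery mapping cone are simply unavailable over a general field $\kk$. There is nothing to ``check'' because the objects do not exist. Avoiding exactly this is the whole point of the $\kk$-coefficient statement, and it is why the paper calls the $\F_2$ version folklore but treats the $\kk$ version as genuinely new.

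Your ``cleaner alternative'' correctly builds a negative-definite cobordism from a large surgery $S^3_{\bm{N}}(\L)$ down to $S^3_{1/m_1,\ldots,1/m_\ell}(\L)$ and transfers $d$-invariant inequalities along it; this is essentially the content of Corollary~\ref{coro:rational} and equation~\eqref{eq:111<nnn} in the paper. But the crucial premise that ``the large surgery is a $\kk$-L-space with directly computable $d$-invariant'' is false: being a $\kk$-L-space means $\HFminus_\kk$ is a single $\kk[U]$-tower in each $\spinc$ structure, but the \emph{grading} of that tower (i.e.~the $d$-invariant) is precisely what one cannot compute over $\kk$ without the $h$-function. So the inequality $d_\kk(S^3_{\bm{1}}(\L)) \le d_\kk(S^3_{\bm{N}}(\L),\mfs_{\bm 0}) - \sum_k d(L(N,1),0)$ has an undetermined right-hand side and yields no conclusion.

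The paper's argument is structurally different and circumvents both gaps. It argues by contradiction: assume $d_\kk(S^3_{\bm{1}}(\L))=0$. Squeezing this between the large-surgery $d$-invariants of $\L$ and of the unlink (Lemma~\ref{lem: d invariant bound}, equation~\eqref{eq:111<nnn}) forces equality in $\spinc_{\bm{0}}$, and the monotonicity Lemma~\ref{lem:link-monotonicity} (which is built by induction from the knot-level mapping cone, valid over $\kk$) upgrades this to an identification of $d_\kk(S^3_{\bm{m}}(\L),\mfs_{\bm{i}})$ with the unlink values in \emph{every} $\spinc$ structure. The induction hypothesis and Corollary~\ref{link and sublink} show $\L' = L_1\cup\cdots\cup L_\ell$ is an unlink, so $S^3_{\bm{m'}}(\L')$ is a connected sum of lens spaces; then Gainullin's Dehn surgery characterization of the unknot in L-spaces (which the authors note works over any field) forces $L_{\ell+1}$ to be unknotted in $S^3_{\bm{m'}}(\L')$, and varying $\bm{m'}$ with Gabai's theorem pins down $L_{\ell+1}$ as unknotted in the exterior of $\L'$, so $\L$ is the unlink. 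If you want to repair your proposal, the key missing ingredients are the grading-monotonicity lemma for the knot-level mapping cone over $\kk$ and the Gainullin/Gabai endgame; the $h$-function cannot be part of the argument.
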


 \begin{corollary}
\label{unlinksphere}
Let $\L $ be an algebraically split $\kk$-L-space link such that $S^{3}_{1, \cdots, 1}(\L)$ is $S^{3}$. Then $\L$ is the unlink.
\end{corollary}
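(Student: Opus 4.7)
The statement is an almost immediate consequence of Proposition~\ref{prop: L space sublink}, so the plan is really just to unpack the contrapositive. The main point to record is that the $d$-invariant of $S^3$ (with any field coefficients) vanishes: $\dk(S^3) = 0$. This is a standard calculation, since $\widehat{HF}(S^3) \cong \kk$ is concentrated in grading $0$, and passing to the $\infty$-theory confirms that the bottom of the $U$-nontorsion tower sits at grading $0$.

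Now suppose, for contradiction, that $\L$ is an algebraically split $\kk$-L--space link with $S^3_{1,\ldots,1}(\L) \cong S^3$ but $\L$ is not the unlink. Then $\L$ is a nontrivial $\kk$-L--space link with all pairwise linking numbers zero, so Proposition~\ref{prop: L space sublink} (applied with $m_1 = \cdots = m_\ell = 1$) gives
\[
\dk\bigl(S^3_{1,\ldots,1}(\L)\bigr) \le -2.
\]
But the left-hand side equals $\dk(S^3) = 0$ by hypothesis, which is a contradiction. Hence $\L$ must be the unlink.

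Since Proposition~\ref{prop: L space sublink} is already assumed to be proved, there is no real obstacle: the only thing to verify carefully is that the hypotheses match (algebraically split with $\kk$-L--space structure, and surgery coefficients all equal to $+1$, which is a special case of the positive $1/m_i$ surgeries covered by the proposition) and that the unlink is the sole excluded ``trivial'' case, so that the contrapositive yields the conclusion $\L = $ unlink rather than merely some degenerate alternative.
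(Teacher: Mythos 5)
Your proof is correct and is exactly the intended argument: the corollary is the immediate contrapositive of Proposition~\ref{prop: L space sublink}, using $d_{\kk}(S^3) = 0$, and the paper states it without further comment precisely because this is the whole content.
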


\subsection*{Organization}  In Section~\ref{sec:background}, we review certain numerical invariants extracted from the link Floer complex, including (and defining) the $h$-function.  In Section~\ref{sec:milnor-h}, we relate these numerical invariants to the Milnor invariants and prove Theorem~\ref{thm:milnor}.  In Section~\ref{sec:detection} we establish the detection results for the Whitehead link and Borromean rings: Theorems~\ref{Brunnian detection from Poincare surgery combined}, ~\ref{Lspace link detection combined} and \ref{whitehead or trefoil}. Finally, in Section~\ref{sec:d}, we establish Theorem~\ref{nonzero linking d invt} on the $d$-invariants of surgeries on three-component algebraically split links.  

\subsection*{Acknowledgements}  We thank Marco Golla, Robert Lipshitz, Filip Misev, Yi Ni and Abigail Thompson for useful discussions.
E.G.~was supported by the grants DMS-1700814 and DMS-1760329.  T.L.~was supported by DMS-1709702 and a Sloan Fellowship.  B.L.~is grateful to Max Planck Institute for Mathematics in Bonn for its hospitality and financial support.  A.H.M.~was partially supported by DMS-1716987.  


\section{Background}
\label{sec:background}
In this section, we review the relevant aspects of Heegaard Floer homology, especially properties of the link Floer complex and L-space links. A multi-component link is denoted by scripted $\L$, and its components are denoted $L_i$. We denote multi-framings and vectors in an $n$-dimensional lattice by bold letters (e.g. $\bm{p}=(p_1, \cdots, p_n)$--surgery or $\bm{s}=(s_1, \cdots, s_n)$), and we let $\bm{e}_{i}$ denote a vector in $\Z^{n}$ where the $i$-th entry is $1$ and other entries are $0$. For any subset $B\subset\{1, \cdots, n\}$, we let $\bm{e}_B=\sum_{i\in B} \bm{e}_i$. Given two vectors $\bm{u}=(u_{1}, u_{2}, \cdots, u_{n})$ and $\bm{v}=(v_{1}, \cdots, v_{n})$ in $\mathbb{Z}^{n}$,  we write $\bm{u}\preceq \bm{v}$  if $u_{i}\leq v_{i}$ for each $1\leq i\leq n$, and $\bm{u}\prec \bm{v}$ if $\bm{u}\preceq \bm{v}$ and $\bm{u}\neq \bm{v}$.

\begin{definition}
\label{split}
A link $\L=L_1\cup \cdots \cup L_n$ is algebraically split if for all $i\neq j$, $\lk(L_i, L_j)=0$.
\end{definition}

Throughout this article, all links will be assumed to be algebraically split, unless otherwise stated.  However, we include a slightly more general discussion below for the benefit of the reader.

\begin{definition}
\label{Hfunctions}
For an oriented link $\L=L_{1}\cup \cdots \cup L_{n}\subset S^{3}$, define $\mathbb{H}(\L)$ to be the affine lattice over $\Z^{n}$,
\[
	\H(\L)=\oplus_{i=1}^{n}\H_{i}(\L), \quad \H_{i}(\L)=\Z+\dfrac{\lk(L_{i}, \L\setminus L_{i})}{2}.
\]
If $\L$ is algebraically split then $\H(\L)=\Z^n$.
\end{definition} 

\subsection{$d$-invariants}
\label{subsec:standard}
We assume familiarity with Heegaard Floer homology, and refer the reader to \cite{OS03, MO} for details. 
With the exception of Section \ref{sec:d}, we work over the field $\F=\Z_2$, as is typical in Heegaard Floer homology. 
Recall the $d$-invariant $d(Y, \mft$) of a rational homology sphere $Y$ equipped with a $\spinc$ structure $\mft$ is defined to be the maximal degree of a non-torsion class $x\in HF^{-}(Y, \mft)$. 
For three-manifolds with $b_1(Y)>0$, the definition of the $d$-invariant is more complicated, see Section \ref{subsec:d std}.

\subsection{The $h$-function and L--space links}
\label{sec:hfunction}

We review the definition of the $h$-function for oriented links $\L\subset S^{3}$, as defined by the first author and N\'emethi \cite{GN}. We will quote without proof several technical lemmas regarding its properties; proofs of these statements can be found in either \cite{BG}, \cite{GLM}, or both. 

A link $\L=L_{1}\cup \cdots \cup L_{n}$ in $S^{3}$ defines a filtration on the Floer complex $CF^{-}(S^{3})$. This filtration is indexed by elements $\bm{s}=(s_{1}, \cdots, s_{n})\in \H(\L)$ (see Definition \ref{Hfunctions}). 
The \emph{generalized Heegaard Floer complex} $\A^{-}(\L, \bm{s}) \subset CF^{-}(S^3)$ is the $\F[U]$-module defined to be the subcomplex of $CF^{-}(S^{3})$ corresponding to the filtration indexed by the lattice point $\bm{s}$ \cite{MO}. The large surgery theorem of \cite[Theorem 12.1]{MO} implies that the homology of $\A^{-}(\L, \bm{s})$ is isomorphic to the Heegaard Floer homology of a sufficiently large surgery on the link $\L$ equipped with some Spin$^{c}$-structure as an $\F[U]$-module. Therefore, there is a non-canonical isomorphism between the homology of $\A^{-}(\L, \bm{s})$ and a direct sum of one copy of $\F[U]$ and a $U$-torsion submodule. Thus the following is well-defined:

\begin{definition}\cite[Definition 3.9]{BG}
\label{Hfunction}
For an oriented link $\L\subseteq S^{3}$, we define the $H$-function $H_{\L}(\bm{s})$ by saying that $-2H_{\L}(\bm{s})$ is the maximal homological degree of a nonzero element in the free part of $H_{\ast}(\A^{-}(\L, \bm{s}))$ where $\bm{s}\in \H(\L)$. 
\end{definition}

\begin{remark}
We will write $H_{\L}(\bm{s})$ as $H(\bm{s})$ for brevity if the context is clear. The notation $H_I(\bm{s})$ refers to the $H$-function of the sublink determined by indices $I\subseteq\{1, \cdots, n\}$.   
\end{remark}

By definition $-2H_{\L}(\bm{s})$ is equivalent to the $d$-invariant of large surgery on $\L$, following a degree shift that depends only on the linking matrix and $\bm{s}$ (this is explained in \cite[Section 10]{MO}, \cite[Theorem 4.10]{BG}). In particular, the $H$-function is a well-defined topological invariant of links. 
For a knot $K$, the $H$-function satisfies $H_K(s) = V_s$, where $V_s$ are the similarly defined integer-valued concordance invariants coming from the knot Floer complex \cite{Ras:Thesis, NiWu}. 

We will work with a normalized version of the $H$-function, denoted $h$, as well as a `state sum', denoted $h'$, that is taken over sublinks of $\L$. 
\begin{definition}
Let $\L$ be an $n$--component algebraically split link, $n\ge 1$. We define
\begin{align*}
h(\bm{s})&=H(\bm{s})-H_O(s_1)-\ldots-H_O(s_n) \\
h'(\bm{s})&=\sum_{I\subseteq \{1,\ldots,n\} }(-1)^{n-|I|}h_{I}(\bm{s})
\end{align*}
where $H_O(s)$ is the $H$-function for the unknot and $h(\emptyset)=0$.  
\end{definition}

\begin{example}
\label{ex:hbrun}
Assume that $\L$ is a Brunnian link, that is, all proper sublinks are unlinks. Then $h_I(\bm{s})=0$ for all proper subsets $I$ and
$
h'(\bm{s})=h(\bm{s}).
$
\end{example}

We now list several properties of the $H$-function. 

\begin{lemma}
\label{h-function list} 
For an oriented link $\L\subseteq S^{3}$, 
\begin{enumerate}
	\item 
	\label{h-function nonnegative}
	The $H$-function $H_{\L}(\bm{s})$ takes nonnegative values.
	\item 
	\label{h-function increase}
	$H_{\L}(\bm{s}-\bm{e}_{i})=H_{\L}(\bm{s})$ or $H_{\L}(\bm{s}-\bm{e}_{i})=H_{\L}(\bm{s})+1$ where $\bm{s}\in \H$. 
	\item 
	\label{h-function symmetry}
	$H(-\bm{s})=H(\bm{s})+\sum_{i=1}^{n} s_i$ where $\bm{s}=(s_1, \cdots, s_n)$.
	\item 
	\label{h-function bdy}
	If $\L$ is algebraically split and $N$ is sufficiently large, then
    \[
    	H_{\L}(s_1, \cdots, s_{i-1}, N, s_{i+1},  \cdots, s_n)=H_{\L\setminus L_i}(s_1, \cdots, s_{i-1}, s_{i+1}, \cdots, s_n),
    \] 
	for all $i$ and $s_j$.  
    \item 
    \label{h-symmetry}
    The functions $h$ and $h'$ have the following symmetry property:
    \[
    h(-\bm{s})=h(\bm{s}),\ h'(-\bm{s})=h'(\bm{s}).
    \]
    \item 
    \label{lem: h increases}
    The function $h$ is non-decreasing towards the origin. That is, $h(\bm{s}-\bm{e}_i)\ge h(\bm{s})$ if $s_i>0$ and $h(\bm{s}-\bm{e}_i)\le h(\bm{s})$ if $s_i\le 0$.
    \item 
    \label{h nonnegative}
	For all $\bm{s}$ one has $h(\bm{s})\ge 0$.
\end{enumerate}
\end{lemma}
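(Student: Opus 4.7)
The plan is to treat the seven properties as layered: items (1)--(4) are Heegaard-Floer-theoretic inputs---essentially consequences of the large surgery theorem \cite[Theorem 12.1]{MO} and the structure of $\A^-(\L,\bm{s})$, all available in \cite{BG, GLM}---while items (5)--(7) are formal consequences that I would derive cleanly from (1)--(4).

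For the Floer-theoretic inputs: \textbf{(1)} follows because the inclusion $\A^-(\L,\bm{s}) \hookrightarrow CF^-(S^3)$ is an $\F[U]$-equivariant chain map whose induced map on free parts is injective into $HF^-(S^3)\cong \F[U]$, whose tower's top generator lies in non-positive grading; hence $-2H(\bm{s})\le 0$. \textbf{(2)} rests on the short exact sequence $0\to \A^-(\L,\bm{s}-\bm{e}_i)\to \A^-(\L,\bm{s})\to Q\to 0$, where $Q$ is a filtration quotient whose homology is $U$-torsion; the induced map on free towers is injective with $U$-torsion cokernel, forcing $H$ to decrease by at most $1$ from $\bm{s}$ to $\bm{s}-\bm{e}_i$. \textbf{(3)} is the conjugation symmetry of $\spinc$-structures: conjugation identifies $\A^-(\L,\bm{s})$ with $\A^-(\L,-\bm{s})$ up to the stated absolute grading shift in the algebraically split case. \textbf{(4)} is a stabilization: once $N$ exceeds the largest $i$-th Alexander grading of any generator of $CFL^-(\L)$, no additional filtration condition in the $i$-th direction is imposed, so $\A^-(\L;\ldots,N,\ldots)$ is identified with $\A^-(\L\setminus L_i;\ldots,\hat{s}_i,\ldots)$.

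For the combinatorial consequences: \textbf{(5)} follows by substituting (3) into the definition of $h$ and using the knot-level symmetry $H_O(-s)=H_O(s)+s$:
\[
h(-\bm{s})=H(\bm{s})+\sum_i s_i -\sum_i\bigl(H_O(s_i)+s_i\bigr)=h(\bm{s}),
\]
and the statement for $h'$ follows by applying this identity sublink by sublink. \textbf{(6)} combines (2) for $\L$ (yielding $H_\L(\bm{s}-\bm{e}_i)-H_\L(\bm{s})\in\{0,1\}$) with the explicit formula $H_O(s)=\max(0,-s)$, so that $H_O(s_i-1)-H_O(s_i)$ equals $0$ for $s_i>0$ and $1$ for $s_i\le 0$; the signed difference is then in the claimed range. \textbf{(7)} proceeds by induction on the number of components. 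For a knot, $h(s)=V_s-\max(0,-s)$ is non-negative by (1) (for $s\ge 0$) and by (3) applied to $K$ (for $s<0$). For a link with $n\ge 2$ components, property (4) gives $h_\L(\ldots,N,\ldots)=h_{\L\setminus L_i}(\ldots,\hat{s}_i,\ldots)\ge 0$ for large $N$ by the inductive hypothesis, and (6) then propagates non-negativity inward to all $\bm{s}$.

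The step I expect to require the most care is (4). The large surgery theorem ensures $H_*(\A^-)\cong HF^-$ of surgery only when \emph{all} coordinates of $\bm{s}$ are simultaneously large, whereas (4) asserts that a single large coordinate already collapses the complex to that of a proper sublink; this requires a genuine effective bound on $N$ and careful tracking of grading shifts, which simplify under the algebraically split hypothesis. Once (4) is in place, items (5)--(7) reduce to the bookkeeping above.
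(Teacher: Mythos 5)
Your proposal is correct and matches what the paper does: the paper cites \cite{BG} for items (1), (2), (4), cites \cite{LiuY2} for item (3), derives (5) from (3) exactly as you do, and cites \cite[Lemma 2.16, Corollary 2.17]{GLM} for items (6) and (7), which are proved there by the same elementary arguments you sketch (the explicit formula for $H_O$ plus item (2) for item (6), and induction via item (4) for item (7)). The only thing you gloss over slightly is that in the inductive step of (7), pushing a \emph{negative} coordinate out to $-\infty$ requires an application of the symmetry (5) before (4) applies, since (4) as stated is a limit as a coordinate goes to $+\infty$; but this is minor bookkeeping, and the overall strategy is sound and the same as the paper's.
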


\begin{proof}
Items \eqref{h-function nonnegative} and \eqref{h-function increase} are proved in \cite[Proposition 3.10]{BG} and \eqref{h-function symmetry} is proved in \cite[Lemma 5.5]{LiuY2}. Item \eqref{h-function bdy} is \cite[Proposition 3.12]{BG}. For \eqref{h-symmetry}, the first equation follows from \eqref{h-function symmetry} and the second follows from the first.  Item \eqref{lem: h increases} and Item \eqref{h nonnegative} are proved in \cite[Lemma 2.16, Corollary 2.17]{GLM}. 
\end{proof}

\begin{lemma}
\label{lem: h' finite}
Let $\L$ be an algebraically split link. Then
the function   $h'_{\L}(\bm{s})$ is finitely supported. 
\end{lemma}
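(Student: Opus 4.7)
The plan is to show that $h'_{\L}(\bm{s})$ vanishes outside a compact box $\prod_{i=1}^n[-N_i,N_i]\subset \Z^n$. By the symmetry $h'(-\bm{s})=h'(\bm{s})$ from Lemma \ref{h-function list} \eqref{h-symmetry}, it is enough to exhibit, for each $i$, a threshold $N_i$ such that $h'(\bm{s})=0$ whenever $s_i\geq N_i$; the case $s_i\leq -N_i$ then follows by applying the argument to $-\bm{s}$.

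The key input will be Lemma \ref{h-function list} \eqref{h-function bdy}, which applies to every sublink $\L_I$ since any sublink of an algebraically split link is algebraically split. For $s_i$ sufficiently large and any $I \ni i$, the lemma gives $H_{\L_I}(\bm{s})=H_{\L_I\setminus L_i}(\bm{s})$, both sides being evaluated on the appropriate coordinates. Specializing to $|I|=1$ (i.e.\ to the unknot component) gives $H_O(s_i)=0$ for $s_i$ large. Combining these with the definition of $h_I$ in terms of $H_I$ and single-variable unknot contributions yields
$$h_I(\bm{s})=h_{I\setminus\{i\}}(\bm{s}), \qquad \text{whenever } i\in I \text{ and } s_i \text{ is sufficiently large,}$$
where the right-hand side tacitly uses that $h_{I\setminus\{i\}}$ is independent of the $i$-th coordinate.

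With this identity in hand, the final step is to rewrite the defining inclusion-exclusion sum by pairing each $J\subseteq\{1,\ldots,n\}\setminus\{i\}$ with $J\cup\{i\}$:
$$h'(\bm{s})=\sum_{J\not\ni i}(-1)^{n-|J|}\bigl[h_J(\bm{s})-h_{J\cup\{i\}}(\bm{s})\bigr].$$
Taking $N_i$ to be the maximum over the finitely many $J\subseteq\{1,\ldots,n\}\setminus\{i\}$ of the individual thresholds produced above, every bracket vanishes when $s_i\geq N_i$, and hence $h'(\bm{s})=0$.

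The main obstacle is really only bookkeeping: one needs to verify that property \eqref{h-function bdy} can be applied uniformly across the $2^n$ sublinks of $\L$. This causes no trouble since there are only finitely many sublinks and a common threshold can be taken; beyond this there is no genuine conceptual difficulty past the inclusion-exclusion cancellation.
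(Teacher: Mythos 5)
Your proof is correct and follows essentially the same route as the paper's: reduce via the symmetry $h'(-\bm{s})=h'(\bm{s})$ to the case $s_i\to+\infty$, use Lemma \ref{h-function list}\eqref{h-function bdy} (applied to the sublinks, together with $H_O(s_i)=0$ for large $s_i$) to get $h_{J\cup\{i\}}(\bm{s})=h_J(\bm{s})$, and then pair $J$ with $J\cup\{i\}$ in the inclusion--exclusion sum so all terms cancel. The only difference is that you spell out the bookkeeping (uniform thresholds over the finitely many sublinks) that the paper leaves implicit.
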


\begin{proof}
By Lemma \ref{h-function list}\eqref{h-function bdy}, when $s_i>N$ for some $N$, we have $h_{I\sqcup\{i\}}(\bm{s})=h_{I}(\bm{s})$. 
By Lemma \ref{h-function list}\eqref{h-symmetry}, it is therefore sufficient to prove for all $i$ that when $s_i\to +\infty$ we have $h'(\bm{s})=0$. 
Fix any index $i$ and observe that we can write
\[
h'(\bm{s})=\sum_{i \notin I }(-1)^{n-|I|}(h_{I}(\bm{s})-h_{I\sqcup \{i\}}(\bm{s})).\qedhere
\]
\end{proof}

\begin{lemma}
\label{split trivial}
Suppose that $\L$ is an $n$--component split link and $n>1$. Then $h'(\bm{s})=0$ for all $\bm{s}$.
\end{lemma}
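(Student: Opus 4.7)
The plan is to reduce the statement to a standard inclusion-exclusion identity by exploiting the fact that on a split link the $H$-function decomposes additively across the components.

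The key input is the following additivity: for any sublink $L_I$ of a split link $\L$, indexed by $I \subseteq \{1,\ldots,n\}$,
\[
H_I(\bm{s}) \;=\; \sum_{i \in I} H_{L_i}(s_i).
\]
This follows from the large surgery theorem of \cite{MO}, which identifies $-2 H_I(\bm{s})$ with the $d$-invariant of a sufficiently large surgery on $L_I$, up to a degree shift depending only on the linking matrix and $\bm{s}$. For a split link the linking matrix vanishes and large surgery is a connected sum of surgeries on the individual components, so additivity of the $d$-invariant under connected sum yields the formula. Every sublink of a split link is itself split, so the identity applies uniformly to each $I$.

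Setting $\delta_i(s_i) := H_{L_i}(s_i) - H_O(s_i)$, the additivity gives
\[
h_I(\bm{s}) \;=\; H_I(\bm{s}) - \sum_{i \in I} H_O(s_i) \;=\; \sum_{i \in I} \delta_i(s_i).
\]
Plugging this into the definition of $h'$ and interchanging the order of summation,
\[
h'(\bm{s}) \;=\; \sum_{I \subseteq \{1,\ldots,n\}} (-1)^{n-|I|} \sum_{i \in I} \delta_i(s_i) \;=\; \sum_{i=1}^{n} \delta_i(s_i) \sum_{I \ni i} (-1)^{n-|I|}.
\]
Writing $I = J \sqcup \{i\}$ with $J \subseteq \{1,\ldots,n\}\setminus\{i\}$, the inner sum becomes $\sum_{J}(-1)^{n-1-|J|} = (1-1)^{n-1} = 0$ since $n \geq 2$. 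Hence $h'(\bm{s}) = 0$ for every $\bm{s}$.

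The only step that requires genuine Floer-theoretic input beyond bookkeeping is the additivity $H_I(\bm{s}) = \sum_{i \in I} H_{L_i}(s_i)$ for split sublinks; I expect this to be the sole point of substance, and it is essentially a standard consequence of the connected-sum behavior of $d$-invariants combined with the large surgery theorem, so it should be disposable in a few lines.
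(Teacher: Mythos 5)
Your proof is correct and follows essentially the same route as the paper: decompose $h_I(\bm{s})$ additively over the components of the split link and observe that the signed sum $\sum_{I\ni i}(-1)^{n-|I|}$ vanishes for $n>1$. The only difference is that you spell out a justification of the additivity $H_I(\bm{s})=\sum_{i\in I}H_{L_i}(s_i)$ via the large surgery theorem and connected-sum additivity of $d$-invariants, a standard fact which the paper simply asserts.
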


\begin{proof}
Since $\L$ is split, we have $h_{I}(s)=\sum_{i\in I}h_i(s_i)$ for all $I$, and
$$
h'(\bm{s})=\sum_{I}(-1)^{n-|I|}\sum_{i\in I}h_i(s_i)=\sum_{i}h_i(s_i)\sum_{i\in I}(-1)^{n-|I|}=0
$$
for $n>1$. 
\end{proof}

\begin{corollary}
\label{cor: h' from H}
Let $\L$ be an algebraically split link with $n>1$ components. Then
$$
h'(s_1,\ldots,s_n)=\sum_{I\subseteq \{1,\ldots,n\} }(-1)^{n-|I|}H_{I}(\bm{s}).
$$
\end{corollary}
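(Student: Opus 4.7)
The plan is to prove this by a direct inclusion–exclusion computation, expanding each $h_I$ via its definition and showing that the $H_O$-contribution cancels out when $n > 1$.

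First, I would recall that for each nonempty subset $I \subseteq \{1,\ldots,n\}$, the $|I|$-component sublink $\L_I$ has $h$-function given by
\[
h_I(\bm{s}) = H_I(\bm{s}) - \sum_{i \in I} H_O(s_i),
\]
while $h_\emptyset = 0$ by convention (and the $I = \emptyset$ term automatically contributes nothing to either side, since $H_\emptyset$ is trivially zero on the empty set of components). Substituting this into the definition of $h'$ yields
\[
h'(\bm{s}) \;=\; \sum_{I \subseteq \{1,\ldots,n\}} (-1)^{n-|I|} H_I(\bm{s}) \;-\; \sum_{I \subseteq \{1,\ldots,n\}} (-1)^{n-|I|} \sum_{i \in I} H_O(s_i).
\]

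Next, I would swap the order of summation in the second term and regroup by the index $i$:
\[
\sum_{I} (-1)^{n-|I|} \sum_{i \in I} H_O(s_i) \;=\; \sum_{i=1}^{n} H_O(s_i) \sum_{\substack{I \ni i}} (-1)^{n-|I|}.
\]
For each fixed $i$, the subsets $I$ containing $i$ are in bijection with arbitrary subsets of $\{1,\ldots,n\} \setminus \{i\}$, so the inner sum equals
\[
\sum_{k=0}^{n-1} \binom{n-1}{k} (-1)^{n-1-k} \;=\; (1-1)^{n-1} \;=\; 0,
\]
where we use the hypothesis $n > 1$. Hence the second double sum in the expression for $h'(\bm{s})$ vanishes, and we are left with the claimed identity.

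There is no real obstacle here: the statement is essentially a book-keeping consequence of the definition of $h$ together with the standard binomial identity $(1-1)^{n-1}=0$. The only point to be careful about is that the hypothesis $n > 1$ is genuinely used — for $n = 1$ the cancellation fails, which is consistent with the fact that $h_\L = H_\L - H_O$ in that case rather than $h_\L = H_\L$.
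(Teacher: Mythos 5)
Your proof is correct and is essentially the same argument as the paper's: the paper verifies the identity for the unlink (where the right-hand side vanishes by the same binomial cancellation you carry out, as in Lemma \ref{split trivial}) and then invokes linearity, which amounts to exactly your computation that $\sum_{I\ni i}(-1)^{n-|I|}=(1-1)^{n-1}=0$ kills the $H_O$-terms when $n>1$. You have simply written out explicitly what the paper leaves implicit, including the correct observation that the hypothesis $n>1$ is where the cancellation is used.
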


\begin{proof}
If $\L$ is an unlink, then similarly to Lemma \ref{split trivial} the right hand side vanishes, and then the statement follows by linearity.
\end{proof}

\begin{lemma}
\label{lem: alternating h'}
Let $\L$ be an algebraically split link with $n>1$ components. Then for all $\bm{s}$
one has
$$
\sum_{J\subset \{1,\ldots,n\}}(-1)^{|J|}h'_{\L}(\bm{s}-\bm{e}_{J})=\sum_{J\subset \{1,\ldots,n\}}(-1)^{|J|}H_{\L}(\bm{s}-\bm{e}_{J}).
$$
\end{lemma}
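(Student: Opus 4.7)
The plan is to reduce everything to the identity in Corollary \ref{cor: h' from H} and exploit the fact that $H_I(\bm{s})$ depends only on the coordinates $s_k$ with $k\in I$. Since $n>1$, Corollary \ref{cor: h' from H} gives
\[
h'_\L(\bm{t}) \;=\; \sum_{I\subseteq\{1,\ldots,n\}}(-1)^{n-|I|}H_{I}(\bm{t})
\]
for any lattice point $\bm{t}$. Substituting $\bm{t}=\bm{s}-\bm{e}_J$ and interchanging the order of summation transforms the left-hand side of the lemma into
\[
\sum_{I\subseteq\{1,\ldots,n\}}(-1)^{n-|I|}\Bigl(\sum_{J\subseteq\{1,\ldots,n\}}(-1)^{|J|}H_{I}(\bm{s}-\bm{e}_J)\Bigr).
\]

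The main step I would carry out is to show that the inner sum vanishes whenever $I$ is a proper subset of $\{1,\ldots,n\}$. For such $I$, pick any index $i\notin I$. Because $H_I$ is a function of only the coordinates indexed by $I$, and $(\bm{s}-\bm{e}_J)_k = s_k - [k\in J]$, the value $H_{I}(\bm{s}-\bm{e}_J)$ is insensitive to whether $i\in J$ or not. Partitioning subsets according to whether they contain $i$, each $J'\subseteq\{1,\ldots,n\}\setminus\{i\}$ contributes once as $J=J'$ (with sign $(-1)^{|J'|}$) and once as $J=J'\cup\{i\}$ (with sign $-(-1)^{|J'|}$), and the contributions cancel. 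Consequently the inner sum is zero for every proper $I$, and only the term $I=\{1,\ldots,n\}$ survives, yielding exactly the right-hand side $\sum_J (-1)^{|J|}H_\L(\bm{s}-\bm{e}_J)$.

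I do not expect any serious obstacle: the argument is an inclusion-exclusion bookkeeping, and the only structural input beyond Corollary \ref{cor: h' from H} is the trivial observation that an $H$-function of a sublink is independent of the coordinates corresponding to indices outside that sublink. The mild point worth double-checking is that Corollary \ref{cor: h' from H} requires $n>1$, which is precisely the hypothesis of the lemma, so its application is legitimate.
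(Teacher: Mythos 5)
Your proof is correct and follows essentially the same route as the paper: apply Corollary \ref{cor: h' from H}, swap the order of summation, and use that $H_I$ ignores coordinates outside $I$ to kill every proper subset $I$. The paper phrases the cancellation by factoring $J=J'\sqcup J''$ with $J''\subset\{1,\ldots,n\}\setminus I$ and summing $(-1)^{|J''|}$ to zero, which is the same bookkeeping as your pairing of $J'$ with $J'\cup\{i\}$.
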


\begin{proof}
By Corollary \ref{cor: h' from H} we get
$$
\sum_{J\subset \{1,\ldots,n\} }(-1)^{|J|} h'(\bm{s}-\bm{e}_{J})=
\sum_{I, J\subset \{1,\ldots,n\} }(-1)^{n-|I|+|J|} H_{I}(\bm{s}-\bm{e}_{J})
$$
If $I$ is a proper subset of $\{1\,\ldots,n\}$ then $H_{I}(\bm{s}-\bm{e}_{J})=H_{I}(\bm{s}-\bm{e}_{I\cap J})$ and we can write $J=J'\sqcup J''$ where $J'=I\cap J$ and $J''=J\setminus I$. Then
$$
\sum_{J\subset \{1,\ldots,n\}}(-1)^{n-|I|+|J|}H_{I}(\bm{s}-\bm{e}_{J})=\sum_{J'\subset I}(-1)^{n-|I|+|J'|}H_{I}(\bm{s}-\bm{e}_{J'})\sum_{J''\subset (\{1,\ldots,n\}\setminus I)}(-1)^{|J''|}=0,
$$
so the only surviving terms are from $I=\{1,\ldots,n\}$. 
\end{proof}

\subsection{L--spaces}
Recall from \cite{OS:lenspaces} that a rational homology sphere $Y$ is an L--space if it is has the simplest possible Heegaard Floer homology. More precisely, for any Spin$^{c}$-structure $\mathfrak{s}$, $HF^{-}(Y, \mathfrak{s})$ is a free $\F[U]$-module of rank one.

\begin{definition}\cite{GN,LiuY2}
\label{definition of L--space link}
An oriented $n$-component link $\L\subset S^{3}$ is an L--space link if there exists  $\bm{0}\prec \bm{p}\in \mathbb{Z}^{n}$ such that the surgered manifold $S^{3}_{\bm{q}}(\L)$ is an L--space for any $\bm{q}\succeq \bm{p}$. 
\end{definition}

Recall that if a knot $K\subset S^3$ admits any positive surgery to an L--space, then $S^3_{p/q}(K)$ is also an L--space for all $p/q\ge 2g(K)-1$ \cite{OS:lenspaces}. For links though, it is not necessarily the case that the existence of a single $\bm{p}$--surgery yielding an L--space guarantees that all large surgeries are also L--spaces. However, the following criterion of Y. Liu can determine when this is the case. 
\begin{theorem}\cite{LiuY2}
\label{l-space link cond}
\begin{enumerate}
	\item \label{sublink} Every sublink of an L--space link is an L--space link.
    \item \label{no torsion} A link is an L--space link if and only if  for all $\bm{s}$ one has $H_{\ast}(\A^{-}(\L, \bm{s}))=\F[U]$.
	\item \label{criterion} Assume that for some $\bm{p}$ the surgery $S^{3}_{\bm{p}}(L)$ is an L--space. In addition, assume that for all sublinks $\L'\subset \L$ the surgeries $S^{3}_{\bm{p}|_{\L'}}(\L')$ are L--spaces too, and the framing matrix $\Lambda|_{\L'}$ is positive definite.
Then for all $\bm{q}\succeq \bm{p}$ the surgered manifolds $S^{3}_{\bm{q}}(\L)$ are L--spaces, and so $\L$ is an L--space link.
\end{enumerate}
\end{theorem}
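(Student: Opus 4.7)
The plan is to derive all three statements from two main tools: the large surgery theorem of Manolescu--Ozsv\'ath (already invoked in Definition~\ref{Hfunction}) and the Manolescu--Ozsv\'ath link surgery formula, which computes $HF^{-}(S^{3}_{\bm{q}}(\L),\mft)$ as the homology of a ``perturbed'' cube of the complexes $\A^{-}(\L',\bm{s})$ indexed over sublinks $\L' \subset \L$ and lattice points $\bm{s} \in \H(\L')$, with maps induced by projections/inclusions and changes of framing.

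For part \eqref{no torsion}, the forward direction is immediate: if $\L$ is an L--space link, choose $\bm{N}$ so large that the large surgery theorem identifies $H_{\ast}(\A^{-}(\L,\bm{s}))$ (up to grading shift) with $HF^{-}$ of $S^{3}_{\bm{N}}(\L)$ in the corresponding Spin$^c$--structure; since the surgery is an L--space, each summand is $\F[U]$, so the same holds for every $\bm{s}\in\H(\L)$ (using Lemma~\ref{h-function list}\eqref{h-function bdy} to reduce other $\bm{s}$ to $\A^-$ of sublinks, and then iterating). For the reverse direction, the surgery complex computing $HF^{-}(S^{3}_{\bm{q}}(\L))$ is a filtered object whose associated graded pieces are subquotients of $\A^{-}(\L',\bm{s})$ for sublinks $\L'$; if each such complex has homology $\F[U]$, a spectral sequence / rank count argument forces the surgery complex to also have total homology a single $\F[U]$ in each Spin$^c$--structure, so sufficiently positive surgery is an L--space.

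Part \eqref{sublink} now follows quickly from part \eqref{no torsion}: if $\L$ is an L--space link, then $H_{\ast}(\A^{-}(\L,\bm{s}))=\F[U]$ for all $\bm{s}$, and taking $s_i \to +\infty$ together with Lemma~\ref{h-function list}\eqref{h-function bdy} (applied now to $\A^-$ rather than just $H$) identifies $\A^{-}(\L,\bm{s})$ with $\A^{-}(\L \setminus L_i, \bm{s}')$ in homology. Thus every $\A^-$ for the sublink also has $\F[U]$ homology, and the sublink is an L--space link by the reverse direction of \eqref{no torsion}. Part \eqref{criterion} is the subtle one: one uses the surgery exact triangle to compare $S^{3}_{\bm{q}}(\L)$ with $S^{3}_{\bm{q}'}(\L)$ where $\bm{q}$ and $\bm{q}'$ differ in one coordinate, and with surgery on sublinks. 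Induction on the number of components, using that sublink surgeries are L--spaces and that the framing matrices are positive definite (so that the connecting maps in the triangle behave like in the knot case of Ozsv\'ath--Szab\'o), propagates the L--space condition from $\bm{p}$ to every $\bm{q} \succeq \bm{p}$.

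The main obstacle is the reverse direction of \eqref{no torsion}: controlling the totality of differentials in the surgery complex from pointwise information about the $\A^{-}(\L,\bm{s})$. Once one has this in hand, parts \eqref{sublink} and \eqref{criterion} fall out from bookkeeping with sublinks and surgery exact triangles. A secondary technical point in \eqref{criterion} is to verify that positive definiteness of $\Lambda|_{\L'}$ for all sublinks is exactly what guarantees that the inductive step preserves the L--space property, since otherwise the connecting maps in the surgery triangle could fail to be surjective on the tower.
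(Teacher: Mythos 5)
First, a point of reference: the paper does not prove Theorem \ref{l-space link cond} at all; it is quoted from Y.~Liu \cite{LiuY2}, so your proposal can only be judged on its own merits and against the tools the paper already cites. The step you yourself flag as ``the main obstacle,'' the reverse direction of \eqref{no torsion}, is where your write-up has a genuine gap: you propose to run the full Manolescu--Ozsv\'ath surgery complex and assert that ``a spectral sequence / rank count argument forces'' the total homology in each $\spinc$ structure to be a single $\F[U]$. As stated this is not an argument --- knowing that every associated graded piece of a filtered complex is $\F[U]$ does not determine the homology without controlling the higher differentials, which is precisely the information a rank count does not supply. Moreover, the machinery is unnecessary: the large surgery theorem \cite[Theorem~12.1]{MO}, already invoked in Definition~\ref{Hfunction}, identifies $HF^-(S^3_{\bm{q}}(\L),\mathfrak{s})$ for all sufficiently large $\bm{q}$, in \emph{every} $\spinc$ structure, with $H_*(\A^-(\L,\bm{s}))$ for $\bm{s}$ in a fundamental domain; so if all these homologies are $\F[U]$, every sufficiently large surgery is an L--space and $\L$ is an L--space link directly from Definition~\ref{definition of L--space link}. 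The forward direction of \eqref{no torsion} and your deduction of \eqref{sublink} from it (stabilization $\A^-(\L,\bm{s})\simeq\A^-(\L\setminus L_i,\bm{s}')$ for $s_i\gg 0$, then apply \eqref{no torsion} to the sublink) are correct and standard.

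For \eqref{criterion}, your outline (induction over $\bm{q}\succeq\bm{p}$ and over sublinks, via the surgery exact triangle whose third term is surgery on $\L\setminus L_i$ with the restricted framing) is the right strategy, but the decisive ingredient is missing. To conclude from a triangle of rational homology spheres that the third is an L--space when the other two are, one needs the orders of $H_1$ to be additive, and here this comes from the determinant expansion $\det(\Lambda+E_{ii})=\det(\Lambda)+\det(\Lambda|_{\L\setminus L_i})$ together with positivity of all these determinants --- which is exactly where the hypothesis that every $\Lambda|_{\L'}$ is positive definite enters. Saying the connecting maps ``behave like in the knot case'' or ``could fail to be surjective on the tower'' gestures at this but does not substitute for the $|H_1|$ count and the standard two-out-of-three L--space lemma; with those supplied, your induction goes through, but as written the key mechanism is asserted rather than proved.
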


\begin{example}
\label{ex:Lspacelinks}
If $\L$ is algebraically split, then $\Lambda$ is positive definite if and only if all $p_i>0$. In this case, the existence 
of any $\bm{p}$ with all $p_i > 0$ such that $S^{3}_{\bm{p}'}(\L')$ is an L--space for all sublinks $\L'$ is sufficient to conclude $\L$ is an L--space link. In particular, any Brunnian link admitting a single positive L--space surgery (e.g. $(+1,+1,+1)$-surgery along the Borromean rings) is an L--space link.
\end{example}

By \cite{OS:linkpoly}, the Euler characteristic $\chi(HFL^{-}(\L, \bm{s}))$ is the multivariable Alexander polynomial,
\begin{equation}
\label{computation 3}
\tilde{\Delta}_{\L}(t_{1}, \cdots, t_{n})=\sum_{\bm{s}\in \H(\L)}\chi(HFL^{-}(\L, \bm{s}))t_{1}^{s_{1}}\cdots t_{n}^{s_{n}}
\end{equation}
where $\bm{s}=(s_{1}, \cdots, s_{n})$, and
\begin{equation}
\label{mva}
\widetilde{\Delta}_{\L}(t_{1}, \cdots, t_{n}): = \left\{
        \begin{array}{ll}
           (t_{1}\cdots t_{n})^{1/2} \Delta_{\L}(t_{1}, \cdots, t_{n}) & \quad \textup{if } n >1, \\
            \Delta_{\L}(t)/(1-t^{-1}) & \quad  \textup{if } n=1. 
        \end{array}
    \right. 
\end{equation}
For L--space links, the $H$-function can be computed from the multi-variable Alexander polynomial.
Indeed, by Theorem \ref{l-space link cond} \eqref{no torsion} and the inclusion-exclusion formula, one can write
\begin{equation}
\label{computation of h-function 1}
\chi(HFL^{-}(\L, \bm{s}))=\sum_{B\subset \lbrace 1, \cdots, n \rbrace}(-1)^{|B|-1}H_{\L}(\bm{s}-\bm{e}_{B}),
\end{equation}
as in \cite[(3.14)]{BG}. 

\begin{example}
\label{wh H}
The (symmetric) Alexander polynomial of the Whitehead link equals 
\[
\Delta(t_1,t_2)=-(t_1^{1/2}-t_1^{-1/2})(t_2^{1/2}-t_2^{-1/2}),
\]
and the $H$-function has the following values.

\begin{center}
\begin{tikzpicture}
\draw (1,0)--(1,5);
\draw (2,0)--(2,5);
\draw (3,0)--(3,5);
\draw (4,0)--(4,5);
\draw (0,1)--(5,1);
\draw (0,2)--(5,2);
\draw (0,3)--(5,3);
\draw (0,4)--(5,4);
\draw (0.5,4.5) node {2};
\draw (1.5,4.5) node {1};
\draw (2.5,4.5) node {0};
\draw (3.5,4.5) node {0};
\draw (4.5,4.5) node {0};
\draw (0.5,3.5) node {2};
\draw (1.5,3.5) node {1};
\draw (2.5,3.5) node {0};
\draw (3.5,3.5) node {0};
\draw (4.5,3.5) node {0};
\draw (0.5,2.5) node {2};
\draw (1.5,2.5) node {1};
\draw (2.5,2.5) node {1};
\draw (3.5,2.5) node {0};
\draw (4.5,2.5) node {0};
\draw (0.5,1.5) node {3};
\draw (1.5,1.5) node {2};
\draw (2.5,1.5) node {1};
\draw (3.5,1.5) node {1};
\draw (4.5,1.5) node {1};
\draw (0.5,0.5) node {4};
\draw (1.5,0.5) node {3};
\draw (2.5,0.5) node {2};
\draw (3.5,0.5) node {2};
\draw (4.5,0.5) node {2};
\draw [->,dotted] (0,2.5)--(5,2.5);
\draw [->,dotted] (2.5,0)--(2.5,5);
\draw (5,2.7) node {$s_1$};
\draw (2.3,5) node {$s_2$};
\end{tikzpicture}
\end{center}

The $H$-function of the two-component unlink agrees everywhere with the $H$-function of the Whitehead link except at $\bm{s}=(0,0)$, where $H_{\bm{O}}(\bm{0})=0$. Therefore, for the Whitehead link,
\begin{equation}
\label{whitehead h}
h_{\L}(s_1,s_2)=\begin{cases}
1 & \text{if}\ s_1=s_2=0\\
0 & \text{otherwise}.
\end{cases}
\end{equation}

\end{example}
Lastly, we observe:
\begin{lemma}
\label{unlink1}
If for an L--space link $\L$ one has $h(\bm{0})=0$,  then $\L$ is the unlink. 
\end{lemma}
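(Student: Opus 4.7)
The plan is to argue by induction on the number of components $n$ of $\L$ that $H_\L$ agrees with the $H$-function of the $n$-component unlink, and then invoke the detection of unlinks by link Floer homology. Since $H_O(0)=0$, the hypothesis $h(\bm{0})=0$ is equivalent to $H_\L(\bm{0})=0$. For the base case $n=1$, $\L=K$ is an L--space knot with $V_0(K)=H_K(0)=0$, which is well-known to force $K$ to be the unknot.

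For $n\ge 2$, I first reduce to the case that $\L$ is Brunnian. Lemma \ref{h-function list}\eqref{h-function bdy} identifies $H_\L$ with $H_{\L\setminus L_i}$ once $s_i$ is sufficiently large, and the monotonicity in Lemma \ref{h-function list}\eqref{h-function increase} then yields $H_\L(\bm{s})\ge H_{\L\setminus L_i}(\bm{s})$. Evaluating at $\bm{s}=\bm{0}$ and using nonnegativity (Lemma \ref{h-function list}\eqref{h-function nonnegative}) forces $H_{\L\setminus L_i}(\bm{0})=0$. Since sublinks of L--space links are L--space links by Theorem \ref{l-space link cond}\eqref{sublink}, the inductive hypothesis implies that each $\L\setminus L_i$ is an unlink.

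Next I show $H_\L(\bm{s})=H_{\text{unlink}}(\bm{s})=\sum_i \max(-s_i,0)$ for all $\bm{s}$. On $\bm{s}\succeq \bm{0}$ the equality follows from monotonicity and nonnegativity, and the symmetry in Lemma \ref{h-function list}\eqref{h-function symmetry} handles $\bm{s}\preceq \bm{0}$. For a mixed-sign $\bm{s}$, partition the coordinates into the nonnegative set $P$ and the negative set $N$. Sending the $P$-coordinates to $+\infty$ in Lemma \ref{h-function list}\eqref{h-function bdy}, combining with monotonicity and the Brunnian conclusion, yields the lower bound $H_\L(\bm{s})\ge \sum_{i\in N}(-s_i)$; walking from $\bm{0}$ to $\bm{s}$ coordinate-by-coordinate and applying Lemma \ref{h-function list}\eqref{h-function increase} yields the matching upper bound.

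Finally, for an L--space link the entire link Floer complex is determined by $H_\L$ via Theorem \ref{l-space link cond}\eqref{no torsion}, so the link Floer homology of $\L$ coincides with that of the $n$-component unlink, and Ni's detection theorem for the unlink then forces $\L$ to be the unlink. The main obstacle is this last step: the structural work establishing $H_\L\equiv H_{\text{unlink}}$ stays within the $h$-function machinery of the section, but the topological conclusion requires the external input of unlink detection by link Floer homology.
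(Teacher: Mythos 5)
Your reduction to the statement that the $H$-function of $\L$ agrees with that of the unlink (equivalently, $h(\bm{s})=0$ for all $\bm{s}$) is correct, though considerably longer than necessary: the paper obtains this in one line from Lemma \ref{h-function list}\eqref{lem: h increases} and \eqref{h nonnegative}, since $h$ is nonnegative and is maximized at the origin. The induction and the Brunnian reduction you set up are fine but are not where the content of the lemma lies.

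The genuine gap is in your final step. You assert that ``for an L--space link the entire link Floer complex is determined by $H_\L$ via Theorem \ref{l-space link cond}\eqref{no torsion}.'' That item only says that $H_{*}(\A^{-}(\L,\bm{s}))\cong\F[U]$ for every $\bm{s}$ (with top degree $-2H(\bm{s})$); it does not determine the multi-filtered chain homotopy type of $CFL^-$, nor the groups $HFL$. Concretely, $HFL^-(\L,\bm{s})$ is the homology of an iterated mapping cone of the inclusions $\A^-(\L,\bm{s}-\bm{e}_B)$, and replacing each vertex by its homology $\F[U]$ leaves higher homotopies (equivalently, higher differentials and extensions in the associated spectral sequence) that are not pinned down by the $H$-function. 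For two-component L--space links the determination of $HFL^-$ by the $H$-function is a nontrivial theorem of Y.~Liu \cite{LiuY2}, and no analogous statement for an arbitrary number of components is quoted or follows formally; so your bridge from ``$H_\L=H_{\mathrm{unlink}}$'' to ``$HFL(\L)\cong HFL(\mathrm{unlink})$,'' and hence to unlink detection by link Floer homology, is unsupported. This is exactly the point the paper handles by citing \cite[Theorem 1.3]{Beibei}, which states that an L--space link with identically vanishing $h$-function is the unlink; that external result is the substance of Lemma \ref{unlink1}. If you wish to avoid that citation, the workable route is not through $HFL$ but through surgeries: since $h\equiv 0$, all large surgeries on $\L$ have the absolutely graded Heegaard Floer homology of the corresponding surgeries on the unlink, and one can then run your induction using Gainullin's Dehn surgery characterization of the unknot in L--spaces \cite{Gainullin} together with \cite[Corollary 2.4]{GabaiII}, exactly as in the paper's proof of Proposition \ref{prop: L space sublink}.
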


\begin{proof}
If $h(\bm{0})=0$ then by Lemma \ref{h-function list} \eqref{lem: h increases} we have $h(\bm{s})=0$ for all $\bm{s}\in \H(\L)$. The rest of the proof follows from \cite[Theorem 1.3]{Beibei}.
\end{proof}

We will also make use of the following well-known fact without reference.  If $K$ is an L--space knot, then 
\[
g(K) = \max \{s \mid h(s) > 0 \} + 1.
\]

\subsection{Coefficients}
\label{subsec:coefs}

As stated above, for most of the paper we use $\F=\mathbb{Z}_2$ as the field of coefficients.
However, in Section \ref{sec:d} we will use rational coefficients, so we need to discuss the dependence of the results on the field of coefficients. 

First of all, the Heegaard Floer complexes $\CFhat,\CFminus,\CFinfty$ for knots and three-manifolds are defined over $\Z$ \cite{OS03}. In particular, $\CFminus$ is a complex of finitely generated free $\Z[U]$ modules. Its decomposition into $\spinc$ structures is well defined over $\Z[U]$.

Let $\kk$ be an arbitrary field. We will write $\CFminus_{\kk}=\CFminus\otimes_{\Z}\kk$, and define
$\CFhat_{\kk},\CFinfty_{\kk}$ and $\HFhat_{\kk},\HFminus_{\kk},\HFinfty_{\kk}$  similarly. Since $\kk[U]$ is a principal ideal domain, any graded $\kk[U]$ module (in particular, $\HFminus_{\kk}$) can be decomposed as a direct sum of several copies of $\kk[U]$ and $\kk[U]/U^{d_i}$ for various
$d_i$.   

If $Y$ is a rational homology sphere, then for any $\spinc$ structure $\mft$ on $Y$ and any field $\kk$
the homology $\HFminus_{\kk}(Y,\mft)$ contains exactly one copy of $\kk[U]$ \cite{OS:properties}. We define $d_{\kk}(Y,\mft)$, the $d$-invariant with coefficients in $\kk$, as the homological degree of the generator of this copy of $\kk[U]$. When $\kk=\F=\Z_2$, we simply write $d(Y,\mft)=d_{\F}(Y,\mft)$, as above.

The following two examples show that $d$-invariants with coefficients in $\F$ and in $\Q$ could be potentially very different. It would be very interesting (but rather challenging) to find such examples 
in actual Heegaard Floer homology. In both examples we consider complexes of free $\Z[U]$-modules with three generators $a,b,c$. 

\begin{example} 
Suppose that  that $\partial(c)=U^{k}a-2b$.
The homology over $\Z$ can be identified with the submodule of $\Z[U]$ generated by 
2 (corresponding to $a$) and $U^k$ (corresponding to $b$). In particular, the homology is free as 
$\mathbb{Z}$-module and has no torsion.

On the other hand, if we consider this complex over $\F$, then $\partial(c)=U^{k}a$ and 
the homology is isomorphic to $\F[U]/(U^k)\oplus\F[U]$ as graded $\F[U]$-module,  its $\F[U]$ free part is generated by $b$. If we consider the same complex over $\Q$, then it is isomorphic to $\Q[U]$ generated by $a$. 

In conclusion, $d_{\F}=d_{\Q}-2k.$
\end{example}

\begin{example}
Suppose  that $\partial(a)=U^{k}c$ and $\partial(b)=2c$. In this case the homology has $\Z_2$ torsion of rank $k$, spanned by $c,Uc,\ldots,U^{k-1}c$. 

If we consider this complex over $\F$, then the homology is isomorphic to $\F[U]/(U^k)\oplus\F[U]$ as graded $\F[U]$-module, and its $\F[U]$ free part is generated by $b$. If we consider the same complex over $\Q$, then it is isomorphic to $\Q[U]$ generated by $2a-U^{k}b$.

In conclusion, $d_{\F}=d_{\Q}+2k.$
\end{example}
 
Note that the above examples show that the difference between $d_{\F}$ and $d_{\Q}$ could be either positive or negative, and arbitrarily large in absolute value. 

It is also important to point out that
that the notion of L-space (and hence of L-space link) depends on the coefficients, so a pedantically inclined reader is invited to use the terms $\F$-L-space and $\F$-L-space link.  

At present, link Floer homology (for links with more than one component) is only defined over $\F$, so the $H$-function and its cousins are only defined over $\F$.


\section{Milnor invariants and the Casson invariant}\label{sec:milnor-h}
In this section, we show how to extract the Milnor triple linking invariant from the link Floer complex.  This will be in terms of the invariant $h'$ defined in the previous section and another invariant $\chi'$  from the torsion part of $H_{\ast}(\A^{-}(\L, \bm{s}))$.  

\subsection{The invariant $\chi'$}

For non L--space links, the $h$-function does not determine the Alexander polynomial.  However, we can obtain this from the collection of $H_*(\A^-(\L,\bm{s}))$ for all $\bm{s}$, which we now explain.  Recall that for any link we have a non-canonical splitting 
$$
H_*(\A^-(\L,\bm{s}))=\F[U][-2H(\bm{s})]\oplus \A^-_{tor}(\L,\bm{s}),
$$
where $\A^-_{tor}(\L,\bm{s})$ is finite-dimensional over $\F$ and hence a torsion module over $\F[U]$.  We begin by analyzing the modules $\A^-_{tor}$, as they will feature in our formula for the Alexander polynomial, and ultimately the Milnor invariants.

\begin{lemma}
For an algebraically split link $\L$, we have
$$
H_*(\A^-_{tor}(\L,-\bm{s})) \cong H_*(\A^-_{tor}(\L,\bm{s}))[-2|\mathbf{s}|],
$$  
where $|\mathbf{s}| = \sum_i s_i$.
\end{lemma}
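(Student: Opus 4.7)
The strategy is to invoke the conjugation symmetry of Heegaard Floer homology at the level of the generalized complexes $\A^{-}(\L,\bm{s})$. Conjugation of $\mathrm{Spin}^{c}$ structures corresponds, under the identification of $\mathrm{Spin}^{c}$ structures on large surgeries with lattice points in $\H(\L)$, to the map $\bm{s}\mapsto -\bm{s}$ (with no linking correction terms, because $\L$ is algebraically split). It induces a chain homotopy equivalence of $\F[U]$--complexes $\A^{-}(\L,\bm{s})\simeq \A^{-}(\L,-\bm{s})$ shifting the homological grading by a single constant $c_{\bm{s}}\in\Z$, and hence on homology a graded $\F[U]$--module isomorphism $H_{*}(\A^{-}(\L,\bm{s}))\cong H_{*}(\A^{-}(\L,-\bm{s}))$ with the same shift.

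Next, I would pin down $c_{\bm{s}}$ by comparing the free summands. Each $H_{*}(\A^{-}(\L,\pm\bm{s}))$ has a unique maximal $\F[U]$--free summand, so any graded $\F[U]$--module isomorphism must restrict to an isomorphism of these free summands. Their top generators sit in degrees $-2H(\bm{s})$ and $-2H(-\bm{s})$ respectively, so together with the symmetry $H(-\bm{s})=H(\bm{s})+|\bm{s}|$ from Lemma \ref{h-function list}\eqref{h-function symmetry} I obtain
\[
c_{\bm{s}} \;=\; -2H(-\bm{s})-\bigl(-2H(\bm{s})\bigr) \;=\; -2|\bm{s}|.
\]
The restriction of the same isomorphism to the complementary torsion summands is then a graded $\F$--linear isomorphism shifting degrees by $-2|\bm{s}|$, which under the convention $M[n]_{d}=M_{d-n}$ implicit in the paper's notation $\F[U][-2H(\bm{s})]$ is exactly the stated identification $H_{*}(\A^{-}_{tor}(\L,-\bm{s}))\cong H_{*}(\A^{-}_{tor}(\L,\bm{s}))[-2|\bm{s}|]$.

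The main point to be careful about is citing the conjugation symmetry in the correct form for the generalized Heegaard Floer complexes of Manolescu--Ozsv\'ath, with the right grading conventions. This is essentially bookkeeping, made cleaner by the fact that the shift value produced by the free-part comparison is precisely the one forced by the already established $H$-function symmetry, so no off-by-one error can slip in once the two pieces are lined up.
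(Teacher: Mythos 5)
Your proposal is correct and follows essentially the same route as the paper's proof: identify $H_*(\A^-(\L,\bm{s}))$ with $HF^-$ of a large surgery via \cite[Theorem 12.1]{MO}, apply conjugation symmetry of $HF^-$ to obtain an ungraded isomorphism $H_*(\A^-(\L,-\bm{s})) \cong H_*(\A^-(\L,\bm{s}))$, and then pin down the grading shift by comparing the degrees of the generators of the $\F[U]$-free summands using $H(-\bm{s}) = H(\bm{s}) + |\bm{s}|$. The paper's write-up is slightly more terse about the final shift computation, but the content is the same; your explicit treatment of how the shift must be a single constant and why the free-summand comparison determines it is a reasonable expansion of what the paper leaves implicit.
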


\begin{proof}
By the large surgery theorem \cite{MO} we have (up to a grading shift)
\[
H_*(\A^-(\L,\bm{s})) \cong HF^-(S^3_{\bm{p}}(\L),\bm{s}),\ \text{for}\ \bm{p}\ggcurly 0.
\]
By \cite[Theorem 2.4]{OS:properties} we have 
\[
HF^-(S^3_{\bm{p}}(\L),-\bm{s}) \cong HF^-(S^3_{\bm{p}}(\L),\bm{s})
\]
Therefore up to a grading shift we have
\[
H_*(\A^-(\L,-\bm{s})) \cong H_*(\A^-(\L,\bm{s})).
\]
To figure  out the shift, we can look at the $\F[U]$-free part and use the identity
\[
H(-\bm{s})=H(\bm{s})+|\bm{s}|.\qedhere
\]\end{proof}

\begin{corollary}
For an algebraically split link $\L$, we have 
\begin{equation}
\label{eq: tor symmetry}
\chi(\A^-_{tor}(\L,-\bm{s}))=\chi(\A^-_{tor}(\L,\bm{s})).
\end{equation}
\end{corollary}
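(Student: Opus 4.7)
The statement is an immediate consequence of the preceding lemma, so the proof proposal is essentially a one-line observation made precise.

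My plan is to deduce equation \eqref{eq: tor symmetry} directly from the graded isomorphism
\[
H_*(\A^-_{tor}(\L,-\bm{s})) \cong H_*(\A^-_{tor}(\L,\bm{s}))[-2|\bm{s}|]
\]
established in the lemma above. The key point is that the (graded) Euler characteristic $\chi(\A^-_{tor}(\L,\bm{s}))$ is a signed count of the ranks of the homogeneous components, and the sign depends only on the parity of the grading. Since the shift appearing in the lemma is by $-2|\bm{s}|$, which is an even integer, the parity of each graded piece is preserved under the shift. Therefore the alternating sums agree.

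Concretely, if we write $H_*(\A^-_{tor}(\L,\bm{s}))=\bigoplus_i V_i$ as a graded $\F$-vector space with $V_i$ sitting in homological degree $i$, then the lemma yields a degree-preserving isomorphism of $H_*(\A^-_{tor}(\L,-\bm{s}))$ with $\bigoplus_i V_i$ placed in degree $i-2|\bm{s}|$. Taking the Euler characteristic of both sides gives
\[
\chi(\A^-_{tor}(\L,-\bm{s})) = \sum_i (-1)^{i-2|\bm{s}|}\dim V_i = \sum_i (-1)^i \dim V_i = \chi(\A^-_{tor}(\L,\bm{s})),
\]
which is the desired identity. There is no substantive obstacle; the only thing to verify is that the Euler characteristic being used is indeed the graded one (or equivalently, uses only parity), which is standard in the Heegaard Floer setting since the absolute $\mathbb{Z}$-grading reduces mod $2$ to a canonical $\mathbb{Z}/2$-grading.
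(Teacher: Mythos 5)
Your proposal is correct and is exactly the argument the paper intends: the corollary is stated as an immediate consequence of the lemma, since the grading shift $-2|\bm{s}|$ is even (here $\bm{s}\in\Z^n$ because $\L$ is algebraically split) and therefore does not affect the graded Euler characteristic. Your explicit verification of the parity point is the only content needed, and it matches the paper's implicit reasoning.
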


We are ready to define the analogue of the function $h'$ for the torsion parts of $\A^-$.

\begin{definition}
Let $\L$ be an algebraically split link with $n$ components. We define
\[
\chi'_{\L}(\bm{s})=\chi'(\bm{s})=\sum_{I\subset \{1,\ldots,n\}}(-1)^{n-|I|}\chi(\A^-_{tor}(\L_{I},\bm{s}_{I})).
\]
\end{definition}

\begin{lemma}
\label{lem: chi' finite}
The function $\chi'(\bm{s})$ is finitely supported and enjoys the symmetry $\chi'(-\bm{s})=\chi'(\bm{s})$.
\end{lemma}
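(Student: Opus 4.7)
My plan is to prove the two claims separately, mirroring the structure of Lemma \ref{lem: h' finite}. For the symmetry, this is essentially immediate. For each sublink $\L_I$ of the algebraically split link $\L$, the sublink $\L_I$ is itself algebraically split, so equation \eqref{eq: tor symmetry} applied to $\L_I$ gives $\chi(\A^-_{tor}(\L_I,-\bm{s}_I))=\chi(\A^-_{tor}(\L_I,\bm{s}_I))$. Summing over $I$ with signs yields $\chi'(-\bm{s})=\chi'(\bm{s})$ directly from the definition.

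For the finite support claim, I will follow the outline of Lemma \ref{lem: h' finite} closely. By the symmetry established above, it suffices to show that for each fixed index $i\in\{1,\ldots,n\}$, one has $\chi'(\bm{s})=0$ whenever $s_i$ is sufficiently large. The essential input is a stabilization statement for the torsion part of $\A^-$: for $s_i$ large enough, the subcomplex $\A^-(\L,\bm{s})\subset CF^-(S^3)$ agrees with $\A^-(\L\setminus L_i,\bm{s}_{\setminus i})$, because the filtration condition in the $i$-th coordinate becomes vacuous. This is exactly the same observation used to justify Lemma \ref{h-function list}\eqref{h-function bdy}, and applying it to each sublink $\L_J$ containing $L_i$ gives
\[
\chi(\A^-_{tor}(\L_J,\bm{s}_J))=\chi(\A^-_{tor}(\L_{J\setminus\{i\}},\bm{s}_{J\setminus\{i\}}))\quad\text{for } s_i\gg 0.
\]

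With this in hand, I group the terms in the defining sum by pairing $I'\not\ni i$ with $I'\cup\{i\}$:
\[
\chi'(\bm{s})=\sum_{I'\not\ni i}(-1)^{n-|I'|}\left[\chi(\A^-_{tor}(\L_{I'},\bm{s}_{I'}))-\chi(\A^-_{tor}(\L_{I'\cup\{i\}},\bm{s}_{I'\cup\{i\}}))\right].
\]
For $s_i\gg 0$, the stabilization identity above makes each bracketed term vanish, so $\chi'(\bm{s})=0$ for $s_i$ large; combined with the symmetry this forces $\chi'$ to have bounded support in each coordinate, hence finite support.

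The main obstacle is cleanly justifying the stabilization $\chi(\A^-_{tor}(\L,\bm{s}))=\chi(\A^-_{tor}(\L\setminus L_i,\bm{s}_{\setminus i}))$ for $s_i$ sufficiently large. This is not quite stated in the background section in the form needed, but it follows directly from the same subcomplex identification used in \cite{MO} and \cite{BG} that underlies Lemma \ref{h-function list}\eqref{h-function bdy}; since the entire chain complex stabilizes (not just its $\F[U]$-free part), the torsion submodules agree, so their $\F$-dimensions (hence Euler characteristics) agree as well. Everything else in the proof is a straightforward sign-grouping argument identical to Lemma \ref{lem: h' finite}.
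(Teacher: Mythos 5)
Your proposal is correct and follows essentially the same route as the paper: symmetry comes from \eqref{eq: tor symmetry} applied termwise, and finite support comes from stabilization in the $i$-th coordinate for $s_i\gg 0$ together with the pairing of subsets $I'$ and $I'\cup\{i\}$, exactly as in Lemma \ref{lem: h' finite}. The only cosmetic difference is that the paper justifies the stabilization by citing \cite[Lemma 10.1]{MO} at the level of $H_*(\A^-)$ (which immediately gives equality of the torsion Euler characteristics), rather than arguing at the chain level as you do.
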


\begin{proof}
The symmetry for $\chi'(\bm{s})$ immediately follows from \eqref{eq: tor symmetry}. Let us prove that it is finitely supported.
For $s_i\gg 0$ and any subset $I$ not containing $s_i$ we have from \cite[Lemma 10.1]{MO}
\[
H_*(\A^-(\L_{I},\bm{s}_{I})) \cong H_*(\A^-(\L_{I\cup \{i\}},\bm{s}_{I\cup\{i\}}))
\]
and, in particular, 
\[
\chi(\A^-_{tor}(\bm{s}_{I}))=\chi(\A^-_{tor}(\bm{s}_{I\cup\{i\}})).
\]
Then similarly to Lemma \ref{lem: h' finite} we have $\chi'(\bm{s})=0$ for $s_i\gg 0$. By symmetry, we also have
$\chi'(\bm{s})=0$ for $s_i\ll 0$, and therefore $\chi'(\bm{s})$ is finitely supported. 
\end{proof}

Finally, equation \eqref{computation of h-function 1} can be generalized in the presence of torsion as follows:

\begin{equation}
\label{computation of h-function tors}
\chi(HFL^{-}(\L, \bm{s}))=\sum_{B\subset \lbrace 1, \cdots, n \rbrace}(-1)^{|B|}(\chi(\A^{-}_{tor}(\bm{s}-\bm{e}_B))-H_{\L}(\bm{s}-\bm{e}_{B})).
\end{equation}

Let $\L$ be an algebraically split link with $n>1$ components. Then the Torres condition \cite{Torres}
implies that $\Delta_{\L}$ is divisible by $(t_i-1)$ for all $i$. Hence, we can write
\begin{equation}
\label{eq: def Delta'}
\Delta_{\L}(t_1,\ldots, t_n) =\prod_i (t_i^{1/2}-t_i^{-1/2})\widetilde{\Delta}'_\L(t_1, \cdots, t_n),
\end{equation}
where $\Delta_\L$ is normalized as in equation \eqref{mva} above.

\begin{theorem}
\label{lem: Delta'}
Assume that $\L$ is an algebraically split link with $n>1$ components. Then 
$$
\tilde{\Delta}'_{\L}(\bm{t})=(-1)^{n}\sum_{\bm{s}}(\chi'(\bm{s})-h'(\bm{s}))\bm{t}^{\bm{s}},
$$
where $ \bm{t}^{\bm{s}}=t_1^{s_1}\cdots t_n^{s_n}$.
\end{theorem}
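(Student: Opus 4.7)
The plan is to reduce everything to an identity of formal power series coefficients. First I would observe that combining \eqref{mva} with the definition in \eqref{eq: def Delta'} yields
\[
\widetilde{\Delta}_{\L}(\bm t)=(t_1\cdots t_n)^{1/2}\prod_{i=1}^{n}(t_i^{1/2}-t_i^{-1/2})\,\widetilde{\Delta}'_\L(\bm t)=\prod_{i=1}^{n}(t_i-1)\,\widetilde{\Delta}'_\L(\bm t),
\]
so the claim is equivalent to the identity
\[
\sum_{\bm s}\chi(HFL^-(\L,\bm s))\bm t^{\bm s}=(-1)^{n}\prod_{i=1}^{n}(t_i-1)\sum_{\bm s}\bigl(\chi'(\bm s)-h'(\bm s)\bigr)\bm t^{\bm s},
\]
using \eqref{computation 3} on the left. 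Expanding $\prod_i(t_i-1)=\sum_{B}(-1)^{n-|B|}\bm t^{\bm e_B}$ and shifting the index of summation in $\bm s$, this reduces to showing, for every $\bm s$,
\[
\sum_{B\subseteq\{1,\ldots,n\}}(-1)^{|B|}\bigl(\chi'(\bm s-\bm e_B)-h'(\bm s-\bm e_B)\bigr)=\chi(HFL^-(\L,\bm s)).
\]

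Next I would plug in the refined inclusion-exclusion formula \eqref{computation of h-function tors}, which rewrites the right-hand side as $\sum_{B}(-1)^{|B|}\bigl(\chi(\A^-_{tor}(\bm s-\bm e_B))-H_{\L}(\bm s-\bm e_B)\bigr)$. The $H_{\L}$ contribution is handled by Lemma \ref{lem: alternating h'}, which asserts
\[
\sum_{B}(-1)^{|B|}h'(\bm s-\bm e_B)=\sum_{B}(-1)^{|B|}H_{\L}(\bm s-\bm e_B).
\]
The only remaining task is to prove the exact analog for $\chi'$:
\[
\sum_{B}(-1)^{|B|}\chi'(\bm s-\bm e_B)=\sum_{B}(-1)^{|B|}\chi(\A^-_{tor}(\L,\bm s-\bm e_B)).
\]

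I would establish this by repeating the argument in Lemma \ref{lem: alternating h'} verbatim. Substituting the definition of $\chi'$ and exchanging the order of summation gives a double sum over subsets $I,B\subseteq\{1,\ldots,n\}$ of $(-1)^{n-|I|+|B|}\chi(\A^-_{tor}(\L_I,\bm s_I-\bm e_{I\cap B}))$. Writing $B=(I\cap B)\sqcup(B\setminus I)$ decouples the sum over $B\setminus I\subseteq\{1,\ldots,n\}\setminus I$, which vanishes whenever $I$ is a proper subset. Hence only the term $I=\{1,\ldots,n\}$ survives, producing exactly $\sum_B(-1)^{|B|}\chi(\A^-_{tor}(\L,\bm s-\bm e_B))$ as needed. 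The finite support established in Lemmas \ref{lem: h' finite} and \ref{lem: chi' finite} justifies all manipulations with formal sums.

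The only real subtlety is conceptual rather than computational: one must verify that the torsion summand $\A^-_{tor}$ obeys the same sublink-stabilization behavior under $s_i\to\infty$ that $H_{\L}$ does, which is precisely the content guaranteeing the inclusion-exclusion collapse above and which was already used in the proof of Lemma \ref{lem: chi' finite}. Once that stability is in hand, the proof is a purely formal rearrangement of the two inclusion-exclusion identities.
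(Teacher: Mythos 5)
Your proof is correct and follows essentially the same route as the paper's: both reduce the statement to a coefficient identity by multiplying $\widetilde{\Delta}'_\L$ by $\prod_i(t_i-1)$, invoke \eqref{computation of h-function tors}, Lemma~\ref{lem: alternating h'} (and its verbatim $\chi'$-analogue), and close with the finite-support Lemmas~\ref{lem: h' finite} and \ref{lem: chi' finite}. The one place you should be a touch more explicit is the word ``equivalent'' at the start: multiplying by $\prod_i(t_i-1)$ obviously sends equality to equality, but to go \emph{back} from $\prod_i(t_i-1)\cdot P=\prod_i(t_i-1)\cdot Q$ to $P=Q$ you need that $\prod_i(t_i-1)$ is a nonzero element of the integral domain $\Z[t_1^{\pm1},\dots,t_n^{\pm1}]$, and that both $P$ and $Q$ actually lie in that ring --- which is exactly what the finite-support lemmas guarantee. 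The paper phrases this same cancellation as a descending recursion on $\bm{s}$ with the boundary condition that both $(-1)^n q(\bm{s})$ and $\chi'(\bm{s})-h'(\bm{s})$ vanish for $\bm{s}$ large; the two formulations are mathematically the same, and neither is shorter or more general than the other.
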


\begin{proof}
Let $\tilde{\Delta}'_{\L}(\bm{t})=\sum q(\bm{s})\bm{t}^{\bm{s}}$ and $\tilde{\Delta}_{\L}(\bm{t})=\prod_{i}(t_{i}-1)\tilde{\Delta}'_{\L}(t)=\sum a(\bm{s})\bm{t}^{\bm{s}}.$
Then
\begin{equation}
\label{a from q}
a(\bm{s})=\sum_{J\subset \{1,\ldots,n\}}(-1)^{n-|J|}q(\bm{s}-\bm{e}_{J}).
\end{equation}
By Lemma \ref{lem: alternating h'} we get 
$$
\sum_{J\subset \{1,\ldots,n\}}(-1)^{|J|}h'_{\L}(\bm{s}-\bm{e}_{J})=\sum_{J\subset \{1,\ldots,n\}}(-1)^{|J|}H_{\L}(\bm{s}-\bm{e}_{J}).
$$
Similarly,
$$
\sum_{J\subset \{1,\ldots,n\}}(-1)^{|J|}\chi'(\bm{s}-\bm{e}_{J})=\sum_{J\subset \{1,\ldots,n\}}(-1)^{|J|}\chi(\A^-_{tor}(\bm{s}-\bm{e}_{J})).
$$
Therefore \eqref{computation of h-function tors} implies 
\begin{equation}
\label{a from h'}
a(\bm{s})=\sum_{J\subset \{1,\ldots,n\}}(-1)^{|J|}(\chi'(\bm{s}-\bm{e}_{J})-h'(\bm{s}-\bm{e}_{J})).
\end{equation}
Equations \eqref{a from q} and \eqref{a from h'} imply that $(-1)^{n}q(\bm{s})$ and $\chi'(\bm{s})-h'(\bm{s})$ satisfy the same recursion relations, and by Lemmas \ref{lem: h' finite}
and \ref{lem: chi' finite} both vanish for sufficiently large $\bm{s}$. Therefore, 
\[
\tilde{\Delta}'_{\L}(\bm{t})=(-1)^{n}\sum_{\bm{s}}(\chi'(\bm{s})-h'(\bm{s}))\bm{t}^{\bm{s}}.\qedhere
\]
\end{proof}

\begin{corollary}
\label{cor: Delta' Brunnian}
Assume that $\L$ is a Brunnian L--space link with $n>1$ components. Then 
$$ 
\tilde{\Delta}'_{\L}(\bm{t})=(-1)^{n+1}\sum_{\bm{s}}h(\bm{s})\bm{t}^{\bm{s}}.
$$
\end{corollary}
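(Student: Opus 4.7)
The plan is to derive this as a direct specialization of Theorem \ref{lem: Delta'}, by showing that both simplifications (vanishing of $\chi'$ and collapse of $h'$ to $h$) occur under the Brunnian L--space hypothesis.

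First, I would apply Theorem \ref{lem: Delta'} to write
\[
\tilde{\Delta}'_{\L}(\bm{t}) \;=\; (-1)^{n}\sum_{\bm{s}}\bigl(\chi'(\bm{s})-h'(\bm{s})\bigr)\bm{t}^{\bm{s}}.
\]
Then I would analyze the two terms separately. For the torsion term, recall that by Theorem \ref{l-space link cond}\eqref{no torsion}, a link $\L'$ is an L--space link if and only if $H_{\ast}(\A^{-}(\L',\bm{s}')) \cong \F[U]$ for all $\bm{s}'$, equivalently $\A^{-}_{tor}(\L',\bm{s}')=0$. Since $\L$ is an L--space link, every sublink $\L_I$ is also an L--space link by Theorem \ref{l-space link cond}\eqref{sublink}, so each summand $\chi(\A^{-}_{tor}(\L_I,\bm{s}_I))$ in the definition of $\chi'(\bm{s})$ vanishes. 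Therefore $\chi'(\bm{s})\equiv 0$.

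For the $h$-term, the Brunnian hypothesis says all proper sublinks of $\L$ are unlinks, so $h_I(\bm{s})=0$ for all proper $I\subsetneq\{1,\ldots,n\}$. Then from the definition
\[
h'(\bm{s}) \;=\; \sum_{I\subseteq\{1,\ldots,n\}}(-1)^{n-|I|}h_I(\bm{s})
\]
only the term with $I=\{1,\ldots,n\}$ survives, giving $h'(\bm{s})=h(\bm{s})$; this is precisely Example \ref{ex:hbrun}.

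Substituting these two observations into the formula from Theorem \ref{lem: Delta'} yields
\[
\tilde{\Delta}'_{\L}(\bm{t}) \;=\; (-1)^{n}\sum_{\bm{s}}\bigl(0-h(\bm{s})\bigr)\bm{t}^{\bm{s}} \;=\; (-1)^{n+1}\sum_{\bm{s}}h(\bm{s})\bm{t}^{\bm{s}},
\]
as claimed. There is no genuine obstacle here; the statement is a clean corollary, and the only point requiring mild care is confirming that the L--space property descends to all sublinks (so that $\chi'$ vanishes identically, not merely for the full link). Both required facts are already in place in the excerpt.
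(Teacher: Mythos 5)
Your proposal is correct and matches the paper's argument: the paper's proof of Corollary \ref{cor: Delta' Brunnian} likewise specializes Theorem \ref{lem: Delta'} by noting that $h'=h$ for Brunnian links and $\chi'=0$ for L--space links. You have merely spelled out the details (sublinks of L--space links are L--space links, hence all torsion summands vanish) that the paper leaves implicit.
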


\begin{proof}
For Brunnian links $h'(\bm{s})=h(\bm{s})$, and since $\L$ is an L-space link, $\chi'(\bm{s})=0$ for all $\bm{s}$.
\end{proof}

\subsection{Milnor triple linking invariant}
To associate the Milnor triple linking invariant to the Alexander polynomial, we must pass through the Conway polynomial. 
The Conway polynomial of $\L=L_1\cup \ldots \cup L_n$ can be written as
\[
	\nabla_\L (z) = z^{n-1}(a_0 + a_2z^2 + a_4z^4 + \cdots ), \qquad a_i\in\Z,
\]
and 
\begin{equation*}
	\nabla_\L(t^{1/2}-t^{-1/2})=\pm (t^{1/2}-t^{-1/2})\Delta_{\L}(t,\ldots, t),
\end{equation*}
where $\Delta_{\L}(t_1,\ldots, t_n)$ denotes the multi-variable Alexander polynomial of $\L$. Note that $a_{0}$ depends only on  the linking numbers of $\L$, see \cite[Theorem 1]{Hoste:Conway}. 
For an algebraically split link with $n>1$ components, $a_{0}=0$ and we can write its multivariable Alexander polynomial as in \eqref{eq: def Delta'}:
\[
\Delta_{\L}(t_1,\ldots, t_n) =\prod_i (t_i^{1/2}-t_i^{-1/2})\widetilde{\Delta}'_\L(t_1, \cdots, t_n).
\]
Then we can set all $t_i=t$ and apply the change of variable $z=t^{1/2} - t^{-1/2}$:
\[
\nabla_\L(z)= \pm (t^{1/2} - t^{-1/2})\Delta_\L(t,\ldots,t) =
\pm  (t^{1/2} - t^{-1/2})^{n+1} \widetilde{\Delta}'_\L(t,\ldots, t).
\]
We define $\widetilde{\nabla}_\L(z)$ as $\nabla_\L(z) / z^{n+1}$.  With this, the  coefficient $a_{2}$ of the Conway polynomial can be written as
$$
a_2(\L)=\widetilde{\nabla}_\L(0)=\pm \tilde{\Delta}'_\L(1,\ldots, 1). 
$$
It is an important invariant of the link. By Theorem \ref{lem: Delta'} we get
\begin{equation}\label{eq:a2-chi}
a_2(\L)=\pm \sum_{\bm{s}}(\chi'(\bm{s})-h'(\bm{s})).
\end{equation}

\begin{example}
\label{ex: 2comp}
For two-component algebraically split links the invariant $a_2(\L)$ agrees with the Sato-Levine invariant $\beta(\L)$ up to sign \cite{GLM, Sturm}.  Equation \eqref{eq:a2-chi} now gives an explicit formula for $\beta(\L)$ in terms of the link Floer complex for $\L$. Moreover, if $\L$ is a two-component algebraically split  L-space link with unknotted components, then $\beta(\L)=0$ implies $\L$ is the unlink (see \cite[Corollary 6.4]{GLM}).
\end{example}

\begin{remark}
For two-component links, other Milnor invariants of the form $\mu[1^p 2^q]$ may be written in terms of the link Floer complex as follows. (For example, the linking number corresponds with $\mu[12]$ and the Sato-Levine invariant with $\mu[1122]$.) One first writes the multivariable Alexander polynomial as in Theorem \ref{lem: Delta'}. Then passing to the Taylor expansion at $(1, 1)$ of this two variable polynomial, a result of Murasugi \cite[Theorem 4.1]{Murasugi:Milnor} shows that the coefficients of the Taylor expansion determine these Milnor invariants. 
\end{remark}

\begin{example}
\label{ex: 4comp}
If $\L$ is an algebraically split link with $n\ge 4$ components then by \cite{Cochran}
$a_2(\L)=0.$
\end{example}

\begin{proof}[Proof of Theorem~\ref{thm:milnor}]
For three-component links, by  \cite[Theorem 5.1]{Cochran} $a_2(\L)$ relates to the square of the Milnor triple linking number
\begin{equation}\label{eq:a2-triple-linking}
	a_2(\L) =\pm \mu^2_{123}(\L).
\end{equation}
Equation \eqref{eq:a2-chi} now establishes an explicit formula for $|\mu_{123}|$ in terms of the link Floer complex.
\end{proof}

\begin{corollary}
\label{cor:mucfl}
Assume that $\L$ is an algebraically split link with three components. Then 
$$\mu^{2}_{123}(\L)=\pm \sum_{\bm{s}}(\chi'(\bm{s})-h'(\bm{s})).$$
In particular, $|\sum_{\bm{s}}(\chi'(\bm{s})-h'(\bm{s}))|$ is a link homotopy invariant.
\end{corollary}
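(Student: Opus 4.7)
The plan is to observe that this corollary is essentially a direct repackaging of the proof of Theorem~\ref{thm:milnor}. The formula for $a_2(\L)$ in terms of the link Floer complex has already been established in equation \eqref{eq:a2-chi}, using Theorem~\ref{lem: Delta'} together with the reduction of $\nabla_\L(z)$ to $\widetilde{\Delta}'_\L$ via the substitution $z=t^{1/2}-t^{-1/2}$ and evaluation at $t=1$. Combining this with Cochran's identity $a_2(\L)=\pm\mu_{123}^2(\L)$ from \cite[Theorem 5.1]{Cochran}, which is equation \eqref{eq:a2-triple-linking}, yields the claimed identity
$$\mu_{123}^2(\L)=\pm\sum_{\bm{s}}(\chi'(\bm{s})-h'(\bm{s})).$$

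For the homotopy invariance statement, the key input is Milnor's theorem that $\mu_{123}$ is an invariant of link homotopy (this is classical, going back to \cite{Milnor-linkgroup}). Since the right-hand side equals $\pm\mu_{123}^2(\L)$ up to sign, taking absolute values removes the sign ambiguity and produces $\mu_{123}^2(\L)$, which is manifestly a link homotopy invariant. Hence $\bigl|\sum_{\bm{s}}(\chi'(\bm{s})-h'(\bm{s}))\bigr|$ is a link homotopy invariant as claimed.

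I expect no real obstacles here, since all the work has already been done: Theorem~\ref{lem: Delta'} provides the bridge between the multivariable Alexander polynomial and the Floer-theoretic quantities $h'$ and $\chi'$, Cochran's theorem supplies the translation between $a_2$ and $\mu_{123}$, and Milnor's homotopy invariance of $\mu_{123}$ is a standard result. The only thing to emphasize in the write-up is that finiteness of the sum is guaranteed by Lemmas~\ref{lem: h' finite} and~\ref{lem: chi' finite}, so the right-hand side is a well-defined integer. The proof itself can essentially be a one-line citation of equations \eqref{eq:a2-chi} and \eqref{eq:a2-triple-linking} followed by a remark on homotopy invariance.
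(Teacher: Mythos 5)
Your proposal is correct and takes essentially the same approach as the paper: the corollary is presented there as an immediate consequence of equations \eqref{eq:a2-chi} and \eqref{eq:a2-triple-linking}, which is exactly the chain of reasoning you spell out, together with Milnor's homotopy invariance of $\mu_{123}$ for the final clause. The observation about finiteness of the sum via Lemmas~\ref{lem: h' finite} and \ref{lem: chi' finite} is a good point of hygiene, already implicitly used in Theorem~\ref{lem: Delta'}.
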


For Brunnian L--space links we can get even more information.
\begin{theorem}
\label{Brunnianh}
Assume that $\L$ is a Brunnian L--space link with three components. Then the following statements hold:
\begin{enumerate}
\item[(a)] $\mu_{123}^2(\L)=\sum_{\bm{s}}h(\bm{s})$.
\item[(b)] If $\mu_{123}(\L)=0$ then $\L$ is the unlink.
\item[(c)] $\mu_{123}(\L)$ has the same parity as $h(0,0,0)=H(0,0,0)$.
\item[(d)] $\mu_{123}(\L)$ cannot equal $\pm 2$.
\end{enumerate}
\end{theorem}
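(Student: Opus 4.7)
The plan is to establish parts (a)--(d) in order, with (a)--(c) following rather directly from the earlier corollary and lemmas, and (d) being the substantial piece. For part (a), the strategy is to invoke Corollary \ref{cor:mucfl} and simplify both sides under the stronger hypotheses. Since $\L$ is an L--space link, Theorem \ref{l-space link cond}\eqref{no torsion} applied to $\L$ and all its sublinks (all of which are L--space links by Theorem \ref{l-space link cond}\eqref{sublink}) gives $\A^-_{tor}=0$, so $\chi'(\bm{s})=0$ for every $\bm{s}$. Since $\L$ is Brunnian, Example \ref{ex:hbrun} gives $h'(\bm{s})=h(\bm{s})$. Hence $\mu_{123}^2(\L)=\mp\sum_{\bm{s}}h(\bm{s})$, and the sign is pinned to $+$ by non-negativity of $h$ (Lemma \ref{h-function list}\eqref{h nonnegative}). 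Part (b) is now immediate: $\mu_{123}=0$ together with $h\geq 0$ forces $h(\bm{0})=0$, and Lemma \ref{unlink1} identifies $\L$ with the unlink.

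For part (c), the key is the symmetry $h(-\bm{s})=h(\bm{s})$ from Lemma \ref{h-function list}\eqref{h-symmetry}. Since $\bm{s}\mapsto -\bm{s}$ fixes only the origin in $\Z^3$, the nonzero lattice points partition into symmetric pairs $\{\bm{s},-\bm{s}\}$, each contributing an even amount to $\sum h$. Thus $\sum_{\bm{s}}h(\bm{s})\equiv h(\bm{0})\pmod{2}$, and combining with $\mu_{123}\equiv \mu_{123}^2\pmod{2}$ and part (a) yields $\mu_{123}\equiv h(\bm{0})\pmod{2}$.

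Part (d) is where the real work lies. Assume toward contradiction that $\mu_{123}=\pm 2$, so $\sum_{\bm{s}} h(\bm{s})=4$ by (a) and $h(\bm{0})$ is even by (c), hence $h(\bm{0})\in\{0,2,4\}$. I would rule out each value using Lemma \ref{h-function list}\eqref{h-function increase} (the unit-step bound $H(\bm{s}-\bm{e}_i)-H(\bm{s})\in\{0,1\}$), Lemma \ref{h-function list}\eqref{lem: h increases}, and the explicit unknot values $H_O(0)=H_O(1)=0$, $H_O(-1)=1$. If $h(\bm{0})=0$, then monotonicity toward the origin together with non-negativity forces $h\equiv 0$, contradicting $\sum h=4$. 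If $h(\bm{0})=4$, then $h$ must vanish off the origin, yet the unit-step bound gives $h(\bm{e}_i)=H(\bm{e}_i)\geq H(\bm{0})-1=3$ and similarly $h(-\bm{e}_i)\geq 3$, a contradiction. If $h(\bm{0})=2$, then $\sum_{\bm{s}\neq\bm{0}}h=2$, but the same bound forces $h(\pm\bm{e}_i)\geq 1$ for each $i=1,2,3$; these three distinct symmetric pairs contribute at least $6$ to $\sum_{\bm{s}\neq\bm{0}}h$, a contradiction. Hence no value of $h(\bm{0})$ is compatible with $|\mu_{123}|=2$, proving (d). The main obstacle is organizing this case analysis cleanly, though each case collapses quickly once the unit-step bounds on $H$ and $H_O$ are brought to bear at $\pm\bm{e}_i$.
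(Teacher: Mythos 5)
Your proof is correct and follows essentially the same route as the paper: (a)--(c) via Corollary \ref{cor:mucfl} with $\chi'=0$, $h'=h$, non-negativity, and the symmetry $h(-\bm{s})=h(\bm{s})$, and (d) via the unit-step bound forcing $h(\pm\bm{e}_i)\geq h(\bm{0})-1$ so that $\sum_{\bm{s}}h(\bm{s})\geq 5$ when $h(\bm{0})\geq 2$. Your explicit case split $h(\bm{0})\in\{0,2,4\}$ is just a slightly more verbose packaging of the paper's argument, which handles $h(\bm{0})\geq 2$ uniformly.
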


\begin{proof}
(a) By \eqref{eq:a2-triple-linking} we have $\mu^2_{123}(\L)=\pm a_2(\L)$ and by 
Corollary \ref{cor:mucfl}  we have $a_2(\L)=\sum_{\bm{s}}h(\bm{s}).$ Therefore
$$
\mu^2_{123}(\L)=\pm \sum_{\bm{s}}h(\bm{s}).
$$
On the other hand, both $\mu^2_{123}$ and $h(\bm{s})$ are nonnegative, so the sign is positive.

(b) If $\mu_{123}(\L)=0$ then by (a) we have $h(\bm{s})=0$ for all $\bm{s}$. By \cite[Theorem 1.3]{Beibei} this implies that $\L$ is the unlink. 

(c) By Lemma \ref{h-function list}\eqref{h-symmetry}, we have $h(-\bm{s})=h(\bm{s})$ for all $\bm{s}$. Note that   $\bm{s}= -\bm{s}$ if and only if $\bm{s}= (0,0,0)$. Therefore
$\sum_{\bm{s}}h(\bm{s})$ has the same parity as $h(0,0,0)$.

(d) Assume that $\mu_{123}(\L)=2$, then by (c)  $h(0,0,0)$ is even as well. If $h(0,0,0)=0$ then $h(\bm{s})=0$ for all $\bm{s}$ by Lemma \ref{h-function list}\eqref{lem: h increases}, which is a contradiction. Therefore $h(0,0,0)\ge 2$, hence
$$
h( 1,0,0), h(0, 1,0), h(0,0, 1)\ge 1
$$
by Lemma \ref{h-function list}.  This implies $\mu^2_{123}=\sum_{\bm{s}}h(\bm{s})\ge 2+3=5,$ contradicting to the assumption.
\end{proof}

Note that Theorem \ref{Brunnianh}(a) and (b) provide the statement of Theorem \ref{triple squared} and Corollary \ref{unlink detection} in the introduction. We mention here an open problem suggested by the discussion above:
\begin{problem}
Do there exist examples of Brunnian L--space links with three components and large $\mu_{123}$?
\end{problem}

Let $\L=L_{1}\cup L_{2}\cdots \cup L_{n}$ be an oriented link in an integer homology sphere $Y$ with all pairwise linking numbers equal zero, and with framing $1/q_i$ on component $L_i$, for $q_i\in\Z$. Hoste \cite{Hoste:Casson} proved that the Casson invariant $\lambda$ of the integer homology sphere $Y_{1/q_1, \cdots, 1/q_n}(\L)$ satisfies a state sum formula,
\begin{equation}
\label{statesum}
	\lambda(Y_{1/q_1, \cdots, 1/q_n}(\L) ) = \lambda(Y) + \sum_{\L'\subset\L}\left( \prod_{i\in\L'}q_i \right) a_2(\L';Y), 
\end{equation}
where the sum is taken over all sublinks $\L'$ of $\L$. 

For example, let $\L=L_{1}\cup L_{2}\cup L_{3}$ be a three-component algebraically split link  in $S^{3}$ with framings $q_{i}=1$. Formula \eqref{statesum} simplifies to 
\begin{equation*}
\lambda(S^{3}_{1, 1, 1}(\L))=a_{2}(\L)+a_{2}(L_{12})+a_{2}(L_{13})+a_{2}(L_{23})+a_{2}(L_{1})+a_{2}(L_{2})+a_{2}(L_{3}),
\end{equation*}
where $L_{ij}=L_{i}\cup L_{j}$.  
Theorem \ref{Brunnianh}(a) immediately implies

\begin{corollary}
\label{casmu}
Assume that $\L$ is a Brunnian L-space link with three components. Then 
$$\lambda(S^{3}_{1, 1, 1}(\L))=\mu^{2}_{123}(\L)=\sum_{\bm{s}}h_{\L}(\bm{s}).$$
\end{corollary}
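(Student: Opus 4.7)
The plan is to combine Hoste's state sum formula \eqref{statesum} with Theorem~\ref{Brunnianh}(a). Since $\L$ is a three-component Brunnian link, every proper sublink of $\L$ is an unlink: each component $L_i$ is an unknot, and each two-component sublink $L_{ij}$ is a two-component unlink (the pairwise linking numbers vanish and any two of the three components form an unlink by the Brunnian hypothesis). Consequently the relevant Conway coefficients vanish: for an unknot $\nabla(z)=1$ so $a_2=0$, and for a two-component unlink $\nabla$ vanishes identically, so again $a_2=0$.

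Applied with $Y=S^3$ and $q_1=q_2=q_3=1$, the state sum formula \eqref{statesum} together with $\lambda(S^3)=0$ therefore collapses (as displayed in the paragraph preceding the corollary) to
\[
\lambda(S^3_{1,1,1}(\L))=a_2(\L)+\sum_{i<j}a_2(L_{ij})+\sum_{i}a_2(L_i)=a_2(\L).
\]

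Finally, Theorem~\ref{Brunnianh}(a) identifies $\mu_{123}^2(\L)=\sum_{\bm{s}}h_\L(\bm{s})$, and its proof shows that both of these quantities equal $a_2(\L)$ (with positive sign, using that $\mu_{123}^2$ and each $h_\L(\bm{s})$ are non-negative). Stringing the equalities together gives
\[
\lambda(S^3_{1,1,1}(\L))=a_2(\L)=\mu_{123}^2(\L)=\sum_{\bm{s}}h_\L(\bm{s}),
\]
as claimed. There is no real obstacle here: the substantive work has already been carried out in Cochran's identity $a_2(\L)=\pm\mu_{123}^2(\L)$, in Theorem~\ref{lem: Delta'} expressing the reduced Alexander polynomial via $h'$ and $\chi'$, and in the Brunnian L-space simplifications $h'=h$ and $\chi'\equiv 0$. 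The only new ingredient needed is the vanishing of $a_2$ on unknots and two-component unlinks, which follows directly from the definition of $a_2$ via the Conway polynomial.
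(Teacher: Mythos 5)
Your argument is correct and is essentially the paper's own: the paper specializes Hoste's state sum \eqref{statesum} to $(1,1,1)$-surgery, observes (implicitly) that all proper sublinks of a Brunnian link are unlinks so their $a_2$-contributions vanish, and then invokes Theorem~\ref{Brunnianh}(a) to identify $a_2(\L)$ with $\mu_{123}^2(\L)=\sum_{\bm{s}}h_{\L}(\bm{s})$. You have simply made explicit the vanishing of $a_2$ for the unknotted components and two-component unlinks, which the paper leaves unstated.
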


\begin{example}
Let $\L$ denote the Borromean rings, which are easily checked to form an L-space link.  Since $S^3_{1,1,1}(\L)$ is the Poincar\'e homology sphere, we see that $\lambda(S^3_{1,1,1}(\L)) = 1$.  This confirms the well-known calculation that $|\mu_{123}(\L)| = 1$.  From this, we also deduce that the $h$-function satisfies $h(0,0,0) = 1$ and vanishes elsewhere.
\end{example}


\section{Detection results}
\label{sec:detection}

In this section, we will apply the statements from the section above to show that for Brunnian links or algebraically split $L$-space links, sometimes information about surgery or Milnor invariants is sufficient for link detection. In Section~\ref{subsec:whitehead}, we focus on Whitehead link characterizations, in Section~\ref{subsec:borromean}, we focus on Borromean ring detections, and finally, in Section~\ref{subsec:morecomponents}, we discuss analogous results for links with more components.  Combining the results in these sections, we obtain proofs of Theorem \ref{Brunnian detection from Poincare surgery combined}, Theorem~\ref{Lspace link detection combined}, and Theorem \ref{whitehead or trefoil}.

Before we move to these statements, as a warm-up, we first determine which Brunnian links admit a rational surgery which is the three-sphere $S^{3}$. 

\begin{proposition}
\label{unlinksphere2}
Let   $\L=L_{1}\cup \cdots \cup L_{\ell}$ be an $\ell$-component  Brunnian link, and suppose that some surgery $S^{3}_{1/m_{1}, \cdots, 1/m_{\ell}}(\L)$ is the three sphere $S^{3}$, where all $m_i \neq 0$. Then $\L$ is the unlink.
\end{proposition}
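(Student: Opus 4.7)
My plan is to combine the vanishing of the Casson invariant of $S^3$ with Gordon--Luecke's theorem and an inductive Kirby-calculus argument on the number of components $\ell$. The Casson invariant gives a quick first consequence: since $\lambda(S^3_{1/m_1,\ldots,1/m_\ell}(\L))=\lambda(S^3)=0$, Hoste's state sum formula \eqref{statesum} combined with the Brunnian hypothesis (so every proper sublink is an unlink with $a_2=0$) collapses the entire sum to $\big(\prod_i m_i\big)\,a_2(\L)=0$. Since every $m_i\neq 0$, this forces $a_2(\L)=0$, which already rules out links like the Whitehead link (for $\ell=2$) and the Borromean rings (for $\ell=3$).

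For the base case $\ell=1$, Gordon--Luecke's theorem applies directly: any $1/m_1$-surgery on a knot producing $S^3$ with $m_1\neq 0$ forces the knot to be unknotted. For the inductive step with $\ell\geq 2$, I would perform $1/m_1$-surgery on the unknot $L_1$ (which returns $S^3$) and track the resulting link $\L'=L_2^{\mathrm{new}}\cup\cdots\cup L_\ell^{\mathrm{new}}$, obtained by $m_1$ full twists of $L_2,\ldots,L_\ell$ along a disk bounded by $L_1$. The key observation is that $\L'$ is Brunnian on $\ell-1$ components: for each $j\in\{2,\ldots,\ell\}$, the $(\ell-1)$-component sublink $\{L_1\}\cup(\L\setminus\{L_1,L_j\})$ of $\L$ is a proper sublink and hence an unlink, so $L_1$ is split from $\L\setminus\{L_1,L_j\}$ and a disk for $L_1$ can be chosen disjoint from these components. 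Thus the twist fixes $\L\setminus\{L_1,L_j\}$ setwise, so removing $L_j^{\mathrm{new}}$ from $\L'$ yields an unlink. Since $S^3_{1/m_2,\ldots,1/m_\ell}(\L')=S^3$, the inductive hypothesis identifies $\L'$ with the $(\ell-1)$-unlink.

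The main obstacle is the last step: passing from the unknottedness of $\L'$ back to the unknottedness of the original $\L$. Both $\L\setminus L_1$ (unlink by Brunnian-ness) and $\L'$ are $(\ell-1)$-unlinks related by a nonzero twist along the unknot $L_1$ with vanishing pairwise linking numbers, and to conclude that $\L$ itself is the $\ell$-unlink one must upgrade this to the statement that $L_1$ is split from $\L\setminus L_1$. I would close this by invoking a twist-equivalence result: if a nonzero twist of an unlink along an unknot with vanishing pairwise linking numbers again produces an unlink, then the unknot is split from the unlink. For a single component this is classical (follows essentially from Gordon--Luecke applied to the dual knot in the surgery), and the multi-component case follows by induction on the number of components of the unlink combined with the splitness argument on sublinks used above.
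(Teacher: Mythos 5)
Your overall strategy is genuinely different from the paper's: the paper first uses Gordon--Luecke (inductively on sublinks, exactly as you do) to reduce to the case $S^3_{1,\dots,1}(\L)=S^3$, but then it invokes the Heegaard Floer machinery --- $S^3_{1,\dots,1}(\L)=S^3$ implies $\L$ is an L-space link (Example \ref{ex:Lspacelinks}), and L-space links with such surgeries are unlinks by Corollary \ref{unlinksphere}, which rests on Proposition \ref{prop: L space sublink} (Gainullin's Dehn-surgery characterization of the unknot in L-spaces, $d$-invariant monotonicity, Gabai's solid-torus theorem). You are trying to replace all of that by classical three-manifold topology. Unfortunately there is a genuine gap.

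The Casson invariant observation is correct but is a dead end: for Brunnian $\L$, Hoste's formula does give $(\prod_i m_i)\,a_2(\L)=0$ and hence $a_2(\L)=0$, but $a_2(\L)=0$ does not imply $\L$ is the unlink without the L-space hypothesis you are trying to avoid. (For $\ell\ge 4$, $a_2(\L)\equiv 0$ for \emph{every} algebraically split link by Cochran's theorem; for $\ell=2,3$, $a_2(\L)=0$ only kills $\beta$ or $\mu_{123}$, which is far from detecting the unlink --- see Example~\ref{ex: bing}, where $a_2$ of two-component Brunnian sublinks can be nonzero, and more to the point there are nontrivial Brunnian links with vanishing $a_2$.) So this step rules out some links, as you note, but cannot close the argument.

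The real gap is in the final ``twist-equivalence result.'' Your inductive reduction correctly shows that $\L'=\L_{\text{twisted}}$ is an $(\ell-1)$-component unlink, and you must then conclude that $L_1$ is split from $\L\setminus L_1$, i.e.\ that the unknot $L_1$ lies in a ball in the genus-$(\ell-1)$ handlebody $S^3\setminus N(\L\setminus L_1)$. For $\ell-1=1$ this is indeed classical: one applies Gabai's ``surgery on knots in solid tori'' theorem to $L_1$ in the solid torus complement of $L_2$ and uses winding number zero to exclude the $0$- and $1$-bridge braid cases. But for $\ell-1\ge 2$ you are asking for the analogue of Gabai's theorem for knots in higher-genus handlebodies whose nontrivial surgery yields a handlebody, and no such theorem is available. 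Worse, your proposed induction (``combined with the splitness argument on sublinks used above'') only re-derives that $L_1$ is split from each proper sub-unlink $\L\setminus\{L_1,L_j\}$, which is exactly the Brunnian hypothesis you started with. Knowing that $L_1$ is split from every proper sub-unlink but wanting to conclude it is split from the full unlink $\L\setminus L_1$ is \emph{precisely} the content of the proposition; the Borromean rings illustrate why this step has real content (each component is split from each remaining single component, but not from the two-component sublink). So the final step is circular, and the multi-component twist-equivalence lemma remains unproved. To repair the argument in this framework one would need a theorem about surgeries on null-homologous knots in handlebodies yielding handlebodies, which is not in the classical literature; the paper circumvents the issue by feeding $S^3_{1,\dots,1}(\L)=S^3$ into the Heegaard Floer L-space link detection result.
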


\begin{proof}
We first prove that $S^{3}_{1, \cdots, 1}(\L)=S^{3}$. Let $L'_{\ell}$ be the image of $L_{\ell}$ in $S^{3}_{1/m_{1}, \cdots, 1/m_{\ell-1}}$ which is $S^{3}$ since $\L$ is Brunnian. Then $L'_{\ell}$ is the unknot by Gordon-Luecke \cite{GordonLuecke}. It is easy to see that 
$$S^{3}_{1/m_{1}, \cdots, 1/m_{\ell-1}, 1}(\L)=S^{3}_{1}(L'_{\ell})=S^{3}.$$
By repeating this argument, one can easily prove that $S^{3}_{1, \cdots, 1}(\L)=S^{3}$. By Example \ref{ex:Lspacelinks}, $\L$ is an L-space link.  Hence $\L$ is the unlink by Corollary \ref{unlinksphere}. \end{proof}

In the following sections, we will generalize to the case when $(1/m_1, \cdots, 1/m_\ell)$-framed Dehn surgery yields the Poincar\'e homology sphere. We will use the notation $\PHS=\Sigma(2,3,5)$ for the Poincar\'e homology sphere, oriented as the boundary of the positive-definite $E8$ plumbing, and $\Wh$ and $\B$ for the positive Whitehead and Borromean links, respectively. 

\subsection{Whitehead link detection}\label{subsec:whitehead}

This subsection is devoted to the detection of the Whitehead link. Proposition \ref{Whitehead by topology} informs and precedes the more general Proposition \ref{ifandonlyifwhite}, which gives the statement of Theorem \ref{Brunnian detection from Poincare surgery combined}(1) in the introduction. Corollary \ref{beta is 1} will give the statement of Theorem \ref{Lspace link detection combined}(1), and following this we prove Theorem \ref{whitehead or trefoil}.

\begin{proposition}
\label{Whitehead by topology}
Let $\L$ be a two-component Brunnian link.  If $S^3_{1,1}(\L)$ is the Poincar\'e homology sphere, then $\L$ is the positive Whitehead link $\Wh$.  
\end{proposition}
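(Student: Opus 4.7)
The plan is to reduce the two-component surgery to a knot surgery, invoke the Heegaard Floer detection of the trefoil from Poincar\'e surgery, and then promote this knot-level information to a link classification. Since $L_2$ is an unknot and $\lk(L_1,L_2)=0$, performing $+1$-surgery on $L_2$ produces $S^3$ and carries $L_1$ to some knot $L_1'\subset S^3$; because the linking number is zero, no framing shift occurs on $L_1'$, so
\[
S^3_{1,1}(\L)\cong S^3_{+1}(L_1')\cong +\Sigma(2,3,5).
\]

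Next, I would invoke the Heegaard Floer theoretic detection of the trefoil. Combining the Ozsv\'ath--Szab\'o mapping cone surgery formula with Ghiggini's theorem that the trefoil is detected by its knot Floer homology, one concludes that any knot $K\subset S^3$ with $S^{3}_{+1}(K)\cong +\Sigma(2,3,5)$ must be the left-handed trefoil $T(-2,3)$. Hence $L_1'\cong T(-2,3)$. Swapping the roles of $L_1$ and $L_2$ (the hypothesis is symmetric under relabeling), the same argument shows that the image $L_2'$ of $L_2$ in $S^{3}_{+1}(L_1)$ is also the left-handed trefoil.

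The final, most delicate step is to identify $\L$ itself as $\Wh$. View $L_1$ as a null-homologous knot in the solid torus $V=S^3\setminus\nu(L_2)$ which is trivial in $S^3$ but becomes $T(-2,3)$ after a $+1$-Dehn twist along a disk $D$ bounded by $L_2$; equivalently, $L_2$ is an unknotting crossing circle for $L_1'$. Since the trefoil has unknotting number one, any such twist must realize a single crossing change, forcing $|L_1\cap D|=2$, so that $D$ is an honest unknotting crossing disk for $L_1'$. The unknotting crossing disk for the trefoil is essentially unique up to isotopy in the trefoil exterior (a classical consequence of the trefoil being $2$-bridge and of the Seifert-fibered structure on its exterior). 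Hence the pair $(L_1',L_2)$ is isotopic to the standard crossing-disk configuration for $T(-2,3)$, and undoing the $+1$-surgery on $L_2$ identifies $\L$ with the Whitehead link; the orientation convention $\Sigma(2,3,5)=\partial(+E_8)$ distinguishes $\Wh$ from its mirror.

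The main obstacle is this last step: lifting the knot-level identifications $L_1'\cong L_2'\cong T(-2,3)$ to a classification of $\L$. A purely Floer-theoretic argument is unavailable since the hypothesis does not a priori make $\L$ an L-space link, so one must rely on $3$-manifold topology --- either the uniqueness of unknotting crossing disks for the trefoil as above, or alternatively a JSJ analysis of $S^{3}\setminus\nu(L_1\cup L_2)$ identifying it with the hyperbolic Whitehead-link complement from the surgery data, followed by a Gordon--Luecke-type uniqueness result for links with homeomorphic complements.
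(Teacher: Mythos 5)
Your first two steps are fine in substance: blowing down the unknotted component $L_2$ identifies $S^3_{1,1}(\L)$ with $+1$-surgery on a knot $L_1'\subset S^3$, and Ghiggini's theorem then forces $L_1'$ to be a trefoil (though with the paper's convention $\PHS=\partial(+E_8)$, a $+1$-surgery yielding $\PHS$ makes $L_1'$ the \emph{right}-handed trefoil, not $T(-2,3)$ --- a convention slip, not the real problem). The genuine gap is the step ``forcing $|L_1\cap D|=2$.'' The hypothesis $\lk(L_1,L_2)=0$ only says the \emph{algebraic} intersection of $L_1$ with a disk $D$ bounded by $L_2$ is zero; geometrically $L_1$ can meet $D$ in $2k$ points for any $k$. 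A $+1$-twist along such a disk is a full twist on $2k$ strands (a generalized crossing change), not a single crossing change, and the fact that the trefoil has unknotting number one puts no bound whatsoever on $k$. So the reduction of $L_2$ to an ``unknotting crossing circle'' is unjustified, and with it the appeal to uniqueness of the unknotting crossing disk (itself asserted without proof) collapses. This is exactly the hard part of the proposition; the paper gets around it by showing, via Ni's fibering/Thurston-norm theorem, that the image of $L_2$ in $S^3_n(L_1)$ is a genus-one fibered knot in $L(n,1)$ for \emph{all} integers $n$, invoking Baker's classification of genus-one fibered knots in lens spaces, and then running a geometric-limit/JSJ argument to identify the exterior $E^{\L}$ with $E^{\Wh}$ or $E^{\overline{\Wh}}$.

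Your fallback suggestion also has a flaw: there is no ``Gordon--Luecke-type uniqueness result for links,'' since links are \emph{not} determined by their exteriors. Indeed, the cores of $(1/m,1/n)$-surgeries on $\Wh$ that return $S^3$ give infinitely many links with the Whitehead exterior, which is precisely why the paper needs a further argument (its Lemma on links with Whitehead exterior) using the hypothesis $S^3_{1,1}(\L)=\PHS$ to single out $\Wh$ among them. Your instinct to use the surgery data at this point is correct, but that step is not carried out, and the JSJ identification of $E^{\L}$ with the Whitehead exterior from the surgery data alone is also not supplied. As written, the proof establishes only that both components become trefoils after blowing down the other component, which is far from characterizing $\L$.
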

\begin{proof}
First, since $\L$ is Brunnian, each of the components $L_1$ and $L_2$ are unknotted.  Therefore, $S^3_n(L_i) = L(n,1)$ for all integers $n$.  For notation, let $E^{\L}$ denote the exterior of $\L$, and let $T_1$ and $T_2$ denote the boundary components corresponding to $L_1$ and $L_2$ respectively.  Let $L_{2,(n)}$ denote the image of $L_2$ in $S^3_n(L_1)$ and $M^{\L}_{(n)}$ the exterior of $L_{2,(n)}$ in $S^3_n(L_1)$.  Note that $M^{\L}_{(n)}$ has one torus boundary component, obtained by Dehn filling $E^{\L}$ along $T_1$.  We also will study $M^{\L}_{(\infty)}$, which is just a solid torus, since $L_{2,(\infty)}$ is unknotted. Since $S^3_{1,1}(\L)$ is the Poincar\'e homology sphere, $L_{2,(1)}$ is the trefoil by \cite{Ghiggini}.  Therefore, $M^{\L}_{(1)}$ is the exterior of the trefoil in $S^3$, and hence is a fibered three-manifold.   

We first claim that for all $n \neq \infty$, $L_{2,(n)}$ is a genus one fibered knot.  From the above discussion, we see that there are two slopes $\alpha, \beta$ on the boundary component $T_1$ in $E^{\L}$ such that Dehn filling along $\alpha$ results in a fibered three-manifold and filling along $\beta$ lowers the Thurston norm.  (Here, with respect to the canonical meridian-longitude coordinates on $T_1$, $\alpha = +1$ and $\beta = \infty$.)  By \cite[Theorem 1.4]{Ni}, we see that the core $J$ of the $\alpha$-filling sits on the fiber surface $F$ of the trefoil in $M^{\L}_{(1)}$ and that the framing $\beta$ corresponds to the surface framing of $F$.  It follows that any surgery on $J$ which is distance one from $\beta$ is also a genus one fibered three-manifold. Indeed, one cuts along $F$ and reglues by some number of Dehn twists along $J$.  
In meridian-longitude coordinates, $J$ is the core of $+1$-surgery on $L_1$ in the exterior of $L_2$, $\beta$ corresponds to the $\infty$-filling of $T_1$, and integral $n$-filling along $L_1$ for any $n$ is distance one from $\beta$. 
Therefore each $M^{\L}_{(n)}$ is a fibered three-manifold with fiber having genus one and a single boundary component.  Since $L_{2,(n)}$ is nullhomologous, we see that it is a genus one fibered knot in $L(n,1)$.  

By \cite[Theorem 4.3]{Baker}, for $n \neq 4, \infty$, we see that $L_{2,(n)}$ belongs to one of exactly two isotopy classes of knots. These must be either $\Wh_{2,(n)}$ or $\overline{\Wh}_{2,(n)}$ because the arguments above imply that integral surgery on a single component of $\Wh$ and $\overline{\Wh}$ give genus one fibered knots.  To see these are distinct, note that $+1$-surgery on $\Wh_{2,(n)}$ gives $S^3_n(T_{2,3})$ whereas $+1$-surgery on $\overline{\Wh}_{2,(n)}$ gives $S^3_n(4_1)$.  

In other words, $M^{\L}_{(n)} = M^{\Wh}_{(n)}$ or $M^{\overline{\Wh}}_{(n)}$ for infinitely many $n$.  By Lemma~\ref{lem:geometric-limit} below, $E^{\L} = E^{\Wh}$ or $E^{\overline{\Wh}}$.  In principle, this does not yet imply that $\L$ is a Whitehead link, since such links are not determined by their exteriors.  However, Lemma~\ref{lem:whitehead-exterior} shows that the additional condition that $S^3_{1,1}(\L)$ being the Poincar\'e homology sphere implies $\L$ is in fact $\Wh$.  
\end{proof}

\begin{lemma}\label{lem:geometric-limit}
If $M^{\L}_{(n)} = M^{\Wh}_{(n)}$ for infinitely many $n$, then $E^{\L} = E^{\Wh}$ and similarly for $\overline{\Wh}$.  
\end{lemma}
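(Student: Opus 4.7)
The plan is to combine Thurston's hyperbolic Dehn surgery theorem with Mostow rigidity, converting the identification of the filled manifolds $M^{\L}_{(n)}\cong M^{\Wh}_{(n)}$ into an identification of the unfilled exteriors by drilling out the cores of the Dehn fillings. First I would verify that $E^{\L}$ is hyperbolic. Since $E^{\Wh}$ is a complete finite-volume hyperbolic two-cusped three-manifold, Thurston's theorem guarantees that $M^{\Wh}_{(n)}$ is hyperbolic for all but finitely many $n$, and that the core $\gamma_n$ of the $T_1$-filling is a closed geodesic whose length tends to $0$ as $|n|\to\infty$. The hypothesis that $M^{\L}_{(n)}=M^{\Wh}_{(n)}$ for infinitely many $n$ then forces infinitely many Dehn fillings of $E^{\L}$ along $T_1$ to be hyperbolic; by Geometrization together with standard exceptional surgery bounds, a compact orientable three-manifold with toroidal boundary admitting infinitely many hyperbolic Dehn fillings along a single boundary torus must itself be hyperbolic in its interior, so $E^{\L}$ is hyperbolic.

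Applying Thurston's theorem to both hyperbolic manifolds $E^{\L}$ and $E^{\Wh}$, I would then choose $n_0$ among the infinitely many values with $M^{\L}_{(n_0)}=M^{\Wh}_{(n_0)}$ large enough that $\gamma_{n_0}$ is shorter than the three-dimensional Margulis constant and is the \emph{unique} shortest closed geodesic in both $M^{\L}_{(n_0)}$ and $M^{\Wh}_{(n_0)}$. The uniqueness follows from the usual thick-thin decomposition: any geodesic lying outside the Margulis tube of $\gamma_{n_0}$ has length bounded below uniformly in $n$ (it survives, after bounded perturbation, as a geodesic in the geometric limit), while any geodesic entirely inside the tube is homotopic, hence equal, to a power of the core. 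By Mostow rigidity, the given homeomorphism $M^{\L}_{(n_0)}\cong M^{\Wh}_{(n_0)}$ is isotopic to an isometry, which must send the unique shortest geodesic to itself and therefore identify the two Dehn-filling cores. Drilling out $\gamma_{n_0}$ on each side then recovers $E^{\L}$ and $E^{\Wh}$, yielding the desired homeomorphism. The $\overline{\Wh}$ case is identical.

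The main obstacle I expect is the verification that $E^{\L}$ is hyperbolic: one has to rule out that $E^{\L}$ is reducible, toroidal, or Seifert fibered, using only the information that infinitely many fillings along one boundary torus are hyperbolic. A slightly cleaner alternative, which avoids singling out a good $n_0$, is to argue via pointed Gromov-Hausdorff convergence: once both $E^{\L}$ and $E^{\Wh}$ are hyperbolic, the sequences $M^{\L}_{(n)}$ and $M^{\Wh}_{(n)}$ converge geometrically to $E^{\L}$ and $E^{\Wh}$ respectively, and since these sequences coincide on an infinite subsequence their limits are isometric, hence diffeomorphic. Either route gives the result once the hyperbolicity of $E^{\L}$ is in hand.
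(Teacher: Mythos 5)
The second half of your argument, where hyperbolicity of $E^{\L}$ is already granted, is essentially the paper's: the paper identifies $E^{\L}$ and $E^{\Wh}$ as the common geometric limit of the sequence $M^{\L}_{(n)}=M^{\Wh}_{(n)}$, and your Margulis-tube/unique-shortest-geodesic argument combined with Mostow rigidity is a careful way of making that limit identification precise. The problem is the first half. The principle you invoke --- that a compact orientable three-manifold with torus boundary admitting infinitely many hyperbolic Dehn fillings along a fixed boundary torus must itself be hyperbolic --- is false, and no ``standard exceptional surgery bound'' yields it: finiteness of exceptional slopes is a theorem about manifolds that are already hyperbolic, not a characterization of them. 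For a counterexample, glue a cable space $X$ along one of its boundary tori to a cusp of a finite-volume hyperbolic manifold, arranged so that the Seifert fiber slope of $X$ on the remaining boundary torus is the $\infty$-slope of the filling coordinates. Every integral filling then replaces $X$ by a solid torus, so the filled manifolds are Dehn fillings of the hyperbolic piece and are hyperbolic for all but finitely many integers, yet the unfilled manifold is toroidal.

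This is exactly the configuration the paper must exclude by hand, and doing so uses the specific topology of $\L$, which your proposal never touches. In the paper's proof: if $E^{\L}$ is reducible then $\L$ is split, hence (being Brunnian) an unlink, whose fillings cannot equal the hyperbolic $M^{\Wh}_{(n)}$; if $E^{\L}$ is Seifert fibered, no filling along $T_1$ is hyperbolic; and in the toroidal case one analyzes the JSJ piece containing $T_1$ via \cite[Theorems 2.4.3, 2.4.4]{CGLS}, where the dangerous subcase is precisely a cable space whose fiber slope on $T_1$ is the $\infty$-slope. That subcase is killed by the fact that $M^{\L}_{(\infty)}$ is a solid torus (because $L_2$ is unknotted in the complement of $L_1$, i.e.\ $\L$ is Brunnian), whereas filling a cable space along its fiber slope produces a lens space summand. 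You correctly flag hyperbolicity of $E^{\L}$ as the crux, but the resolution you propose does not work, and without an argument of the above type --- which constitutes the bulk of the paper's proof --- the lemma is not established.
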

\begin{proof}
We do the case of $\Wh$.  The mirror is the same.

Suppose $E^{\L}$ is hyperbolic with two cusps. Then by Thurston's hyperbolic Dehn surgery theorem \cite{Thurston}, $M^{\L}_{(n)}$ is hyperbolic for all but finitely many $n$ with one cusp. In addition, $E^{\L}$ is the geometric limit of the sequence of complete hyperbolic manifolds $M^{\L}_{(n)}$ which is the same as $M^{\Wh}_{(n)}$. Hence, in the limit $E^{\L}=E^{\Wh}$.   

We next prove that $E^{\L}$ is hyperbolic. Suppose instead that $E^{\L}$ is not hyperbolic.  If $E^{\L}$ is reducible, then $\L$ is split, and so $\L$ is an unlink, and so $M^{\L}_{(n)} \neq M^{\Wh}_{(n)}$, contradiction.  Note that $E^{\L}$ is not Seifert, since otherwise $M^{\L}_{(n)}$ would not be hyperbolic for any $n$.  It follows that there is a non-boundary parallel incompressible torus (i.e. essential torus) in $E^{\L}$.  Consider the piece $X$ of the JSJ decomposition of $E^{\L}$ which contains $T_1$.  Let $T'$ denote a boundary component of $X$ which is not $T_1$ or $T_2$ and essential in $E^{\L}$.
 Then, $T'$ remains incompressible and non-boundary parallel in all but finitely many fillings of $X$ along $T_1$, unless $X$ is a cable space (i.e. a Seifert fibered space with base orbifold an annulus and exactly one cone point)  \cite[Theorem 2.4.4]{CGLS}.  If $X$ is not a cable space, then generically $M^{\L}_{(n)}$ has a non-boundary parallel incompressible torus. However,  $M^{\Wh}_{(n)}$ is hyperbolic for infinitely many $n$, and hence does not have a non-boundary parallel incompressible torus, contradiction.  If instead $X$ is a cable space, and infinitely many of the $n$-fillings of $T_1$ are not those that cause $T'$ to compress in $M^{\L}_{(n)}$, then again $M^{\L}_{(n)}$ has a non-boundary parallel incompressible torus for infinitely many $n$, while $M^{\Wh}_{(n)}$ does not, contradiction.  

Lastly, suppose that $X$ is a cable space, and that infinitely many integral $n$-slopes on $L_1$ are slopes on $X$ which cause $T'$ to compress.  The fiber slope on a boundary component of a cable space is uniquely characterized by the slope which is distance one from at least 3 different compressing slopes \cite[Theorem 2.4.3]{CGLS}.  This means that the $\infty$-slope on $L_1$ corresponds to the fiber slope $\phi$ on $X$.  
Recall that for a cable space, filling one boundary component along the fiber slope results in the connected sum of a solid torus and a non-trivial lens space. Let $X(\phi)$ denotes the Dehn filling of $T_{1}$ with the fiber slope $\phi$. Therefore, $M^{\L}_{(\infty)}$ contains $X(\phi)$ as a codimension zero submanifold  which has a lens space summand, but $M^{\L}_{(\infty)}$ is a solid torus, so this is a contradiction.
\end{proof}

\begin{lemma}\label{lem:whitehead-exterior}
If $\L$ is a two-component link in $S^3$ with $E^{\L} = E^{\Wh}$ or $E^{\overline{\Wh}}$ and $S^3_{+1,+1}(\L)$ is the Poincar\'e homology sphere, then $\L = \Wh$.  
\end{lemma}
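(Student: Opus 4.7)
My plan is to prove that $E^{\L}=E^{\Wh}$ together with the unknottedness of both components of $\L$ already forces $\L=\Wh$ (and symmetrically, $E^{\L}=E^{\overline{\Wh}}$ would force $\L=\overline{\Wh}$); the $(+1,+1)$-surgery hypothesis will then be invoked only to rule out the mirror case. The technical heart is to identify the meridian-longitude basis of $\L$ with that of $\Wh$, up to signs.

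First I will fix a (say, orientation-preserving) homeomorphism $\phi\colon E^{\L}\to E^{\Wh}$ and set $\alpha_i=\phi(\mu_i^{\L})$, $\beta_i=\phi(\lambda_i^{\L})$, viewed as slopes on the two boundary tori of $E^{\Wh}$. Because both $\L$ and $\Wh$ are algebraically split, $H_1(E^{\Wh})=\mathbb{Z}\mu_1^{\Wh}\oplus\mathbb{Z}\mu_2^{\Wh}$ and the longitudes $\lambda_i^{\Wh}$ are already nullhomologous there. Since $\lambda_i^{\L}$ becomes nullhomologous after Dehn filling $E^{\L}$ at $T_{3-i}$ with $\mu_{3-i}^{\L}$, a direct computation in $H_1$ of the corresponding filling of $E^{\Wh}$ forces $\beta_i=\pm\lambda_i^{\Wh}$; the basis condition on $(\alpha_i,\beta_i)$ then gives $\alpha_i=\pm\mu_i^{\Wh}+k_i\lambda_i^{\Wh}$ for some integers $k_i$.

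The key step is then to show $k_1=k_2=0$. Unknottedness of $L_i^{\L}$ in $S^3$ means that the Dehn filling of $E^{\Wh}$ along $T_{3-i}$ with slope $\alpha_{3-i}$ is a solid torus. Since $L_{3-i}^{\Wh}$ is itself unknotted, this filling is exactly $\pm 1/k_{3-i}$-surgery on an unknot in $S^3$ and so yields $S^3$, in which the image of $L_i^{\Wh}$ becomes a twist knot. By the classical fact (detectable by the Alexander polynomial) that these twist knots are nontrivial for every $k_{3-i}\neq 0$, unknottedness forces $k_1=k_2=0$. Therefore $\alpha_i=\pm\mu_i^{\Wh}$ and $\beta_i=\pm\lambda_i^{\Wh}$, and extending $\phi$ across the Dehn-filling solid tori produces a self-homeomorphism of $S^3$ carrying $\L$ to $\Wh$.

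If instead $E^{\L}=E^{\overline{\Wh}}$, the identical argument produces $\L=\overline{\Wh}$. But as recorded in the proof of Proposition~\ref{Whitehead by topology}, $(+1)$-surgery on one component of $\overline{\Wh}$ gives the figure-eight knot, so $S^3_{+1,+1}(\overline{\Wh})=S^3_{+1}(4_1)$, which is not the Poincar\'e homology sphere. This contradicts the hypothesis and rules out the mirror case, leaving $\L=\Wh$. The main potential obstacle is the twist-knot nontriviality claim, but this is classical.
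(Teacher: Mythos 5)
Your argument quietly imports a hypothesis that the lemma does not grant you: you state at the outset that you will prove ``$E^{\L}=E^{\Wh}$ together with the unknottedness of both components of $\L$ already forces $\L=\Wh$,'' and the step showing $k_1=k_2=0$ really does use that \emph{both} $L_1^{\L}$ and $L_2^{\L}$ are unknotted. But the lemma is stated for an arbitrary two-component link $\L$ with $E^{\L}=E^{\Wh}$ or $E^{\overline{\Wh}}$, and unknottedness of both components does \emph{not} follow from this. Concretely: take $\L'$ to be the core link of $(+1,\infty)$-surgery on $\Wh$. Then $E^{\L'}=E^{\Wh}$, one component of $\L'$ is unknotted (it is a surgery core of $+1$-surgery on the unknot $L_1^{\Wh}$), but the other component is the image of $L_2^{\Wh}$ after that surgery, i.e.\ the right-handed trefoil $K[2,-2]$. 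So your hypothesis fails for $\L'$. In your own notation, the knot-exterior condition together with the $S^3$-filling condition only forces \emph{one} of $k_1,k_2$ to vanish (this is exactly Gordon--Luecke applied to the twist knot), not both; the nonvanishing $k_i$ twists the other component into a nontrivial twist knot. So you cannot obtain $k_1=k_2=0$ without bringing in more information, and the remaining constraint is precisely the $(+1,+1)$-surgery hypothesis, which you declared you would invoke ``only to rule out the mirror case.''

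The paper's argument does not need unknottedness at all. It parametrizes $\L$ as the core of a $(1/m,1/n)$-surgery on $\Wh$ that returns $S^3$, uses the twist-knot description plus Gordon--Luecke to conclude one of $m,n$ is $0$ (equivalently, one of your $k_i$ vanishes), and then uses $\PHS = S^3_{1,1}(\L) = S^3_1(K[2(m+1),-2])$ and Ghiggini's trefoil characterization to pin down the other parameter. This is what you must do as well: after reducing (say) to $k_2=0$, identify $S^3_{1,1}(\L) = S^3_{1/(k_1+1),1}(\Wh)=S^3_1(K[2(k_1+1),-2])$ and invoke the Poincar\'e-sphere hypothesis to force $k_1=0$. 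Your setup (identifying $\beta_i=\pm\lambda_i^{\Wh}$ from homology, writing $\alpha_i=\pm\mu_i^{\Wh}+k_i\lambda_i^{\Wh}$, extending $\phi$ over the fillings) is all fine, and your treatment of the mirror case is also fine; the gap is solely in how you get the second $k_i$ to vanish. If you instead wish to keep your route, you would have to restate the lemma with the Brunnian hypothesis added — which is harmless in the paper's applications, since the lemma is only invoked for Brunnian $\L$, but it is not what the lemma as written asserts.
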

To explain why this lemma is necessary, notice that doing $1/n$-surgery on a single component of $\Wh$ yields $S^3$, and that the image of the other component is a twist knot $K[2n,-2]$. The core of that surgery, together with this twist knot, is a two-component link in $S^3$ with the same exterior as $\Wh$.  
\begin{proof}
We begin with the case that $E^{\L} = E^{\Wh}$.  Since $Wh$ has linking number 0, any $\L$ in $S^3$ with the same exterior can be described by the core of $(1/m,1/n)$-surgery on $\Wh$, where that surgery results in $S^3$.  Note that $S^3_{\frac{1}{m},\frac{1}{n}}(\Wh)$ is $1/m$-surgery on the twist knot $K[2n,-2]$ (see Figure \ref{fig:twistknot}), and thus either $1/m = \infty$ or $1/n = \infty$.  Without loss of generality, $1/n = \infty$.  Therefore, 
\[
\PHS= S^3_{1,1}(\L) = S^3_{\frac{1}{m+1}, 1}(Wh) = S^3_{\frac{1}{m+1}}(K[2,-2]) = S^3_1(K[2(m+1), -2]).
\]  
It follows that $K[2(m+1), -2]$ is the right-handed trefoil, and hence $m = 0$.  

We can repeat a similar argument with $\overline{\Wh}$.  In this case, we see that 
\[
\PHS = S^3_{1,1}(\L) = S^3_{\frac{1}{m+1}, 1}(\overline{\Wh}) = S^3_{\frac{1}{m+1}}(4_1) = S^3_1(4_1),
\]               
which is a contradiction.
\end{proof}

\begin{figure}
\centering
	\begin{tikzpicture}
        \node[anchor=south west,inner sep=0] at (0,0) {\includegraphics[width = 4cm]{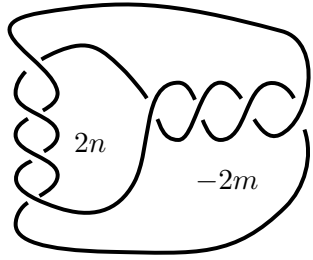}};
        \draw(1.1,1.5) node {$2n$};
        \draw(2.9,1) node {$-2m$};  									
	\end{tikzpicture}
\caption{The generalized twist knot $K[2n, -2m]$ is a two-bridge knot described by numerator closure of the rational tangle with continued fraction $[2n, -2m]$). For example, $K[2,-2]$ is the right-handed trefoil.}
\label{fig:twistknot}
\end{figure}

By an argument similar to the one in Proposition \ref{Whitehead by topology}, we now can characterize which rational surgeries on a Brunnian two-component link are the Poincar\'e homology sphere. 

\begin{proposition}
\label{ifandonlyifwhite}
Suppose that $\L$ is a two-component Brunnian link. Then $S^{3}_{1/m, 1/n}(\L)$ is the Poincar\'e homology sphere if and only if  $mn=1$  and $\L$ is the Whitehead link or its mirror. 
\end{proposition}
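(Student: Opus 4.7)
The plan is to reduce the general $(1/m,1/n)$-surgery assertion to the $(1,1)$ case already handled in Proposition~\ref{Whitehead by topology}, using two extra ingredients: the classification of knots in $S^{3}$ whose $1/n$-surgery is $\PHS$, and the Casson invariant combined with Cochran's formula for $a_{2}$ under twisting along an unknot. The ``if'' direction is then immediate: $S^{3}_{1,1}(\Wh)=\PHS$ is the computation behind Proposition~\ref{Whitehead by topology}, and $(-1,-1)$-surgery on $\overline{\Wh}$ yields $\PHS$ by mirroring.

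For the converse, suppose $S^{3}_{1/m,1/n}(\L)=\PHS$. Since $\L$ is Brunnian, both $L_{1}$ and $L_{2}$ are unknotted, so $S^{3}_{1/m}(L_{1})\cong S^{3}$. Let $L_{2}^{(m)}$ denote the image of $L_{2}$ in this copy of $S^{3}$; then $S^{3}_{1/n}(L_{2}^{(m)})=\PHS$. Since $\PHS$ is an L-space, $L_{2}^{(m)}$ must be an L-space knot (or the mirror of one). The L-space surgery inequality $p/q\geq 2g(K)-1$, combined with the Ozsv\'ath--Szab\'o classification showing that the unique genus-one L-space knot in $S^{3}$ is $T(2,3)$, forces $L_{2}^{(m)}=T(2,\pm 3)$ and $n=\pm 1$ with matching signs. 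Swapping the roles of $L_{1}$ and $L_{2}$ yields $L_{1}^{(n)}=T(2,\pm 3)$ and $m=\pm 1$.

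To pin down $mn$, I would apply Hoste's state sum formula~\eqref{statesum}, which for an algebraically split link with unknotted components reduces to $\lambda(\PHS)=mn\cdot a_{2}(\L)$, together with Cochran's twist formula $a_{2}(L_{2}^{(m)})=a_{2}(L_{2})+m\cdot a_{2}(\L)=m\cdot a_{2}(\L)$. Using $a_{2}(T(2,\pm 3))=1$ we obtain $a_{2}(\L)=m$, and the symmetric argument applied to $L_{1}^{(n)}$ gives $a_{2}(\L)=n$, so $m=n$ and hence $mn=1$. The case $(m,n)=(1,1)$ is then exactly the hypothesis of Proposition~\ref{Whitehead by topology} and identifies $\L=\Wh$, while $(m,n)=(-1,-1)$ reduces to the first case by mirroring both the link and the surgery, identifying $\L=\overline{\Wh}$. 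The main obstacle I expect is the L-space-knot reduction in the second paragraph: one must rule out higher-genus and cable L-space knots whose $1/n$-surgery might a priori be $\PHS$, while carefully tracking orientations so that both right- and left-handed trefoils are recovered according to the sign of $n$.
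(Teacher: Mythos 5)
Your proposal is correct and takes a genuinely different route from the paper's. Both arguments begin identically: blow down one component (unknotted since $\L$ is Brunnian) to get a knot $L_2^{(m)}$ in $S^3$ with $S^3_{1/n}(L_2^{(m)})=\PHS$, and invoke Ghiggini's theorem to conclude that $L_2^{(m)}$ is a trefoil and $n=\pm 1$ (and symmetrically $m=\pm 1$). The approaches then diverge. The paper treats the ``mixed-sign'' case $(m,n)=(1,-1)$ (and its reflection) by essentially re-running the geometric-topological machinery of Proposition~\ref{Whitehead by topology} and Lemma~\ref{lem:whitehead-exterior}: it shows the link exterior $E^{\L}$ must equal $E^{\Wh}$ or $E^{\overline{\Wh}}$ via the fibered-knot/JSJ/hyperbolic Dehn surgery argument, and then derives a contradiction from the twist-knot computation in Lemma~\ref{lem:whitehead-exterior}. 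You instead rule out the mixed-sign case algebraically: from two applications of Hoste's state-sum formula~\eqref{statesum}, one for $\L$ in $S^3$ and one for the knot $L_2^{(m)}$ in $S^3_{1/m}(L_1)\cong S^3$, one gets $mn\, a_2(\L) = n\, a_2(L_2^{(m)})$, hence $a_2(L_2^{(m)})=m\,a_2(\L)$ (your ``twist formula''); since $a_2(T(2,\pm 3))=1$, this gives $m\,a_2(\L)=1$ and, by symmetry, $n\,a_2(\L)=1$, whence $m=n$ immediately. This is shorter and avoids re-traversing the heavy geometry, at the cost of invoking Hoste's formula, which the paper does already use elsewhere (e.g.\ Corollary~\ref{casmu} and Theorem~\ref{whitehead or trefoil}), so it is no extra import. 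After that, both you and the paper reduce $(m,n)=(-1,-1)$ to $(1,1)$ by mirroring; both are a touch cavalier there about the orientation of $\PHS$ versus $-\PHS$, but the case can also be excluded outright since neither $S^3_{-1}(T(2,3))=\Sigma(2,3,7)$ nor $S^3_{-1}(T(2,-3))=-\PHS$ is $\PHS$, so this does not affect correctness. Overall your argument is sound and arguably cleaner.
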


\begin{proof}
The if part is easy. For the only if part, we first claim that if $S^{3}_{1/m, 1/n}(\L)$ is the Poincar\'e homology sphere, then $m=\pm 1$ and $n=\pm 1$. Suppose $\L=L_{1}\cup L_{2}$, and let $L'_{1}$ denote the image of $L_{1}$ in $S^{3}$ after blowing down $L_{2}$. Then $S^{3}_{1/m, 1/n}(\L)=S^{3}_{1/m}(L'_{1})= \pm \PHS$, which implies that $m=\pm 1$, since only a trefoil has a surgery to the Poincar\'e homology sphere, and the surgery coefficient is integral \cite{Ghiggini}.  A similar argument applies for $n$. If $m=n=1$, by Proposition \ref{Whitehead by topology}, $\L$ must be the Whitehead link. If $m=n=-1$, we have $S^{3}_{-1, -1}(\L)=S^{3}_{1, 1}(\bar{\L})$ where $\bar{\L}$ is the mirror of $\L$. Then $\bar{\L}$ must be the Whitehead link, and hence, $\L$ is the mirror of the Whitehead link. 

Without loss of generality, we assume that $m=1, n=-1$. By a similar argument as the one in Proposition \ref{Whitehead by topology}, the exterior of the link $E^{\L}=E^{Wh}$ or $E^{\overline{Wh}}$. However, this is impossible by applying the same argument as in Lemma \ref{lem:whitehead-exterior}. 
\end{proof}

We now transition to characterizing the Whitehead link among two-component algebraically split L-space links.  

\begin{lemma}
\label{lem:whitehead detection}
Let $\L=L_{1}\cup L_{2}$ be an algebraically split L-space link with the following $h$-function:
\begin{equation}
\label{eq: h whitehead}
h_{\L}(s_1,s_2)=\begin{cases}
1 & \text{if}\ s_1=s_2=0\\
0 & \text{otherwise}.
\end{cases}
\end{equation}
Then $\L$ is the Whitehead link.
\end{lemma}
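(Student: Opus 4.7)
The plan is to show that $S^3_{1,1}(\L)$ is the Poincar\'e homology sphere $\PHS$, and then invoke Proposition~\ref{Whitehead by topology} to conclude $\L = \Wh$. First I would establish that both components of $\L$ are unknotted. By Theorem~\ref{l-space link cond}\eqref{sublink} each $L_i$ is itself an L-space knot, and by Lemma~\ref{h-function list}\eqref{h-function bdy}, for $N$ sufficiently large $h_{\L}(s_1,N) = h_{L_1}(s_1)$ and $h_{\L}(N,s_2) = h_{L_2}(s_2)$. The hypothesis~\eqref{eq: h whitehead} forces both to vanish, so $h_{L_1} \equiv h_{L_2} \equiv 0$, and Lemma~\ref{unlink1} shows both components are unknots. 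Hence $\L$ is a Brunnian L-space link. Corollary~\ref{cor: Delta' Brunnian} then computes $\tilde{\Delta}'_{\L}(t_1,t_2) = -1$, giving
\[
\Delta_{\L}(t_1,t_2) = -(t_1^{1/2}-t_1^{-1/2})(t_2^{1/2}-t_2^{-1/2}),
\]
which is the Alexander polynomial of the Whitehead link (cf.~Example~\ref{wh H}).

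Next, since $L_2$ is unknotted, blowing it down with framing $+1$ identifies the ambient manifold with $S^3$ and sends $L_1$ to a knot $K \subset S^3$ with $S^3_1(K) = S^3_{1,1}(\L)$. The Manolescu--Ozsv\'ath link surgery formula, applied to the L-space link $\L$, produces a surgery complex whose quasi-isomorphism type depends only on the $H$-function and the framing matrix. Since both agree with those of the Whitehead link, we obtain $HF^{-}(S^3_{1,1}(\L)) \cong HF^{-}(\PHS)$, so $S^3_1(K)$ is an integer homology sphere L-space and $K$ is an L-space knot. The same rigidity identifies the knot Floer homology of $K$ with that of $T(2,3)$; in particular $\Delta_K(t) = t - 1 + t^{-1}$ (alternatively, computed directly from Hoste's Conway polynomial surgery formula together with $a_2(\L) = \pm 1$ from Example~\ref{ex: 2comp}) and $g(K) = 1$.

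By Ni's theorem, L-space knots are fibered, so $K$ is a fibered knot of genus one in $S^3$. The classification of genus-one fibered knots in $S^3$ lists $T(2,3)$, its mirror, and the figure-eight. The figure-eight is ruled out by its Alexander polynomial $-t + 3 - t^{-1}$, and the left-handed trefoil is ruled out by the L-space knot condition (it admits no positive L-space surgery). Therefore $K = T(2,3)$ and $S^3_{1,1}(\L) = S^3_1(T(2,3)) = \PHS$, and Proposition~\ref{Whitehead by topology} finishes the argument. The main obstacle is verifying that the knot Floer homology of $K$ is pinned down by the $h$-function of $\L$; this is a rigidity statement for the link surgery complex of an L-space link, whose careful invocation is the most delicate ingredient.
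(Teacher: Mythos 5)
Your proof is correct and follows the same basic strategy as the paper: establish that both components are unknots, show $S^3_{1,1}(\L)$ is the Poincar\'e homology sphere, and then invoke Proposition~\ref{Whitehead by topology}. The paper's route to $\PHS$ is more economical, however. It cites the truncation result from \cite{GLM} to show that the surgery complex for $S^3_{1,1}(\L)$ reduces to a single vertex $\A^-_{00}(\L)$, so $S^3_{1,1}(\L)$ is an L-space with $d$-invariant $-2H(0,0)=-2$. From there, the image knot (say $L'_2 \subset S^3_1(L_1) = S^3$) is an L-space knot with $+1$ L-space surgery, forcing $1 \geq 2g(L'_2)-1$, hence $g(L'_2) \leq 1$; since $d=-2\neq 0$ rules out the unknot, $g(L'_2)=1$ and $L'_2 = T(2,3)$. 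This avoids your detour through Ni's fiberedness theorem and the classification of genus-one fibered knots in $S^3$, which, while correct, is more machinery than is needed once the genus bound and $d$-invariant are in hand. Your step ``the same rigidity identifies the knot Floer homology of $K$ with that of $T(2,3)$'' is the delicate claim you correctly flag at the end; it is doing more work than necessary, since all that is needed from the surgery complex is that $S^3_{1,1}(\L)$ is an L-space with $d=-2$, and for that the paper appeals to the specific truncation lemma of \cite{GLM} rather than a general rigidity theorem for L-space link surgery complexes. Your preliminary computation of $\widetilde{\Delta}'_\L$ via Corollary~\ref{cor: Delta' Brunnian} is a valid sanity check but is not used in the final argument.
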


\begin{proof}
Let us prove that $S^{3}_{1,1}(\L)$ is the Poincar\'e homology sphere. Let $L'_{2}$ be the image of $L_2$ in $S^3_{1}(L_1)$, so that
$S^3_{1,1}(\L)=S^3_{1}(L'_2)$.

Since the $h$-function of $\L$ is given by \eqref{eq: h whitehead}, the link surgery complex for $S^{3}_{1,1}(\L)$  can be truncated so that it contains only $\A^{-}_{00}(\L)$ \cite{GLM} . Therefore $S^{3}_{1}(L'_{2})=S^3_{1,1}(\L)$ is an L-space with $d$-invariant equal to $-2H(0,0) = -2$. 
The knot $L'_{2}$ is an L-space knot and the L-space surgery coefficient of $+1$ satisfies $1\geq 2g(L'_{2})-1$. Because the surgery $S^{3}_{1}(L'_{2})$ is not $S^3$, we must have the genus of $L'_{2}$ is exactly 1, and so $L'_{2}$ is the right-handed trefoil. 
This also shows $S^3_{1,1}(\L)$ is the Poincar\'e homology sphere.  Finally, since the unknot is determined by its $h$-function among L-space knots, Lemma~\ref{h-function list}\eqref{h-function bdy} implies that the components of $\L$ are unknotted.  
Now the statement follows from Proposition \ref{Whitehead by topology}.
\end{proof}

\begin{corollary}
\label{beta is 1}
Suppose that $\L$ is a two-component L--space link with unknotted components and $\beta(\L)=\pm 1$. 
Then $\L$ is the Whitehead link. 
\end{corollary}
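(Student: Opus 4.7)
The plan is to reduce the hypothesis $\beta(\L) = \pm 1$ to the hypothesis of Lemma \ref{lem:whitehead detection}, namely that the $h$-function of $\L$ takes value $1$ at the origin and vanishes elsewhere. Once this reduction is accomplished, applying Lemma \ref{lem:whitehead detection} immediately identifies $\L$ as the Whitehead link.

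First I would invoke the Sato-Levine computation from Example \ref{ex: 2comp}, giving the identity
\[
\beta(\L) = \pm a_{2}(\L) = \pm \sum_{\bm{s}} \bigl(\chi'(\bm{s}) - h'(\bm{s})\bigr).
\]
Since $\L$ is assumed to be an L--space link, Theorem \ref{l-space link cond}\eqref{no torsion} forces $\A^{-}_{tor}(\L,\bm{s}) = 0$ for all $\bm{s}$, and the same is true for the sublinks $L_{1}$ and $L_{2}$ since they are unknots (note that sublinks of L-space links are L-space links by Theorem \ref{l-space link cond}\eqref{sublink}). Thus $\chi'(\bm{s}) \equiv 0$. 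Moreover, the unknotted components of $\L$ together with the algebraically split condition imply $\L$ is Brunnian, so by Example \ref{ex:hbrun} we have $h'(\bm{s}) = h(\bm{s})$. Combining these observations with the hypothesis $\beta(\L) = \pm 1$ yields
\[
\sum_{\bm{s} \in \Z^{2}} h(\bm{s}) = 1.
\]

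Next I would use the general properties of $h$ to conclude that this forces $h(0,0) = 1$ and $h(\bm{s}) = 0$ otherwise. By Lemma \ref{h-function list}\eqref{h nonnegative}, $h(\bm{s}) \geq 0$ for every $\bm{s}$, so the displayed equation implies there is a unique lattice point $\bm{s}_{0}$ with $h(\bm{s}_{0}) = 1$ and $h$ vanishes elsewhere. By Lemma \ref{h-function list}\eqref{h-symmetry} we have $h(-\bm{s}_{0}) = h(\bm{s}_{0}) = 1$, so uniqueness gives $\bm{s}_{0} = -\bm{s}_{0}$, i.e.\ $\bm{s}_{0} = (0,0)$. This is exactly the $h$-function appearing in \eqref{eq: h whitehead}.

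Finally, Lemma \ref{lem:whitehead detection} identifies $\L$ with the Whitehead link, completing the proof. The only place where care is needed is in ensuring the vanishing of the torsion contribution $\chi'$, which rests on invoking the L--space sublink property from Theorem \ref{l-space link cond}\eqref{sublink}; apart from that, the argument is a direct chain of lemmas already established.
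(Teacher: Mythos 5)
Your proposal is correct and follows essentially the same route as the paper: identify $\pm\beta(\L)$ with $\sum_{\bm{s}}h(\bm{s})$ (the L--space condition killing $\chi'$ and the Brunnian condition giving $h'=h$), use nonnegativity and the symmetry $h(-\bm{s})=h(\bm{s})$ to pin down the $h$-function of \eqref{eq: h whitehead}, and conclude via Lemma \ref{lem:whitehead detection}. Your write-up merely makes explicit the justification of $\sum_{\bm{s}}h(\bm{s})=\pm\beta(\L)$ that the paper states in one line.
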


\begin{proof}
Since $\L$ is an L--space link with unknotted components, we have $\sum_{\bm{s}} h(\bm{s})=\pm \beta(\L)=\pm 1$.
Since $h(\bm{s})\ge 0$ and $h(-\bm{s})=h(\bm{s})$, the only possible $h$-function is given by \eqref{eq: h whitehead}. Hence, $\L$ is the Whitehead link by Lemma \ref{lem:whitehead detection}. 
\end{proof}

\begin{reptheorem}{whitehead or trefoil}
Let $\L=L_{1}\cup L_{2}$ be an algebraically split L-space link such that $S^{3}_{1, 1}(\L)$ is the Poincar\'e homology sphere. Then $\L$ is either the Whitehead link or the split union of $T(2, 3)$ and the unknot.
\end{reptheorem}

\begin{proof}
By \cite[Proposition 5.6]{GLM} we have that either $S^3_1(L_1)$ or $S^3_1(L_2)$ is an L--space. Without loss of generality, in the remainder of the proof we assume that $S^3_1(L_1)$ is an L--space. Then $L_1$ is an L--space knot of genus 0 or 1, so it is either unknotted or the right-handed trefoil.

{\bf Case 1:} $L_1$ is an unknot. In this case we can blow it down and obtain a knot $L'_2$ such that $S^3_{1,1}(\L)=S^3_1(L'_2)$. This means that $S^3_1(L'_2)$ is the Poincar\'e homology sphere.  So $L'_{2}$ is $T(2,3)$.
 
By \cite[Theorem 4.8]{GLM} the $H$-function for $L'_2$ equals $H_{\L}(0,s_2)$, so 
\[
h_{\L}(0,s_2)=h_{T(2,3)}(s_2)=\begin{cases}
1 & \text{if}\ s_2=0\\
0 & \text{otherwise}.\\
\end{cases}
\]
By Lemma \ref{h-function  list}\eqref{lem: h increases} we get $h_{\L}(N,s_2)=0$ for all $N\ge 0$ and $s_2\neq 0$, and hence
$h_2(s_2)=0$ for $s_2\neq 0$. Since $L_2$ is an L--space knot, this implies the genus is at most one, and hence it is either unknotted or $T(2,3)$.

\begin{figure}[ht]
\begin{tikzpicture}
\draw (0,0) node {0};
\draw (0,2.5) node {0};
\draw (0,3.5) node {0};
\draw (0,3) node {1};
\draw (0,6) node {0};
\draw (0,1.5) node {$\vdots$};
\draw (0,4.5) node {$\vdots$};
\draw (1.5,0) node {$\cdots$};
\draw (1.5,3) node {$\cdots$};
\draw (1.5,6) node {$\cdots$};
\draw (3,0) node {0};
\draw (3,2.5) node {0};
\draw (3,3) node {1};
\draw (3,1.5) node {$\vdots$};
\draw (3,4.5) node {$\vdots$};
\draw (3,3.5) node {0};
\draw (3,6) node {0};
\draw (4.5,0) node {$\cdots$};
\draw (4.5,3) node {$\cdots$};
\draw (4.5,6) node {$\cdots$};
\draw (6,0) node {0};
\draw (6,3) node {1};
\draw (6,6) node {0};
\draw (6,1.5) node {$\vdots$};
\draw (6,4.5) node {$\vdots$};
\draw (6,2.5) node {0};
\draw (6,3.5) node {0};
\draw [dotted,->] (-0.5,3)--(6.5,3);
\draw [dotted,->] (3,-0.5)--(3,6.5);
\end{tikzpicture}
\caption{The $h$-function for $O\sqcup T(2,3)$}
\label{fig: h unknot trefoil}
\end{figure}
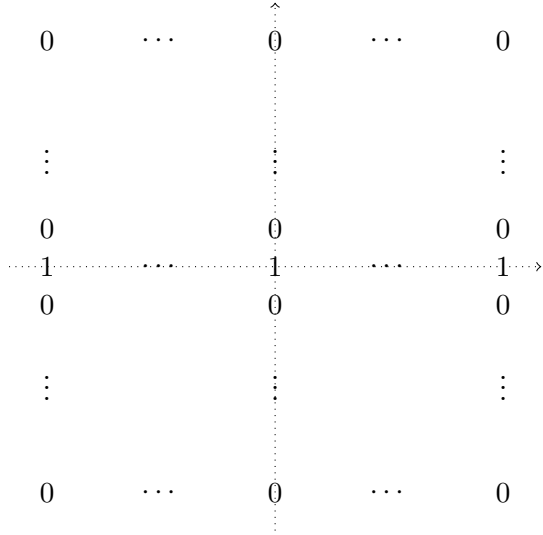

If $L_2$ is unknotted, then $\L$ is the Whitehead link by Proposition \ref{Whitehead by topology}.

If $L_2$ is $T(2,3)$, then $h_2(0)=h_{\L}(N,0)=1$ for all $N>0$, and Lemma \ref{h-function list}\eqref{h-symmetry}
implies that $h_{\L}(s_1,s_2)$ coincides with the $h$-function of the disjoint union of $O\sqcup T(2,3)$ (see Figure \ref{fig: h unknot trefoil}).
By the main result of \cite{Beibei3} the Thurston polytope of $\L$ is the same as the Thurston polytope of 
$O\sqcup T(2,3)$. In particular, $L_1$ bounds a disk not intersecting $L_2$, and hence $\L=O\sqcup T(2,3)$.

{\bf Case 2:} $L_1$ is the trefoil. Let $L'_2$ be the knot corresponding to $L_2$ in $S^3_1(L_1)=\PHS$.
Since $+1$-framed surgery along $L_2'\subset \PHS$ yields $\PHS$, by the Dehn surgery characterization of the unknot in an integer homology sphere L-space \cite{KMOS, Gainullin}, $L_2'$ is an unknot. 

Now for all $d$ we have $S^3_{1,d}(\L)=\PHS_d(L'_2)=S^3_{1,d}(T(2,3)\sqcup O).$ Similarly to \cite[Theorem 4.8]{GLM} we conclude that 
\[
h_{\L}(0,s_2)=h_{T(2,3)\sqcup O}(0,s_2)=1
\]
as in Figure \ref{fig: h hypothetical}. 

Now we claim that $L_{2}$ is the unknot. Otherwise, suppose that $g(L_{2})\geq 1$. 
By Lemma \ref{h-function  list}\eqref{lem: h increases}, 
$h_2(s_{2})=h_{\L}(N, s_{2})\leq h_{\L}(0, s_{2})=1.$
More precisely, $h_2(s_{2})=0$ for all $|s_{2}|\geq g(L_{2})$ and $h_2(s_{2})=1$ otherwise as in Figure \ref{fig: h hypothetical}. 

Recall that $h'(s_{1}, s_{2})=h_{\L}(s_{1}, s_{2})-h_{1}(s_{1})-h_{2}(s_{2})$. Since $L_{1}$ is the trefoil, $h_{1}(s_{1})=0$ for all $s_{1}\neq 0$.  By a similar argument to the  one in Lemma \ref{h-function  list}\eqref{lem: h increases}, one can prove that $h'(\bm{s}-\bm{e}_{1})\geq  h'(\bm{s})$  for all $s_{1}>1$. Note that $h'(\infty, s_{2})=0$ for all $s_{2}\in \Z$. Then $h'(s_{1}, s_{2})\geq 0$ for all $s_{1}\geq1$. By Lemma \ref{h-function list}\eqref{h-symmetry}, $h'(s_{1}, s_{2})\geq 0$ for all $|s_{1}|\geq 1$.  

By \eqref{statesum}, 
$$1=\lambda(S^{3}_{1, 1}(\L))=a_{2}(\L)+a_{2}(L_{1})+a_{2}(L_{2}). $$

Note that $a_{2}(L_{1})=\sum h_1(s_{1})=1$, $a_{2}(L_{2})=\sum h_2(s_{2})=2g(L_{2})-1$. Then $a_{2}(\L)=\sum h'_{\L}(\bm{s})=1-2g(L_{2})<0$. Observe that (see Figure \ref{fig: h hypothetical})
$$\sum_{s_{2}\in \Z} h'_{\L}(0, s_{2})=1-2g(L_{2})=\sum_{\bm{s}\in \H(\L)} h'_{\L}(s_{1}, s_{2}),$$
and $h'_\L(s_{1}, s_{2})\geq 0$ for all $|s_{1}|\geq 1$.
Hence $h'_{\L}(s_{1}, s_{2})=0$ for all $s_{1}\neq 0$, indicating that $h_{\L}(s_{1}, s_{2})=h_{1}(s_{1})+h_{2}(s_{2})$ for all $s_{1}\neq 0$.  Note that $h_{1}(s_{1})=0$ for all $s_{1}\neq 0$. This implies that $h_{\L}(s_{1}, s_{2})=h_{2}(s_{2})$ for all $s_{1}\geq 1$, and the $h$-function has the form as in Figure \ref{fig: h hypothetical}. 

\begin{figure}[ht]
\begin{tikzpicture}
\draw (0,0) node {0};
\draw (0,2.5) node {1};
\draw (0,3.5) node {1};
\draw (0,2) node {0};
\draw (0,4) node {0};
\draw (0,3) node {\vdots};
\draw (0,6) node {0};
\draw (0,1.5) node {$\vdots$};
\draw (0,4.5) node {$\vdots$};
\draw (1.5,0) node {$\cdots$};
\draw (1.5,3) node {$\cdots$};
\draw (1.5,6) node {$\cdots$};
\draw (3,0) node {1};
\draw (3,2) node {1};
\draw (3,2.5) node {1};
\draw (3,3) node {\vdots};
\draw (3,1.5) node {$\vdots$};
\draw (3,4.5) node {$\vdots$};
\draw (3,3.5) node {1};
\draw (3,4) node {1};
\draw (3,6) node {1};
\draw (4.5,0) node {$\cdots$};
\draw (4.5,3) node {$\cdots$};
\draw (4.5,6) node {$\cdots$};
\draw (6,0) node {0};
\draw (6,3) node {$\vdots$};
\draw (6,6) node {0};
\draw (6,1.5) node {$\vdots$};
\draw (6,4.5) node {$\vdots$};
\draw (6,2.5) node {1};
\draw (6,3.5) node {1};
\draw (6,2) node {0};
\draw (6,4) node {0};
\draw [dotted,->] (-0.5,3)--(6.5,3);
\draw [dotted,->] (3,-0.5)--(3,6.5);
\draw [dotted,->] (7,3.5)--(6.2,3.5);
\draw [dotted,->] (7,2.5)--(6.2,2.5);
\draw (7.5,3.5) node {\scriptsize{$g(L_2)-1$}};
\draw (7.5,2.5) node {\scriptsize{$1-g(L_2)$}};
\draw (2.5,0) node {0};
\draw (2.5,2) node {0};
\draw (2.5,2.5) node {1};
\draw (2.5,3) node {$\vdots$};
\draw (2.5,3.5) node {1};
\draw (2.5,4) node {0};
\draw (2.5,6) node {0};
\draw (3.5,0) node {0};
\draw (3.5,2) node {0};
\draw (3.5,2.5) node {1};
\draw (3.5,3) node {$\vdots$};
\draw (3.5,3.5) node {1};
\draw (3.5,4) node {0};
\draw (3.5,6) node {0};
\end{tikzpicture}
\caption{Hypothetical $h$-function for Case 2}
\label{fig: h hypothetical}
\end{figure}

As in \cite[Section 5]{GLM} we may define 
\[
	b_{1}=\min \{s_{1}-1 \mid H_{\L}(s_{1}, s_{2})=H_2(s_{2}) \text{ for all } s_2\}.
\]
Clearly, from Figure \ref{fig: h hypothetical} we have $b_1=0$. It is proved in \cite[Proposition 4.7]{Liu19} that (under some assumptions on the $h$-function which are satisfied in this case) if 
$S^3_{d_1,d_2}(\L)$ is an L--space for $d_1>2b_1$ and $d_2\ll 0$ then $L_2$ is the unknot.
Since $(1, d)$-surgery on $\L$ yields $\PHS\# L(d, 1)$, which is an L-space for any nonzero integer $d$,  $L_{2}$ is the unknot. Hence, by the same argument as the one in Case 1, $\L$ is the disjoint union of the unknot and $T(2, 3)$. 
\end{proof}

\subsection{Borromean link detection}\label{subsec:borromean}
In this subsection, Proposition \ref{Borromean by topology} will inform the more general Proposition \ref{thm:rational} which corresponds with Theorem \ref{Brunnian detection from Poincare surgery combined}(2) in the introduction. Proposition \ref{borromean} will give the statement of Theorem \ref{Lspace link detection combined}(2).

\begin{proposition}
\label{Borromean by topology}
Let $\L$ be a three-component Brunnian link.  Then if $S^3_{1,1,1}(\L)$ is the Poincar\'e homology sphere, then $\L$ is the Borromean rings $\B$.  
\end{proposition}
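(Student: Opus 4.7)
The strategy is to mimic the proof of Proposition \ref{Whitehead by topology} one level deeper, reducing the three-component case to the already-proved two-component case via surgery on a single component. Since $\L$ is Brunnian, each pair $L_i \cup L_j$ is an unlink and hence bounds disjoint embedded disks. After $+1$-surgery on $L_1$ — which yields $S^3$ because $L_1$ is unknotted — each of the images $L_2'$ and $L_3'$ still bounds the corresponding disk from the original $S^3$ (arranged to be disjoint from $L_1$), so both remain unknotted. Their pairwise linking number is preserved and equals zero, and
\[
S^3_{1,1}(L_2' \cup L_3') = S^3_{1,1,1}(\L) = \PHS.
\]
By Proposition \ref{Whitehead by topology}, the link $L_2' \cup L_3'$ is the positive Whitehead link $\Wh$. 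By symmetry, the same conclusion holds after $+1$-surgery on $L_2$ or $L_3$.

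Next, I would upgrade this to an identification of exteriors $E^\L \cong E^\B$ via an iterated geometric-limit argument analogous to Lemma \ref{lem:geometric-limit}. Filling the boundary torus $T_1$ of $E^\L$ with slope $+1$ produces the Whitehead link exterior, and the same is true for $E^\B$. Now further fill $T_2$ with integral slope $n_2$: the resulting knot exterior $M^\L_{(1,n_2)}$ lives in $L(n_2,1)$ and, by the Ni–Baker argument used in Proposition \ref{Whitehead by topology}, is a genus-one fibered knot exterior, hence matches $M^\B_{(1,n_2)}$ for infinitely many $n_2$. Thurston's hyperbolic Dehn surgery theorem, applied to the two-cusped manifold obtained by $(+1)$-filling $T_1$, then forces the two-cusped exteriors $E^\L(+1,\cdot,\cdot)$ and $E^\B(+1,\cdot,\cdot)$ to coincide. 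A second application of the same Ni–Baker–Thurston machinery, now varying the slope on $T_1$, identifies the three-cusped exteriors $E^\L$ and $E^\B$.

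Finally, as in Lemma \ref{lem:whitehead-exterior}, I would promote this homeomorphism of exteriors to an isotopy of links. Since the Borromean rings have vanishing pairwise linking numbers, any three-component link $\L \subset S^3$ with $E^\L \cong E^\B$ arises from $\B$ by $(1/m_1,1/m_2,1/m_3)$-surgery on $\B$ yielding $S^3$, for some integers $m_i$. Classifying such surgeries and then imposing $S^3_{1,1,1}(\L) = \PHS$ should pin down $(m_1,m_2,m_3) = (0,0,0)$, forcing $\L = \B$. The principal obstacle is this iterated Ni–Baker–Thurston step together with the final cleanup: in the two-component case one had Baker's explicit classification of genus-one fibered knots in $L(n,1)$, but here we must control a two-parameter family of Dehn fillings simultaneously, track orientations carefully to rule out mirrors at each stage, and complete the link-from-exterior classification of surgeries on $\B$ yielding $S^3$ (a three-component analogue of Proposition \ref{ifandonlyifwhite} that appears nontrivial).
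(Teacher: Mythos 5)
Your opening reduction is correct and, in fact, is also used in the paper's proof: since $L_1 \cup L_2$ and $L_1 \cup L_3$ are unlinks, $L_2$ and $L_3$ bound disjointly embedded disks in the complement of $L_1$, so after $+1$-surgery on $L_1$ (which is $S^3$) the image $L_2' \cup L_3'$ is a two-component Brunnian link with $S^3_{1,1}(L_2'\cup L_3') = \PHS$; Proposition \ref{Whitehead by topology} then identifies $L_2' \cup L_3'$ with $\Wh$. Your final cleanup step also matches the paper's Lemma \ref{lem:borromean-exterior}, which classifies $(1/m_1,1/m_2,1/m_3)$-surgeries on $\B$ yielding $S^3$ using Proposition \ref{unlinksphere2} together with Proposition \ref{ifandonlyifwhite} — this turns out to be routine, not a serious obstacle.

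The genuine gap is the middle step, and your exposition there is partly circular. After the first paragraph you already know $E^{\L}(+1) = E^{\Wh} = E^{\B}(+1)$, so the subsequent sentences about filling $T_2$ and applying Thurston with $T_1$ fixed at $+1$ establish nothing new — the one-cusped manifolds $M^{\L}_{(1),(n_2)}$ automatically agree with $M^{\B}_{(1),(n_2)}$, and letting $n_2\to\infty$ only recovers the two-cusped $E^{\Wh}$, not the three-cusped $E^{\L}$. To run the geometric-limit argument on $E^{\L}$ itself you need $M^{\L}_{(m),(n)} = M^{\B}_{(m),(n)}$ for infinitely many pairs with \emph{both} slopes varying, and this cannot be achieved by varying $T_1$ alone (that would compare two-component link exteriors in $L(n_1,1)$, for which you have no classification). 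The paper resolves this by applying \cite[Theorem 1.4]{Ni} twice to see that \emph{both} cores $J_1$ and $J_2$ sit on the fiber surface of the trefoil exterior $M^{\L}_{(1),(1)}$, so that all integral $(m,n)$-fillings remain fibered of genus one; it then invokes Baker's classification of genus one fibered knots via branched double covers, applied to $L(m,1)\#L(n,1)$ as the double cover of $T(2,m)\#T(2,n)$ (using Hodgson--Rubinstein and the equivariant sphere theorem to pin down the base knot, and Baker's Lemma 3.6 to get uniqueness of the braid axis). You flag this two-parameter control as the ``principal obstacle,'' and you are right that it is — but your proposal does not actually address it, so as written the proof is incomplete at precisely the step where new input is required.
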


\begin{proof}
Because $L_1\cup L_2\cup L_3$ is Brunnian, all proper two-component sublinks $L_i\cup L_j$ are unlinks and $S^3_{m,n}(L_i\cup L_j) = L(m,1) \# L(n, 1)$ for all integers $m, n$. We will use notation similar to that of Proposition \ref{Whitehead by topology}. Let $E^{\L}$ denote the exterior of $\L$, and $T_i$ the boundary component corresponding to $L_i$. We write $L_{3, (m), (n)}$ to denote the image of $L_3$ in $S^3_{m,n}(L_1\cup L_2) = L(m,1) \# L(n, 1)$, and write $M^{\L}_{(n), (m)}$ for the exterior of $L_{3, (m), (n)}$ in $L(m,1) \# L(n, 1)$. We also write $L_{3,2,(n)}$ to denote the image of $L_2\cup L_3$ in $S^3_{n}(L_1)$. We similarly have that $M^{\L}_{(\infty),(\infty)}$ is a solid torus. Because $S^3_{1,1,1}(\L)$ is the Poincar\'e homology sphere, $L_{3,(1),(1)}$ is the trefoil knot and $M^{\L}_{(1),(1)}$ is the exterior of the trefoil in $S^3$, a fibered three-manifold.   

We will invoke \cite[Theorem 1.4]{Ni} to argue that for all pairs of integers $n,m \neq \infty$, $L_{3,(n),(m)}$ is a genus one fibered knot. The argument is analogous to that of Proposition \ref{Whitehead by topology}. 
There are two slopes $\alpha_1, \beta_{1}$ on the boundary component $T_{1}$ in $E^{\L}$ such that Dehn filling along $\alpha_{1}$ results in a fibered three-manifold and filling along $\beta_{1}$ lowers the Thurston norm. With respect to the canonical meridian-longitude coordinates on $T_{1}$, $\alpha_{1}=+1$ and $\beta_{1}=\infty$. Indeed, filling along $\alpha_{1}$ results in the complement of Whitehead link in $S^{3}$.  Since $\L$ is Brunnian, then $L_{3, 2, (1)}$ is also Brunnian and $S^{3}_{1, 1}(L_{3, 2, (1)})=S^{3}_{1, 1, 1}(\L)$ is the Poincar\'e homology sphere. By Proposition  \ref{Whitehead by topology}, $L_{3, 2, (1)}$ is the Whitehead link. Filling along $\beta_1$ results in the complement of two-component unlink in $S^{3}$, which  lowers the Thurston norm. By \cite[Theorem 1.4]{Ni}, the core $J_1$ of the $\alpha_1$-filling of $T_1$ sits on the fiber surface $F$ of the Whitehead link complement  $M^{\L}_{(1)}$. The framing $\beta_1$ corresponds to the surface framing of $F$.  Now we repeat this argument. In particular, $M^{\L}_{(1)}$ has two boundary components $T_2$ and $T_3$. We consider two fillings $\alpha_2$ and $\beta_2$ along $T_2$ similary (i.e. $\alpha_{2}=+1, \beta_{2}=\infty$). By the argument given above, the $\alpha_2$ filling $M^{\L}_{(1),(1)}$ yields a fibered three-manifold which is the exterior of the right-handed trefoil and is obtained by capping off one boundary of the fiber surface of the Whitehead link.  The $\beta_2$ filling $M^{\L}_{(\infty),(1)}$ which results in the complement of the unknot in $S^{3}$ reduces the Thurston norm. By a second application of \cite[Theorem 1.4]{Ni}, we have that the core $J_2$ of the $\alpha_2$-filling of $T_2$ sits on the fiber surface $F_3$ in $M^{\L}_{(1), (1)}$ and any surgery which is distance one from $\beta_2$ will produce a fibered three-manifold. In fact, any fillings along $T_1, T_2$ which are distance one from $\beta_1, \beta_2$ will extend the monodromy of the fibration. We now have for any integers $m, n$ that $M^{\L}_{(m),(n)}$ is fibered and that the fiber is genus one. So $L_{3,(m),(n)}$ is a genus one fibered knot in $L(m,1) \# L(n,1)$.

Having established that $L_{3,(m),(n)}$ is genus one fibered knot, we turn again to Baker's classification of genus one fibered knots in lens spaces \cite{Baker}. By \cite[Lemma 2.2]{Baker}, the isotopy classes $(M, K)$ of genus one fibered knots $K$ in $M$ are in a one-to-one correspondence with isotopy classes $(N, A)$ where $N$ is a link in $S^3$ whose branched double cover is $M$, and with a braid axis $A$ that yields a closed 3-braid presentation of $N$. Fix $m, n$ odd and distinct.  Applying this here, we must have that $N$ is the connected sum of torus knots $T(2, m) \# T(2, n)$. Indeed, $T(2, m)$ is the unique knot whose branched double cover is the lens space $L(m, 1)$ by Hodgson and Rubinstein \cite{HR85}. By the equivariant sphere theorem for involutions \cite{KT}, $T(2, m) \# T(2, n)$ is the unique knot with branched double cover $L(m, 1) \# L(n, 1)$. Thus, we seek to identify the braid axes $A$ yielding closed 3-braid presentations for $T(2, m) \# T(2, n)$. There is a unique braid axis  for closed 3-braid presentations of  the knot $T(2, m)\# T(2, n)$ with odd integers satisfying $|m|, |n| > 1$. Note that the torus knots $T(2, m)$ and $T(2, n)$ are strongly invertible, so $T(2, m)\# T(2, n)$ is invertible. There is no distinction between the orientations of the knot. By \cite[Lemma 3.6]{Baker}, there is at most one equivalence class of braid axes giving 3-braid representatives for the oriented knot and its inverse. 

Similarly to the argument used for the Whitehead link, we have that $M^{\L}_{(m), (n)}=M^{\B}_{(m), (n)}$ for infinitely many $m, n$. An argument completely analogous to that of Lemma \ref{lem:geometric-limit} implies that the geometric limit $E^{\L}$ of the hyperbolic manifolds $M^{\L}_{(m), (n)}$ agrees with the limit $E^{\B}$ of the sequence $M^{\B}_{(m), (n)}$, meaning the exterior $E^{\L}$ is the same as the Borromean exterior $E^{\B}$. The result now follows from Lemma \ref{lem:borromean-exterior} below.
\end{proof}

\begin{lemma}\label{lem:borromean-exterior}
If $\L$ is a three-component Brunnian link in $S^3$ with $E^{\L} = E^{\B}$ and $S^3_{1,1,1}(\L)$ is the Poincar\'e homology sphere, then $\L = \B$.  
\end{lemma}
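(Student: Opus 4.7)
My plan is to mimic the strategy of Lemma \ref{lem:whitehead-exterior}: express $\L$ as the link of cores of a rational surgery on $\B$, then use Hoste's Casson surgery formula \eqref{statesum} together with the two surgery hypotheses and the Brunnian hypothesis to force the surgery parameters to vanish. Since $E^{\L} \cong E^{\B}$ as oriented manifolds and $\L$ has all pairwise linking numbers zero, after composing with an element of the mapping class group of $E^{\B}$ to align components, the meridian of each $L_i^{\L}$ represents the slope $\mu_i + m_i \lambda_i$ on $T_i \subset E^{\B}$ for some integer $m_i$. Equivalently, $\L$ is the link of cores of the $(1/m_1, 1/m_2, 1/m_3)$-surgery on $\B$, and this surgery recovers the ambient $S^3$. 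Moreover, since each $L_i$ is unknotted and $\B$ is Brunnian, $\lambda_i$ bounds a disk in $S^3$ disjoint from the other components of $\B$, so the Seifert longitude of $L_i^{\L}$ in the ambient $S^3$ is still $\lambda_i$; consequently, $+1$-framing on $L_i^{\L}$ corresponds to $1/(m_i+1)$-surgery on $L_i \subset \B$.

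Applying Hoste's formula to $\B$, whose proper sublinks are all unlinks (so their $a_2$ vanish) while $a_2(\B) = \mu_{123}^2(\B) = 1$, yields three sets of constraints on the integers $m_i$. First, since the ambient manifold is $S^3$ with Casson invariant zero, $m_1 m_2 m_3 = 0$. Second, because $S^3_{1,1,1}(\L) = S^3_{1/(m_1+1),\,1/(m_2+1),\,1/(m_3+1)}(\B)$ is the Poincar\'e homology sphere with Casson invariant one, $(m_1+1)(m_2+1)(m_3+1) = 1$. Third, for each pair of indices $i \neq j$ with third index $k$, the Brunnian hypothesis forces $L_i^{\L} \cup L_j^{\L}$ to be the two-component unlink, so its $(+1,+1)$-surgery is $S^3$; rewriting this surgery as $S^3_{1/(m_i+1),\,1/(m_j+1),\,1/m_k}(\B)$ and comparing Casson invariants gives $(m_i+1)(m_j+1)\,m_k = 0$ for each of the three choices of $k$.

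The first two constraints alone force $(m_1, m_2, m_3)$ to be either $(0,0,0)$ or a permutation of $(0, -2, -2)$. A direct check shows that each permutation of $(0,-2,-2)$ violates at least one of the sublink constraints --- for example, $(m_1, m_2, m_3) = (0, -2, -2)$ gives $(m_1+1)(m_3+1)\,m_2 = (1)(-1)(-2) = 2 \neq 0$ --- while $(0,0,0)$ satisfies all three. Hence $(m_1, m_2, m_3) = (0, 0, 0)$, so the meridians of $\L$ coincide with those of $\B$, and we conclude $\L = \B$. The main technical care, as in Lemma \ref{lem:whitehead-exterior}, lies in aligning the initial surgery description with the symmetries of $E^{\B}$ (so that the slopes $\mu_i + m_i \lambda_i$ are unambiguous); once this is in place, the Casson computations are routine applications of Hoste's formula and the vanishing of $a_2$ on the proper sublinks of $\B$.
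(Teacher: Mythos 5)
Your proof is correct, and it takes a genuinely different route from the paper's. The paper's argument applies Proposition \ref{unlinksphere2} to $\B$ itself (so that $S^3_{1/m,1/n,1/p}(\B)=S^3$ forces some $m_i=0$), then reduces to the two-component case and invokes Proposition \ref{ifandonlyifwhite} for the Whitehead link; in other words, the Borromean detection is made a corollary of the earlier unlink and Whitehead detection results, which in turn rest on Gordon--Luecke and the Floer-homology-based unlink detection (Corollary \ref{unlinksphere}). Your argument instead applies Hoste's Casson surgery formula \eqref{statesum} three times --- once to the ambient $S^3$ description, once to the hypothesis $S^3_{1,1,1}(\L)=\PHS$, and once for each two-component sublink --- using the vanishing of $a_2$ on proper sublinks of $\B$ and $a_2(\B)=\mu_{123}^2(\B)=1$ to obtain the polynomial constraints $m_1m_2m_3=0$, $(m_1+1)(m_2+1)(m_3+1)=1$, and $(m_i+1)(m_j+1)m_k=0$. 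This yields a short, self-contained number-theoretic derivation that avoids both Floer homology and Gordon--Luecke. One trade-off is that the paper's route does not use the Brunnian hypothesis on $\L$ at all (only that $\B$ is Brunnian), whereas your third family of constraints genuinely requires the two-component sublinks of $\L$ to be unlinks; since the lemma is stated with that hypothesis, both proofs are valid. Your framing analysis ($\lambda_i^{\L}=\lambda_i^{\B}$ because the Seifert longitude is the unique nullhomologous slope on $T_i$, and the homology constraint forces $\mu_i^{\L}=\pm(\mu_i^{\B}+m_i\lambda_i^{\B})$) is sound, and the sign normalization $a_2(\B)=\lambda(\PHS)$ is forced by the fact that $\B$ itself must satisfy the constraints, so the constraint $(m_1+1)(m_2+1)(m_3+1)=1$ is correctly signed.
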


\begin{proof}
Any link $\L$ in $S^3$ with the same exterior as $\B$ can be described by the core of $(1/m,1/n,1/p)$-surgery on $\B$, where this surgery results in $S^3$.  By Proposition \ref{unlinksphere2}, at least one of $m, n, p$ is zero. Without loss of generality, we assume $p=0$. Then
\[
\PHS = S^3_{1,1,1}(\L) = S^3_{\frac{1}{m+1}, \frac{1}{n+1}, 1}(\B) = S^3_{\frac{1}{m+1}, \frac{1}{n+1}}(\Wh).
\]  
By Proposition \ref{ifandonlyifwhite}, $m=n=0$.   Therefore, $\L=\B$. 
\end{proof}

Based on the argument for the detection of the Borromean rings, we now characterize which rational surgeries on a Brunnian three-component link are the Poincar\'e homology sphere. 

\begin{proposition}
\label{thm:rational}
Suppose that $\L$ is a three-component Brunnian link. Then $S^{3}_{1/p, 1/q, 1/r}(\L)$ is the Poincar\'e homology sphere if and  only if  the link $\L$ is the Borromean rings, and $|p|=|q|=|r|=1$ with the same sign. 
\end{proposition}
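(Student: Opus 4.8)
The plan is to mirror the structure of Proposition~\ref{ifandonlyifwhite}, using the already-established Proposition~\ref{Borromean by topology} as the ``$|p|=|q|=|r|=1$, same sign'' base case and Lemma~\ref{lem:borromean-exterior} (together with the Whitehead-link results) to rule out the mixed-sign cases. The ``if'' direction is immediate: $S^3_{1,1,1}(\B) = \PHS$ by hypothesis, and if all the $1/m_i$ are $-1$ then $S^3_{-1,-1,-1}(\B) = S^3_{1,1,1}(\overline{\B}) = S^3_{1,1,1}(\B) = \PHS$ since the Borromean rings are amphichiral. So the content is entirely in the ``only if'' direction.

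For the ``only if'' direction, suppose $S^{3}_{1/p, 1/q, 1/r}(\L) = \PHS$. First I would show each of $|p|, |q|, |r|$ equals $1$: let $L'_{1}$ be the image of $L_1$ in $S^3$ after blowing down the other two components (which is $S^3$ since $\L$ is Brunnian), so $S^3_{1/p, 1/q, 1/r}(\L) = S^3_{1/p}(L'_1) = \pm\PHS$; by Ghiggini's theorem \cite{Ghiggini} only the trefoil has a surgery to $\pm\PHS$, and that surgery coefficient is integral, forcing $|p|=1$, and symmetrically $|q|=|r|=1$. Next I would show the signs agree. Suppose not; after permuting components and possibly mirroring (replacing $\L$ by $\overline{\L}$, which negates all three signs), assume $p=q=1$ and $r=-1$. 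Then by the argument of Proposition~\ref{Borromean by topology} — which only used that the surgery is the Poincar\'e sphere to identify a two-component sublink surgery as the Whitehead link, and otherwise used Thurston's hyperbolic Dehn surgery theorem and Baker's classification — we get $E^{\L} = E^{\B}$. But then, as in Lemma~\ref{lem:borromean-exterior}, $\L$ is the core of $(1/m, 1/n, 1/p')$-surgery on $\B$ yielding $S^3$, so (by Proposition~\ref{unlinksphere2}) one of $m,n,p'$ is zero; blowing down and reducing leaves a surgery on $\Wh$ equal to $\pm\PHS$ with a genuinely mixed framing (the $r=-1$ versus $p=q=1$ discrepancy persists), contradicting Proposition~\ref{ifandonlyifwhite}, which forces the two surgery coefficients on $\Wh$ to be $1/m, 1/n$ with $mn=1$ and same sign. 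Once all signs agree we are in the case $|p|=|q|=|r|=1$ with common sign, and Proposition~\ref{Borromean by topology} (applied to $\L$ or $\overline{\L}$) identifies $\L$ as $\B$.

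The main obstacle I anticipate is the bookkeeping in the mixed-sign case: one must be careful that blowing down components to reduce the three-component surgery to a two-component $\Wh$-surgery is compatible with the sign discrepancy, i.e.\ that after the reduction the resulting framings on $\Wh$ really do violate the ``$mn=1$, same sign'' conclusion of Proposition~\ref{ifandonlyifwhite} rather than accidentally satisfying it. Concretely, if $p=q=1$, $r=-1$ and $\L$ is the core of $(1/m,1/n,1/p')$-surgery on $\B$ with $p'=0$, then $\PHS = S^3_{1/(m+1),1/(n+1),1}(\B) = S^3_{1/(m+1),1/(n+1)}(\Wh)$, so $(m+1)(n+1)=1$ by Proposition~\ref{ifandonlyifwhite}, giving $m=n=0$ and hence $\L = \B$ with all framings $+1$ — contradicting $r=-1$. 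The remaining subcase $p'\neq 0$ (so $m$ or $n$ is zero) is handled the same way after relabeling. A secondary point requiring care is verifying that the hyperbolicity/geometric-limit argument of Proposition~\ref{Borromean by topology} genuinely goes through verbatim in the mixed-sign case; this should be routine since that proof only invoked the Poincar\'e-sphere hypothesis to pin down the Whitehead sublink, and the mixed-sign surgery still yields $\PHS$.
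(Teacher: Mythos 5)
Your reduction to $|p|=|q|=|r|=1$ (blow down two components, apply Ghiggini) and your handling of the two equal-sign cases via Proposition~\ref{Borromean by topology} and mirroring match the paper. The genuine gap is in your mixed-sign step. You assert that the argument of Proposition~\ref{Borromean by topology} ``goes through verbatim'' when, say, $S^3_{1,1,-1}(\L)=\pm\PHS$, yielding $E^{\L}=E^{\B}$. It does not: the first move of that proof is to fill $T_1$ with slope $+1$ and identify the image of $L_2\cup L_3$ as the Whitehead link via Proposition~\ref{Whitehead by topology}, and that proposition requires the $(+1,+1)$-surgery on the resulting two-component Brunnian link to be the Poincar\'e sphere. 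In the mixed-sign case the relevant two-component link instead has a $(+1,-1)$-surgery equal to $\pm\PHS$, so the Whitehead identification is unavailable and the fibering/geometric-limit chain never gets started; the claim $E^{\L}=E^{\B}$ is unjustified. Ironically, this failure point already contains the contradiction you want: by Proposition~\ref{ifandonlyifwhite}, no two-component Brunnian link has a $(1/m,1/n)$-surgery to $\pm\PHS$ with $mn=-1$, so a single blow-down of a $+1$-framed component kills the mixed-sign case outright, and the entire $E^{\L}=E^{\B}$/Kirby-calculus detour is both unnecessary and, as written, not established.

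Two further slips inside that detour: with $r=-1$ and $p'=0$ the third coefficient on $\B$ is $1/(p'+r)=-1$, not $+1$, so blowing it down yields $\overline{\Wh}$ rather than $\Wh$; and $(m+1)(n+1)=1$ has the integer solution $m=n=-2$ in addition to $m=n=0$, which you do not treat (it is excluded only by the link-identification clause of Proposition~\ref{ifandonlyifwhite} or by checking the resulting core link fails to be Brunnian). For comparison, the paper settles the sign question much more directly: blowing down two components exhibits the surgered manifold as a $\pm 1$-surgery on a knot, and by Ghiggini the sign of each coefficient is pinned down by which orientation of the Poincar\'e sphere occurs (equivalently, the definiteness of the $E8$ plumbing it bounds), so the three signs automatically agree; the remainder is Proposition~\ref{Borromean by topology} plus mirroring, exactly as in your final step.
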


\begin{proof}
The if part is easy. For the only if part, we first claim that $p, q, r=\pm 1$. Suppose that $\L=L_1\cup L_2\cup L_3$, and let $L'_{1}$ denote the image of $L_{1}$ in $S^{3}$ after blowing down $L_{2}, L_{3}$. Then $S^{3}_{1/p, 1/q, 1/r}(\L)=S^{3}_{1/p}(L'_{1})$, which implies that $1/p=\pm 1$.  Hence, $p=\pm 1$. Further, the sign of $p$ is determined by the definiteness of the plumbing which the surgered Poincar\'e homology sphere bounds.  A similar argument can be used to prove that $q, r=\pm 1$, and that their signs must agree with that of $p$. If $p=q=r=1$, by Proposition \ref{Borromean by topology}, $\L$ is the Borromean rings. If $p=q=r=-1$, then the mirror of $\L$ is the Borromean rings, which implies that $\L$ is also the Borromean rings.  
\end{proof}

\begin{proposition}
\label{borromean}
Assume that $\L$ is a Brunnian L--space link with three components and $\mu_{123}(\L)=\pm 1$. Then $\L$ is the Borromean rings.
\end{proposition}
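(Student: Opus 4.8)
The plan is to combine the $h$-function computation from Theorem \ref{Brunnianh} with the topological detection result Proposition \ref{Borromean by topology}. Since $\L$ is a Brunnian L--space link with three components and $\mu_{123}(\L)=\pm 1$, Theorem \ref{Brunnianh}(a) gives $\sum_{\bm{s}} h_{\L}(\bm{s}) = \mu_{123}^2(\L) = 1$. Because $h_{\L}(\bm{s})\ge 0$ for all $\bm{s}$ (Lemma \ref{h-function list}\eqref{h nonnegative}), exactly one lattice point has $h_{\L}(\bm{s})=1$ and all others vanish. By the symmetry $h_{\L}(-\bm{s})=h_{\L}(\bm{s})$ from Lemma \ref{h-function list}\eqref{h-symmetry}, that point must satisfy $\bm{s}=-\bm{s}$, hence $\bm{s}=(0,0,0)$; so $h_{\L}(0,0,0)=1$ and $h_{\L}(\bm{s})=0$ otherwise. (Since $\L$ is Brunnian, $h'=h$ and also $H_{\L}=h_{\L}$ away from the coordinate hyperplanes needing unknot corrections, as in Example \ref{ex:hbrun}, so this pins down the full $H$-function up to the standard unknot shifts.)

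Next I would identify the surgery $S^3_{1,1,1}(\L)$. With the $h$-function concentrated at the origin, the link surgery complex of Manolescu--Ozsv\'ath for $S^3_{1,1,1}(\L)$ truncates (as in \cite{GLM}, exactly as used in the proof of Lemma \ref{lem:whitehead detection}) so that only the generalized complex $\A^-_{0,0,0}(\L)$ contributes. Hence $S^3_{1,1,1}(\L)$ is an L--space with $d$-invariant $-2H_{\L}(0,0,0) = -2$. An integer homology sphere L--space with $d=-2$: I want to conclude it is the Poincar\'e homology sphere. One clean way is to invoke Corollary \ref{casmu}, which gives $\lambda(S^3_{1,1,1}(\L)) = \mu_{123}^2(\L) = 1$; an integer homology sphere that is an L--space and has Casson invariant $1$ is $\pm\Sigma(2,3,5)$, and the $d$-invariant being $-2$ (rather than $+2$) fixes the orientation to be $\PHS = \Sigma(2,3,5)$.

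Finally, having shown $S^3_{1,1,1}(\L) = \PHS$, I would like to apply Proposition \ref{Borromean by topology} to conclude $\L = \B$. That proposition is stated for Brunnian links, and $\L$ is Brunnian by hypothesis, so the conclusion follows directly. The main obstacle I anticipate is the step identifying $S^3_{1,1,1}(\L)$ precisely as the Poincar\'e homology sphere: knowing only that it is an integer homology sphere L--space with $d=-2$ is not by itself enough (one needs to rule out, say, connected sums or other L--spaces with the same $d$), so the Casson invariant input from Corollary \ref{casmu} together with a characterization of $\Sigma(2,3,5)$ among homology sphere L--spaces is the load-bearing ingredient; alternatively one could try to run the truncation argument to pin down more of the Floer homology of the surgery directly, but the Casson route is cleaner given what has already been established.
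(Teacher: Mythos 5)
Your first step (pinning the $h$-function: $h(0,0,0)=1$ and $h\equiv 0$ elsewhere) and your last step (applying Proposition~\ref{Borromean by topology} once $S^3_{1,1,1}(\L)=\PHS$ is known) match the paper. The gap is in the middle step, which you yourself flag as load-bearing: the claim that an integer homology sphere L--space with Casson invariant $1$ (and $d=-2$) must be $\pm\Sigma(2,3,5)$ is not available. First, for an L--space integer homology sphere the Casson invariant carries no information beyond the $d$-invariant: by Ozsv\'ath--Szab\'o's formula $\lambda(Y)=\chi(HF^+_{red}(Y))-\tfrac{1}{2}d(Y)$ and $HF^+_{red}=0$ for an L--space, so Corollary~\ref{casmu} only re-encodes $d=-2$. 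Second, the characterization itself fails: $\PHS\#\PHS\#(-\PHS)$ is an integer homology sphere L--space with $d=-2$ and $\lambda=1$ that is not $\Sigma(2,3,5)$, and even restricting to irreducible manifolds, the statement that the only integer homology sphere L--spaces are $S^3$ and $\pm\Sigma(2,3,5)$ is an open problem (related to the L--space conjecture), not a citable theorem. So ``L--space, $d=-2$, $\lambda=1$'' does not identify the surgery.

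The paper closes this step by the route you set aside, and it is short: since $\L$ is Brunnian, $L_1\cup L_2$ is an unlink of unknots, so blowing down the two $+1$-framed components exhibits $S^3_{1,1,1}(\L)=S^3_{+1}(L_3'')$ for a knot $L_3''\subset S^3$. The truncation argument shows this is an L--space with $d=-2$; a positive L--space surgery slope forces $L_3''$ to be an L--space knot with $1\ge 2g(L_3'')-1$, the unknot is excluded because $d\neq 0$, and the unique genus-one L--space knot is the right-handed trefoil, whence $S^3_{1,1,1}(\L)=\PHS$. This is exactly the mechanism of Lemma~\ref{lem:whitehead detection}, and it uses the Brunnian hypothesis (realizing the surgery as $+1$-surgery on a knot in $S^3$) in a way that no intrinsic invariant of the surgered manifold of the type you invoke can replace. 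With that substitution your outline becomes the paper's proof.
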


\begin{proof}
By Theorem \ref{Brunnianh}(a),  $\sum_{\bm{s}}h(\bm{s})=1$. Recall that $h(\bm{s})\geq 0$, and takes the maximal value at $h(0, 0, 0)$. So
\[
h(\bm{s})=\begin{cases}
1\ \text{if}\ \bm{s}=(0,0,0)\\
0\ \text{otherwise}.\\
\end{cases}
\]

Let us prove that $S^3_{1, 1, 1}(\L)$ is the Poincar\'e homology sphere. 
Because $\L$ is Brunnian and the $h$-function agrees with that of the Borromean rings, the link surgery complex can be truncated so that it contains only $\A^{-}_{000}(\L)$. 
Just as with the proof of Lemma \ref{lem:whitehead detection}, because $S^3_{1,1,1}(\L)$ is an L-space with $d$-invariant $-2H(0,0,0) = -2$ we again have that $S^3_{1,1,1}(\L)$ is the Poincar\'e sphere. That is, by doing $+1$-surgery along the link components $L_1$ and $L_2$, we have a knot $L_3''\subset S^3$ with a $+1$-surgery to an L-space with $d=-2$. Thus $L_3''$ is the trefoil and  $S^3_{1,1,1}(\L)$ is the Poincar\'e sphere.
By Proposition \ref{Borromean by topology}, $\L$ is isotopic to the Borromean rings.
\end{proof}

\subsection{Links with four or more components}\label{subsec:morecomponents}

In this section we show that the above results have no analogues for links with more than three components. Proposition \ref{thm: 4 comp brunnian} gives Theorem \ref{Lspace link detection combined}(3) and Proposition \ref{thm: 4 comp no phs} gives Theorem \ref{Brunnian detection from Poincare surgery combined}(3) in the introduction.
\begin{proposition}
\label{thm: 4 comp brunnian}
Assume that $\L$ is a Brunnian L--space link with four or more components. Then $\L$ is the unlink.
\end{proposition}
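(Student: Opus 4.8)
The plan is to combine the link Floer formula for the Conway coefficient $a_2$ with Cochran's vanishing result for links with at least four components. A Brunnian link is in particular algebraically split, so since $\L$ has $n\ge 4$ components, Example~\ref{ex: 4comp} (i.e. \cite{Cochran}) gives $a_2(\L)=0$. On the other hand, $\L$ is a Brunnian L--space link, so $\chi'(\bm{s})=0$ for all $\bm{s}$ (the L--space condition, via Theorem~\ref{l-space link cond}\eqref{no torsion}) and $h'(\bm{s})=h(\bm{s})$ for all $\bm{s}$ (Brunnian, Example~\ref{ex:hbrun}). Hence equation~\eqref{eq:a2-chi} yields
\[
a_2(\L)=\pm\sum_{\bm{s}}\bigl(\chi'(\bm{s})-h'(\bm{s})\bigr)=\pm\sum_{\bm{s}}h(\bm{s}).
\]
(Equivalently, one may invoke Corollary~\ref{cor: Delta' Brunnian} and evaluate $\tilde\Delta'_\L$ at $\bm{t}=(1,\ldots,1)$.)

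Next, I would observe that every term in this sum is nonnegative: $h(\bm{s})\ge 0$ for all $\bm{s}$ by Lemma~\ref{h-function list}\eqref{h nonnegative}, and the sum is finite since $h=h'$ is finitely supported by Lemma~\ref{lem: h' finite} together with Example~\ref{ex:hbrun}. Therefore $\sum_{\bm{s}}h(\bm{s})=0$ forces $h(\bm{s})=0$ for every $\bm{s}$; in particular $h(\bm{0})=0$.

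Finally, since $\L$ is an L--space link with $h(\bm{0})=0$, Lemma~\ref{unlink1} shows that $\L$ is the unlink, completing the argument.

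I do not expect a serious obstacle here: the substance lies entirely in the cited inputs, namely Cochran's computation of $a_2$ for algebraically split links with $\ge 4$ components and the Heegaard Floer interpretation of $a_2$ established earlier in the paper. The only point needing care is ensuring that the auxiliary functions involved are finitely supported so that the sum $\sum_{\bm{s}}h(\bm{s})$ is well-defined, which is exactly what Lemma~\ref{lem: h' finite} provides.
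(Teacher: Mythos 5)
Your proof is correct and follows essentially the same route as the paper: Cochran's vanishing of $a_2$ for links with at least four components, the identification $a_2(\L)=\pm\sum_{\bm{s}}h(\bm{s})$ for Brunnian L--space links, nonnegativity of $h$ forcing $h\equiv 0$, and then unlink detection via the vanishing $h$-function (your Lemma~\ref{unlink1} is just the paper's citation of \cite[Theorem 1.3]{Beibei} packaged as a lemma). The extra care you take with finite support and nonnegativity is implicit in the paper's argument, so there is nothing substantively different here.
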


\begin{proof}
By Example \ref{ex: 4comp} we have $a_2(\L)=\widetilde{\Delta}'(1,\ldots,1)=0$. Therefore by Corollary \ref{cor: Delta' Brunnian} we have $\sum_{\bm{s}}h(\bm{s})=0$ and $h(\bm{s})$ vanishes for all $\bm{s}$. Then by \cite[Theorem 1.3]{Beibei} $\L$ is the unlink.
\end{proof}

Now we consider whether rational surgery on a Brunnian link with at least four components is the Poincar\'e homology sphere. Note that  if  $\L$ is a Brunnian link and $S^3_{1,\ldots,1}(\L)$ is an L--space, then by Theorem \ref{l-space link cond} it is an L--space link.  By Proposition \ref{thm: 4 comp brunnian}, $\L$ is the unlink, and $(1, \cdots, 1)$-surgery cannot be the Poincar\'e homology sphere. For general rational surgeries, we have the similar result. 

\begin{proposition}
\label{thm: 4 comp no phs}
Let $\L$ be an $n$-component  Brunnian link with $n\geq 4$.  Then $S^3_{1/m_{1},\ldots,1/m_{n}}(\L)$ is not the Poincar\'e homology sphere.
\end{proposition}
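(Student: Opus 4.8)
The plan is to mimic the argument of Proposition~\ref{thm: 4 comp brunnian} but now use the Casson invariant state sum in place of the Alexander polynomial computation. Suppose toward a contradiction that $\L = L_1 \cup \cdots \cup L_n$ with $n \geq 4$ is Brunnian and that $S^3_{1/m_1, \ldots, 1/m_n}(\L)$ is the Poincar\'e homology sphere. First I would observe that because $\L$ is Brunnian, every proper sublink $\L'$ is an unlink, so $a_2(\L'; S^3) = 0$ for all proper sublinks. Moreover, since $\L$ is algebraically split with $n \geq 4$ components, Example~\ref{ex: 4comp} (i.e.\ Cochran's result) gives $a_2(\L) = 0$ as well. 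Hence \emph{every} term in Hoste's state sum \eqref{statesum} vanishes except the $\lambda(S^3) = 0$ term, so $\lambda\big(S^3_{1/m_1, \ldots, 1/m_n}(\L)\big) = 0$.

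Next I would recall that the Poincar\'e homology sphere has Casson invariant $\pm 1$ (depending on orientation; it is nonzero in any case), which contradicts the conclusion $\lambda = 0$ just obtained. This finishes the proof. I should double-check the orientation convention: $\lambda(\PHS) = \lambda(\Sigma(2,3,5)) = \mp 1$ with whatever sign convention is in force, but all that matters is $\lambda(\PHS) \neq 0$, so the argument is robust to conventions.

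One subtlety worth spelling out: Hoste's formula \eqref{statesum} is stated for surgery along a link $\L$ in an integer homology sphere $Y$ with all pairwise linking numbers zero and framings $1/q_i$. Since $\L$ is Brunnian it is in particular algebraically split, and the framings $1/m_i$ are of the required form, so the formula applies directly with $Y = S^3$, giving $\lambda(S^3_{1/m_1,\ldots,1/m_n}(\L)) = \sum_{\L' \subseteq \L} \big(\prod_{i \in \L'} m_i\big) a_2(\L'; S^3)$. The empty sublink contributes $\lambda(S^3) = 0$, each nonempty proper sublink contributes $0$ because it is an unlink, and the full link $\L$ contributes $0$ by Example~\ref{ex: 4comp}.

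I do not expect any real obstacle here; the only thing to be careful about is citing the hypotheses of Hoste's state sum correctly and noting that "Brunnian" includes the algebraically split condition needed to apply both \eqref{statesum} and Example~\ref{ex: 4comp}. If one wanted to be even more self-contained one could instead note that $S^3_{1/m_1,\ldots,1/m_n}(\L)$ being $\PHS$ would, by repeatedly blowing down as in Proposition~\ref{unlinksphere2}, force structural constraints, but the Casson invariant argument is the cleanest and most direct route.
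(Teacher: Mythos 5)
Your argument is correct, but it is genuinely different from the proof in the paper. You run Hoste's state sum \eqref{statesum}: since $\L$ is Brunnian every nonempty proper sublink is an unlink and contributes $a_2=0$, and since $\L$ is algebraically split with $n\geq 4$ components the top term $a_2(\L)$ vanishes by Example \ref{ex: 4comp}, so $\lambda\bigl(S^3_{1/m_1,\ldots,1/m_n}(\L)\bigr)=0$, contradicting $\lambda(\PHS)=\pm1$ (the paper itself records $\lambda(\PHS)=1$ in the example following Corollary \ref{casmu}). The paper instead first reduces to $n=4$ by blowing down, then reruns the fibering argument of Propositions \ref{Whitehead by topology} and \ref{Borromean by topology} (via Ni's theorem) to conclude that the image of $L_4$ would be a genus one fibered knot in $L(l,1)\#L(p,1)\#L(q,1)$, which is impossible because that manifold has Heegaard genus $3$. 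Your route is shorter and more elementary, needs no geometric-limit or fibered-knot input, and in fact proves the stronger statement that \emph{any} $(1/m_1,\ldots,1/m_n)$-surgery on a Brunnian link with $n\geq 4$ components has vanishing Casson invariant, hence cannot be any integer homology sphere with $\lambda\neq 0$; it also silently covers the degenerate case $m_i=0$, since then the surgery is on an unlink. What the paper's approach buys is methodological uniformity with the $\ell=2,3$ cases of Theorem \ref{Brunnian detection from Poincare surgery combined}, where no Casson-invariant shortcut is available and the fibering machinery is doing the real detection work. The only points you should cite carefully are exactly the ones you flagged: the hypotheses of \eqref{statesum} (integral $q_i$, pairwise linking zero, which Brunnian-ness supplies for $n\geq 3$) and Cochran's vanishing result underlying Example \ref{ex: 4comp}.
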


\begin{proof}

Suppose $S^3_{1/m_{1},\ldots,1/m_{n}}(\L)$ is the Poincar\'e homology sphere.  By the same argument as in Proposition \ref{thm:rational}, $m_{i}=\pm 1$ where $i=1, \cdots, n$. 
Since $S^3_{1/m_{1},\ldots, 1/m_{n}}(\L) = S^3_{1/m_{2},\ldots, 1/m_{n}}(L'_2 \cup \ldots \cup L'_n)$ and the latter is still a Brunnian link, it suffices to consider the case that $\L$ has four components.  
By repeating the arguments and notation from Proposition~\ref{Whitehead by topology} and Proposition~\ref{Borromean by topology}, we see that $L_{4,(l),(p),(q)}$ is a genus 1 fibered knot in $(l ,p ,q)$-surgery on the three-component unlink whenever $l, p, q$ are integers.  However, $L(l, 1)\# L(p, 1)\# L(q, 1)$ does not contain a genus 1 fibered knot since its Heegaard genus is 3 when $|l|, |p|, |q| > 1$, which is a contradiction. Hence, $S^3_{1/m_{1},\ldots,1/m_{n}}(\L)$ is not the Poincar\'e  homology sphere. 
\end{proof}


\section{From triple linking to $d$-invariants}
\label{sec:d}

In this section, unlike the rest of the paper, we will use $d$-invariants for Heegaard Floer homology with a variety of coefficients.  See Section \ref{subsec:coefs} for more details on subtleties of coefficient fields. Some of the results hold over an arbitrary field $\kk$, and we will denote the corresponding $d$-invariants by $d_{\kk}$.

Let us also introduce some additional notation required in this section. Given a nullhomologous link $\L = L_1\cup \cdots \cup L_\ell$ in a rational homology sphere $Y$ and integers $\bm{m}=(m_1, \cdots, m_\ell)$, let $W_{\bm{m}}$ denote the four-dimensional two-handle cobordism from $Y$ to $Y_{\bm{m}}(\L)$ which is the surgery manifold specified by the $\bm{m}$-framed link $\L$. The notation $\mathfrak{s}, \mathfrak{t}$ and $\mathfrak{w}, \mathfrak{z}$ will generally be used to denote $\spinc$-structures on three-manifolds and four-manifolds, respectively. In particular, let $\mfw$ denote a $\spinc$-structure on $W_m=W_m(K)$ which extends $\mfs$ in $\spinc(Y)$, where $K$ is a knot in $Y$. Recall from \cite[Theorem 4.2]{OS:Integer} that $\mfw$ induces a $\spinc$-structure $\mfs_i$ on $Y_m(K)$ indexed by
\[
	\langle c_1(\mfw), [F] \rangle + m =2i,
\]
where $[F]$ is the surface in $W_m(K)$ coming from capping off a Seifert surface for $K$.  We can similarly define $\mfs_{\bm{i}}=\mfs_{i_1,\cdots, i_\ell}$ as the restriction of $\spinc$-structures induced by surgery along links of $\ell$ components by a similar formula.

We will denote by $\delta(W, \mfw)$ the quantity
\[
	\delta(W, \mfw) = \frac{c_1(\mfw)^2 - 3\sigma(W)-2\chi(W)}{4},
\]
which describes the shift in absolute grading induced by the $\spinc$-cobordism map on the Floer homology associated to $(W,\mfw)$.

\subsection{$d$-invariants for standard three-manifolds}
\label{subsec:d std}

In this section we review the definition of $d$-invariants for \emph{standard} three-manifolds from \cite[Section 9]{OS03}. An additional reference for this material can be found in \cite{Levine}.

Let $H$ be a finitely generated, free abelian group and let $\Lambda^{\ast}(H)$ denote the exterior algebra of $H$. If $Y$ is a three-manifold, we denote
\[
\Lambda^{\ast}H^1:=\Lambda^{\ast}H^1(Y;\Z),\ \Lambda^{\ast}H_1:=\Lambda^{\ast}\left(H_1(Y;\Z)/\textup{Tors}\right).
\] 
The module $HF^\infty_\kk(Y)$ is called \emph{standard} if for each torsion $\spinc$ structure $\mft$,
\[
	HF^\infty_\kk(Y, \mft) \cong \Lambda^{\ast}H^1\otimes_\Z \kk[U, U^{-1}]
\]
as $\Lambda^{\ast}H_{1}\otimes_\Z \kk[U]$-modules. The group $\Lambda^* H^1$ is graded by setting $\mathrm{gr}(\Lambda^{b_1(Y)}H^1(Y;\Z)) = b_1(Y)/2$ and by letting the action of $H_1(Y; \Z)/\textup{Tors}$ by contraction drop gradings by one. 

Let $M$ be any $\Lambda^{\ast}(H) \otimes \kk$-module. The kernel of the action of $\Lambda^{\ast}(H) \otimes \kk$ on $M$ is
\[
	\mathcal{K}M:=\{ x\in M \mid v\cdot x=0 \quad  \forall  \ v\in H \otimes \kk \}.
\]
The quotient of $M$ by this action is defined by
\[
	\mathcal{Q}M:= M/ (\mathcal{I} \cdot M).
\]
where $\mathcal{I}$ is the two-sided ideal in $\Lambda^{\ast}(H) \otimes \kk$ generated by $H$. For a standard three-manifold $Y$, there are then induced maps:
\begin{eqnarray*}
\mathcal{K}(\pi): &\mathcal{K}HF^{\infty}_\kk(Y, \mft) \rightarrow \mathcal{K}HF^{+}_\kk(Y, \mft) \\
\mathcal{Q}(\pi): &\mathcal{Q}HF^{\infty}_\kk(Y, \mft) \rightarrow \mathcal{Q}HF^{+}_\kk(Y, \mft).
\end{eqnarray*}

We may now define the \emph{bottom} and \emph{top correction terms} of $(Y, \mft)$ to be the minimal grading of any nonzero element in the image of $\mathcal{K}(\pi)$ and $\mathcal{Q}(\pi)$, denoted by $d_{bot,\kk}$ and $d_{top,\kk}$,  respectively. 

\begin{proposition}[Ozs\'vath-Szab\'o, \cite{OS03}]
\label{prop:d-inv}
Let $K\subset Y$ be a nullhomologous knot in a three-manifold $Y$ with $b_{1}(Y)\leq 1$. 
Then 
\[ 
	d_{top,\kk}(Y_{0})-\dfrac{1}{2}\leq d_{top,\kk}(Y_{1}). 
\]
Further, if $HF_{red,\kk}(Y)=0$, then $d_{top,\kk}(Y_{0})-\dfrac{1}{2}= d_{top,\kk}(Y_{1})$. 
\end{proposition}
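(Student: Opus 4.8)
The plan is to follow the strategy of \cite{OS03}: realize the passage from $Y_{0}$ to $Y_{1}$ by a single two-handle cobordism, compute the grading shift of the induced map, and push the ``top tower'' of $HF^{\infty}$ through it using the $\Lambda^{\ast}H_{1}$-module structure; the equality clause is then extracted from the surgery exact triangle under the extra hypothesis.

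First I would record the homological bookkeeping. Since $K$ is nullhomologous, $b_{1}(Y_{0})=b_{1}(Y)+1$ and $b_{1}(Y_{1})=b_{1}(Y)$, so the hypothesis $b_{1}(Y)\le 1$ makes $b_{1}\le 2$ for all of $Y,Y_{0},Y_{1}$; hence $HF^{\infty}_{\kk}$ is standard at torsion $\spinc$ structures on each, and $d_{top,\kk}$ is defined for all three. Let $\mft_{0}\in\spinc(Y_{0})$ be the torsion structure (equivalently the one pulled back, through the common knot exterior $M=Y\setminus N(K)$, from the torsion structure on $Y$). The dual knot $K_{1}^{\ast}\subset Y_{1}$ of the $(+1)$-surgery is nullhomologous, with Seifert framing equal on $\partial M$ to the Seifert longitude of $K$, so $0$-framed surgery on $K_{1}^{\ast}$ recovers $Y_{0}$; let $W'\colon Y_{0}\to Y_{1}$ be the reverse of this two-handle cobordism, which is the cobordism appearing in the surgery exact triangle. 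Then $\chi(W')=1$, the closed surface $\widehat{F}$ obtained by capping a Seifert surface of $K_{1}^{\ast}$ has square zero, so $\sigma(W')=0$, and for $\mfw\in\spinc(W')$ restricting to the torsion structures on both ends one has $c_{1}(\mfw)^{2}=0$ and therefore
\[
\delta(W',\mfw)=\frac{c_{1}(\mfw)^{2}-3\sigma(W')-2\chi(W')}{4}=-\frac{1}{2}.
\]

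Next I would analyze $F^{\infty}_{W',\mfw}\colon HF^{\infty}_{\kk}(Y_{0},\mft_{0})\to HF^{\infty}_{\kk}(Y_{1})$. It is $\kk[U]$-linear and intertwines the $\Lambda^{\ast}H_{1}$-actions via the surjection $H_{1}(Y_{0})\twoheadrightarrow H_{1}(Y_{1})$ induced by $W'$, so it descends to a map $\mathcal{Q}F^{\infty}_{W',\mfw}$ on the $\mathcal{Q}$-quotients, each of which is a free $\kk[U,U^{-1}]$-module of rank one (the top exterior power $\Lambda^{b_{1}}H^{1}\otimes\kk[U,U^{-1}]$). By the $HF^{\infty}$-version of the surgery exact triangle relating $Y,Y_{0},Y_{1}$, the connecting map $HF^{\infty}_{\kk}(Y_{1})\to HF^{\infty}_{\kk}(Y)$ vanishes for rank reasons, so $F^{\infty}_{W',\mfw}$, and hence $\mathcal{Q}F^{\infty}_{W',\mfw}$, is surjective; a surjection of rank-one free $\kk[U,U^{-1}]$-modules is an isomorphism, so $\mathcal{Q}F^{\infty}_{W',\mfw}$ is an isomorphism of degree $-\tfrac{1}{2}$. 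Since $\pi$ commutes with cobordism maps and with the $\mathcal{Q}$-quotient, $\mathcal{Q}\pi_{Y_{1}}\circ\mathcal{Q}F^{\infty}_{W',\mfw}=\mathcal{Q}F^{+}_{W',\mfw}\circ\mathcal{Q}\pi_{Y_{0}}$, and surjectivity of the top arrow gives $\mathrm{im}\,(\mathcal{Q}\pi_{Y_{1}})=\mathcal{Q}F^{+}_{W',\mfw}\bigl(\mathrm{im}\,(\mathcal{Q}\pi_{Y_{0}})\bigr)$. The latter is supported in gradings $\ge d_{top,\kk}(Y_{0},\mft_{0})-\tfrac{1}{2}$, because $\mathrm{im}\,(\mathcal{Q}\pi_{Y_{0}})$ is supported in gradings $\ge d_{top,\kk}(Y_{0},\mft_{0})$ and $\mathcal{Q}F^{+}_{W',\mfw}$ lowers gradings by $\tfrac{1}{2}$; taking the minimal grading of a nonzero element yields $d_{top,\kk}(Y_{0})-\tfrac{1}{2}\le d_{top,\kk}(Y_{1})$.

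For the equality clause, the hypothesis $HF_{red,\kk}(Y)=0$ makes $\pi_{Y}$ surjective, so $HF^{+}_{\kk}(Y)$ is purely a sum of towers. Feeding this through the $HF^{+}$-version of the surgery exact triangle (in which $\ker F^{+}_{W',\mfw}$ is a quotient, and $\mathrm{coker}\,F^{+}_{W',\mfw}$ a submodule, of $HF^{+}_{\kk}(Y)$) one finds that $\mathcal{Q}F^{+}_{W',\mfw}$ cannot annihilate the bottom class of $\mathrm{im}\,(\mathcal{Q}\pi_{Y_{0}})$; its image is then a nonzero element of $\mathrm{im}\,(\mathcal{Q}\pi_{Y_{1}})$ in grading exactly $d_{top,\kk}(Y_{0})-\tfrac{1}{2}$, giving the reverse inequality. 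The main obstacle I anticipate is making the last two paragraphs rigorous: controlling precisely how the cobordism maps interact with the $\Lambda^{\ast}H_{1}$-module structure when $b_{1}$ jumps by one, upgrading the bare surgery exact triangle to the isomorphism and surjectivity statements on the $\mathcal{Q}$-quotients in the correct gradings, and verifying that the $b_{1}(Y)=1$ case (where $Y_{0}$ has $b_{1}=2$, but $\Lambda^{2}H^{1}(Y_{0})$ still supplies a rank-one $\kk[U,U^{-1}]$-module) runs through unchanged.
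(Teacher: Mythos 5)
The paper supplies no argument for this proposition at all --- it is quoted from \cite{OS03} --- so there is no internal proof to compare against; your reconstruction follows the standard Ozsv\'ath--Szab\'o strategy, and the inequality half is essentially correct. The cobordism $W'$ appearing in the $(\infty,0,1)$ triangle has $\chi=1$, $\sigma=0$, and identically vanishing intersection form (its $H_2$ is carried by the capped Seifert surface in $Y_0$), so \emph{every} $\spinc$ structure on it with torsion restriction to $Y_0$ shifts degree by $-\tfrac12$; you should really run the argument with the sum of cobordism maps that the triangle actually produces, but since all summands shift degree identically this is harmless. The rank count on $HF^\infty_\kk$ (which requires the observation that $HF^\infty_\kk$ vanishes in non-torsion $\spinc$ structures, so the triangle localizes at the torsion ones) does kill the connecting map, $\mathcal{Q}F^\infty_{W'}$ is then an isomorphism of degree $-\tfrac12$, and the commutation of $\pi$ with the cobordism maps gives $\mathrm{im}(\mathcal{Q}\pi_{Y_1})=\mathcal{Q}F^+_{W'}(\mathrm{im}(\mathcal{Q}\pi_{Y_0}))$ and hence $d_{top,\kk}(Y_0)-\tfrac12\le d_{top,\kk}(Y_1)$ as you say.

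The genuine gap is in the equality clause, and it is exactly the point you flag. Your mechanism tacitly identifies ``$\mathcal{Q}F^+_{W'}$ annihilates the bottom class of $\mathrm{im}(\mathcal{Q}\pi_{Y_0})$'' with ``a lift $\pi_{Y_0}(\eta)$ of that class lies in $\ker F^+_{W'}$,'' which is what the exact triangle controls ($\ker F^+_{W'}$ is the image of $HF^+_\kk(Y)$). But annihilation in $\mathcal{Q}$ only means $F^+_{W'}(\pi_{Y_0}(\eta))\in\mathcal{I}\cdot HF^+_\kk(Y_1)$, say $=\gamma'\cdot z$. When $b_1(Y)=0$ the ideal on the $Y_1$ side is zero, the two notions coincide, and the argument closes: $\pi_{Y_0}(\eta)$ then lies in the image of $HF^+_\kk(Y)=\mathrm{im}\,\pi_Y$, which equals $\pi_{Y_0}(\ker F^\infty_{W'})\subseteq\mathcal{I}\cdot HF^+_\kk(Y_0)$ by injectivity of $\mathcal{Q}F^\infty_{W'}$, contradicting the choice of $\eta$. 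When $b_1(Y)=1$ --- which is the case the paper actually needs, with $Y=S^2\times S^1$ in the proof of Theorem \ref{nonzero linking d invt} --- one must additionally show that $z$ can be chosen in $\mathrm{im}\,F^+_{W'}$, so that $\gamma'\cdot z=F^+_{W'}(\gamma\cdot w)$ and $\eta$ can be corrected by an element of $\mathcal{I}\cdot HF^+_\kk(Y_0)$; otherwise one is obstructed by the class of $z$ in $\mathrm{coker}\,F^+_{W'}$, which sits inside $HF^+_\kk(Y)$ but interacts nontrivially with the $H_1$-action. The statement ``$HF^+_\kk(Y)$ is a sum of towers'' does not by itself dispose of this cokernel term, and your sketch does not address it; that is the step that must be supplied before the equality is justified.
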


Note that if $b_1(Y) \leq 1$, then $b_{1}(Y_{0})\leq 2$ and $b_{1}(Y_{1})\leq 1$ and hence these manifolds both have standard $HF^{\infty}$ \cite{OS03}.  Therefore, our use of $d_{bot}$ and $d_{top}$ is justified.

It is natural to ask about three-manifolds which have non-standard $HF^\infty$.  Links with $\mu_{123}\neq 0$ produce a supply of three-manifolds which are not standard.

\begin{theorem}
\label{thm: rank 6}
Let $\L$ be an algebraically split link with three components.  Suppose that $\mu_{123}(\L)\neq 0$. Then $HF^{\infty}_\mathbb{Q}(S^3_{0,0,0}(\L), \mathfrak{s}_0)$ is free of rank 6 over $\mathbb{Q}[U,U^{-1}]$, where $\mathfrak{s}_0$ is the unique torsion Spin$^c$ structure.
\end{theorem}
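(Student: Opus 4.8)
The plan is to compute $HF^{\infty}_{\Q}$ of $Y=S^3_{0,0,0}(\L)$ directly from the rational cohomology ring of $Y$, exploiting the classical identification of the triple cup product of an all-zero surgery with Milnor's triple linking number. First I would record the topology of $Y$: since $\L$ is algebraically split the surgery linking matrix vanishes, so $H_1(Y;\Z)\cong\Z^3$ is free, $Y$ carries a unique torsion $\spinc$ structure $\mathfrak{s}_0$, and $H^1(Y;\Q)\cong\Q^3$. Because $Y$ is closed and oriented, the cup product pairings $H^1\times H^2\to H^3\cong\Q$ are perfect, so the graded ring $H^*(Y;\Q)$ is completely determined by the alternating triple cup product form $\Lambda^3 H^1(Y;\Q)\to\Q$. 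By the classical relation between Milnor's triple linking number and the cup product structure of surgeries on algebraically split links \cite{Turaev, Porter, Cochran}, this form equals $\pm\mu_{123}(\L)$ times a generator; hence, when $\mu_{123}(\L)\ne 0$, the form is nondegenerate and $H^*(Y;\Q)\cong H^*(T^3;\Q)$ as graded rings.

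The second step is to feed this into the structure of $HF^{\infty}$ for three-manifolds with $b_1=3$. I would invoke the description (Ozsv\'ath--Szab\'o \cite{OS03}, together with its refinements) of $HF^{\infty}$ of such a manifold, with a torsion $\spinc$ structure, in terms of its triple cup product form: there is a spectral sequence with $E_2\cong\Lambda^*H^1(Y;\Q)\otimes\Q[U,U^{-1}]$ converging to $HF^{\infty}_{\Q}(Y,\mathfrak{s}_0)$, whose first possibly nontrivial differential $d_3$ is, up to a power of $U$, contraction against the triple cup product form. Since $\mu_{123}(\L)\ne 0$, this $d_3$ restricts to an isomorphism from $\Lambda^3 H^1$ onto $\Lambda^0 H^1$ on each $U$-power and vanishes on $\Lambda^1 H^1$ and $\Lambda^2 H^1$; and there is no room for higher differentials, because with $b_1=3$ every later differential strictly lowers the $\Lambda$-degree by more than three. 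Therefore $E_\infty\cong\bigl(\Lambda^1 H^1\oplus\Lambda^2 H^1\bigr)\otimes\Q[U,U^{-1}]$, which has total rank $3+3=6$ over $\Q[U,U^{-1}]$; equivalently $HF^{\infty}_{\Q}(Y,\mathfrak{s}_0)\cong HF^{\infty}_{\Q}(T^3,\mathfrak{s}_0)$. Since $HF^{\infty}$ is a finitely generated, $U$-torsion-free module over the principal ideal domain $\Q[U,U^{-1}]$, it is free, hence free of rank $6$, as claimed.

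I expect the crux to be the second step: the cohomology-ring computation of $HF^{\infty}$ available in the literature is typically phrased over $\mathbb{Z}_2$, so one must either locate a $\Q$-coefficient version or rerun the relevant part of the Ozsv\'ath--Szab\'o $b_1=3$ argument over $\Q$, keeping careful track of the absolute gradings throughout. The identification of the zero-surgery triple cup product with $\mu_{123}(\L)$ in the first step is classical but also demands care with orientation and normalization conventions; however, since neither a sign nor a normalization affects the rank, those issues are immaterial for the present statement.
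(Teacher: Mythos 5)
Your first step (identifying the triple cup product form of $S^3_{0,0,0}(\L)$ with $\mu_{123}(\L)$) is the same as the paper's, and your algebra in the second step would indeed yield rank $6$ \emph{if} the tool you invoke existed. But that tool is precisely the missing ingredient, and you have not supplied it. There is no $\Q$-coefficient (or $\Z$-coefficient) theorem in the Heegaard Floer literature computing $HF^\infty$ of a $b_1=3$ manifold with torsion $\spinc$ structure from its triple cup product form: \cite{OS03} establishes the standard description of $HF^\infty$ only for $b_1\le 2$ (together with the explicit model case of $T^3$, whose cup product is $\pm 1$, not an arbitrary nonzero $\mu_{123}$), and the ``refinements'' you allude to (the spectral sequence with $E_2\cong\Lambda^*H^1\otimes \kk[U,U^{-1}]$ and $d_3$ given by the triple cup product) are proved only over $\Z_2$, because they rest on the Manolescu--Ozsv\'ath link surgery formula \cite{MO}; this paper says so explicitly in Section 5.4 and in the discussion of coefficients in Section 2.4. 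Note also that one cannot argue that $HF^\infty_\Q$ ``only depends on the $\Q$-cohomology ring'' and then quote the $T^3$ computation: that dependence statement is itself equivalent to the unproven structural result you need. So the sentence ``one must either locate a $\Q$-coefficient version or rerun the relevant part of the Ozsv\'ath--Szab\'o $b_1=3$ argument over $\Q$'' is not a routine loose end; it is the entire content of the theorem, and rerunning the argument over $\Q$ is not currently possible within Heegaard Floer theory (it is essentially the Ozsv\'ath--Szab\'o $HF^\infty$ conjecture in this range, known over $\Z_2$ but open over $\Q$ and $\Z$).

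The paper closes exactly this gap by a mechanism of a different nature: it quotes Kronheimer--Mrowka's computation of $\overline{HM}_\Q$ of a three-torsor $b_1=3$ manifold via coupled Morse theory on the Picard torus \cite[Proposition 35.3.2]{KM}, which shows $\overline{HM}_\Q(S^3_{0,0,0}(\L),\mathfrak{s}_0)$ is free of rank $6$ whenever the triple cup product is nonzero, and then transfers the answer to $HF^\infty_\Q$ using the isomorphism between Heegaard Floer and monopole Floer homology \cite{KLT1,KLT2,KLT3,KLT4,KLT5,CGH1,CGH2,CGH3,Taubes}. (This is also why the parity hypothesis appears in the $\Z_2$ statements elsewhere in the paper: over $\Z_2$ the purely Heegaard Floer spectral-sequence argument works, but only detects $\mu_{123}$ mod $2$.) To repair your proposal you would either have to import this gauge-theoretic input, as the paper does, or prove the $\Q$-coefficient spectral sequence statement yourself, which is beyond a citation-level fix.
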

\begin{proof}
Recall that since $\L$ has pairwise linking number zero, there exists a basis $a_1, a_2, a_3$ for $H^1(S^3_{0,0,0}(\L);\Z)$ such that the multiplicity of the triple cup product on cohomology is given by $\mu_{123}$.  It now follows immediately from \cite[Proposition 35.3.2]{KM} that $\overline{HM}_\mathbb{Q}(S^3_{0,0,0}(\L), \mathfrak{s}_0)$ is free of rank 6 over $\mathbb{Q}[U,U^{-1}]$.  Since Heegaard and monopole Floer homology are isomorphic over $\mathbb{Z}$ by \cite{KLT1, KLT2, KLT3, KLT4, KLT5} or \cite{CGH1, CGH2, CGH3, Taubes}, we have the same result for $HF^\infty$.  
\end{proof}

\begin{remark}
\label{rem: rank 6 over k}
The same proof shows that if $\kk$ is an arbitrary field and $\mu_{123}(\L)$ is coprime to the characteristic of $\kk$ then $HF^{\infty}_{\kk}(S^3_{0,0,0}(\L), \mathfrak{s}_0)$ is free of rank 6  over $\kk[U,U^{-1}]$.
\end{remark}

\subsection{$d$-invariant inequalities}

In this section we collect some inequalities for $d$-invariants of surgeries of links over an arbitrary field $\kk$.  The key result is Proposition \ref{prop: L space sublink} which shows that the $d_{\kk}$-invariant of $S^3_{\bm{1}}(\L)$ detects the unlink when $\L$ is an L--space link.

We recall from \cite{OS03} that if $(W,\mathfrak{z})$ is a negative-definite $\spinc$-cobordism from $(Y,\mfs)$ to $(Y',\mfs')$, two rational homology spheres, then 
\begin{equation}\label{eq:neg-def-shift}
d(Y',\mfs') - d(Y,\mfs) \geq \delta(W,\mathfrak{z}).  
\end{equation}

The following three results are well-known consequences of \eqref{eq:neg-def-shift} and the formulas for the $d$-invariants of lens spaces from \cite{OS03}.  (Recall that a positive surgery on a nullhomologous knot induces a positive-definite two-handle cobordism; reversing orientation produces a negative-definite cobordism.)
 
\begin{lemma}
\label{lem: d invariant bound}
Let $\L$ be an $\ell$-component algebraically split nullhomologous link in a rational homology sphere $Y$.  Fix a $\spinc$-structure $\mfs$ on $Y$.  For any $\bm{m}=(m_1,\ldots,m_{\ell})$ with $m_i>0$ we have
\begin{equation}
\label{largeY}
\dk(Y_{\bm{m}}(L),\mfs_{\bm{i}}) \leq \dk(Y, \mfs) + \sum_{k=1}^{\ell} d(L(m_k,1),i_k).
\end{equation}
where $\bm{i}=(i_{1}, \cdots, i_{\ell})$.
\end{lemma}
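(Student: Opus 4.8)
The plan is to build an explicit negative-definite $\spinc$-cobordism realizing the surgery and apply the standard $d$-invariant inequality \eqref{eq:neg-def-shift}. First I would observe that since $\L$ is algebraically split and nullhomologous, performing $m_i$-framed surgery on the components $L_i$ produces a two-handle cobordism $W_{\bm{m}}$ from $Y$ to $Y_{\bm{m}}(\L)$ whose intersection form is the diagonal matrix $\mathrm{diag}(m_1,\ldots,m_\ell)$ (the off-diagonal entries vanish precisely because all pairwise linking numbers are zero). Since each $m_i>0$, this form is positive definite, so reversing orientation yields a negative-definite cobordism $-W_{\bm{m}}$ from $-Y_{\bm{m}}(\L)$ to $-Y$. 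To get the inequality in the stated direction it is cleaner to instead turn $W_{\bm{m}}$ around: read upside-down, $W_{\bm{m}}$ gives a cobordism from $-Y_{\bm{m}}(\L)$ to $-Y$, but the cleanest route is to compare $Y_{\bm{m}}(\L)$ with the connected sum $Y \# L(m_1,1)\#\cdots\# L(m_\ell,1)$ via a negative-definite cobordism and then subtract.

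The key steps, in order, are: (1) factor $W_{\bm{m}}$ (or rather, interpolate) so that the surgery is compared with the ``split'' model in which the $L_i$ are replaced by unknots in disjoint balls — concretely, there is a negative-definite cobordism $Z$ from $Y \# L(m_1,1)\#\cdots\#L(m_\ell,1)$ to $Y_{\bm{m}}(\L)$ obtained by attaching $\ell$ two-handles realizing the difference between $\L$ and the split unlink, again with diagonal positive-definite form by algebraic splitness; (2) for each $\spinc$-structure $\mfz$ on $Z$ restricting to $\mfs_{\bm{i}}$ on the outgoing end, compute $\delta(Z,\mfz)$; choosing $\mfz$ to extremize $c_1(\mfz)^2$ and using that $\chi(Z)=\ell$, $\sigma(Z)=-\ell$ after orientation reversal, one gets $\delta(Z,\mfz)\le 0$ with the appropriate correction; (3) invoke the $d$-invariant formula for connected sums, $d(Y\#Y',\mfs\#\mfs')=d(Y,\mfs)+d(Y',\mfs')$, together with \eqref{eq:neg-def-shift} applied to $-Z$; (4) substitute the known values $d(L(m_k,1),i_k)$ from \cite{OS03}. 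All of this is the standard argument (already used in \cite{OS03}) that positive surgery on a nullhomologous knot raises the $d$-invariant by at most $d(L(m,1),i)$; the point here is just that algebraic splitness makes the $\ell$-component cobordism diagonal so the single-component bound adds up coordinatewise.

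The main obstacle — really the only point requiring care — is the bookkeeping of $\spinc$-structures and gradings: one must check that the $\spinc$-structure $\mfs_{\bm{i}}$ as indexed by $\langle c_1(\mfw),[F_k]\rangle + m_k = 2i_k$ is the one that matches up, under the connected-sum decomposition, with $(\mfs; i_1,\ldots,i_\ell)$ on $Y\#L(m_1,1)\#\cdots\#L(m_\ell,1)$, and that the grading shift $\delta$ splits as a sum over the handles so that the right-hand side of \eqref{largeY} appears exactly. Since the intersection form is diagonal, this reduces to the one-variable computation already in the literature, applied $\ell$ times; no new analytic input is needed beyond \eqref{eq:neg-def-shift} and the lens space $d$-invariant formula.
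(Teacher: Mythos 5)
Your overall strategy --- exhibit a negative-definite $\spinc$-cobordism, apply \eqref{eq:neg-def-shift}, and use the lens space $d$-invariant formulas, with algebraic splitness guaranteeing a diagonal intersection form --- is exactly the argument the paper has in mind (it records the lemma as a well-known consequence of \eqref{eq:neg-def-shift}, remarking only that positive surgery on a nullhomologous knot gives a positive-definite two-handle cobordism whose orientation reversal is negative definite). However, the specific cobordism you build in step (1) is a genuine gap. You claim there is a negative-definite cobordism $Z$ from $Y\#L(m_1,1)\#\cdots\#L(m_\ell,1)$ to $Y_{\bm{m}}(\L)$ ``obtained by attaching $\ell$ two-handles realizing the difference between $\L$ and the split unlink.'' No such construction is given, and none exists in general: to pass from $\L$ to the unlink one must perform crossing changes, each of which corresponds to a two-handle attached along a $\pm 1$-framed unknot with the sign dictated by the crossing, so the number of handles is not $\ell$ and the resulting intersection form is typically indefinite, not negative definite. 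The naive alternative of gluing the reversed unlink-surgery cobordism to $W_{\bm{m}}(\L)$ along $Y$ produces the indefinite form $\mathrm{diag}(-m_1,\ldots,-m_\ell)\oplus\mathrm{diag}(m_1,\ldots,m_\ell)$. Since everything downstream (your $\chi(Z)=\ell$, $\sigma(Z)=-\ell$, and the $\delta$ estimate) is computed on this nonexistent $Z$, the argument as written does not go through. A secondary issue: even granting a negative-definite $Z$, the statement ``$\delta(Z,\mfz)\le 0$ with the appropriate correction'' is not the estimate you need; you must identify the extremal $-\delta$ with $\sum_k d(L(m_k,1),i_k)$ exactly, which is where the lens space formula enters.

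The repair is to drop the connected-sum detour entirely. Apply \eqref{eq:neg-def-shift} directly to the surgery cobordism $W_{\bm{m}}(\L)$ itself, turned around and with orientation reversed (as the paper does explicitly in the proof of Lemma~\ref{lem:1<n}): since $\L$ is nullhomologous and algebraically split with all $m_i>0$, this is a negative-definite cobordism between the relevant rational homology spheres with intersection form $\mathrm{diag}(-m_1,\ldots,-m_\ell)$. Among the $\spinc$-structures restricting to $\mfs$ on $Y$ and inducing $\mfs_{\bm{i}}$ on $Y_{\bm{m}}(\L)$, choose the one maximizing $c_1^2$; because the form is diagonal the computation of $-\delta$ splits coordinatewise and the one-handle computation reproduces $d(L(m_k,1),i_k)$ for each $k$, yielding \eqref{largeY}. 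Equivalently, one can induct on the number of components, applying the known single-knot inequality at each stage, noting that algebraic splitness keeps the image of each remaining component nullhomologous in the intermediate rational homology sphere (which is a rational homology sphere since $m_i\neq 0$). Either of these completes the proof along the lines the paper intends; your connected-sum cobordism does not.
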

 
\begin{corollary}
\label{coro:rational}
Let $\L$ be an algebraically split link in $S^3$ with $\ell$ components. Then for any integers $p_1,\ldots,p_{\ell}>0$ we have
\[
	d_{\kk}(S^3_{1/p_1,\ldots,1/p_\ell}(\L))\leq d_{\kk}(S^3_{\bm{1}}(\L)).
\]
\end{corollary}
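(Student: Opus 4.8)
The plan is to exhibit $S^{3}_{1/p_{1},\ldots,1/p_{\ell}}(\L)$ as the result of attaching two-handles to $S^{3}_{\bm 1}(\L)$ along a disjoint union of meridional chains of the dual link, so that reversing orientation produces a negative-definite cobordism, and then to feed this into \eqref{eq:neg-def-shift}. Write $L'_{i}\subset S^{3}_{\bm 1}(\L)$ for the core of the surgery torus glued in along $L_{i}$; this is the image of a meridian of $L_{i}$. Converting the rational coefficient $1/p_{i}$ into integer surgeries in the standard way (via the continued fraction $1/p_i = 1 - \frac{p_i-1}{p_i}$ and the expansion of $\frac{p_i}{p_i-1}$), $S^{3}_{1/p_{1},\ldots,1/p_{\ell}}(\L)$ is obtained from $S^{3}_{\bm 1}(\L)$ by attaching, for each $i$, a chain of $p_{i}-1$ two-handles along meridians of $L'_{i}$. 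Call the resulting cobordism $W\colon S^{3}_{\bm 1}(\L)\to S^{3}_{1/p_{1},\ldots,1/p_{\ell}}(\L)$; it has $b_{1}(W)=0$.

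Because $\L$ is algebraically split, the chains attached along distinct $L'_{i}$ may be taken geometrically disjoint and have pairwise linking number zero, so the intersection form of $W$ is the orthogonal sum $\bigoplus_{i=1}^{\ell}N_{i}$, where $N_{i}$ is the $(p_{i}-1)\times(p_{i}-1)$ linking matrix of the $i$-th chain inside $S^{3}_{\bm 1}(\L)$. Accounting for the $(+1)$-surgery on $L_{i}$, which shifts the framing of the innermost meridian by $-1$, the matrix $N_{i}$ has a $1$ in the top-left corner, $2$'s elsewhere on the diagonal, and $1$'s on the super- and sub-diagonals; it is positive definite with determinant $1$, and an explicit change of basis shows $N_{i}\cong\langle 1\rangle^{\oplus(p_{i}-1)}$ over $\Z$. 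Hence $\overline{W}\colon S^{3}_{1/p_{1},\ldots,1/p_{\ell}}(\L)\to S^{3}_{\bm 1}(\L)$ is negative definite with $b_{1}=0$ and intersection form $\langle -1\rangle^{\oplus k}$, where $k=\sum_{i}(p_{i}-1)$. Choosing a $\spinc$-structure $\mfz$ on $\overline{W}$ with $c_{1}(\mfz)^{2}=-k$ (a characteristic vector $(\pm 1,\ldots,\pm 1)$ in a diagonalizing basis), we have $\sigma(\overline W)=-k$, $\chi(\overline W)=k$, and $\delta(\overline{W},\mfz)=\frac{1}{4}(-k-3(-k)-2k)=0$. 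Since both ends of $\overline{W}$ are integer homology spheres, $\mfz$ restricts to their unique $\spinc$-structures, so \eqref{eq:neg-def-shift}, applied over $\kk$, gives $\dk(S^{3}_{\bm 1}(\L))-\dk(S^{3}_{1/p_{1},\ldots,1/p_{\ell}}(\L))\ge\delta(\overline{W},\mfz)=0$. (If some $p_{i}=1$ the corresponding chain is empty; if all $p_{i}=1$ then $\overline{W}$ is a product cobordism and the inequality is an equality.)

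The substantive content is really just careful bookkeeping: one must verify that (i) the framing of the innermost meridian shifts by exactly $-1$ under the $(+1)$-surgery, which is what makes each $N_{i}$ unimodular (equivalently, what keeps the surgered manifold an integer homology sphere); (ii) algebraic splitness of $\L$ kills all cross-terms between distinct chains; and (iii) each $N_{i}$ is $\Z$-congruent to $\langle 1\rangle^{\oplus(p_{i}-1)}$, so that a $\spinc$-structure on $\overline{W}$ with $\delta=0$ exists. I expect point (iii) — equivalently, nailing down the precise handle/continued-fraction description — to be the fussiest step. A self-contained alternative avoids the cobordism: prove first that $\dk(Y_{1/q_{1},\ldots,1/q_{\ell}}(\L'))\le\dk(Y)$ for every algebraically split nullhomologous link $\L'$ in an integer homology sphere $Y$ and all $q_{i}\ge 0$, by induction on $\sum_{i}q_{i}$, peeling one meridian off a single component at a time, with the base case $\sum_i q_i=0$ trivial and the inductive step using \eqref{largeY} for a single $(+1)$-framed component (where $d(L(1,1),\cdot)=0$); then Corollary \ref{coro:rational} is the case $Y=S^{3}_{\bm 1}(\L)$ with $\L'$ the dual link and $q_{i}=p_{i}-1$. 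This route relies on the same linking-number facts (i)--(ii).
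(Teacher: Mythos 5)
Your argument is correct and is essentially the argument the paper intends: the paper records this corollary as a well-known consequence of the negative-definite cobordism inequality \eqref{eq:neg-def-shift} together with the observation that positive surgeries on nullhomologous knots give positive-definite two-handle cobordisms, and your continued-fraction chain description (framings $(1,2,\dots,2)$ in $S^3$, shifted to a unimodular positive-definite, diagonalizable form in $S^3_{\bm 1}(\L)$) with the $c_1^2=-k$ $\spinc$-structure giving $\delta=0$ is a correct implementation of exactly that, with your iterative alternative via \eqref{largeY} being the same argument done one dual-knot $+1$-surgery at a time.
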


\begin{corollary}
\label{link and sublink}
Let $\L'$ be a sublink in $\L$. Then
\[
d_{\kk}(S^3_{\bm{1}}(\L))\le d_{\kk}(S^3_{\bm{1}}(\L')).
\]
\end{corollary}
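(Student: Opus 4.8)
The plan is to realize $S^3_{\bm{1}}(\L)$ as surgery on a knot inside $S^3_{\bm{1}}(\L')$ and then apply Lemma \ref{lem: d invariant bound} (or rather its underlying negative-definite cobordism estimate). Write $\L = \L' \cup \L''$, where $\L''$ consists of the components not in $\L'$. Performing $+1$-surgery on all components of $\L'$ first yields the rational homology sphere $Y' := S^3_{\bm{1}}(\L')$, inside which the remaining components $\L''$ sit as a framed link. Since $\L$ is algebraically split, each pairwise linking number vanishes, so the image of $\L''$ in $Y'$ is again a nullhomologous algebraically split link, with each component inheriting framing $+1$ (the framings are unchanged because the linking numbers with the blown-down components are zero). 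Thus $S^3_{\bm{1}}(\L) = Y'_{\bm{1}}(\L'')$.

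Next I would apply Lemma \ref{lem: d invariant bound} to the link $\L''\subset Y'$ with all framings equal to $1$, taking $\mfs$ to be the unique $\spinc$ structure (both manifolds are integer homology spheres since $\L$ and $\L'$ are algebraically split, so there is no ambiguity in the choice of $\spinc$ structures, and the induced $\mfs_{\bm i}$ is again the unique one). The lemma gives
\[
d_{\kk}(S^3_{\bm{1}}(\L)) = d_{\kk}(Y'_{\bm{1}}(\L'')) \le d_{\kk}(Y') + \sum_{k} d(L(1,1), 0) = d_{\kk}(S^3_{\bm{1}}(\L')),
\]
using that $L(1,1) = S^3$ has $d$-invariant $0$. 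This is exactly the claimed inequality.

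The one point requiring a little care — and the place I expect the only genuine obstacle — is justifying that Lemma \ref{lem: d invariant bound} applies with $Y$ replaced by the rational homology sphere $Y'$ rather than $S^3$; but the lemma is already stated for an arbitrary rational homology sphere $Y$ and an algebraically split nullhomologous link in it, so this is immediate once one checks that $\L''\subset Y'$ is indeed nullhomologous and algebraically split. Both follow from the hypothesis that $\L$ is algebraically split: nullhomologousness of each component of $\L''$ in $Y'$ holds because its linking numbers with the surgered components of $\L'$ are zero, and the pairwise linking numbers within $\L''$ are likewise zero. The surface $[F]$ capping off the Seifert surface in the two-handle cobordism is thus well-defined, the cobordism from $Y'$ to $Y'_{\bm 1}(\L'')$ built from the $+1$-framed handles is positive definite, and reversing orientation makes \eqref{eq:neg-def-shift} applicable, completing the argument.
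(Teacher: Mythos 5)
Your proposal is correct and matches the paper's intent: the paper gives no separate proof of Corollary~\ref{link and sublink}, declaring it (together with Lemma~\ref{lem: d invariant bound}) a well-known consequence of the negative-definite cobordism inequality \eqref{eq:neg-def-shift} and the lens space $d$-invariant formulas. Your argument---viewing $S^3_{\bm{1}}(\L)$ as $+1$-surgery on the nullhomologous, algebraically split image of the remaining components inside $S^3_{\bm{1}}(\L')$ and applying Lemma~\ref{lem: d invariant bound} with $d(L(1,1),0)=0$---is exactly that standard argument, with the framing and nullhomology checks handled correctly.
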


In what follows, it will be important to be able to relate the $d$-invariants of $+1$-surgery and large surgery.  The key lemma we need is the following.  
\begin{lemma}\label{lem:1<n}
Let $K$ be a nullhomologous knot in a rational homology sphere $Y$ with $\spinc$ structure $\mfs$.   Then, for $n \gg 0$, we have
\begin{equation}\label{eq:Y1Yn}
d_{\kk}(Y_1(K),\mfs_0) \leq d_{\kk}(Y_n(K),\mfs_0) - d(L(n,1), 0).
\end{equation}  
\end{lemma}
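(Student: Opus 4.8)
The strategy is to compare $Y_1(K)$ and $Y_n(K)$ via a negative-definite cobordism and apply the inequality \eqref{eq:neg-def-shift}, exactly as in the proof of Lemma \ref{lem: d invariant bound} and its corollaries. The key point is that there is a natural cobordism from $Y_1(K)$ to $Y_n(K)$ built by attaching a single two-handle: thinking of $Y_n(K)$ as surgery on $K$, the difference between the framings $1$ and $n$ is realized topologically by attaching a $(-1)$-framed (or more precisely, appropriately framed) two-handle along a meridian-type curve, or equivalently by blowing up and handlesliding. Concretely, $Y_n(K)$ is obtained from $Y_1(K)$ by $(-1/(n-1))$-surgery on the dual knot, which after a slam-dunk/blow-up description yields a linear plumbing cobordism $W$ with $b_2^+(W) = 0$, i.e. negative-definite.

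First I would set up this cobordism $W$ from $Y_1(K)$ to $Y_n(K)$ explicitly — for instance, using the fact that $(-1/(n-1))$-surgery on an unknot is $S^3$, one gets $W$ as a chain of two-handles attached along a Hopf-linked chain of unknots together with the dual of $K$, all with framing $-1$ (or a single $(-(n-1))$-framed handle if one prefers, which is also negative-definite). Then I would identify the $\spinc$ structure $\mathfrak{z}$ on $W$ restricting to $\mathfrak{s}_0$ on both ends — here one wants the restriction of $\mathfrak{z}$ to the torus-boundary pieces to be the one with $\langle c_1, [\widehat{F}]\rangle$ minimized in absolute value, consistent with the convention $\mathfrak{s}_0$ fixed in Section \ref{subsec:d std}. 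Next, I would compute $\delta(W,\mathfrak{z}) = \frac{c_1(\mathfrak{z})^2 - 3\sigma(W) - 2\chi(W)}{4}$ and recognize the resulting quantity as exactly $d(L(n,1),0)$, by matching with the computation of lens space $d$-invariants in \cite{OS03}; indeed $L(n,1) = S^3_n(\text{unknot})$ arises by capping off the same cobordism applied to the unknot, so the grading shift contribution must coincide. Applying \eqref{eq:neg-def-shift} to $\overline{W}$ (reversing orientation to make it negative-definite from $Y_1(K)$ to $Y_n(K)$, or applying it directly depending on orientation conventions) then yields
\[
d_{\kk}(Y_n(K),\mathfrak{s}_0) - d_{\kk}(Y_1(K),\mathfrak{s}_0) \geq \delta(W,\mathfrak{z}) = d(L(n,1),0),
\]
which is the claimed inequality after rearranging.

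The main obstacle I anticipate is bookkeeping: getting the orientation of the cobordism and the sign/normalization of $\delta$ correct so that the grading shift comes out to precisely $d(L(n,1),0)$ rather than, say, $d(L(n,1),0)$ up to a universal constant or with the wrong sign. This is the kind of computation where one must be careful about whether the two-handle cobordism is positive- or negative-definite and about the "$n \gg 0$" hypothesis, which is presumably needed so that $HF^\infty$ behaves as in a large surgery and the relevant tower elements survive the cobordism map (i.e. so that the map on $HF^-$ or $HF^\infty$ is injective on the free part, guaranteeing the inequality \eqref{eq:neg-def-shift} actually applies to the distinguished tower generator). A clean alternative, which I would fall back on if the direct cobordism computation gets unwieldy, is to factor through $+1$-surgery and $+\infty = Y$: use Lemma \ref{lem: d invariant bound} type reasoning together with the mapping cone / surgery exact triangle relating $Y_1(K)$, $Y_n(K)$, and $Y$, extracting the inequality from the $d$-invariant of the relevant lens space appearing in the triangle — but the cobordism approach is more direct and is the one I would write up first.
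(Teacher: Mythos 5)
Your overall strategy is the paper's strategy: compare the two surgeries through a definite two-handle cobordism, apply \eqref{eq:neg-def-shift}, and identify the grading shift with $d(L(n,1),0)$. The difference is in which cobordism carries the argument. The paper never builds a cobordism from $Y_1(K)$ to $Y_n(K)$: it takes $W = W_1(K)\#_{n-1}\mathbb{CP}^2$ (positive definite, from $Y$ to $Y_1(K)$), reverses orientation, decomposes $-W = X_1\cup X_2$ with $X_2\colon -Y_n(K)\to -Y_1(K)$ negative definite, applies \eqref{eq:neg-def-shift} to $X_2$, and gets $\delta(X_2,\mfz_2)=d(L(n,1),0)$ for free from additivity of $c_1^2-3\sigma-2\chi$ together with the computation on $X_1$; crucially, the capped Seifert surface $[F_n]=\sum_i\alpha_i$ lives inside $-W$, and evaluating $c_1(\mfz)$ against it is what identifies the restricted $\spinc$ structure on $Y_n(K)$ as the labeled $\mfs_0$.

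Two points in your version would fail as written. First, the concrete handles you propose are wrong: for $n>2$ no single two-handle attachment turns $Y_1(K)$ into $Y_n(K)$ (the two filling slopes on the dual knot are distance $n-1$ apart), a single $-(n-1)$-framed handle on the meridian changes the coefficient $1$ to $n/(n-1)$, not $n$; a chain ``all with framing $-1$'' is not even definite (two consecutive $-1$'s give a degenerate form); and the obvious integral realization of $-1/(n-1)$-surgery on the meridian, with diagram framings $(0,n-1)$, yields an indefinite cobordism because the meridian's self-intersection picks up a $-1$ correction from linking the $+1$-framed $K$. The route can be repaired -- the meridian chain with diagram framings $(-1,-2,\dots,-2)$ has intersection form the negative-definite $A_{n-1}$ chain of $-2$'s and does interpolate from $Y_1(K)$ to $Y_n(K)$ -- but not with the handles you describe. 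Second, the step you defer as ``bookkeeping'' is the actual content: you must exhibit a $\spinc$ structure on your cobordism with maximal square whose restrictions are simultaneously the labeled $\mfs_0$ on $Y_1(K)$ and on $Y_n(K)$, where the labels are defined by evaluating $c_1$ on capped Seifert surfaces relative to the background $\mfs$ on $Y$. Your cobordism contains no such surface for $K$, so verifying this forces you to glue back a surgery trace and argue there -- which is exactly why the paper works inside $-W$, where $\langle c_1(\mfz),[F_n]\rangle=n$ pins the restriction down. Finally, \eqref{eq:neg-def-shift} requires no injectivity of the cobordism map on the tower, so your speculation about the role of $n\gg 0$ is off target; positivity of $n$ is what the argument needs, and the large-$n$ hypothesis matters only for the lemma's later use alongside the large surgery formula.
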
  

\begin{proof}
For $n > 0$, consider the cobordism $W:Y \to Y_1(K)$ given by $W_1(K) \#_{n-1} \mathbb{C}P^2$.  Of course, this is a positive-definite four-manifold with diagonalizable intersection form $n \langle 1 \rangle$.  We will work with $-W$.  Choose a basis $\alpha_1,\ldots,\alpha_n$ for $H_2(W)$, where $\alpha_1 = [F_1]$ and the other $\alpha_i$ are given by the exceptional spheres $S_{i}$ in the $\mathbb{C}P^2$'s.  Here, $F_1$ is a capped off Seifert surface for $K$.  To pin down the signs more carefully, consider the obvious Kirby diagram for $-W$ and handleslide the $n-1$ many $-1$-framed unknots onto the $-1$-framed copy of $-K$.  We now choose the signs on $[F_1]$ and $[S_i]$ so that $[F_n] = \sum_{i=1}^n \alpha_i$ which corresponds to the Seifert surface of the knot $K$ with framing $n$.  (This is pinned down up to an overall sign by the class of $[F_n]$, which will not matter.)

Next, let $\mathfrak{z}$ denote the $\spinc$ structure on $-W$ which evaluates to one on each of the basis elements.  This is the $\spinc$ structure for which $c_1(\mathfrak{z})^2$ is maximized, i.e. $c_1(\mathfrak{z})^2 = -n$.  Note that we can break $-W$ up into two cobordisms $X_1 : -Y \to -Y_n(K)$ and $X_2: -Y_n(K) \to -Y_1(K)$.  Of course, each $X_i$ is still negative definite.  Let $\mathfrak{z}_i$ denote the restriction of $\mathfrak{z}$ to $X_i$.  

Therefore, we have from \eqref{eq:neg-def-shift}:
\begin{equation}\label{eq:d-inequality-Y1Yn}
d_{\kk}(Y_1(K), \mfs_0) \leq d_{\kk}(Y_n(K), \mathfrak{z}\mid_{Y_n(K)}) - \delta(X_2, \mfz_2).
\end{equation}

The result will then be complete if we can establish two results.  First, we want to see that $\mathfrak{z} \mid_{Y_n(K)} = \mfs_0$.  Second, we want to compute that $\delta(X_2, \mfz_2) = d(L(n,1),0)$
Since $F_n$ is supported in $X_1$ and $[F_n] = \sum_i \alpha_i$, we see that 
\[
	\langle c_1(\mathfrak{z}_1), [F_n] \rangle = \langle c_1(\mathfrak{z}), [F_n] \rangle = n.
\]     
The last equality follows since $c_1(\mathfrak{z})$ evaluates to 1 on each $\alpha_i$.  Since $H^2(X_1) = \mathbb{Z}$, $\mathfrak{z}_1$ is determined by the evaluation of the first Chern class on $[F_n]$.  Therefore, it follows that $\mathfrak{z}_1 \mid_{-Y_n(K)}  = \mfs_0$.  (Recall that Spin$^c$ structures do not require an orientation to define, so we can equate the Spin$^c$ structures on $Y_n(K)$ and $-Y_n(K)$.)  

Therefore, it remains to compute $\delta(X_2, \mfz_2)$. By our choice of $\mathfrak{z}$, 
\[
4\delta(W, \mfz) = c_1(\mathfrak{z})^2 - 3\sigma(W) - 2\chi(W) = 0.  
\]
Next, note that $c_1(\mathfrak{z}_1)^2 - 3\sigma(X_1) - 2\chi(X_1) = -c_1(\mathfrak{z}_2)^2 + 3\sigma(X_2) + 2\chi(X_2)$, since each of these terms is additive over gluing along rational homology spheres.  Therefore, we will compute $c_1(\mathfrak{z}_1)^2 - 3\sigma(X_1) - 2\chi(X_1)$ instead.  This is easy to compute, since the intersection form of $X_1$ is $\langle - n \rangle$.  Since $c_1(\mathfrak{z}_1)$ is $n$ times the generator of $H^2(X_1)$, we have $c_1(\mathfrak{z}_1)^2 = -\frac{n^2}{n} = -n$.  Therefore, we see that 
\[
\delta(X_1, \mfz_1) = \frac{c_1(\mathfrak{z}_1)^2 - 3\sigma(X_1) - 2\chi(X_1)}{4} = \frac{1-n}{4},
\]  
which is exactly $-d(L(n,1),0)$ and we are now done by \eqref{eq:d-inequality-Y1Yn}.  
\end{proof}
 
\begin{corollary}
For any algebraically split link $\L$ in $S^3$ we have
\begin{equation}\label{eq:111<nnn}
d_{\kk}(S^3_{\bm{1}}(L)) \leq d_{\kk}(S^3_{\bm{n}}(L),\mfs_{\bm{0}}) - \sum_{i=1}^\ell d(L(n_i,1), 0),
\end{equation} 
where $\bm{n} = (n_1,\ldots,n_\ell)$ is chosen to be a sufficiently large surgery, and $\mfs_{\bm{0}}$ denotes the unique trivial $\spinc$ structure. 
\end{corollary}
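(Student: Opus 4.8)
The plan is to obtain \eqref{eq:111<nnn} as an iteration of Lemma~\ref{lem:1<n}, changing the framing of a single component of $\L=L_1\cup\cdots\cup L_\ell$ from $1$ to $n_i$ at each stage. For $0\le k\le\ell$ set $Y_k=S^3_{n_1,\ldots,n_k,1,\ldots,1}(\L)$, so that $Y_0=S^3_{\bm 1}(\L)$ is an integer homology sphere (its linking matrix is the identity) and $Y_\ell=S^3_{\bm n}(\L)$. For each $k$ let $Z_k$ be the rational homology sphere obtained by surgering $S^3$ along $\L\setminus L_k$ with framings $n_1,\ldots,n_{k-1}$ on $L_1,\ldots,L_{k-1}$ and $1$ on $L_{k+1},\ldots,L_\ell$. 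Since $\L$ is algebraically split, the image of $L_k$ in $Z_k$ is nullhomologous (its class in $H_1(Z_k;\Z)$ is determined by its vanishing linking numbers with the surgery cores), and one has the identities $(Z_k)_1(L_k)=Y_{k-1}$ and $(Z_k)_{n_k}(L_k)=Y_k$.

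The main step is to apply Lemma~\ref{lem:1<n} to the knot $L_k\subset Z_k$, equipped with the $\spinc$ structure $\mfs_{\bm{0}}$ coming from the surgery presentation of $Z_k$. For $n_k$ sufficiently large this gives
\[
d_{\kk}(Y_{k-1},\mfs_{\bm{0}})\le d_{\kk}(Y_k,\mfs_{\bm{0}})-d(L(n_k,1),0),
\]
and chaining these $\ell$ inequalities for $k=1,\ldots,\ell$, together with the fact that $\mfs_{\bm{0}}$ is the unique $\spinc$ structure on the integer homology sphere $Y_0$, yields \eqref{eq:111<nnn}. Here ``sufficiently large'' is to be read iteratively: one first chooses $n_1$ large relative to the knot $L_1\subset Z_1$, then $n_2$ large relative to $L_2\subset Z_2$, and so on; since $Z_k$ depends only on $n_1,\ldots,n_{k-1}$, this causes no circularity.

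The main obstacle is the $\spinc$-structure bookkeeping: one must check that the $\spinc$ structure on $(Z_k)_1(L_k)$ (resp.\ on $(Z_k)_{n_k}(L_k)$) extending $\mfs_{\bm{0}}$ on $Z_k$ is exactly the structure $\mfs_{\bm{0}}$ on $Y_{k-1}$ (resp.\ on $Y_k$) as defined directly from the $S^3$-surgery, so that the $\ell$ inequalities can indeed be concatenated. This should follow from the normalization in \cite[Theorem 4.2]{OS:Integer} together with the additivity of $\langle c_1(\cdot),[F_i]\rangle$ under composition of two-handle cobordisms: the capped-off Seifert surface $[F_i]$ used to index the $i$-th coordinate is supported in the single handle attached to $L_i$, so each index $i_j$ computed at an intermediate stage agrees with the one computed from the full surgery, and in particular the all-zero-index structure is tracked correctly. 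Alternatively, one can bypass the iteration and mimic the proof of Lemma~\ref{lem:1<n} directly: form the positive-definite cobordism $W_{\bm 1}(\L)\#_{\sum_i(n_i-1)}\mathbb{C}P^2$, handleslide the exceptional $-1$-framed unknots onto the appropriate components, split the resulting negative-definite cobordism as a large-surgery cobordism followed by a piece $X_2$ accounting for the lens-space term $\sum_i d(L(n_i,1),0)$, and apply \eqref{eq:neg-def-shift} with the $\spinc$ structure maximizing $c_1^2$; the grading-shift computation is then the one carried out in Lemma~\ref{lem:1<n}, performed one summand at a time.
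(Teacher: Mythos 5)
Your proposal is correct and follows exactly the route the paper intends: the corollary is stated as an immediate consequence of Lemma~\ref{lem:1<n}, obtained by changing the framing of one component at a time (equivalently, by running the blow-up argument of that lemma for all components at once), which is precisely what you do. Your handling of the two genuine subtleties—that algebraic splitness makes each $L_k$ nullhomologous with unchanged Seifert framing in the intermediate surgered manifold, and that the zero-indexed $\spinc$ structures match under composition of the two-handle cobordisms via the evaluations $\langle c_1(\cdot),[F_i]\rangle$—is at the right level of detail and correct.
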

  
The following lemma is a straightforward analogue of \cite[Theorem 2.5]{NiWu}.
\begin{lemma}\label{lem:knot-monotonicity}
Let $K$ be a nullhomologous knot in a rational homology sphere $Y$.  Choose $m$ a large positive integer and fix a $\spinc$ structure $\mfs$ on $Y$.  Suppose that $\dk(Y_m(K),\mfs_0) = \dk(Y,\mfs) + d(L(m,1),0)$.  
Then $\dk(Y_m(K), \mfs_i) = \dk(Y,\mfs) + d(L(m,1), i)$ for each $i$.
\end{lemma}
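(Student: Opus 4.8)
The plan is to deduce the statement from the large surgery formula together with the non-negativity and monotonicity of the associated knot invariants $V_s$, running the argument of \cite[Theorem 2.5]{NiWu} over the rational homology sphere $Y$ and over the field $\kk$. First I would invoke the large surgery theorem \cite{OS:Integer, MO} for the nullhomologous knot $K\subset Y$: for $m\gg 0$ and every $s$, the group $\HFminus_{\kk}(Y_m(K),\mfs_s)$ is isomorphic, as a graded $\kk[U]$-module and up to an overall grading shift depending only on $m$ and $s$, to the homology of the subquotient complex $\A^{-}_{\kk}(K,\mfs,s)$ of $CFK^{\infty}$. Since $CFK^{\infty}$ and all the chain maps in play are defined over $\Z$, this descends from $\F$-coefficients to $\kk$ by base change; comparing with the unknot pins down the grading shift, so one gets integers $V^{\kk}_s\ge 0$, $s\in\Z$, with
\[ \dk(Y_m(K),\mfs_s)=\dk(Y,\mfs)+d(L(m,1),s)-2V^{\kk}_s . \]
The inequality $V^{\kk}_s\ge 0$ is also exactly Lemma~\ref{lem: d invariant bound} with $\ell=1$; at the chain level it reflects the $U$-equivariant vertical map $\A^{-}_{\kk}(K,\mfs,s)\to\CFminus_{\kk}(Y,\mfs)$, which is an isomorphism in all sufficiently negative gradings.

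Next I would record the two structural properties of the sequence $(V^{\kk}_s)$ that are needed: the symmetry $V^{\kk}_s=V^{\kk}_{-s}$ (after the conjugation $\mfs\mapsto\overline{\mfs}$) and the monotonicity $V^{\kk}_s\ge V^{\kk}_{s+1}$ for $s\ge 0$. These are the $\kk$-coefficient analogues of Lemma~\ref{h-function list}\eqref{h-symmetry} and \eqref{lem: h increases} (equivalently, of \cite[Theorem 2.5]{NiWu}): the symmetry comes from the flip symmetry of $CFK^{\infty}$ combined with conjugation invariance of Heegaard Floer homology, and the monotonicity from the comparison maps between the complexes $\A^{-}(K,\mfs,s)$. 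All of these are chain maps defined over $\Z$, so the arguments are insensitive to the coefficient field; in particular $0\le V^{\kk}_s\le V^{\kk}_0$ for every $s$, for each of $\mfs$ and $\overline{\mfs}$, and the hypothesis at $\mfs_0$ is equivalent to the same statement at $\overline{\mfs}_0$.

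Finally I would combine these: the hypothesis $\dk(Y_m(K),\mfs_0)=\dk(Y,\mfs)+d(L(m,1),0)$ says precisely $V^{\kk}_0=0$, hence $V^{\kk}_s=0$ for all $s\ge 0$ by monotonicity, and then for all $s$ by the conjugation symmetry; substituting back into the displayed formula gives $\dk(Y_m(K),\mfs_i)=\dk(Y,\mfs)+d(L(m,1),i)$ for every $i$. The one step that genuinely requires care — and the only reason the lemma is not immediate — is the passage to an arbitrary coefficient field: one must check that the large surgery formula and the non-negativity, symmetry, and monotonicity of the $V_s$ all follow from chain-level statements over $\Z$, where knot Floer homology of a nullhomologous knot in a rational homology sphere is defined, rather than from the link Floer package, which at present is available only over $\F$.
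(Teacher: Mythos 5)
Your proposal is correct and follows essentially the same route as the paper's proof: both express $\dk(Y_m(K),\mfs_i)$ via the large surgery/mapping cone formula as $\dk(Y,\mfs)+d(L(m,1),i)$ minus twice a non-negative, monotone correction term (your $V^{\kk}_s$, the paper's $H_{\mfs}(i)$ defined through the $v^+$ maps, with monotonicity from Rasmussen), so the hypothesis forces the correction to vanish for $i\ge 0$, and the $i\le 0$ case follows from the conjugation symmetry $\overline{\mfs_i}=\overline{\mfs}_{-i}$. Your extra care about working over an arbitrary field $\kk$ is a reasonable elaboration of the same argument rather than a different approach.
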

Note that $\overline{\mfs_i} = \overline{\mfs}_{-i}$.  
\begin{proof}
First, we assume that $0 \leq i \leq \frac{m}{2}$.  By the large surgery formula from \cite{OS:Hol} and absolute gradings on the mapping cone formula \cite[Section 7.2]{OS:Rational}, $\dk(Y_m(K), \mfs_i) = \dk(Y, \mfs) + d(L(m,1),i) - 2H_{\mfs}(i)$, where $H_{\mfs}(i)$ is defined as follows.  The map $v^+_{\mfs,i} : H_*(A^+_{\mfs,i}) \to HF^+(Y,\mfs)$ defined in \cite{OS:Integer} is given by multiplication by a power of $U$ when restricted to the image of large powers of $U$ in $H_*(A^+_{\mfs,i})$.  This exponent is $H_{\mfs}(i)$.  Rasmussen shows $H_{\mfs}(i) \geq H_{\mfs}(i+1) \geq 0$ in \cite[Proposition 7.6]{Ras:Thesis} and the result follows.   

If $i\le 0$ we use the conjugation symmetry between $\mfs_{i}$ and $\overline{\mfs}_{-i}$.
\end{proof}

In fact, we can generalize Lemma~\ref{lem:knot-monotonicity} to links by an induction argument.
\begin{lemma}
\label{lem:link-monotonicity}
Let $\L$ be a nullhomologous algebraically split $\ell$-component link in a rational homology sphere $Y$.  Choose an $\ell$-tuple of large positive integers $\bm{m}=(m_1,\ldots,m_\ell)$ and fix a $\spinc$ structure $\mfs$ on $Y$.  Suppose 
\begin{equation*}
\dk(Y_{\bm{m}}(\L), \mfs_{\bm{0}}) = \dk(Y,\mfs) + \sum_{k=1}^{\ell} d(L(m_k,1),0).
\end{equation*}
Then, $\dk(Y_{\bm{m}}(\L),\mfs_{\bm{i}}) = \dk(Y,\mfs) + \sum_{k=1}^{\ell} d(L(m_k,1),i_k)$ for any tuple $\bm{i} = (i_1,\ldots,i_\ell)$.  
\end{lemma}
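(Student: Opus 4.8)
The plan is to run an induction on the number of components $\ell$, using Lemma~\ref{lem:knot-monotonicity} both as the base case and as the inductive step, after a reduction that replaces one surgered component with a knot in the intermediate surgered manifold. First I would fix the tuple $\bm{i}=(i_1,\ldots,i_\ell)$ and argue by induction on the number of indices $k$ for which $i_k\neq 0$; the statement is vacuous when all $i_k=0$, so suppose without loss of generality that $i_\ell\neq 0$. Write $Y' = Y_{m_1,\ldots,m_{\ell-1}}(\L\setminus L_\ell)$ and let $K'$ denote the image of $L_\ell$ in $Y'$, which is nullhomologous since $\L$ is algebraically split. Then $Y'$ is a rational homology sphere, $Y_{\bm{m}}(\L) = Y'_{m_\ell}(K')$, and the $\spinc$ structures decompose compatibly: $\mfs_{\bm{i}}$ on $Y_{\bm{m}}(\L)$ is $(\mfs_{i_1,\ldots,i_{\ell-1}})_{i_\ell}$ on $Y'_{m_\ell}(K')$, where $\mfs_{i_1,\ldots,i_{\ell-1}}$ is the induced $\spinc$ structure on $Y'$. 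This last point is the technical heart of the reduction: one must check that the grading-shift formula for iterated surgery is consistent with performing the surgeries one at a time, which follows from the additivity of $c_1^2$, $\sigma$ and $\chi$ over gluing along rational homology spheres together with the explicit formula $\langle c_1(\mfw),[F]\rangle + m = 2i$ from \cite{OS:Integer}.

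Next I would feed the hypothesis into this decomposition. By Lemma~\ref{lem: d invariant bound} applied to the sublink $\L\setminus L_\ell$ and then to $K'\subset Y'$, each of the two inequalities
\[
\dk(Y'_{m_\ell}(K'),(\mfs_{i_1,\ldots,i_{\ell-1}})_0) \leq \dk(Y',\mfs_{i_1,\ldots,i_{\ell-1}}) + d(L(m_\ell,1),0)
\]
and
\[
\dk(Y',\mfs_{i_1,\ldots,i_{\ell-1}}) \leq \dk(Y,\mfs) + \sum_{k=1}^{\ell-1} d(L(m_k,1),0)
\]
holds, and the hypothesis says their composite is an equality, so both are equalities. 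The equality in the second line says the hypothesis of the lemma holds for the sublink $\L\setminus L_\ell$, so by induction on the number of components we get the full conclusion for that sublink, i.e. $\dk(Y',\mfs_{i_1,\ldots,i_{\ell-1}}) = \dk(Y,\mfs) + \sum_{k=1}^{\ell-1} d(L(m_k,1),i_k)$ for our chosen $\bm{i}$. The equality in the first line is precisely the hypothesis of Lemma~\ref{lem:knot-monotonicity} for the knot $K'\subset Y'$ with $\spinc$ structure $\mfs_{i_1,\ldots,i_{\ell-1}}$, so that lemma gives
\[
\dk(Y'_{m_\ell}(K'),(\mfs_{i_1,\ldots,i_{\ell-1}})_{i_\ell}) = \dk(Y',\mfs_{i_1,\ldots,i_{\ell-1}}) + d(L(m_\ell,1),i_\ell).
\]
Combining the last two displayed equalities and translating back through the $\spinc$ decomposition yields $\dk(Y_{\bm{m}}(\L),\mfs_{\bm{i}}) = \dk(Y,\mfs) + \sum_{k=1}^{\ell} d(L(m_k,1),i_k)$, as desired.

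I expect the main obstacle to be the bookkeeping in the $\spinc$ decomposition — verifying that the induced $\spinc$ structures and the grading shifts behave multiplicatively when the iterated surgery is split into a sequence of single-component surgeries, and that "sufficiently large" can be arranged simultaneously for $Y_{\bm{m}}(\L) = Y'_{m_\ell}(K')$ and for all the intermediate manifolds (one chooses the $m_k$ large enough in turn, using that each large-surgery statement only needs $m_k$ large relative to the genus of the relevant knot in the relevant intermediate manifold). Everything else is a formal combination of Lemma~\ref{lem: d invariant bound} (to force the telescoping inequalities to be equalities) and Lemma~\ref{lem:knot-monotonicity} (to handle one component at a time). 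For $i_k \le 0$ one uses the conjugation symmetry $\overline{\mfs_{\bm{i}}} = \overline{\mfs}_{-\bm{i}}$ componentwise, exactly as in the knot case.
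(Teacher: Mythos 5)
Your overall skeleton --- surger the sublink first, force the telescoping inequalities from Lemma~\ref{lem: d invariant bound} into equalities, and finish with Lemma~\ref{lem:knot-monotonicity} applied to the image knot $K'\subset Y'$ --- is the same as the paper's, but the step where you ``feed the hypothesis into this decomposition'' has a genuine gap. To apply Lemma~\ref{lem:knot-monotonicity} to $K'\subset Y'$ at the $\spinc$ structure $\mfs_{\bm{i}'}$, $\bm{i}'=(i_1,\ldots,i_{\ell-1})$, you need the equality $\dk(Y'_{m_\ell}(K'),(\mfs_{\bm{i}'})_0)=\dk(Y',\mfs_{\bm{i}'})+d(L(m_\ell,1),0)$. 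Your justification is that the composite of your two displayed inequalities ``is an equality by the hypothesis,'' but that composite concerns $\dk(Y_{\bm{m}}(\L),\mfs_{\bm{i}',0})$, while the stated hypothesis is an equality only for $\mfs_{\bm{0}}$; when $\bm{i}'\neq\bm{0}$ it says nothing about this quantity. Moreover your second displayed inequality is actually \emph{strict} whenever some $i_k\not\equiv 0 \pmod{m_k}$: Lemma~\ref{lem: d invariant bound} gives the sharper bound $\dk(Y',\mfs_{\bm{i}'})\le \dk(Y,\mfs)+\sum_{k<\ell} d(L(m_k,1),i_k)$, and $d(L(m,1),i)<d(L(m,1),0)$ for $i\not\equiv 0$. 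So the claim ``both are equalities'' fails exactly in the cases you need (it would even contradict the conclusion you are trying to prove), and the input to Lemma~\ref{lem:knot-monotonicity} at $\mfs_{\bm{i}'}$ is never established.

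The paper fills this hole with a second decomposition of the two-handle cobordism: surger the last component first, obtaining $Z=Y_{m_{\ell+1}}(L_{\ell+1})$, and apply the inductive hypothesis (on the number of components) to the image of the sublink inside $Z$. This computes $\dk(Y_{\bm{m}}(\L),\mfs_{\bm{i}',0})$ exactly, and a final chain of inequalities, sandwiched by \eqref{largeY} and the already-established value of $\dk(Z',\mfs_{\bm{i}'})$, forces precisely the equality needed to invoke Lemma~\ref{lem:knot-monotonicity} for the image of the last component in $Z'$. Alternatively, the double induction you announce (on the number of nonzero $i_k$) can be made to work without the second decomposition, but you never actually use it: replace ``the hypothesis'' by the inductive case $(\bm{i}',0)$, which has one fewer nonzero entry and gives $\dk(Y_{\bm{m}}(\L),\mfs_{\bm{i}',0})=\dk(Y,\mfs)+\sum_{k<\ell}d(L(m_k,1),i_k)+d(L(m_\ell,1),0)$, and rewrite your second inequality with $d(L(m_k,1),i_k)$ on the right; then the two inequalities do telescope to an equality, both become equalities, and Lemma~\ref{lem:knot-monotonicity} applies as you intend. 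As written, neither induction is carried out at the crucial point, so the proof is incomplete.
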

\begin{proof}
We prove this by induction on the number of components in a link in an arbitrary rational homology sphere.  If $\L$ is a knot, this is simply Lemma~\ref{lem:knot-monotonicity}.  

Next, suppose that we have established the result for $\ell$-component links in an arbitrary rational homology sphere, and let $\L$ be an $(\ell + 1)$-component link in $Y$.  Let $\L' = \L - L_{\ell+1}$. We will consider two decompositions of the two-handle cobordism $W_{m_1,\ldots, m_{\ell+1}}$ from $Y$ to $Y_{\bm{m}}(\L)$,
\[
	W_{m_{\ell+1}} \cup X_{m_1, \cdots, m_\ell}: Y \rightarrow Z \rightarrow Y_{\bm{m}}(\L),
\]
\[
	W_{m_1, \cdots, m_\ell} \cup X_{m_{\ell+1}}: Y \rightarrow Z' \rightarrow Y_{\bm{m}}(\L)
\]
both shown in Figure \ref{cobordism}. Here $Z = Y_{m_{\ell+1}}(L_{\ell+1})$ and $Z'=Y_{\bm{m}'}(\L')$. Again, the subscripts of the $W$- and $X$-labeled cobordisms indicate the components and framings for which the two-handles are attached, as in the notation of the beginning of the section.  Let  $\overline{\L'}$ be the image of $\L'$ in $Z$, and $\overline{L_{\ell+1}}$ be the image of $L_{\ell+1}$ in $Z'$.  The $\spinc$ structures on $Y, Z, Z'$, and $Y_{\bm{m}}(\L)$ are denoted as in Figure \ref{cobordism} where $\mfs_{\bm{i}'}=\mfs_{i_1,\cdots, i_\ell}$ and $\mfs_{\bm{i}} = \mfs_{i_1,\cdots, i_{\ell+1}}$.

\begin{figure}[h]
\centering
\begin{tikzpicture}
    \node[anchor=south west,inner sep=0] at (0,0) {\includegraphics[width=4.5in]{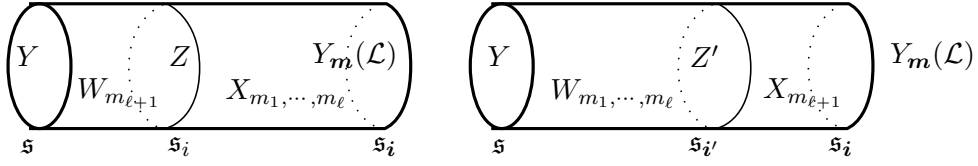}};
    \draw(0.3, 1) node {$Y$};
    \draw(0.3, -0.2) node {$\mfs$};    
    \draw(1.5, 0.5) node {$W_{m_{\ell+1}}$};
    \draw(2.3, 1) node {$Z$};
    \draw(2.3, -0.2) node {$\mfs_{i}$};     
    \draw(3.7, 0.5) node {$X_{m_1,\cdots, m_\ell}$};
    \draw(4.6, 1) node {$Y_{\bm{m}}(\L)$};
    \draw(5, -0.2) node {$\mfs_{\bm{i}}$};    
    \draw(6.5, 1) node {$Y$};
    \draw(6.5, -0.2) node {$\mfs$};     
    \draw(8, 0.5) node {$W_{m_1,\cdots, m_\ell}$};
    \draw(9.2, 1) node {$Z'$};
    \draw(9.2, -0.2) node {$\mfs_{\bm{i}'}$};       
    \draw(10.5, 0.5) node {$X_{m_{\ell+1}}$};
    \draw(12.2, 1) node {$Y_{\bm{m}}(\L)$};
    \draw(11, -0.2) node {$\mfs_{\bm{i}}$};                                 							
\end{tikzpicture}
\caption{Two different decompositions of the two-handle cobordism $W_{m_1,\ldots,m_{\ell+1}}$ from $Y$ to $Y_{\bm{m}}(\L)$ used in Lemma \ref{lem:link-monotonicity}. The left cobordism consists of first attaching the two-handle to $L_{\ell+1}$ and then attaching the remaining two-handles.  The right cobordism consists of first attaching the two-handles along all components other than $L_{\ell+1}$ and then attaching a two-handle along $L_{\ell+1}$}
\label{cobordism}
\end{figure} 

We have  
\begin{align}
\nonumber \dk(Y,\mfs) + \sum_{k=1}^{\ell+1} d(L(m_k,1),0) & = \dk(Y_{\bm{m}}(\L),\mfs_{\bm{0}}) \\
\label{eq:d-surgery-s0} & \leq \dk(Z', \mfs_{\bm{0}'}) + d(L(m_{\ell+1},1),0) \\ 
\nonumber & \leq \dk(Y,\mfs) + \sum_{k=1}^{\ell+1} d(L(m_k,1),0).
\end{align}
Similarly, 
\begin{align}
\nonumber \dk(Y,\mfs) + \sum_{k=1}^{\ell+1} d(L(m_k,1),0) & = \dk(Y_{\bm{m}}(\L),\mfs_{\bm{0}}) \\
\label{eq:d-surgery-t} & \leq \dk(Z, \mfs_0) + \sum^{\ell}_{k=1} d(L(m_k,1),0) \\ 
\nonumber & \leq \dk(Y,\mfs) + \sum_{k=1}^{\ell+1} d(L(m_k,1),0).
\end{align}

Combining \eqref{eq:d-surgery-s0} and \eqref{eq:d-surgery-t}, we conclude 
\begin{align}
\label{eq:d-Y'-pre} \dk(Z',\mfs_{\bm{0}'}) &= \dk(Y,\mfs) + \sum_{k=1}^{\ell} d(L(m_k,1),0) \\
\label{eq:d-L} \dk(Y_{\bm{m}}(\L),\mfs_{\bm{0}})  &= \dk(Z,\mfs_0) + \sum_{k=1}^{\ell} d(L(m_k,1),0).
\end{align}
By \eqref{eq:d-surgery-t} we get $\dk(Z,\mfs_0)=\dk(Y,\mfs)+d(L(m_{\ell+1},1),0)$. By applying Lemma~\ref{lem:knot-monotonicity} to the knot $L_{\ell+1}$, we also have
\begin{align}
\label{eq:d-Z} \dk(Z,\mfs_i) &= \dk(Y,\mfs) + d(L(m_{\ell+1},1),i).
\end{align}

Now, using \eqref{eq:d-Y'-pre}, by our induction assumption applied to $\L'$ in $Y$, 
\begin{equation}
\label{eq:d-inequality-surgery-induct} \dk(Z',\mfs_{\bm{i}'}) = \dk(Y,\mfs) + \sum^\ell_{k=1} d(L(m_k,1),i_k).  
\end{equation}

Similarly, by \eqref{eq:d-L}, our induction assumption applied to $\overline{\L'}$ in $Z$ implies that 
\begin{equation}
\label{eq:d-Y'} \dk(Y_{\bm{m}}(L),\mfs_{\bm{i}',0}) = \dk(Z,\mfs_0) + \sum_{k=1}^\ell d(L(m_k,1),i_k).
\end{equation}

Using \eqref{eq:d-Z} and \eqref{eq:d-Y'}, we have 
\begin{align*}
\dk(Y,\mfs) + \sum_{k=1}^{\ell} d(L(m_k,1), i_{k})+d(L(m_{\ell+1}, 1), 0) & \stackrel{\eqref{eq:d-Z}}{=} \dk(Z,\mft_0) + \sum_{k=1}^\ell d(L(m_k,1),i_k)\\
& \stackrel{\eqref{eq:d-Y'}}{=} \dk(Y_{\bm{m}}(\L),\mfs_{\bm{i'},0})\\ 
& = \dk(Z'_{m_{\ell+1}}(\overline{L_{\ell+1}}),\mfs_{\bm{i}',0}) \\
& \stackrel{\eqref{largeY}}{\leq} \dk(Z',\mfs_{\bm{i'}}) + d(L(m_{\ell+1},1),0) \\
& \stackrel{\eqref{eq:d-inequality-surgery-induct}}{\leq} \dk(Y,\mfs) + \sum_{k=1}^{\ell} d(L(m_k,1), i_{k})+d(L(m_{\ell+1}, 1), 0),
\end{align*}
hence 
\[
\dk(Z'_{m_{\ell+1}}(\overline{L_{\ell+1}}),\mfs_{\bm{i}',0}) = \dk(Z',\mfs_{\bm{i'}}) + d(L(m_{\ell+1},1),0). 
\]
By Lemma~\ref{lem:knot-monotonicity} applied to $\overline{L_{\ell+1}}$ in $Z'$, we have 
\[
\dk(Z'_{m_{\ell+1}}(\overline{L_{\ell+1}}),\mfs_{\bm{i}',i_{\ell+1}}) = \dk(Z',\mfs_{\bm{i'}}) + d(L(m_{\ell+1},1),i_{\ell+1}).
\]
Since $Z'_{m_{\ell+1}}(\overline{L_{\ell+1}}) = Y_{\bm{m}}(\L)$, \eqref{eq:d-inequality-surgery-induct} now completes the proof.
\end{proof} 

We are now ready to prove the unlink detection theorem for L--space links.

\begin{proof}[Proof of Proposition \ref{prop: L space sublink}]
We prove this by induction on the number of components.  First, suppose that $\L$ is a knot.  Then, it is well known that $\dk(S^3_{\bm{1}}(\L)) = 0$ if and only if $\L$ is unknotted.

Next, suppose that the result holds for $\ell$-component L--space links and let $\L$ have $\ell+1$ components.  Suppose that $\dk(S^{3}_{\bm{1}}(\L))=0$.  Given $\bm{m}\ggcurly \bm{0}$, by \eqref{eq:111<nnn} one has  
\[
0 = \dk(S^3_{\bm{1}}(\L)) \leq \dk(S^3_{\bm{m}}(\L), \mfs_0) - \sum_{k=1}^{\ell+1} d(L(m_k,1), 0) \leq d(S^3)=0.  
\]
The last inequality follows from Lemma \ref{lem: d invariant bound}. 

By Lemma~\ref{lem:link-monotonicity}, we have that 
\[
	\dk(S^3_{\bm{m}}(\L),\mfs_{\bm{i}}) = \sum_{k=1}^{\ell+1} d(L(m_k,1),i_k)
\] 
for all tuples $(i_1,\ldots,i_{\ell+1})$.  Note that if $\dk(S^3_{\bm{1}}(\L)) = 0$, then by Lemma \ref{link and sublink} the same is true for $\L' = L_1 \cup \ldots \cup L_\ell$.  Therefore, we have that $\L'$ is an $\ell$-component unlink by the assumption. Hence we have that $S^3_{\bm{m'}}(L')$ is  $\#_{k=1}^{\ell} L(m_k,1)$ which is an L-space.  Since $\bm{m} \ggcurly \bm{0}$, we have that $S^3_{\bm{m}}(\L)$ is an L--space.  Therefore, the image of $L_{\ell+1}$ in $S^3_{\bm{m'}}(\L')$ is a knot for which $m_{\ell+1}$-surgery yields a three-manifold with the same absolutely graded Heegaard Floer homology as $S^3_{\bm{m'}}(\L') \# L(m_{\ell+1}, 1)$.  By Gainullin's Dehn surgery characterization of the unknot in L-spaces \cite[Theorem 8.2]{Gainullin}, we have that $L_{\ell+1}$ is unknotted in $S^3_{\bm{m'}}(\L')$.  (His paper is only written over $\F$, but the arguments work over an arbitrary field.)  By varying the values of $\bm{m'}$, we may apply \cite[Corollary 2.4]{GabaiII} to conclude that $L_{\ell+1}$ is actually unknotted in the exterior of $\L'$.  Since $\L'$ is also an unlink, we see that $\L$ is an unlink.   
 The second part directly follows from Corollary \ref{coro:rational}.
\end{proof}

\begin{remark}
\label{rem:grading}
There is a very elementary proof in the case that $\kk = \F$.  One simply uses \eqref{eq:111<nnn} to see
\begin{equation}
\label{shift}
d_{\F}(S^3_{\bm{1}}(\L)) \leq d_{\F}(S^3_{\bm{m}}(L), \mfs_{\bm{0}}) - \sum^\ell_{k = 1} d(L(m_k,1),0) = -2h_{\L}(\bm{0}) \leq -2.
\end{equation}
Recall that $-2H_{\L}(\bm{0})$ equals $d_{\F}(S^3_{\bm{m}}(L), \mfs_{\bm{0}})$ up to grading shift which does not depend on the link. Hence one can use the unlink to cancel the grading shift, obtaining the equality in \eqref{shift}. The last inequality follows from the fact that a nontrivial link $\L$ has $h_{\L}(\bm{0})> 0$ \cite{Beibei}.   
\end{remark}

\subsection{A bound from non-vanishing triple linking}

In order to constrain the $d$-invariants of $S^3_{\bm{1}}(\L)$ in the case that $\L$ is a three-component link with non-trivial Milnor triple linking, we will connect this with the Floer homology of $S^3_{\bm{0}}(\L)$ which we know is {\em not} standard by Theorem~\ref{thm: rank 6}.  To do this, we will use the $d$-invariant inequalities (and equalities) that come from the surgery triangle for  surgery on nullhomologous knots in three-manifolds which do have standard $HF^{\infty}$.  For $n = 0, \infty$ or odd, let $\mfs_n$ denote the unique self-conjugate torsion $\spinc$ structure on $S^3_{0,0,n}(\L)$.    

\begin{lemma}
\label{dtop dbot}
Assume that $\L$ is a Brunnian link with $\mu_{123}\neq 0$. Then
\[d_{top,\Q}(S^3_{0,0,1}(\L),\mfs_1) \leq d_{bot,\Q}(S^3_{0,0,\infty}(\L),\mfs_\infty).\]
\end{lemma}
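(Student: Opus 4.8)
The plan is to relate the three three-manifolds $S^3_{0,0,0}(\L)$, $S^3_{0,0,1}(\L)$, and $S^3_{0,0,\infty}(\L) = S^3_{0,0}(\L')$ (where $\L' = L_1\cup L_2$ is the two-component unlink, since $\L$ is Brunnian) through the surgery triangle for $\pm 1$-surgery versus $\infty$-surgery on the third component $L_3$, viewed as a nullhomologous knot inside $S^3_{0,0}(L_1\cup L_2)$. First I would observe that because $\L$ is Brunnian, $S^3_{0,0}(L_1\cup L_2) = \#^2(S^1\times S^2)$, which has standard $HF^\infty$, so the top and bottom correction terms $d_{top}$ and $d_{bot}$ are defined there and on the surgeries $S^3_{0,0,1}(\L)$ and $S^3_{0,0,\infty}(\L)$ (the latter being $\#^2(S^1\times S^2)$ as well, by Brunnian-ness), as $b_1$ of these is at most $3$ and at most $2$ respectively. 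The point is that $L_3$ is a nullhomologous knot in $Z := S^3_{0,0}(L_1\cup L_2)$, $S^3_{0,0,\infty}(\L)$ is $Z$ itself viewed as the $\infty$-filling, and $S^3_{0,0,1}(\L) = Z_1(L_3)$ is $+1$-surgery on it; meanwhile $S^3_{0,0,0}(\L) = Z_0(L_3)$ is the $0$-surgery, which by Theorem~\ref{thm: rank 6} has non-standard $HF^\infty$ of rank $6$.

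The key input is Proposition~\ref{prop:d-inv} (the Ozsváth–Szabó inequality $d_{top,\kk}(Y_0) - \tfrac12 \le d_{top,\kk}(Y_1)$ for a nullhomologous knot in a three-manifold with $b_1 \le 1$), together with its analogue/consequences for $d_{top}$ and $d_{bot}$ that are implicit in \cite{OS03} and \cite{Levine}. However, here $b_1(Z) = 2$, so Proposition~\ref{prop:d-inv} does not apply directly. What I would do instead is decompose $Z = S^3_{0,0}(L_1\cup L_2)$ one component at a time: realize $Z$ as $0$-surgery on (the image of) $L_2$ inside $S^3_0(L_1)$, which has $b_1 = 1$, so that Proposition~\ref{prop:d-inv} applies at that last stage; and set things up so that the surgery on $L_3$ and the surgery on $L_2$ commute. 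Concretely, I would compare $d_{top,\Q}$ along the two ways of building up the surgery: first do the $+1$-surgery on $L_3$ (landing in a manifold with $b_1 = 2$), versus first do the filling on $L_2$. The cleanest route is to use the $d$-invariant surgery-triangle inequalities from Section~\ref{subsec:d std} in the form: for a nullhomologous knot $K$ in a three-manifold $Y$ with standard $HF^\infty$ and $b_1(Y)\le 1$, one has $d_{top}(Y_0) - \tfrac12 \le d_{top}(Y_1)$ and, dually, $d_{bot}(Y_1) \le d_{bot}(Y_0) - \tfrac12$ (the latter following from the former applied to the mirror, using $d_{bot}(-Y) = -d_{top}(Y)$). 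Applying this to $L_3\subset Z$ is what we want, but since $b_1(Z) = 2$ I would instead peel off $L_2$: write $Z_1(L_3) = (S^3_0(L_1))_{0,1}(L_2\cup L_3)$ and analyze the effect of the $L_2$-filling there.

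Here is the step order I envisage. (1) Identify $S^3_{0,0,\infty}(\L)$ with $\#^2(S^1\times S^2)$ and record $d_{bot,\Q} = -1$, $d_{top,\Q} = +1$ there; identify $S^3_{0,0}(L_1\cup L_2)$ likewise. (2) View $L_3$ as nullhomologous in $Z = \#^2(S^1\times S^2)$ and write down the surgery exact triangle relating $HF^+_\Q$ of $Z$, $Z_0(L_3) = S^3_{0,0,0}(\L)$, and $Z_1(L_3) = S^3_{0,0,1}(\L)$, keeping track of the torsion $\spinc$ structures $\mfs_\infty, \mfs_0, \mfs_1$ and the absolute $\Q$-gradings (using the grading-shift formula via $\delta(W,\mfw)$ from Section~\ref{subsec:d std}). (3) Extract from this triangle the inequality $d_{top,\Q}(Z_1(L_3),\mfs_1) \le d_{bot,\Q}(Z,\mfs_\infty)$; this is where the rank-$6$ non-standardness of $HF^\infty_\Q(S^3_{0,0,0}(\L),\mfs_0)$ is used to pin down which part of the triangle dominates. (4) Conclude. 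The main obstacle will be step (3): making the grading bookkeeping in the surgery triangle precise when $b_1$ jumps between the three manifolds, and correctly matching the $d_{top}$/$d_{bot}$ of the $b_1 = 2$ and $b_1 = 3$ manifolds against the behavior of the cobordism maps — in particular, handling the fact that the natural cobordism map here increases $b_1$ so one must track both the kernel ($\mathcal K$) and quotient ($\mathcal Q$) parts of $HF^\infty$ and which one the triangle's connecting maps see. I expect the actual inequality to fall out once the triangle and gradings are set up, with the genuinely delicate point being the justification that $d_{top}$ of the $+1$-surgery is controlled by $d_{bot}$ (not $d_{top}$) of the $\infty$-surgery, which is forced precisely because $S^3_{0,0,0}(\L)$ fails to be standard.
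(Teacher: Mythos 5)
Your overall strategy is the right one (a surgery exact triangle on $HF^\infty_\Q$ together with the rank-$6$ non-standardness of $S^3_{0,0,0}(\L)$ forcing a ``top maps to bottom'' phenomenon), but the proposal has a genuine gap: your step (3) is not an implementation detail, it \emph{is} the lemma, and you leave it as an expectation. The paper's proof of that step is the real content: it runs the triangle $HF^\infty_\Q(S^3_{0,0,\infty}) \to HF^\infty_\Q(S^3_{0,0,0}) \to HF^\infty_\Q(S^3_{0,0,n})$ for \emph{large odd} $n$, identifies the connecting map $F_3$ as a sum of two $\spinc$-cobordism maps which, in the self-conjugate torsion structure $\mfs_n$, have the \emph{same} grading shift $-d(L(n,1),0)$ and preserve the $\Z/2$-grading, and then uses three ingredients you never supply: (i) exactness plus rank $6$ in the middle forces $\ker F_3$ and $\operatorname{coker} F_3$ to each have rank $3$; (ii) $F_3$ is equivariant for the $\Lambda^* H_1$-action, so it is determined by the image of the top class $\alpha = x^*\wedge y^*\otimes 1$, and $F_3(\alpha)=0$ or $F_3(\alpha)$ supported on $\alpha'$ would make $\ker F_3$ have rank $4$ or $0$ respectively; (iii) the $\Z/2$-grading excludes the middle classes $\beta',\gamma'$. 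Together these force $F_3(\alpha)$ to have a nonzero component on the bottom class $\delta'$, and only then does the long exact sequence with $HF^\pm$ give $d_{top,\Q}(S^3_{0,0,n},\mfs_n)-d(L(n,1),0)\le d_{bot,\Q}(S^3_{0,0,\infty},\mfs_\infty)$. Saying the non-standardness ``pins down which part of the triangle dominates'' does not substitute for this module-theoretic argument.

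There is also a concrete problem with running the triangle directly at $+1$ as you propose: in the $(\infty,0,1)$ triangle the connecting map out of $HF^\infty_\Q(S^3_{0,0,1}(\L),\mfs_1)$ is a sum of cobordism maps over infinitely many $\spinc$ structures with unequal (indeed unbounded) grading shifts, so the clean grading bookkeeping you need in step (3) is not available there. The paper avoids this by proving the inequality at large odd $n$ (where the shift is uniform) and then descending to $+1$-surgery via the negative-definite cobordism comparison of Lemma \ref{lem: d invariant bound}, i.e.\ $d_{top,\Q}(S^3_{0,0,1},\mfs_1)\le d_{top,\Q}(S^3_{0,0,n},\mfs_n)-d(L(n,1),0)$; that two-step structure is missing from your plan. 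Finally, the digression about peeling off $L_2$ and Proposition \ref{prop:d-inv} is not needed for this lemma (that proposition enters later, in deducing Theorem \ref{nonzero linking d invt} from the lemma), and as written it does not lead anywhere for the statement at hand.
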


\begin{proof}
The first step of the proof is to relate the $d$-invariants of $S^3_{0,0,\infty}(\L)$ with $S^3_{0,0,n}(\L)$.  
Choose odd $n \gg 0$.  We begin by considering the surgery exact triangle from \cite[Theorem 9.19]{OS:properties}: 
\begin{equation*}
\label{eq:triangle}
\xymatrix @M=6pt@C=10pt@R=10pt {
	HF^\infty_{\Q}(S^3_{0,0,\infty}(\L), \mfs_\infty) \ar[rr]^{F_1} & &
	HF^\infty_{\Q}(S^3_{0,0,0}(\L),\mfs_0) \ar[dl]^{F_2} \\
	& HF^\infty_{\Q}(S^3_{0,0,n}(\L), \mfs_n) \ar[ul]^{F_3}
}
\end{equation*}
In the above exact triangle, $F_3$ is a sum of {\em two} $\spinc$ cobordism maps, and by our choice of $\mfs_n$, these have the same absolute grading shift, given by $-d(L(n,1),0)$ (See for instance \cite[Section 4.8]{OS:Integer}).  Furthermore, $F_3$ preserves the absolute $\Z/2$-grading defined in \cite{OS:properties}.        

Since $\L$ is Brunnian, $S^3_{0,0,\infty}(\L)$ is $\#^2(S^1\times S^2)$. Furthermore, for $Y=S^3_{0,0,n}(\L)$ or $S^3_{0,0,\infty}(\L)$, since $b_1(Y) \leq 2$, the module
\begin{equation}\label{eq:standard}
	HF^\infty_{\Q}(Y,\mfs) \cong \Lambda^{\ast}H^1(Y;\Z) \otimes_\Z \Q[U, U^{-1}]
\end{equation}
is standard for any torsion $\spinc$ structure $\mfs$.  The elements of $H_1(Y;\Z)/\text{Tors}$ act by contraction on $\Lambda^\ast H^1(Y;\Z)$, hence on $HF^\infty_\Q(Y,\mfs)$. 

Choose generators $x,y$ of $H_1(S^3_{0,0,n}(\L);\Z)/\text{Tors}$.  Let us choose a $\Q[U,U^{-1}]$-basis for $HF^\infty_{\Q}(S^3_{0,0, n}(\L), \mfs_n)$, denoted $\alpha, \beta, \gamma, \delta$, which correspond to $x^* \wedge y^*, x^*, y^*, 1$ respectively under the isomorphism in \eqref{eq:standard}.  Likewise, choose generators $x', y'$ for $H_1(S^3_{0,0,\infty}(\L);\Z)/\text{Tors}$ which are bordant to $x,y$ in the surgery cobordism from $S^3_{0,0,n}(\L)$ to $S^3_{0,0,\infty}(\L)$; define the analogous generators of $HF^\infty_{\Q}(S^3_{0,0,\infty}(\L), \mfs_\infty)$ by $\alpha', \beta', \gamma', \delta'$. Then $F_3$ may be expressed as follows
\begin{equation*}
\label{eq:triangle}
\xymatrix @M=6pt@C=2pt@R=5pt {
	\alpha = \langle x^*\wedge y^* \otimes 1 \rangle   &&& \alpha' = \langle x'^*\wedge y'^* \otimes 1 \rangle\\
	\beta = \langle x^*\otimes 1 \rangle  \oplus  \langle y^*\otimes 1\rangle = \gamma  \ar@/^2pc/[rrr]^{F_3} &&& 
	\beta' = \langle x'^*\otimes 1 \rangle \oplus  \langle y'^*\otimes 1\rangle =\gamma'\\
	\delta = \langle 1 \otimes 1\rangle &&& \delta' = \langle 1 \otimes 1\rangle 
}
\end{equation*} 

By Theorem \ref{thm: rank 6} $HF^\infty_{\Q}(S^3_{0,0,0}(\L), \mfs_0)$ is of rank 6. Exactness implies that 
\[
	HF^\infty_{\Q}(S^3_{0,0,0}(\L), \mfs_0) \cong \operatorname{coker} F_3 \oplus \ker F_3, 
\]
and so each of $\ker F_3$ and $\operatorname{coker} F_3$ is rank 3. Recall that the map $F_{3}$ is equivariant with respect to the action of the exterior algebra, being a sum of cobordism maps. Hence, the image of $\alpha$ determines the map $F_{3}$. We claim that $F_{3}(\alpha)$ has a component which is a non-zero multiple of $\delta'$.  Before proving the claim, let us see why this will complete the proof.  It follows from the long exact sequence relating $HF^-_\Q, HF^\infty_\Q$ and $HF^+_\Q$ that 
\[
d_{top,\Q}(S^3_{0,0,n}(\L),\mfs_n) - d(L(n,1),0) \leq d_{bot,\Q}(S^3_{0,0,\infty}(\L),\mfs_\infty).
\]
The arguments of Lemma~\ref{lem: d invariant bound} establish 
\[
d_{top,\Q}(S^3_{0,0,1}(\L),\mfs_1) \leq d_{top,\Q}(S^3_{0,0,n}(\L),\mfs_n) - d(L(n,1),0),
\]
which completes the proof.
 
Therefore, it remains to prove that $F_3(\alpha)$ contains a non-zero multiple of $\delta'$.  Suppose instead that $F_3(\alpha)$ is a non-trivial linear combination of $\alpha'$, $\beta'$ and $\gamma'$.  

First, since $F_3$ is equivariant with respect to the action of the exterior algebra, if $F_3(\alpha) = 0$, then $F_3$ is identically 0. Thus $\ker F_3$ has rank 4, a contradiction.   
Since $F_3$ respects the $\Z/2$-grading, $\beta'$ and $\gamma'$ cannot be components of $F_3$.  Therefore, it remains to assume $F_3(\alpha)=c\alpha'+d\delta'$ for a unit $c$ (which might involve a nonzero rational factor and a power of $U$). Again, since $F_3$ is equivariant with respect to the action the exterior algebra, contraction by $y$ implies that
\begin{eqnarray*}
F_3(\beta) &=& F_3(x^*\otimes1) = -F_3\circ \iota_y (x^*\wedge y^* \otimes 1)	\\
 	&=&-\iota_{y'} \circ F_3 (x^*\wedge y^* \otimes 1) = -c\cdot \iota_{y'}( x'^*\wedge y'^* \otimes 1)  = cx'^*\otimes 1 = c \beta'.
\end{eqnarray*}
Similarly, $F_3(\gamma) = c \gamma'$ and $F_3(\delta)=c\delta'$. Thus, $\rank\ker(F_3)=0$, which is again a contradiction. 
\end{proof}

\begin{remark}\label{rem: coprime}
By Remark \ref{rem: rank 6 over k}, the above proof works over an arbitrary field $\kk$ if $\mu_{123}(\L)$ is coprime to the characteristic of $\kk$.
\end{remark}

Now we can prove:
\begin{reptheorem}{nonzero linking d invt}
Let $\L=L_1\cup L_2\cup L_3$ be an algebraically split link such that all two-component sublinks are $\Q$-L--space links. If the triple linking number $\mu_{123}$ is nonzero, then $d_{\Q}(S^3_{1,1,1} (\L))\leq -2$. If the triple linking number $\mu_{123}$ is odd, then the analogous statement holds with $\mathbb{Z}_2$-coefficients.
\end{reptheorem}
\begin{proof}
Let $\L$ be an algebraically split three-component link such that all two-component sublinks are L--space links. 
If one of these sublinks is nontrivial, then the result follows from Corollary \ref{link and sublink} and Proposition \ref{prop: L space sublink}. Therefore from now on we can assume that all two-component sublinks are trivial, so $\L$ is a Brunnian link.

Since $S^3_{0,0,\infty}(\L)=S^3_{0,0}(L_{1}\cup L_{2})=\#^2(S^1\times S^2)$, we have $d_{bot,\Q}(S^3_{0,0,\infty}(\L))=-1$.
By Lemma \ref{dtop dbot} we get $$d_{top,\Q}(S^3_{0,0,1}(\L))\le d_{bot,\Q}(S^3_{0,0,\infty}(\L))=-1.$$ On the other 
hand, $S^3_{0,\infty,1}(\L) = S^2 \times S^1$ and $S^3_{\infty, 1, 1}(\L) = S^3$, so by Proposition \ref{prop:d-inv} we get
$$
d_{top,\Q}(S^3_{0,0,1}(\L))=d_{top,\Q}(S^3_{0,1,1}(\L))+\frac{1}{2}=d_{\Q}(S^3_{1,1,1}(\L))+\frac{1}{2}+\frac{1}{2},
$$
and we conclude $d_{\Q}(S^3_{1,1,1}(\L))\le -2$.  A similar argument applies for the case of $\mathbb{Z}_2$ coefficients by Remark~\ref{rem: coprime}.  (Alternatively, see Corollary~\ref{cor:d-mu123-odd} below.)
\end{proof}

\subsection{$0$-surgery on links}

In this subsection we describe a different approach to the computation of $d$-invariants of $S^3_{000}(\L)$ building on the work of the second author in \cite{Tye}.\footnote{As the article appears on arXiv and in thesis form, there is a gap in the argument for $b_1 \geq 5$.  This does not affect the arguments used here.} Since it uses the link surgery formula of \cite{MO}, we have to restrict ourselves to the coefficients in $\F=\Z_2$. 

Recall that the complex $CF^-(S^3_{000}(\L))$ in the unique torsion $\spinc$-structure can be written as in Figure \ref{fig: surgery} using the surgery formula of \cite{MO}.

\begin{figure}[ht!]
\begin{tikzpicture}
\draw(3,4) node {$\A^-_{000}(\L,\bm{0})$};
\draw(0,2) node {$\A^-_{00}(L_{12},\bm{0})$};
\draw(3,2) node {$\A^-_{00}(L_{13},\bm{0})$};
\draw(6,2) node {$\A^-_{00}(L_{23},\bm{0})$};
\draw(0,0) node {$\A^-_{0}(L_1,\bm{0})$};
\draw(3,0) node {$\A^-_{0}(L_2,\bm{0})$};
\draw(6,0) node {$\A^-_{0}(L_3,\bm{0})$};
\draw(3,-2) node {$CF^-_{\F}(S^3)$};
\draw [->] (2.8,3.8)--(0.2,2.2);
\draw [dashdotted,->] (2.8,3.8)--(0.2,0.2);
\draw [->] (3.2,3.8)--(5.8,2.2);
\draw [dashdotted,->] (3.2,3.8)--(5.8,0.2);
\draw [dashdotted,->] (.15,1.8)--(2.8, -1.8);
\draw [dashdotted,->] ( 5.9 ,1.8 )--(3.1,-1.7);
\draw [->] (2.8,1.8)--(0.3,0.2);
\draw [->] (3.2,1.8)--(5.7,0.2);
\draw [->] (3,3.8)--(3,2.2);
\draw [->] (3,-0.2)--(3,-1.8);
\draw [->] (0,1.8)--(0,0.2);
\draw [->] (6,1.8)--(6,0.2);
\draw [->] (0.2,1.8)--(2.8,0.2);
\draw [->] (5.8,1.8)--(3.2,0.2);
\draw [->] (0.2,-0.2)--(2.8,-1.8);
\draw [->] (5.8,-0.2)--(3.2,-1.8);
\draw [dashdotted,->] (3,3.8)..controls (4.2,2) and (4.2,2)..(3,0.2);
\draw [dashdotted,->] (3,1.8)..controls (1.8,0) and (1.8,0)..(3,-1.8);
\draw [dashed,->] (3.4,3.8)..controls (12,1) and (12,1)..(3.4,-1.8);
\end{tikzpicture}
\caption{Surgery complex quasi-isomorphic to $CF^-(S^3_{000}(\L))$.}
\label{fig: surgery}
\end{figure}
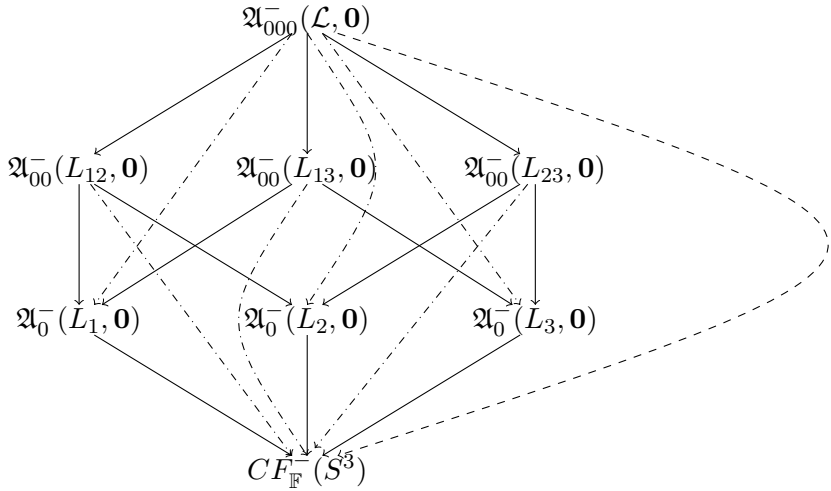

\begin{theorem}[\cite{MO}]
\label{th: sp seq}
The complex $CF^-(S^3_{000}(\L))$ in the unique torsion $\spinc$-structure is quasi-isomorphic (as a complex of free $\F[U]$-modules) to the complex
in Figure \ref{fig: surgery}. The cube filtration on this complex induces a spectral sequence where all pages are link invariants.
\end{theorem}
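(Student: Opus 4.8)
The plan is to derive the first assertion directly from the link surgery formula of \cite{MO}, specialized to the all-zero framing, and then to read off the spectral sequence from the cube structure of the resulting complex, with the only real work being the invariance statement.

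For the first assertion, recall that to an $n$-component link $\L\subset S^3$ with framing matrix $\Lambda$ the formula of \cite{MO} associates, from a complete system of hyperboxes of Heegaard diagrams, a complex $\mathcal{C}^-(\L,\Lambda)$ of free $\F[U]$-modules. It is a hypercube of complexes: the vertices are indexed by subsets $B\subseteq\{1,\dots,n\}$ (a sublink $L_B$ being ``forgotten''), the group at $B$ in a fixed $\spinc$ orbit is a sum of generalized Floer complexes $\A^-(\L\setminus L_B,\cdot)$, and the differential is assembled from the internal differentials together with the inclusion maps $\Phi^{+\bm{e}_i}$, $\Phi^{-\bm{e}_i}$ and their higher compositions along the faces of the cube. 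The main theorem of \cite{MO} then gives a quasi-isomorphism $\mathcal{C}^-(\L,\Lambda)\simeq\CFminus(S^3_\Lambda(\L))$ respecting the splitting into $\spinc$ structures. I would set $\Lambda=0$ and $n=3$. Since the columns of $\Lambda$ vanish, each $\spinc$ structure on $S^3_{000}(\L)$ is carried by a single lattice point, and the unique torsion one $\mfs_0$ corresponds to $\bm{s}=\bm{0}$; restricting $\mathcal{C}^-(\L,0)$ to $\bm{s}=\bm{0}$ makes the group at $B$ equal to $\A^-(\L\setminus L_B,\bm{0})$, so that $B=\emptyset$ gives $\A^-_{000}(\L,\bm{0})$, the singletons give the three $\A^-_{00}(L_{ij},\bm{0})$, the two-element subsets give the three $\A^-_{0}(L_i,\bm{0})$, and $B=\{1,2,3\}$ gives $\CFminus_{\F}(S^3)$, while each $\Phi^{\pm\bm{e}_i}$ lands back at $\bm{s}=\bm{0}$ because $\Lambda_i=0$. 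This is exactly the complex of Figure~\ref{fig: surgery}, which establishes the first assertion.

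For the spectral sequence, filter $\mathcal{C}^-(\L,0)$ by ``cube height'': let $\mathcal{F}_p$ be the sum of the vertex groups over all $B$ with $|B|\ge p$. Every component of the differential either preserves $|B|$ (the internal differentials) or strictly increases it (the edge, face and cube maps), so $\mathcal{F}_\bullet$ is a bounded decreasing filtration by subcomplexes. Its spectral sequence therefore has $E_0$-differential the internal differential, so $E_1^{p}\cong\bigoplus_{|B|=p}H_*(\A^-(\L\setminus L_B,\bm{0}))$ with $d_1$ induced by $\Phi^{+\bm{e}_i}+\Phi^{-\bm{e}_i}$, and it converges to the associated graded of $\HFminus(S^3_{000}(\L),\mfs_0)$.

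The crux is invariance: I need the pages $E_r$ with $r\ge 1$ to be determined by $\L$ alone, independent of the chosen complete system. For this I would invoke that the quasi-isomorphisms used in the invariance proof of \cite{MO} (for Heegaard stabilizations, isotopies and handleslides, and for changes of the hyperbox structure) are themselves built as hyperbox maps and hence are \emph{filtered} for the cube height --- they never decrease $|B|$ --- so they induce isomorphisms on every page from $E_1$ onward; equivalently, $\mathcal{C}^-(\L,0)$ is well-defined up to filtered quasi-isomorphism. Tracking the cube grading through the moves of \cite{MO}, as is done in \cite{Tye}, then yields that each $E_r$, $r\ge 1$, is a link invariant. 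I expect this bookkeeping, rather than anything in the first two paragraphs, to be the main obstacle; note that the footnote's caveat about $b_1\ge 5$ is irrelevant here because $b_1(S^3_{000}(\L))=3$.
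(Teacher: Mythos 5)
Your proposal is correct and takes essentially the same route as the paper, which proves nothing itself but cites \cite{MO}: specializing the link surgery formula to $\Lambda=0$ and to the summand $\bm{s}=\bm{0}$ (the unique torsion $\spinc$ structure) yields exactly the hypercube of Figure \ref{fig: surgery}, and invariance of the pages is the filtered (cube-height) invariance of the surgery complex established in \cite{MO}, as also exploited in \cite{Tye}. The only refinement worth noting is that to get isomorphisms on every page from $E_1$ onward you should invoke that \cite{MO} produce \emph{filtered chain homotopy equivalences} (equivalently, maps inducing isomorphisms on the associated graded), since a filtered quasi-isomorphism alone would only control $E_\infty$.
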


After tensoring the surgery complex with $\F[U,U^{-1}]$ over $\F[U]$ and using the fact that $\A^-(\L,\bm{0})\otimes_{\F[U]}\F[U,U^{-1}]$ is homotopy equivalent to $\F[U,U^{-1}]$ with trivial differential for any link $\L$, we can simplify the surgery complex for $CF^{\infty}(S^3_{000}(\L))$. In fact, we an simplify it even further.

\begin{theorem}[\cite{Tye}]
The surgery complex for $CF^{\infty}(S^3_{000}(\L),\mfs_0)$ is quasi-isomorphic to the complex in Figure \ref{fig: surgery2}.
The differentials $d_1$ and $d_2$ in the associated spectral sequence vanish, while the relevant $d_3$ differential is given (up to a unit) by multiplication by the triple linking number $\mu_{123}(\L)$ modulo 2.
\end{theorem}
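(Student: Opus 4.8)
The plan is to descend from the $CF^-$ surgery complex of Theorem~\ref{th: sp seq} (Figure~\ref{fig: surgery}) to its localization. First I would tensor the whole cube with $\F[U,U^{-1}]$ over $\F[U]$. Since for every sublink $\L'\subseteq\L$ the complex $\A^-(\L',\bm 0)\otimes_{\F[U]}\F[U,U^{-1}]$ is homotopy equivalent to a single grading-shifted copy of $\F[U,U^{-1}]$ with trivial differential, the result is a cube with one rank-one free $\F[U,U^{-1}]$-module at each of its eight vertices, indexed by the subsets of $\{1,2,3\}$; this is the complex of Figure~\ref{fig: surgery2}. The grading shift at the vertex $I$ is $-2H_I(\bm 0)$ plus a term depending only on $n$ and the framings, so for the unlink it reproduces $\Lambda^\ast H^1\otimes_\Z\F[U,U^{-1}]$ with its standard absolute grading, and a short bookkeeping check confirms that in general the cube-degree one, two, and three parts of the induced differential are maps between $\F[U,U^{-1}]$-modules whose gradings are compatible with multiplication by a power of $U$. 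It then remains to compute the pages of the spectral sequence of the cube filtration (the filtration by the number of $1$'s in the cube coordinate).

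Next I would run the spectral sequence. The $E_1$ page is the vertex-wise homology, namely the eight copies of $\F[U,U^{-1}]$, and $d_1$ is induced by the edge maps. Each edge map is, up to chain homotopy, a sum of a $v$-type and an $h$-type map evaluated at the symmetric lattice point $\bm 0$; by the conjugation symmetry $H(-\bm s)=H(\bm s)$ (Lemma~\ref{h-function list}\eqref{h-function symmetry}) these two contributions are multiplication by the same power of $U$, so over $\F=\Z_2$ their sum vanishes. Hence $d_1=0$ and $E_2=E_1$. The differential $d_2$ is then induced directly by the cube-degree-two part of the surgery differential. Following \cite{Tye}, these face maps compute the double cup products of the meridional classes in $H^\ast(S^3_{0,0,0}(\L);\F)$, which vanish because $\L$ is algebraically split; equivalently, the $d_2$ differential on each two-dimensional face is governed by the pairwise linking number of the corresponding two-component sublink. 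Thus $d_2=0$ and $E_3=E_2=E_1$.

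Since the cube is three-dimensional, the only remaining differential is $d_3$, which is the single induced map from the top vertex $\A^-_{000}(\L,\bm 0)\otimes_{\F[U]}\F[U,U^{-1}]$ to the bottom vertex $\CFinfty_\F(S^3)$. By the grading count above, this map is multiplication by a unit times a scalar in $\F$ (the scalar possibly zero). To identify that scalar with $\mu_{123}(\L)\bmod 2$ I would invoke the identification of this top cube-degree term with the triple cup product $a_1\cup a_2\cup a_3\in H^3(S^3_{0,0,0}(\L);\F)$, which for an algebraically split link equals the mod-$2$ reduction of $\mu_{123}(\L)$; this is exactly the cohomological input behind Theorem~\ref{thm: rank 6}, carried out over $\F$ following \cite{Tye}. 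As a cross-check, $d_3$ is nonzero precisely when $\mu_{123}$ is odd, in which case $E_\infty$ has rank $6$, in agreement with Remark~\ref{rem: rank 6 over k}, and when $\mu_{123}$ is even one gets rank $8$.

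The main obstacle is this last identification. That $d_3$ is nonzero for some links would already follow from the rank-$6$ computation, but pinning it down as multiplication by $\mu_{123}\bmod 2$ requires analyzing the cube-degree-three holomorphic polygon maps in the \cite{MO} surgery complex and matching them with the Massey/triple-cup-product structure of $S^3_{0,0,0}(\L)$---for instance by reducing to the Borromean rings, where $\mu_{123}=1$, and controlling the behavior under crossing changes. This is the technical heart of \cite{Tye}, and the argument here simply assembles it with the localization and symmetry observations above.
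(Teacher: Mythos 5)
The paper gives no argument for this statement at all---it is imported verbatim from the second author's earlier work \cite{Tye}, with only the preceding localization remark---so your outline (tensor the cube of Theorem \ref{th: sp seq} with $\F[U,U^{-1}]$ to get Figure \ref{fig: surgery2}, kill $d_1$ over $\Z_2$ because at the symmetric lattice point $\bm{0}$ the two localized maps $\Phi^{+L_i},\Phi^{-L_i}$ are homogeneous of the same degree and hence the same power of $U$, and defer the identification $d_3=\mu_{123}\bmod 2$ to \cite{Tye}) is essentially the same approach as the paper, namely a citation plus the easy localization step. The one place your sketch is shakier than necessary is $d_2$: the appeal to ``double cup products of meridional classes'' is vague, whereas a clean argument is that the grading shifts in the localized cube depend only on the framing and linking data (all zero here), so the eight towers sit in gradings of the same parities as for the unlink, and a cube-length-two differential would have to drop total degree by $1$ between towers of equal parity, which is impossible---alternatively one can invoke standardness of $HF^\infty_{\F}$ for the $b_1\le 2$ manifolds associated to the two-dimensional faces; either way, like the paper, your proposal does not independently establish the key computation $d_3=\mu_{123}\bmod 2$ but correctly attributes it to \cite{Tye}.
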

 
\begin{figure}[ht!]
\centering
\begin{tikzpicture}
\draw(3,4) node {$\F[U,U^{-1}]$};
\draw(0,2) node {$\F[U,U^{-1}]$};
\draw(3,2) node {$\F[U,U^{-1}]$};
\draw(6,2) node {$\F[U,U^{-1}]$};
\draw(0,0) node {$\F[U,U^{-1}]$};
\draw(3,0) node {$\F[U,U^{-1}]$};
\draw(6,0) node {$\F[U,U^{-1}]$};
\draw(3,-2) node {$\F[U,U^{-1}]$};
\draw [dashed,->] (3.4,3.8)..controls (12,1) and (12,1)..(3.4,-1.8);
\draw (10.7,1) node {$\mu_{123}(\L)$};
\end{tikzpicture}
\caption{Surgery complex for computing $HF^{\infty}(S^3_{000}(\L),\mfs)$.}
\label{fig: surgery2}
\end{figure}
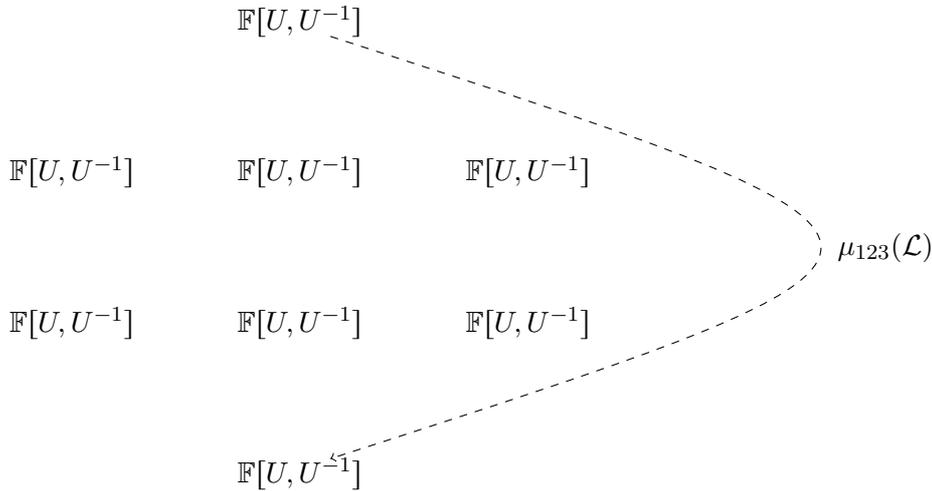

\begin{corollary}
$HF^{\infty}(S^3_{000}(\L),\mfs_0)$ has rank $8$ if $\mu_{123}(\L)$ is even and rank 6 if $\mu_{123}(L)$ is odd.
\end{corollary}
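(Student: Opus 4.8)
The plan is to read the rank directly off the complex of Figure~\ref{fig: surgery2} produced by the preceding theorem. That complex is a three-dimensional cube carrying a copy of $\F[U,U^{-1}]$ at each of its eight vertices; by the theorem the differentials $d_1$ and $d_2$ of the cube-filtration spectral sequence vanish, so the only surviving component of the total differential is the long diagonal map from the $\A^-_{000}$-vertex to the $CF^-_{\F}(S^3)$-vertex, which up to a unit of $\F[U,U^{-1}]$ is multiplication by $\mu_{123}(\L)$ reduced modulo $2$. A three-dimensional cube admits no differential of length four or more, so $E_4 = E_\infty$ and the homology of the complex in Figure~\ref{fig: surgery2} is exactly $HF^{\infty}(S^3_{000}(\L),\mfs_0)$; since $HF^{\infty}$ of a closed three-manifold is a finitely generated free $\F[U,U^{-1}]$-module, comparing ranks is legitimate.

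First I would handle the case that $\mu_{123}(\L)$ is even. Then its reduction mod $2$ is zero, the long diagonal map vanishes, the total differential of the complex is zero, and $HF^{\infty}(S^3_{000}(\L),\mfs_0)$ is the direct sum of all eight vertex copies of $\F[U,U^{-1}]$; hence its rank is $8$. Next I would handle the case that $\mu_{123}(\L)$ is odd. Then the long diagonal map is multiplication by a unit, i.e.\ an isomorphism between the top and bottom vertex summands, and it is the only nonzero component of the differential. Its kernel is the direct sum of the seven vertex summands other than the source, so has rank $7$, and its image is the single target summand, of rank $1$; therefore $\rank HF^{\infty}(S^3_{000}(\L),\mfs_0) = 7 - 1 = 6$.

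There is no genuine obstacle in this argument, since all of the work has been absorbed into the preceding theorem (the vanishing of $d_1,d_2$ and the identification of $d_3$ with $\mu_{123}(\L)$ mod $2$). The only points that deserve a word of care are the bookkeeping of which of the eight summands is the source and which is the target of the surviving differential --- so that one correctly computes a rank-$7$ kernel rather than a rank-$8$ one --- and the standard structural fact that $HF^{\infty}$ is free over $\F[U,U^{-1}]$, which is what makes the rank count meaningful.
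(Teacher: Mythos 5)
Your argument is correct and is exactly the intended deduction: the paper treats the corollary as immediate from the preceding theorem, since the complex of Figure \ref{fig: surgery2} has eight $\F[U,U^{-1}]$ towers and the only surviving differential is the long diagonal, which is zero when $\mu_{123}(\L)$ is even (rank $8$) and a unit when $\mu_{123}(\L)$ is odd, cancelling a pair of towers (rank $6$). Your bookkeeping of kernel and image, and the observation that $HF^\infty$ is free over $\F[U,U^{-1}]$ so rank counting is meaningful, match the paper's (implicit) reasoning.
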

 
Recall that the homology of each $\A^-(\L, \bm{s})$ is non-canonically isomorphic to $\F[U] \oplus M$, where $M$ is annihilated by some power of $U$.  Since $U$-torsion dies after tensoring with $\F[U,U^{-1}]$, we obtain the following result. 

\begin{corollary}
\label{cor: d1 d2 on free parts}
Consider the spectral sequence from Theorem \ref{th: sp seq}. The components of the differentials $d_1$ and $d_2$ that go between free parts vanish.  Up to a power of $U$, the relevant component of the $d_3$ differential between free parts is multiplication by $\mu_{123}(\L) \mod 2$. 
\end{corollary}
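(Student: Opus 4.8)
The plan is to obtain the statement by localizing the surgery complex of Figure~\ref{fig: surgery} at $U$ and comparing it with Figure~\ref{fig: surgery2}. Write $C$ for the surgery complex of Theorem~\ref{th: sp seq}: it is a complex of free $\F[U]$-modules, the cube filtration is a filtration by $\F[U]$-submodules, and $\{E_r^-,d_r^-\}$ is the resulting spectral sequence. Since $\F[U,U^{-1}]$ is flat over $\F[U]$, tensoring with it is exact, commutes with homology, and preserves the cube filtration; hence the spectral sequence of $C\otimes_{\F[U]}\F[U,U^{-1}]$ is canonically $\{E_r^-\otimes_{\F[U]}\F[U,U^{-1}],\ d_r^-\otimes\id\}$. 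But $C\otimes_{\F[U]}\F[U,U^{-1}]$ computes $CF^\infty(S^3_{000}(\L),\mfs_0)$, so this localized spectral sequence is precisely the one of the theorem quoted above from \cite{Tye}, namely $\{E_r^\infty,d_r^\infty\}$.

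Next I would match up the free parts. The vertex of $C$ indexed by a sublink $L_I$ contributes to $E_1^-$ the module $H_*(\A^-(L_I,\bm{0}))\cong \F[U]\oplus M_I$ with $M_I$ a finitely generated $U$-torsion module (as recalled just above), and $M_\emptyset=0$ since the bottom vertex is $HF^-(S^3)=\F[U]$. Tensoring with $\F[U,U^{-1}]$ annihilates exactly the $U$-torsion, so it kills every $M_I$ and carries the free summand of vertex $I$ isomorphically onto the summand $\F[U,U^{-1}]$ of vertex $I$ in the Tye complex. In other words the free $\F[U]$-summands are precisely what survives localization, and the localization map $\F[U]\hookrightarrow\F[U,U^{-1}]$, being injective and remaining injective on Hom-groups between free modules, identifies the free-to-free component of each $d_r^-$ — a $U$-equivariant map $\F[U]\to\F[U]$, hence zero or multiplication by $U^ac$ with $a\ge 0$, $c\in\F$ — with the corresponding component of $d_r^\infty$.

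Now I would just quote \cite{Tye}. Since $d_1^\infty$ and $d_2^\infty$ vanish identically, every free-to-free component of $d_1^-$ and $d_2^-$ localizes to zero, hence is zero; this is the first assertion. For $d_3$, the long-diagonal component of $d_3^\infty$ from the localized free part of $H_*(\A^-_{000}(\L,\bm{0}))$ to the localization of $HF^-(S^3)$ equals, up to a unit of $\F[U,U^{-1}]$ (i.e.\ up to a power of $U$), multiplication by $\mu_{123}(\L)\bmod 2$. Comparing with the corresponding component $U^ac$ of $d_3^-$ forces $c=\mu_{123}(\L)\bmod 2$, so that component is multiplication by $\mu_{123}(\L)\bmod 2$ up to a power of $U$. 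The one point to be careful about is that ``free part'' is a meaningful notion on each page $E_r^-$ and that the relevant summands survive to page $3$; since $d_1^-$ and $d_2^-$ send $U$-torsion to $U$-torsion and, by the first assertion, do not connect free summands to free summands, the free $\F[U]$-summand of each vertex persists, and localization continues to isolate it from the torsion on every page — which is exactly what legitimizes talking about the free-to-free component of $d_3^-$.
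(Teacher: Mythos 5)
Your argument is correct and is essentially the paper's own proof, just spelled out in detail: the paper deduces the corollary from the quoted theorem of \cite{Tye} by observing that $U$-torsion dies after tensoring with $\F[U,U^{-1}]$, which is precisely your flat-base-change comparison of the $\F[U]$ spectral sequence with its localization. One small wording caveat: the free $\F[U]$-summand need not literally persist as the same submodule on later pages (its generator can map to torsion under $d_1$, as happens in Example \ref{ex:bing 8}, so only $U^kz$ survives), but since the free quotient of each page still injects into the localized page this only changes the comparison by a power of $U$, which the statement of the corollary already allows.
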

 
\begin{remark}
Although the differentials $d_1$ and $d_2$ between the free parts vanish, the differentials from the free parts to the torsion are still possible, see Example \ref{ex:bing 8}.  Because $H_*(\A^-(\emptyset)) \cong \F[U]$, there is no torsion piece to which the $d_3$ differential may map. 
\end{remark} 

We can use these results to give an alternative proof of Theorem \ref{nonzero linking d invt} if the triple linking number is odd and $\kk=\mathbb{F}$.
 
\begin{theorem}
\label{thm:h}
Suppose that $\mu_{123}(L)$ is odd and all two-component sublinks of $\L$ are L--space links. Then $H_{\L}(0,0,0)>0$.
\end{theorem}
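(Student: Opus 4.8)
The plan is to derive a contradiction from the surgery spectral sequence of Theorem~\ref{th: sp seq} under the assumption $H_{\L}(0,0,0)=0$. Suppose $H_{\L}(0,0,0)=0$. Since every two-component sublink of $\L$ is an L--space link, so is every one-component sublink by Theorem~\ref{l-space link cond}\eqref{sublink}, and hence by Theorem~\ref{l-space link cond}\eqref{no torsion} the homology $H_*(\A^-(\L',\bm{0}))$ is a single copy of $\F[U]$ for every proper sublink $\L'\subsetneq\L$; the same holds at the bottom vertex $\A^-(\emptyset)\simeq CF^-(S^3)$. Thus $U$--torsion occurs only at the top vertex $\A^-(\L,\bm{0})$, where the free part has top degree $-2H_{\L}(0,0,0)=0$ in the internal grading.

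The first step is to pin down the grading shift applied to each vertex $\A^-(\L',\bm{0})$ when it is placed into the complex of Figure~\ref{fig: surgery}. These shifts are topological, depending only on the (trivial) framing and on $\bm{s}=\bm{0}$, so I would compute them using the three-component unlink $\mathbb{O}$. Since $S^3_{000}(\mathbb{O})=\#^3(S^1\times S^2)$, the module $HF^-(\#^3(S^1\times S^2),\mfs_0)\cong \F[U]\otimes_{\F}\Lambda^*(\F^3)$ has free part generated in degrees $3/2$, $1/2$ (three times), $-1/2$ (three times), $-3/2$. For $\mathbb{O}$ all vertices are torsion-free and all differentials between free parts vanish (Corollary~\ref{cor: d1 d2 on free parts}, using $\mu_{123}(\mathbb{O})=0$), so $HF^-$ is the direct sum of the shifted copies of $\F[U]$ carried by the eight vertices. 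Matching gradings, and using the mapping-cone structure of the surgery complex (the source vertex of a mapping cone contributes in one degree higher, so the full-link vertex sits at the top), shows that the shift at the top vertex equals $+3/2$ and the shift at the bottom vertex equals $-3/2$. Consequently, for our link $\L$ the $E_1$-page carries, at the top vertex, a free summand $\F[U]$ with top degree $-2H_{\L}(0,0,0)+3/2=3/2$, and at the bottom vertex a free summand $\F[U]$ with top degree $-3/2$.

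The second step is a short diagram chase. By Corollary~\ref{cor: d1 d2 on free parts} the free-to-free components of $d_1$ and $d_2$ vanish; since all vertices other than the top one are torsion-free, the free summand of the top vertex supports no $d_1$ or $d_2$ whatsoever, and dually the free summand of the bottom vertex receives none. Hence on the $E_3$-page these two summands are still $\F[U]$ with top degrees $3/2$ and $-3/2$. The only component of $d_3$ joining them runs from the top vertex to the bottom one, and by Corollary~\ref{cor: d1 d2 on free parts} it equals, up to a unit and a power of $U$, multiplication by $\mu_{123}(\L)\bmod 2=1$; in particular it is a nonzero $\F[U]$--module map. But a nonzero degree $-1$ map $\F[U]\to\F[U]$ carries a degree $3/2$ generator to a degree $1/2$ element, while every element of the target has degree $\le -3/2$. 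This contradiction forces $H_{\L}(0,0,0)>0$.

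The step I expect to be the main obstacle is the precise determination of the absolute grading shifts at the vertices of the surgery complex — in particular verifying that the full-link vertex is placed at the top grading $3/2$ — since the whole inequality hinges on it; once the shifts are known the remaining argument is purely formal. A secondary point requiring care is confirming that the free summands at the top and bottom vertices survive unchanged to the $E_3$-page, which is exactly where the absence of torsion at all the remaining vertices is used.
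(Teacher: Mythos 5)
Your proof is correct and follows essentially the same route as the paper: both rely on Corollary~\ref{cor: d1 d2 on free parts} together with the $\kk$-free homology of the proper sublink vertices (from the L--space hypothesis) to see that the free generator of $H_*(\A^-(\L,\bm{0}))$ survives to $E_3$ and maps nontrivially to $H_*(\A^-(\emptyset))\cong\F[U]$, then conclude by a degree count. The only cosmetic differences are that you argue by contradiction from $H_{\L}(0,0,0)=0$ and pin down the absolute shifts ($+3/2$ and $-3/2$) via the three-component unlink, whereas the paper phrases the same grading comparison relatively (top generator at $-2H(0,0,0)$, bottom at $-3$) and deduces $-2H(0,0,0)-1\le -3$ directly.
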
 
 
\begin{proof}
Let us compute the spectral sequence for $CF^-(S^3_{000}(\L))$. Since all two-component sublinks $L_{ij}$ are L-space links,
the components $L_i$ are L--space knots, and 
$$
H_*(\A^-(L_{ij},\bm{0})) \cong  H_*(\A^-(L_i),\bm{0}) \cong \F[U].
$$
Note that the homology of $\A^-(\L,\bm{0})$ might have torsion, since we do not assume $\L$ is an L-space link. By Corollary \ref{cor: d1 d2 on free parts} the differentials $d_1$ and $d_2$ vanish on the free part of $H_*(\A^-(\A,\bm{0}))$ and have trivial image in $H_*(\A^-(\emptyset))$.  The differential $d_3$ from the free part of $H_*(\A^-(\L,\bm{0}))$ to $H_*(\A^-(\emptyset))\cong \F[U]$ is nontrivial. 

On the other hand, $d_3$ lowers the homological degree by $1$.  Further, up to an absolute shift, the generator of the free part of $H_*(\A^-(\L,\bm{0}))$ has homological degree $-2H(0,0,0)$ while the generator of the free part of $H_*(\A^-(\emptyset))$ has homological degree $-3$, so $-2H(0,0,0)-1\le -3$ and $H(0,0,0)\ge 1$.  
\end{proof}

\begin{corollary}\label{cor:d-mu123-odd}
Suppose that $\mu_{123}(L)$ is odd and all two-component sublinks of $\L$ are L--space links. Then $d(S^3_{1,1,1}(\L))\le -2$.
\end{corollary}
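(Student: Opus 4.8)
The plan is to combine Theorem~\ref{thm:h} with the $d$-invariant computation already carried out in Remark~\ref{rem:grading}. The hypotheses of the corollary --- that $\mu_{123}(\L)$ is odd and that all two-component sublinks of $\L$ are L--space links --- are exactly those of Theorem~\ref{thm:h}, so that theorem yields $H_{\L}(0,0,0)>0$, and hence $H_{\L}(0,0,0)\ge 1$ since the $H$-function is integer valued. As $\L$ is algebraically split, the unknot contribution $H_O(0)$ vanishes, so $h_{\L}(\bm 0)=H_{\L}(0,0,0)-3H_O(0)=H_{\L}(0,0,0)\ge 1$.

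On the other hand, as recorded in Remark~\ref{rem:grading}, for sufficiently large $\bm m=(m_1,m_2,m_3)$ the inequality~\eqref{eq:111<nnn} together with the large surgery theorem of \cite{MO} gives
\[
d(S^3_{1,1,1}(\L)) \le d(S^3_{\bm m}(\L),\mfs_{\bm 0}) - \sum_{k=1}^{3} d(L(m_k,1),0) = -2h_{\L}(\bm 0),
\]
where the final equality comes from cancelling the link-independent grading shift against the corresponding large surgery on the three-component unlink $\bm O$, for which $H_{\bm O}(\bm 0)=0$ and $S^3_{\bm m}(\bm O)=\#_{k=1}^{3}L(m_k,1)$. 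Feeding in $h_{\L}(\bm 0)\ge 1$ then yields $d(S^3_{1,1,1}(\L))\le -2$.

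Since both Theorem~\ref{thm:h} and the inequality~\eqref{eq:111<nnn} (proved via Lemma~\ref{lem:1<n}) are already in hand, there is no genuine obstacle; the corollary is essentially a one-line deduction. The only point worth spelling out is that the strict positivity of $H_{\L}(0,0,0)$ must here be supplied by Theorem~\ref{thm:h} --- which exploits the L--space condition on the two-component sublinks through the link surgery spectral sequence of \cite{MO} --- rather than by invoking the fact that a nontrivial L--space link has $h_{\L}(\bm 0)>0$, since in this statement $\L$ itself is not assumed to be an L--space link.
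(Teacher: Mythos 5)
Your argument is correct and follows the paper's proof essentially verbatim: the paper likewise chains the inequality \eqref{eq:111<nnn} and the grading-shift cancellation of Remark~\ref{rem:grading} to get $d(S^3_{\bm{1}}(\L))\le -2h_{\L}(\bm{0})=-2H_{\L}(\bm{0})$, and then invokes Theorem~\ref{thm:h} for $H_{\L}(0,0,0)\ge 1$. Your closing observation---that the positivity must come from Theorem~\ref{thm:h} because $\L$ itself is not assumed to be an L--space link---is exactly the point the paper's proof is implicitly relying on.
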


\begin{proof}
Similar to Remark \ref{rem:grading}, one simply uses \eqref{eq:111<nnn} to see that for $\bm{m} \gg 0$
\begin{equation}
\label{compareh}
d(S^3_{\bm{1}}(\L)) \leq d(S^3_{\bm{m}}(L), \mfs_{\bm{0}}) - \sum^3_{k = 1} d(L(m_k,1),0) = -2h_{\L}(\bm{0}) =-2H_{\L}(\bm{0})\leq -2.
\end{equation}
 The last inequality follows from Theorem \ref{thm:h}. 
\end{proof}

\subsection{Example: generalized Borromean link}

The assumption that all 2-component sublinks are L-space links is important in Theorem \ref{nonzero linking d invt}. We will show that there exist three-component algebraically split links  $\L$ with nonzero triple linking number and $d(S^{3}_{1, 1, 1}(\L))=0$. Here, we resume working exclusively over $\F$ and omit the coefficients from the notation.  

\begin{example}
\label{ex: bing}
Start with the two-component link $L_{1}=K\cup U$  in the left image of Figure \ref{Bing} where $U$ is the unknot and $K$ is arbitrary. We can assume the linking number of $L_{1}$ is $-1$. Let $L(n)=B(K, n)\cup U$ denote the new link obtained by applying an $n$-twisted Bing-double to $K$, which is the right image in Figure \ref{Bing}.  We order $B(K,n)$ so that the first component is the one ``induced'' by $K$.  Note that $L(n)$ is a three-component  algebraically split link and $\mu_{123}(L(n))=(-1) \lk(L_{1})=1$ \cite[Theorem 8.1]{Tim}.

\begin{figure}
\centering
\begin{tikzpicture}
    \node[anchor=south west,inner sep=0] at (0,0) {\includegraphics[width=3.0in]{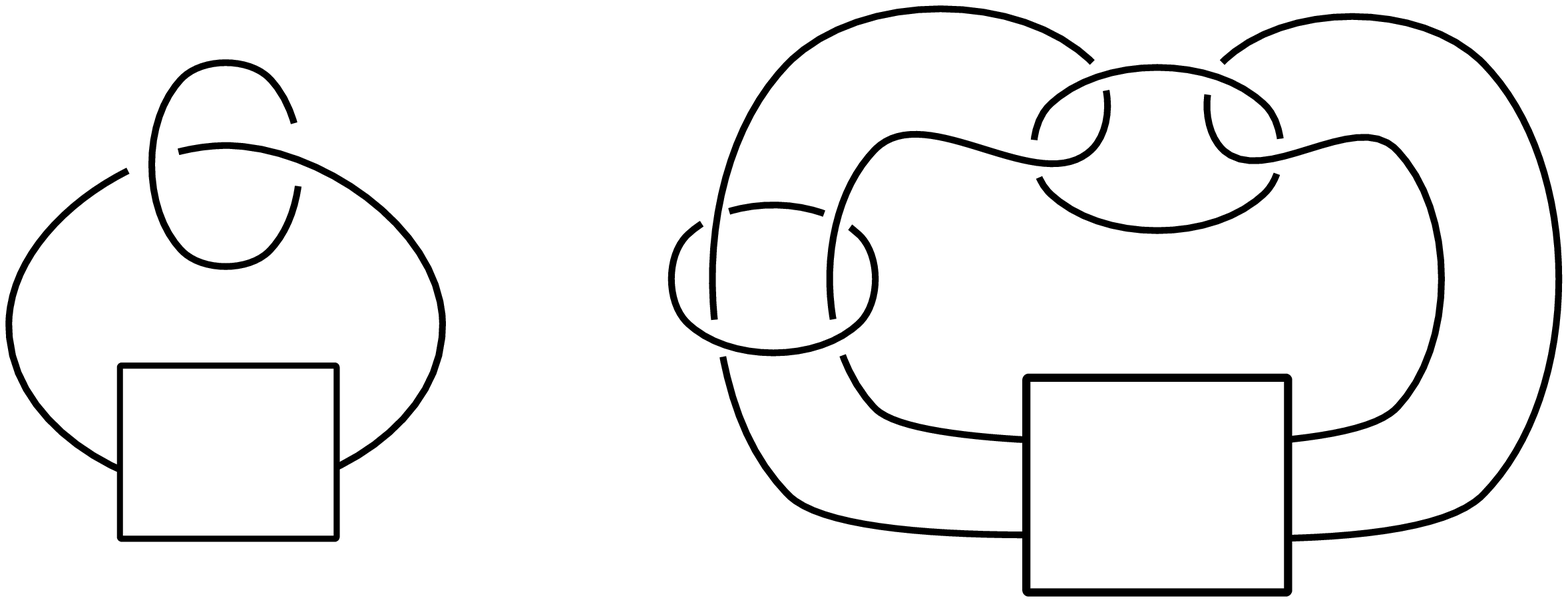}};
    \draw(1.1, 0.7) node {$K$};
    \draw(5.7, 0.5) node {$K, n$};  							
\end{tikzpicture}
\caption{The link on the left is $L_1$ from Example~\ref{ex: bing}.  The link on the right is the result of applying $n$-twisted Bing doubling to $K$ in $L_1$, yielding $L(n) = B(K,n) \cup U$.}
\label{Bing}
\end{figure}

We claim that $d(S^{3}_{1, 1, 1}(L(n)))=0$ for sufficiently large $n$. Note that $S^{3}_{1, 1, 1}(L(n))$ is diffeomorphic to $(S^{3}_{1}(D_{+}(K, n-1))$ where $D_{+}(K, n-1)$ is the $(n-1)$-twisted positively clasped Whitehead double.  The $d$-invariant for this manifold is computed in \cite{Tange} to be 
\begin{equation*}
d(S^{3}_{1}(D_{+}(K, n))) = \left\{
        \begin{array}{ll}
            0  & \quad n\geq 2\tau(K) \\
            -2 & \quad n<2\tau(K).
        \end{array}
    \right. 
\end{equation*}
Hence, for sufficiently large $n$, we have $\mu_{123}(L(n))= 1$, but $d(S^{3}_{1, 1, 1}(L(n)))=0$. 
\end{example}

\begin{remark}
In the above example, 
\[
 -\lambda(S^3_{1, 1, 1}(\L)) = -\mu^2_{123}(\L) + \beta(B(K, n)) = n-1. 
\]
We also have the alternate computation
\[
 \beta(B(K, n)) = - \lambda(S^3_{1, 1}(B(K, n)) = -\lambda(S^3_{1}(D_{+}(K, n)) = n.
\]
\end{remark}

\begin{example}
\label{ex:bing 8}
For a specific example, let $K$ be the unknot.  In our conventions $D_+(K,0)$ is the unknot, $D_+(K,-1)=T(2,3)$ and 
$D_+(K,1)$ is the figure eight knot. In particular, for $n=0$ we get $L(0)$ is the Borromean rings and $$d(S^3_{1,1,1}(L(0)))=d(S^3_{1}(D_+(K,-1)))=-2.$$ The above computation shows that for $n\ge 1$ we have $d(S^3_{1,1,1}(L(n)))=0$. 
By a sequence of inequalities similar to \eqref{compareh}, we get $H_{L(n)}(0,0,0)=0$ for $n\geq 1$.   

For $n=1$ we can also compute all differentials in the spectral sequence of Theorem \ref{th: sp seq}. 
Indeed, all components of $\L=L(1)$ are unknots and two of three two-component sublinks are unlinks. The only interesting two-component sublink is $B(K,1)$ and in order to apply Theorem \ref{th: sp seq} we need to describe $\A^-(B(K,1),\bm{0})$. Observe that the trivial component of $B(K,1)$ has genus 1 in the complement of the other component. This means that for $p\ggcurly 1$  
the $(p,1)$ surgery is large  for $B(K,1)$ and 
$$
\A^-(B(K,1),\bm{0})\simeq CF^-(S^3_{p,1}(B(K,1),\bm{s}_0) \simeq CF^-(S^3_p(D_+(K,1),\bm{s}_0)) \simeq \A^-(D_+(K,1),0).
$$ 
Here the first and last equations follow from the large surgery formula, and the middle equation is clear.

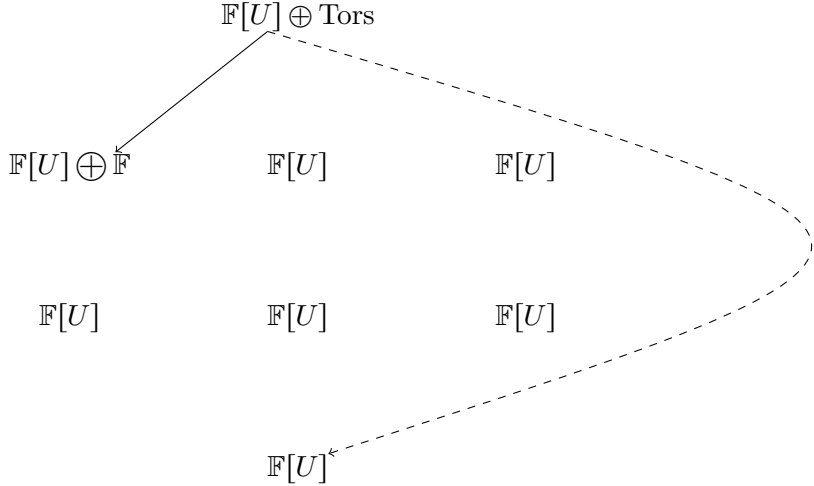
\begin{figure}[t]
\begin{tikzpicture}
\draw(3,4) node {$\F[U]\oplus \textrm{Tors}$};
\draw(0,2) node {$\F[U]\bigoplus \F$};
\draw(3,2) node {$\F[U]$};
\draw(6,2) node {$\F[U]$};
\draw(0,0) node {$\F[U]$};
\draw(3,0) node {$\F[U]$};
\draw(6,0) node {$\F[U]$};
\draw(3,-2) node {$\F[U]$};
\draw [->] (2.6,3.8)--(0.6,2.2);
\draw [dashed,->] (2.6,3.8)..controls (12,1) and (12,1)..(3.4,-1.8);
\end{tikzpicture}
\caption{Spectral sequence in Example \ref{ex:bing 8} for computing $HF^-(S^3_{0,0,0}(L(1)))$.  The solid arrow indicates $d_1$ while the dashed arrow indicates $d_3$.  These are the only non-trivial differentials in the spectral sequence after the $E_1$ page.}
\label{fig: surgery3}
\end{figure}

Since $D_+(K,1)$ is the figure eight knot, it is well known that $H_*(\A^-(D_+(K,1),0)) \cong \F[U]_{(0)} \oplus \F$.  
Therefore the $E_1$ page of the spectral sequence in Theorem \ref{th: sp seq} has the form shown in Figure \ref{fig: surgery3}.  Let $z$ be the generator of the free part of $H_*(\A^-(\L,\bm{0}))$. Then, since $H_{\L}(0,0,0) = 0$, we see $z$ has degree 3 higher than the generator of $H_*(\A^-(\emptyset))$.  We claim that $d_1(z)$ is the unique non-trivial element in the kernel of $U$.  (This description is independent of the choice of splitting.)  Indeed, suppose that instead $d_1(z)=0$.  By Corollary \ref{cor: d1 d2 on free parts}, $d_1$ is identically 0, and so is $d_2$, and the $d_3$ differential should map $z$ nontrivially to the homology of $\A^-(\emptyset)$ which is not possible by degree reasons.

Therefore, $d_1(z)$ is determined and the $d_1$ differential vanishes elsewhere by Corollary \ref{cor: d1 d2 on free parts}.  On the $E_2$ page we get a free $\F[U]$ module generated by $Uz$ together with the torsion at the top, and $\F[U]$ everywhere else, so by Corollary \ref{cor: d1 d2 on free parts} the differential $d_2$ vanishes. Now the differential $d_3$ 
sends $Uz$ to a power of $U$ times the generator of the homology of $\A^-(\emptyset)$.  For degree reasons, $d_3(Uz)$ is in fact the generator of $H_*(\A^-(\emptyset))$, and hence this pair of free modules is cancelled by the $d_3$ differential.  The $d_3$ differential vanishes elsewhere, and all other differentials vanish identically.  From the algebra, we cannot seem to determine from the spectral sequence what the torsion coming from $H_*(\A^-(\L,\bm{0}))$ is and whether it contributes to $HF_{red}(S^3_{0,0,0}(\L))$ or the free part.  We will use some topological input to complete the spectral sequence computation.

It is an easy Kirby calculus exercise to see that $S^3_{0,0,0}(\L) = \mathbb{T}^3$, and hence $HF_{red}(S^3_{0,0,0}(\L)) = 0$.  Thus, the torsion term contributes to the free part.  Further, ignoring this torsion part, the $E_\infty$ page of the spectral sequence has six towers.  Three towers come from the second-to-top filtration level and are all supported in the same gradings.  This relies on the fact that $d(S^3_{p,1}(B(K,1)) = 0$ and that the other two two-component sublinks are trivial.  It is also not hard to deduce that this topmost absolute grading is in fact $1/2$.  The remaining three towers come from the second-to-bottom filtration level and are all supported in the same gradings; their topmost relative grading is one lower than that of the other towers, and hence have topmost grading $-1/2$.  Note that this agrees with the relative-gradings on $HF^-(\mathbb{T}^3)$, and hence the torsion term cannot contribute to the free part of the Floer homology.  Consequently, the torsion term of $H_*(\A^-(\L,\bm{0}))$ is trivial, and we have completed the computation of the spectral sequence.  
\end{example}

\bibliographystyle{alpha}
\bibliography{biblio}

\end{document}